\newtheorem{theorem}{Theorem}
\newtheorem{prop}[theorem]{Proposition}
\newtheorem{lemma}[theorem]{Lemma}
\newtheorem{remark}[theorem]{Remark}
\theoremstyle{definition}
    \def\Re{{\rm Re \,}}
    \def\Im{{\rm Im \,}}
    \def\bigO{{\cal O}}
        \def\P2n{{\rm P}_{{\rm II}}^{(n)}}
\newcommand{\be}{\begin{equation}}
\newcommand{\ee}{\end{equation}}
\newcommand{\wt}{\widetilde}
\renewcommand{\th}{\theta}
\newcommand{\ep}{\varepsilon }
\newcommand{\ve}{\varepsilon }
\tikzset{middlearrow/.style={
       decoration={markings,
         mark= at position 0.5 with {\arrow{#1}} ,
        },
       postaction={decorate}    }
}
\title{Airy-kernel determinant on two large intervals}
\author{I. Krasovsky and T. Maroudas}
\date{}
\begin{document}

\maketitle

\bigskip

\centerline{\it{In memory of Harold Widom, 1932--2021}}

\bigskip
\bigskip

\begin{abstract}
We find the probability of two gaps of the form $(sc,sb)\cup (sa,+\infty)$, $c<b<a<0$, for large $s>0$, in the edge scaling limit of the Gaussian Unitary Ensemble of random matrices, including the multiplicative constant in the asymptotics. 
\end{abstract}

\section{Introduction}
For $c<b<a<0$, set $J=(c,b)\cup(a,+\infty)$. Let $\text{Ai}(x)$, $\text{Ai}'(x)$ be the Airy function and its derivative, respectively.
Consider the (trace class) operator $K^{\mathrm{Ai}}$, acting on $L^2(sJ)$, $s>0$, with kernel
\[
K^{\mathrm{Ai}}(z,z')=\frac{\text{Ai}(z)\text{Ai}'(z')-\text{Ai}'(z)\text{Ai}(z')}{z-z'}=\int_{0}^{\infty}\text{Ai}(z+\zeta)\text{Ai}(z'+\zeta)d\zeta.
\]
We are interested in the large-$s$ behaviour of the corresponding Fredholm determinant
\begin{equation}\label{AIRYDET}
P^{\mathrm{Ai}}(sJ)=\det(I-K^{\mathrm{Ai}})_{(sc,sb)\cup(sa,+\infty)}.
\end{equation}
The determinant $P^{\mathrm{Ai}}(sJ)$ is the probability of 2 gaps $(sc,sb)$ and $(sa,+\infty)$ in the edge scaling limit of the Gaussian Unitary Ensemble (GUE), see, e.g, \cite{Mehta, Dbook}.

Clearly, by rescaling $s$, we can consider the second interval to be $(-s,+\infty)$. 
In the case of one gap, $(-s,+\infty)$,
the determinant \[P^{\mathrm{Ai}}(-s,+\infty)=\det(I-K^{\mathrm{Ai}})_{(-s,+\infty)}\] is the Tracy-Widom distribution \cite{TW} --- the 
distribution of the largest eigenvalue of the GUE. The same determinant also describes the distribution of the longest increasing subsequence in a random permutation \cite{BBRD}. Its large $s$ asymptotics were first considered by Tracy and Widom \cite{TW} in 1994, who observed that
\begin{equation}\label{TW}
 P^{\mathrm{Ai}}(-s,+\infty)= \exp \left \{ -\int_{s}^{\infty}(x-s)u^{2}(x)dx\right \},
\end{equation}
where $u(x)$ is the Hastings-McLeod solution of the Painlev\'e II
equation
\begin{equation}\label{ds:30}
u''(x)=xu(x)+2u^3(x)\,,
\end{equation}
specified by the following asymptotic condition:
\begin{equation}\label{ds:40}
u(x)\sim \mathrm{Ai}(x)\qquad \mbox{as}\quad x\to+\infty.
\end{equation}
The asymptotics of the logarithmic derivative $(d/ds)\log P^{\mathrm{Ai}}(-s,+\infty)$ follow,
up to a constant (which is in fact zero), from (\ref{ds:40})
and the known asymptotics of the Hastings-McLeod solution at $-\infty$. 
Integrating, Tracy and Widom obtained
\begin{equation}\label{AirydetoneGAP}
    \log \det(I-K^{\mathrm{Ai}})_{(-s,+\infty)}=-\frac{1}{12}s^3-\frac{1}{8}\log s+\chi_{\mathrm{Airy}}+\bigO\left( \frac{1}{s^{3/2}}\right), \quad s\to \infty,
\end{equation}
up to an undetermined constant $\chi_{\mathrm{airy}}$. Tracy and Widom did however, conjecture its value to be
\begin{equation}\label{constAiry}
\chi_{\mathrm{Airy}}=\frac{1}{24}\log 2 + \zeta^{\prime}(-1), 
\end{equation}
where $\zeta$ denotes Riemann's $\zeta$-function. 
The proof of \eqref{AirydetoneGAP} with \eqref{constAiry} was given in \cite{DIKAiry},
and, by a different method, in \cite{BBD}.

On the other hand, in the {\it bulk} of the spectrum of GUE, the probability of a gap $(-s,s)$
is given by the Fredholm determinant 
$\det(I-K^{\mathrm{sine}})_{(-s,s)}$
of the trace class operator $K^{\mathrm{sine}}$ on $L^2(-s,s)$ with the sine kernel
\[
K^{\mathrm{sine}}(x,y)=\frac{\sin (x-y)}{\pi (x-y)}.
\]
In this case, we have the following large $s$ asymptotics
\begin{equation}\label{1gap}
\log \det(I-K^{\mathrm{sine}})_{(-s,s)}=-\frac{s^2}{2}-\frac{1}{4}\log s+c_{\mathrm{sine}}+\mathcal O\left(\frac{1}{s}\right),\qquad s\to \infty,
\end{equation}
where
\begin{equation}\label{constsine}
c_{\mathrm{sine}}=\frac{1}{12}\log 2+3\zeta'(-1).
\end{equation}

The leading term $-\frac{s^2}{2}$ in \eqref{1gap} was found by Dyson in 1962 in \cite{Dyson62}. 
The terms $-\frac{s^2}{2}-\frac{1}{4}\log s$ were then computed by des Cloizeaux and Mehta \cite{CM} in 1973.
The constant \eqref{constsine}, known as the Widom-Dyson constant, was identified
by Dyson \cite{Dyson} in 1976. The works \cite{Dyson62}, \cite{CM}, and \cite{Dyson} are not fully rigorous.
The first rigorous confirmation of the main term, i.e. the fact that $\log \det(I-K^{\mathrm{sine}})_{(-s,s)}=-\frac{s^2}{2}(1+o(1))$, was given by Widom \cite{W2}  in 1994. The full asymptotics (up to an undetermined value of $c_{\mathrm{sine}}$) was justified by Deift, Its, and Zhou in 1997 in \cite{DIZ}.
The value \eqref{constsine} of $c_{\mathrm{sine}}$ in \eqref{1gap} was justified in 3 different ways 
in  \cite{Ksine}, \cite{DIKZ}, \cite{Ehrhardt} (see
\cite{DIKreview} and \cite{IB} for more historical details).

As we will see, the present work relates the results \eqref{AirydetoneGAP} and \eqref{1gap} in some sense. 

Since $J$ consists of 2 intervals, we expect the appearance of Jacobi $\theta$-functions in the asymptotics. This phenomenon was first observed
in \cite{DIZ}, where the authors considered the sine-kernel determinant on several large intervals, and found its asymptotics up to a multiplicative constant.
The constant in the case of 2 intervals for the sine-kernel determinant was recently found in \cite{IB}. In the present work, 
following many
ideas from \cite{IB}, we establish the asymptotics of \eqref{AIRYDET} including the relevant multiplicative constant.
To describe our results, we introduce some notation.

Let
\[p(z)=(z-a)(z-b)(z-c),\]
and consider the Riemann surface of the function $p(z)^{1/2}$ with branch cuts on $\mathbb{R}\setminus J$
(see Figure \ref{Figcycles}). Fix the first sheet of the surface by the condition $p(z)^{1/2}>0$ for $z>a$.
\begin{figure}
	\begin{center}
		\begin{tikzpicture}
		\node [above] at (0,0){$-\infty$};
		\node [above] at (2,0){$c$};
		\node [above] at (5,0) {$b$};
		\node [above] at (7,0) {$a$};

		\draw[black,fill=black]  (0,0) circle [radius=0.04];
		\draw[black,fill=black]  (2,0) circle [radius=0.04];
		\draw[black,fill=black]  (5,0) circle [radius=0.04];
		\draw[black,fill=black]  (7,0) circle [radius=0.04];

		\node [below] at (6.5,-1) {$A_1$};
		\node [below] at  (0.6,-1){$A_0$};
		\node [below] at (3.7,-0.88){$B_1$};

		\draw  (5,0)--(7,0);
		\draw  (0,0)--(2,0);

		\draw[dashed,decoration={markings, mark=at position 0.5 with {\arrow[thick]{>}}},
		postaction={decorate}] (1,0) to [out=90,in=180] (3.5,1.5) to [out=0,in=90]  (6,0);
		
		\draw[decoration={markings, mark=at position 0.5 with {\arrow[thick]{<}}},
		postaction={decorate}] (1,0) to [out=270,in=180] (3.5,-1.5) to [out=0,in=270]  (6,0);

		\draw[decoration={markings, mark=at position 0.5 with {\arrow[thick]{>}}},
		postaction={decorate}] (-1,0) to [out=90,in=180] (1,1) to [out=0,in=90]  (3,0);
		\draw[decoration={markings, mark=at position 0.5 with {\arrow[thick]{<}}},
		postaction={decorate}] (-1,0) to [out=270,in=180] (1,-1) to [out=0,in=270]  (3,0);     
		
		\draw[decoration={markings, mark=at position 0.5 with {\arrow[thick]{>}}},
		postaction={decorate}] (4,0) to [out=90,in=180] (6,1) to [out=0,in=90]  (8,0);
		\draw[decoration={markings, mark=at position 0.5 with {\arrow[thick]{<}}},
		postaction={decorate}] (4,0) to [out=270,in=180] (6,-1) to [out=0,in=270]  (8,0);  
		\end{tikzpicture} 
		\caption{Cycles on the Riemann surface.}\label{Figcycles}\end{center}
\end{figure}
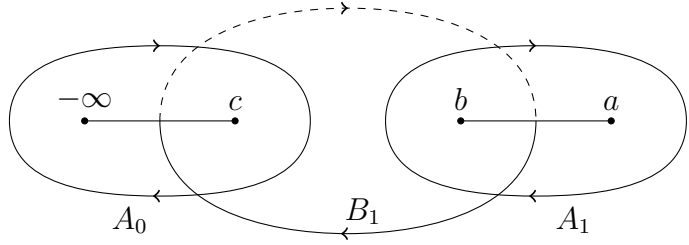
Define the elliptic integrals around cycles
\begin{equation}\label{Jkdefinition}
I_k= \frac{1}{2}\oint_{A_1} \frac{\zeta^kd\zeta}{p(\zeta)^{1/2}}=
 \frac{1}{i}\int_{b}^{a}\frac{\zeta^kd\zeta}{|p(\zeta)^{1/2}|},\qquad 
     J_k=  -\frac{1}{2}\oint_{B_1} \frac{\zeta^kd\zeta}{p(\zeta)^{1/2}}
     =-\int_{c}^{b}\frac{\zeta^kd\zeta}{|p(\zeta)^{1/2}|},  \qquad k=0,1,2,
\end{equation}
where the cycles $A_0$, $A_1$, $B_1$ are depicted in Figure \ref{Figcycles}:
parts represented by solid lines are on first sheet of the surface, while dotted line is on the second.

Consider the function
\begin{equation} \label{definitionofg}
g(z)=\int_{a}^z\frac{q(\zeta)}{p(\zeta)^{1/2}}d\zeta,\qquad \mathbb{C}\setminus (-\infty,a],
\end{equation}
on the first sheet. Here 
\[
q(z)=z^2+q_1z+q_0
\]
is a polynomial such that 
\begin{equation}\label{gcond1}
\int_a^z \frac{q(\zeta)}{p(\zeta)^{1/2}}d\zeta =\frac{2}{3}z^{3/2}+\bigO\left(\frac{1}{z^{1/2}}\right), \quad z\to \infty,
\end{equation}and
\begin{equation} \label{gcond2}
\int_{c}^{b}\frac{q(\zeta)}{p(\zeta)^{1/2}}d\zeta=0.
\end{equation}

As we verify in Lemma \ref{lemmag}  below, these conditions determine the coefficients of $q(z)$:
\begin{equation}\label{defq1}
    q_1=-\frac{a+b+c}{2},\qquad 
    q_0=-\frac{J_2+q_1J_1}{J_0}=\frac{1}{3}(ab+ac+bc)+\frac{1}{3}q_1\frac{J_1}{J_0}.
\end{equation}
Furthermore, we will see that the function $g(z)$ admits a large-$z$ asymptotic expansion of the form
\begin{equation}
    g(z)=\frac{2}{3}z^{3/2}+\frac{\alpha_1}{z^{1/2} }+\frac{\alpha_2}{z^{3/2}}+O\left( z^{-5/2}\right),\qquad z\to \infty,
\end{equation}
where, in particular,
\begin{equation}\label{definitionofalpha2}
    \alpha_2=    -\frac{1}{12}\left( a^3+b^3+c^3-(a+b)(a+c)(b+c)-8q_0q_1\right).
\end{equation}

Let
\begin{equation}\label{definitionofV}
\Omega=\frac{g_+(b)}{\pi i}\in \mathbb{R},\qquad \tau=\frac{I_0}{J_0},\quad \Re\tau=0,\quad \Im\tau>0,
\end{equation}
and recall the third Jacobi $\theta$-function given by
\begin{equation}\label{definitionoftheta3}
\theta(z)=\theta_3(z)=\theta_3(z;\tau)=\sum_{m\in \mathbb{Z}}e^{2 \pi i m z+\pi i\tau m^2}.
\end{equation}

The $\theta$-function satisfies the following periodicity relations, see e.g. \cite{WW},
\begin{equation}\label{qprelations}
\theta_3(z+1)=\theta_3(z)\quad \mathrm{ and }\quad \theta_3(z+\tau)=e^{-2 \pi i z-\pi i \tau}\theta_3(z).
\end{equation}

We now state our result.

\begin{theorem}\label{Mainthm}
The following asymptotics hold 
\begin{equation}\label{asP}
    \log P^{\mathrm{Ai}}(sJ)= -\alpha_2 s^3 -\frac{1}{2} \log s+\log \frac{\theta_3(s^{3/2}\Omega )}{\theta_3(0)}+\chi+o(1), \qquad s\to +\infty,
\end{equation}
with \be \chi=\frac{1}{4}\log(a-c)-\frac{1}{8}\log \lvert  2q(a)q(b)q(c)\rvert+ c_{\mathrm{sine}}+ \chi_{\mathrm{Airy}},\ee
where the constants $c_{\mathrm{sine}}$, $\chi_{\mathrm{Airy}}$ are given by \eqref{constsine} and \eqref{constAiry}, respectively.
\end{theorem}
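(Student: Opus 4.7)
The strategy is the Deift--Zhou nonlinear steepest descent method applied to a $2\times 2$ Riemann--Hilbert problem (RHP) associated with the Airy integrable kernel on $sJ$, following the template of \cite{IB} for the sine-kernel two-interval case. First, set up the RHP for $Y(z)$ with jumps on $sJ$ so that $P^{\mathrm{Ai}}(sJ)$ and $\partial_s\log P^{\mathrm{Ai}}(sJ)$ are expressible through $Y$ and its local data at the endpoints $sa,sb,sc$ via the Its--Izergin--Korepin integrable-operator formalism (the relevant differential identity). After the rescaling $z\mapsto sz$ the jumps live on the fixed set $J$, and the large parameter $s^{3/2}$ enters the exponential weights, priming the problem for steepest descent.

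Next, carry out the $g$-function analysis: conjugation by $e^{s^{3/2}g(z)}$ with $g$ from \eqref{definitionofg} normalizes at infinity thanks to \eqref{gcond1} and equalizes the ``band constants'' on $(c,b)$ and $(a,+\infty)$ thanks to \eqref{gcond2}. Open lenses around both bands so that the lens jumps decay exponentially in $s$. Construct the outer parametrix on the elliptic Riemann surface of $p(z)^{1/2}$ using Jacobi theta functions; this is exactly where the factor $\theta_3(s^{3/2}\Omega;\tau)/\theta_3(0;\tau)$ in \eqref{asP} is born, with $\Omega$ and $\tau$ as in \eqref{definitionofV}. Build Airy parametrices in fixed discs around $a$, $b$, $c$, and verify that the remaining ``error'' RHP is small-norm, so that its solution is $I+\bigO(s^{-3/2})$. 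Substituting this expansion into the differential identity produces an asymptotic formula for $\partial_s\log P^{\mathrm{Ai}}(sJ)$ whose integration, together with the quasi-periodicity \eqref{qprelations} of $\theta_3$ to accommodate the non-decaying oscillatory piece, yields the $-\alpha_2 s^3$, $-\tfrac12\log s$, and theta-function terms of \eqref{asP} up to an $s$-independent constant.

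The genuine obstacle is identifying that constant as $\chi$. The $s$-integration leaves an undetermined function of $(a,b,c)$ that cannot be read off from the asymptotics in $s$ alone. Following \cite{IB}, the remedy is to derive additional differential identities with respect to the endpoint parameters $a,b,c$, integrate them along a path in parameter space, and anchor the result at degenerate configurations where the answer is already known. Two anchor regimes appear naturally and account for the structure of $\chi$: the short gap $(c,b)$ can be sent far from $a$ so that the two bands decouple, reducing the analysis locally to the one-gap Airy asymptotics \eqref{AirydetoneGAP}--\eqref{constAiry} around $sa$ and to a bulk sine-kernel problem near $s(c,b)$ governed by \eqref{1gap}--\eqref{constsine}; this explains the additive appearance of both $\chi_{\mathrm{Airy}}$ and $c_{\mathrm{sine}}$ in $\chi$. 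The algebraic prefactor $\tfrac14\log(a-c)-\tfrac18\log|2q(a)q(b)q(c)|$ is then the residual combination of subleading endpoint contributions from the Airy local parametrices and the outer parametrix at the branch points, tracked carefully through the integration of the endpoint differential identities. The main technical difficulty, and the heart of the paper, is to control all error terms in the steepest descent uniformly in the geometric parameters $a,b,c$ (including in the degeneration limits) so that the known one-interval constants can legitimately be transported back to the two-interval answer.
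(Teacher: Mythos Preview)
Your overall architecture---RHP from the integrable Airy kernel, $g$-function normalization, theta-function outer parametrix on the elliptic curve, local parametrices at $a,b,c$, small-norm remainder, and anchoring the constant via a decoupling limit---matches the paper's. Two points deserve correction.

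First, the local parametrices at $a,b,c$ are \emph{Bessel}, not Airy. After the $g$-function step the $S$-RHP carries jumps $\bigl(\begin{smallmatrix}0&1\\-1&0\end{smallmatrix}\bigr)$ on $(-\infty,c)\cup(b,a)$ and $\bigl(\begin{smallmatrix}1&0\\1&1\end{smallmatrix}\bigr)$ on the lenses, with local variable $\zeta=s^3 f_p(z)$ where $f_p(z)=(g(z)-g_+(p))^2$ has a simple zero at $p$; this is precisely the Bessel model problem (the paper's Section~4.3). An Airy parametrix would require $(z-p)^{3/2}$ local behaviour, which does not occur at the gap endpoints here. This matters for the subleading terms: the $-\tfrac12\log s$ and the algebraic piece $\tfrac14\log(a-c)-\tfrac18\log|2q(a)q(b)q(c)|$ of $\chi$ come from the first correction $\Delta_1$ of the Bessel matching on $\partial U_p$, so the wrong local model would produce the wrong constant.

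Second, the paper does \emph{not} integrate a differential identity in $s$. It works exclusively with differential identities in the edge parameters $a$ and $b$ (Lemma~\ref{DIFFID}, Propositions~\ref{prop2b}--\ref{DEatabasympt}), computes their large-$s$ asymptotics from the RHP solution, and integrates along a path in $(a,b)$-space starting from the regime of the Separation Lemma~\ref{seplemma}. That regime is not ``$(c,b)$ far from $a$'' but a simultaneous double scaling $b=c+2t_0 s^{-3/2}$, $a=-t_1 s^{-1}$ with $t_0=t_1=(\log s)^{1/8}$, so that $(sc,sb)$ becomes a short bulk gap governed by \eqref{1gap} and $(sa,\infty)$ becomes a short edge gap governed by \eqref{AirydetoneGAP}; this is where $c_{\mathrm{sine}}+\chi_{\mathrm{Airy}}$ enters. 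The hard technical work is twofold: extending the small-norm estimates for $R$ uniformly down to this degenerate regime (Section~\ref{secRHext}), and showing that the oscillatory, non-exact theta terms in $\frac{d}{db}\log P^{\mathrm{Ai}}$ integrate to $o(1)$ via an averaging argument over the fast variable $\omega=s^{3/2}\Omega$ (the $\Theta_b,\widetilde\Theta_b$ bounds in Proposition~\ref{prop2b}). Your proposed $s$-integration would reproduce the Blackstone--Charlier--Lenells expansion with an undetermined constant, after which the parameter integration you sketch is still the entire content of the problem; the paper's route simply omits the redundant first step.
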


\begin{remark}
Alternatively, by the identity for $\theta_3(0)$ in \eqref{id123} below, 
\eqref{asP} can be written as
\begin{equation} \label{RESULT1}
    \log P^{\mathrm{Ai}}(sJ)= -\alpha_2 s^3 -\frac{1}{2} \log s+\log \theta_3(s^{3/2}\Omega ;\tau)+\chi_1+o(1),\qquad s\to +\infty,
\end{equation}
with \be \chi_1=-\frac{1}{2}\log \left| \frac{J_0}{\pi} \right|-\frac{1}{8}\log  \lvert q(a)q(b)q(c)\rvert+ 4\zeta^{\prime}(-1) \ee
This expression for the determinant exhibits a certain $duality$ in $I_0$ and $J_0$. By means of the relation
\begin{equation}
    \theta_3(z;\tau)=\frac{e^{-i \pi z^2/\tau}}{\sqrt{-i\tau}}\theta_3\left(\frac{z}{\tau};-\frac{1}{\tau}\right),
\end{equation}
we may write the determinant in a third way. For $\alpha_2^*=\frac{i\pi \Omega ^2}{\tau}+\alpha_2$, we have that
\begin{equation}\label{ThirdWayDET}
   \log P^{Ai}(sJ)= -\alpha_2^* s^3+\log   \theta_3\left(s^{3/2}\frac{\Omega}{\tau};-\frac{1}{\tau}\right)-\frac{1 }{2}\log s +\chi_1^*+o(1),\qquad 
   s\to +\infty,
\end{equation}
with \be \chi_1^*=-\frac{1}{2}\log \left| \frac{I_0}{\pi} \right|-\frac{1}{8}\log \lvert q(a)q(b)q(c)\rvert +4\zeta^{\prime}(-1). \ee

\end{remark}


In the recent work \cite{BCL}, Blackstone, Charlier and Lenells have simultaneously and independently analyzed the large-$s$ asymptotics of  $\log P^{\mathrm{Ai}}(sJ)$. They found the expansion $-\alpha_2 s^3 -\frac{1}{2}\log s+\log \theta_3(s^{3/2}\Omega)+\chi'+\bigO(1/s)$ with
an undetermined  constant term $\chi'=\chi'(a,b,c)$. (This analysis was then extended by the authors
to the case of $n$ gaps in the bulk of the Airy process in \cite{BCLairybulk}, and 
in the Bessel process in \cite{BCLbessel}.) 
They followed the approach of \cite{DIZ}, and used Riemann-Hilbert analysis to obtain the asymptotics 
of the derivative $\frac{d}{ds} \log P^{\mathrm{Ai}}(sJ)$. To determine the multiplicative constant in the asymptotics of the determinant, 
one would need to integrate the logarithmic derivative over $s$. However, there is no appropriate initial point $s_0$ where the asymptotics
of $P^{\mathrm{Ai}}(s_0J)$ would be independently known. For this reason the problem of determining the constant is different and requires additional ideas. As mentioned above, it was first solved for a single interval in the bulk of the spectrum of GUE.
The solution in \cite{Ksine} and also the one in \cite{DIKZ} involved representing the sine-kernel determinant as a double-scaling limit of a 
Toeplitz determinant (whose asymptotics at certain points are either known or can be independently determined)
and then integrating a differential identity for Toeplitz determinants starting from this point. The differential identity can be found in the asymptotic form by a Riemann-Hilbert analysis. Similarly, the constant $c_{\mathrm{Airy}}$ in \eqref{AirydetoneGAP} was determined in \cite{DIKAiry} by integrating a differential identity for a Hankel determinant and using the fact that the asymptotics of the Hankel determinant can be independently established at a certain point, thus providing an initial point for the integration.

It was observed in \cite{IB} that for the sine-kernel determinant on 2 intervals, one does not need to reduce the problem to Toeplitz determinants
(although, of course, one still can), but rather it is easier to notice first that if the 2 intervals are far apart from each other in comparison with their width, then the determinant splits (to the main orders in $s$) into a product of 2 determinants, each on a single interval so that we can use the asymptotics \eqref{1gap}. Then to complete the solution, one determines the asymptotic form of a differential identity with respect to the edges of the intervals and performs integration starting from a point where splitting into the product occurs.

 Here we follow a similar approach to \cite{IB}. First, in Section \ref{secseplemma}, we establish a separation lemma, which states that if the 
 length of the interval $(c,b)$ is relatively small compared to $b-a$, and $a$ is close to zero from the left then $P^{\mathrm{Ai}}(sJ)$
 is written (to main orders in $s$) as a product of a sine-kernel and Airy-kernel determinant for which we can use the asymptotics \eqref{1gap},
 \eqref{AirydetoneGAP}, respectively. More precisely, we prove 
 
\begin{lemma}[Separation of gaps]\label{seplemma}
Set $b=c+\frac{2t_0}{s^{3/2}}$ and $a=-\frac{t_1}{s}$, where $t_0=t_1=(\log s)^{1/8}$. Then as $s\to +\infty$, 
\begin{align}
    \log P^{\mathrm{Ai}}(sJ)&= \log \det(I-K^{\mathrm{sine}})_{(-t_0\sqrt{|c|},t_0\sqrt{|c|})}+
    \log \det(I-K^{\mathrm{Ai}})_{(-t_1,+\infty)}+o(1)\\
    &= -\frac{\lvert c \rvert}{2}t_0^2-\frac{1}{4}\log(\sqrt{\lvert c \rvert} t_0) +c_{\mathrm{sine}}
     -\frac{1}{12}t_1^3-\frac{1}{8}\log t_1 + \chi_{\mathrm{Airy}}+o(1).\label{seplemmaexpansion}
\end{align}
\end{lemma}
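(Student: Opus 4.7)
The plan is to reduce the two-interval Fredholm determinant to a product of one-interval determinants to which the classical single-gap asymptotics \eqref{1gap} and \eqref{AirydetoneGAP} apply, keeping track of error terms that remain $o(1)$ under the slow logarithmic growth $t_0 = t_1 = (\log s)^{1/8}$.

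\emph{Step 1 (block decoupling).} I would write $\chi_{sJ} = \chi_1 + \chi_2$ for the indicators of $(sc, sb)$ and $(sa, +\infty)$ respectively, so that $K^{\mathrm{Ai}}$ on $L^2(sJ)$ has block form $K_{ij} = \chi_i K^{\mathrm{Ai}} \chi_j$. The Schur complement identity yields
\[
\det(I - K^{\mathrm{Ai}})_{sJ} = \det(I - K_{11})\, \det(I - K_{22})\, \det(I - R), \quad R = (I - K_{22})^{-1} K_{21} (I - K_{11})^{-1} K_{12}.
\]
Using the Christoffel--Darboux form of $K^{\mathrm{Ai}}$, the classical bounds $|\mathrm{Ai}(z)| \leq C(1+|z|)^{-1/4}$ and $|\mathrm{Ai}'(z)| \leq C(1+|z|)^{1/4}$ for $z < 0$, and the separation $|z - z'| \geq s|c|/2$ for $z \in (sc, sb)$, $z' \in (sa, +\infty)$ once $s$ is large, one obtains $\|K_{12}\|_{\mathrm{HS}}^2 = O(t_0/s^2)$. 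Since $K_{ii}$ act on intervals of only logarithmic size, the norms $\|(I - K_{ii})^{-1}\|$ grow at most sub-polynomially in $s$, which together with the polynomial smallness of $\|K_{12}\|_{\mathrm{HS}}$ gives $\|R\|_1 = o(1)$, hence $\det(I - R) = 1 + o(1)$.

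\emph{Step 2 (sine-kernel reduction and conclusion).} On $(sc, sb)$, set $X = s|c|$ and $z = sc + u$; the classical Airy asymptotics
\[
\mathrm{Ai}(-X + u) = \frac{\cos\bigl(\tfrac{2}{3}X^{3/2} - X^{1/2} u - \tfrac{\pi}{4}\bigr)}{\sqrt{\pi}\, X^{1/4}}\bigl(1 + O(X^{-3/2})\bigr),
\]
together with the companion expansion of $\mathrm{Ai}'$, substituted into the Christoffel--Darboux form, give $K^{\mathrm{Ai}}(-X+u, -X+v) = \sin(X^{1/2}(u-v))/(\pi(u-v)) + O(X^{-1})$. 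The rescaling $\tilde u = X^{1/2}(u - (sb-sc)/2)$ sends $(sc, sb)$ onto $(-t_0\sqrt{|c|}, t_0\sqrt{|c|})$ and turns the leading term into $K^{\mathrm{sine}}$, with a trace-class remainder of size a negative power of $s$ times the (slowly growing) rescaled interval length. Hence $\det(I - K^{\mathrm{Ai}})_{(sc, sb)} = \det(I - K^{\mathrm{sine}})_{(-t_0\sqrt{|c|}, t_0\sqrt{|c|})}(1 + o(1))$. Since the other block is already $\det(I - K^{\mathrm{Ai}})_{(-t_1,+\infty)}$, applying \eqref{1gap} with $s\mapsto t_0\sqrt{|c|}$ (remainder $O(t_0^{-1})$) and \eqref{AirydetoneGAP} with $s\mapsto t_1$ (remainder $O(t_1^{-3/2})$) produces \eqref{seplemmaexpansion}.

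The main technical obstacle is Step 1: the inverse $(I - K_{22})^{-1}$, acting on the growing Airy interval $(-t_1,+\infty)$, could in principle inflate the smallness of $\|K_{12}\|_{\mathrm{HS}}$ beyond what the polynomial decay in $s$ can absorb, since eigenvalues of $K_{22}$ accumulate at $1$. The precise calibration $t_0 = t_1 = (\log s)^{1/8}$ is chosen so that both intervals are long enough for the limiting asymptotics \eqref{1gap} and \eqref{AirydetoneGAP} to apply with $o(1)$ error, yet short enough that the inverse norms grow only sub-polynomially and the coupling and kernel-approximation errors in Steps 1 and 2 vanish as $s \to +\infty$.
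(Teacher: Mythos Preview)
Your approach via the Schur complement is a different route from the paper's, and it has a genuine gap precisely at the point you yourself flag as the ``main technical obstacle'': you assert that $\|(I-K_{ii})^{-1}\|$ grows only sub-polynomially in $s$, but you do not prove it. For the Airy kernel on $(-t_1,+\infty)$ the largest eigenvalue $\lambda_0$ satisfies $1-\lambda_0 \asymp e^{-c\,t_1^{3/2}}$, so $\|(I-K_{22})^{-1}\|\asymp e^{c\,t_1^{3/2}}$; with $t_1=(\log s)^{1/8}$ this is indeed $e^{c(\log s)^{3/16}}$, sub-polynomial in $s$, and a similar statement holds for the sine block. But establishing these eigenvalue asymptotics is itself nontrivial input, comparable in depth to the lemma you are trying to prove. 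Without it, the bound $\|R\|_1=o(1)$ is not justified, and the argument is circular in spirit: you are invoking fine spectral information about the very operators whose determinants you want to analyse.

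The paper avoids resolvents entirely. It compares $\det(I-K^{\mathrm{Ai}})_{sJ}$ directly to the block-diagonal determinant $\det(I-\widehat{K^{\mathrm{Ai}}})_{sJ}$ (with off-diagonal blocks set to zero) by expanding both as Fredholm series and bounding the difference of the $m\times m$ principal minors termwise via Hadamard's inequality. The same device handles the replacement of the Airy kernel on $(sc,sb)$ by the sine kernel. The resulting error bounds are of the form $s^{-3/4}e^{C\,t_1^2\max\{t_0^2,t_1\}}$ and $s^{-3/2}e^{C\,t_0^2}$, explicit enough that the choice $t_0=t_1=(\log s)^{1/8}$ visibly kills them. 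This is more elementary and self-contained: no operator inverses, no eigenvalue asymptotics, only pointwise kernel estimates and Hadamard. If you want to salvage the Schur-complement route, you must supply an independent proof of the resolvent bound; otherwise the Fredholm-series comparison is the cleaner path.
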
 

\begin{remark}
We can also choose different values for $t_0$, $t_1$, and slightly larger in $s$ than $(\log s)^{1/8}$.
\end{remark}

To solve the problem for arbitrary fixed $c<b<a<0$, we proceed as follows. In Section \ref{secdiffid}, we formulate differential identities which express
the derivatives with respect to the edges, $\frac{d}{da}\log P^{\mathrm{Ai}}(sJ)$, $\frac{d}{db} \log P^{\mathrm{Ai}}(sJ)$,  in terms of a solution of a certain Riemann-Hilbert problem. In Section \ref{secRHP}, this problem is solved asymptotically for large $s$ 
and fixed $c<b<a<0$, by
the Deift-Zhou steepest descent method (as for the sine-kernel on $2$ intervals, the solution involves $\theta$-functions).
In Section \ref{secRHext} we verify that this solution is extendable for variable edges up to the scaling regime of Lemma \ref{seplemma}. We then substitute the solution into the differential identities and perform integration: first setting $a=\alpha-x$, $b=\beta+x$, where $\alpha$ is close to zero and
$\beta$ is close to $c$ in the sense of Lemma \ref{seplemma}, we integrate 
$\frac{d}{dx} \log P^{\mathrm{Ai}}(sJ)$ from $x=0$ to $x_0$ such that $a=\alpha-x_0$.
This fixes the desired value of $a$.
Then we integrate the identity $\frac{d}{db} \log P^{\mathrm{Ai}}(sJ)$ to the general position of $b$ and thus conlcude the proof of Theorem \ref{Mainthm}. 
Note that unlike the cases of just 1 gap, the integration of the differential identities for 2 gaps is technically involved:
we have to use various identities for $\theta$-functions (see Section \ref{secidentities}) and averaging over fast oscillations. The details 
of this computation are presented in Section \ref{proofofthm}.


\section{Separation of gaps: Proof of Lemma \ref{seplemma}}\label{secseplemma}

In this section $C_j$, $j=1,2,\dots$, will denote positive constants whose value may change from line to line.

Recall the kernel
\be\label{airykernel1}
K^{\textmd{Ai}}(z,z')=\frac{\text{Ai}(z)\text{Ai}'(z')-\text{Ai}'(z)\text{Ai}(z')}{z-z'},
\ee
and denote
\be
K^{\textmd{sine}}_{\alpha}(z,z')=\frac{\sin(\alpha(z-z'))}{\pi(z-z')}.
\ee
Thus $K^{\textmd{sine}}(z,z')=K^{\textmd{sine}}_{1}(z,z')$.
Let
\be
b=c+\frac{2t_0}{s^{3/2}},\qquad t_0/s^{1/2}\to 0,;\qquad a=-\frac{t_1}{s},\qquad  t_1/s\to 0.
\ee

We first observe the following.
\begin{prop}\label{Ksineinternal} For $z,z^{\prime}\in(sc,sb)$, let
\[z=cs+\frac{x t_0}{\sqrt{s}}, \qquad z^{\prime}=cs+\frac{yt_0}{\sqrt{s}},\qquad x,y\in(0,2). \]
Then, uniformly for  $x,y\in (0,2)$,
\begin{align}\label{kernelestimate}
    K^{\textmd{Ai}}(z,z^{\prime})=\frac{\sqrt{s}}{t_0}\left(K^{\textmd{sine}}_{t_0\sqrt{|c|}}(x,y)+\bigO\left(\frac{t_0^2}{s^{3/2}}\right)\right),\qquad t_0^2/s^{3/2}\to 0, \qquad s \to +\infty.
\end{align}
\end{prop}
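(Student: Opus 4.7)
The plan is to apply the classical large-negative-argument asymptotic expansions
\[
\mathrm{Ai}(-u)=\frac{1}{\sqrt{\pi}\,u^{1/4}}\bigl[\sin\phi(u)+\bigO(u^{-3/2})\bigr],\qquad
\mathrm{Ai}'(-u)=-\frac{u^{1/4}}{\sqrt{\pi}}\bigl[\cos\phi(u)+\bigO(u^{-3/2})\bigr],
\]
with $\phi(u)=\tfrac{2}{3}u^{3/2}+\tfrac{\pi}{4}$, and substitute into the Airy kernel with $z=cs+xt_0/\sqrt{s}$, $z'=cs+yt_0/\sqrt{s}$. Since $|z|=|c|s-xt_0/\sqrt{s}$ with $xt_0/(|c|s^{3/2})=o(1)$, Taylor expansion around $u=|c|s$ yields
\[
\phi(|z|)=\phi(|c|s)-\sqrt{|c|}\,xt_0+\bigO(t_0^2/s^{3/2}),\qquad |z|^{1/4}=(|c|s)^{1/4}\bigl[1+\bigO(t_0/s^{3/2})\bigr],
\]
and analogously for $z'$ with $y$ in place of $x$. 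The crucial structural point is that when one takes differences between the $z$- and $z'$-expansions, every Taylor correction inherits an explicit factor of $(x-y)$ at each order.

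Substituting the leading WKB approximations into the numerator $F(z,z'):=\mathrm{Ai}(z)\mathrm{Ai}'(z')-\mathrm{Ai}'(z)\mathrm{Ai}(z')$ and applying $\cos A\sin B-\sin A\cos B=\sin(B-A)$, one finds to leading order
\[
F(z,z')=\frac{1}{\pi}\sin\bigl(\phi(|z'|)-\phi(|z|)\bigr)+(x-y)\cdot\bigO(t_0/s^{3/2})\,\sin\bigl(\phi(|z|)+\phi(|z'|)\bigr),
\]
the second summand coming from the amplitude mismatch $|z|^{1/4}/|z'|^{1/4}\neq 1$. Using $\phi(|z'|)-\phi(|z|)=\sqrt{|c|}(x-y)t_0+(x-y)\bigO(t_0^2/s^{3/2})$ together with $\sin(A+\delta)=\sin A+\bigO(\delta)$, and dividing by $z-z'=(x-y)t_0/\sqrt{s}$, one obtains
\[
K^{\mathrm{Ai}}(z,z')=\frac{\sqrt{s}}{t_0}K^{\mathrm{sine}}_{t_0\sqrt{|c|}}(x,y)+\bigO(t_0/s),
\]
which upon rewriting $\bigO(t_0/s)=(\sqrt{s}/t_0)\bigO(t_0^2/s^{3/2})$ is exactly \eqref{kernelestimate}.

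The main obstacle is ensuring this error estimate is \emph{uniform} in $x,y\in(0,2)$, especially on the diagonal $x=y$. A naive bound on the $\bigO(u^{-3/2})$ remainder in the Airy asymptotic gives an absolute error of order $\bigO(s^{-3/2})$ in $F$, which when divided by $z-z'=\bigO(t_0/\sqrt{s})$ would blow up as $x\to y$. The resolution is to observe that the residual $R:=F-F_{\mathrm{WKB}}$ is antisymmetric in $z\leftrightarrow z'$: both $F$ and the WKB approximation (obtained by replacing $\mathrm{Ai}$ and $\mathrm{Ai}'$ by their leading WKB forms) are antisymmetric, so $R$ vanishes on the diagonal and $R/(z-z')$ is smooth. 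Combining the pointwise bound $R=\bigO(s^{-3/2})$ with the derivative bound $\partial_z R=\bigO(s^{-1})$ (each derivative of the oscillatory Airy remainder gains at most a factor $\sqrt{|z|}=\bigO(\sqrt{s})$) yields $R/(z-z')=\bigO(s^{-1})\subset\bigO(t_0/s)$ uniformly, completing the required estimate.
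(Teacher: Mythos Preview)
Your proposal is correct and follows essentially the same approach as the paper: substitute the large-negative-argument Airy asymptotics, identify the leading term as the sine kernel, and handle the diagonal singularity by exploiting antisymmetry of the remainder together with a mean-value-theorem bound involving its first derivative. The paper packages this slightly differently---it decomposes $R$ explicitly into cross-terms like $C(z)\mathrm{Err}_{\mathrm{Ai}}(-z')-C(z')\mathrm{Err}_{\mathrm{Ai}}(-z)$ and applies the MVT identity $\tfrac{f(z)g(z')-f(z')g(z)}{z-z'}=f(z)g'(z_1^*)-f'(z_2^*)g(z)$ to each pair, bounding the derivatives via the explicit estimates $\mathrm{Err}_{\mathrm{Ai}}=\bigO(z^{-3/2})$, $\mathrm{Err}_{\mathrm{Ai}}'=\bigO(z^{-1})$, $\mathrm{Err}_{\mathrm{Ai}}''=\bigO(z^{-1/2})$---whereas you treat $R$ as a single antisymmetric object; your heuristic ``each derivative gains at most $\sqrt{s}$'' is exactly what those explicit estimates make rigorous.
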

\begin{proof}
We will make use of the following expansions of the Airy function for large, negative argument (see, e.g., \cite{AS}):
\begin{equation}\label{airysine1}
    \textmd{Ai}(-z)=\frac{1}{\sqrt{\pi}z^{1/4}}\left[ \sin\left(\frac{2}{3}z^{3/2}+\frac{\pi}{4}\right)+\bigO\left(\frac{1}{z^{3/2}}\right)\right],
\end{equation}
and for the derivative ($\textmd{Ai}^{\prime}(u)=\frac{d}{du}\textmd{Ai}(u)$)
\begin{equation}\label{airysine2}
    \textmd{Ai}^{\prime}(-z)=-\frac{z^{1/4}}{\sqrt{\pi}}\left[ \cos\left(\frac{2}{3}z^{3/2}+\frac{\pi}{4}\right)+\bigO\left(\frac{1}{z^{3/2}}\right)\right],
\end{equation}
as $z\to +\infty$, where the principal branches of the roots are taken with cuts along $(-\infty,0)$. 
Let us define, for $\Re (z)>0$,
\be \textmd{Err}_{\textmd{Ai}}(z)=\sqrt{\pi}z^{1/4} \textmd{Ai}(-z)-\sin\left(\frac{2}{3}z^{3/2} +\frac{\pi}{4}\right), \ee
so that
\begin{equation}\label{airyerror} \textmd{Ai}(-z)=\frac{1}{\sqrt{\pi} z^{1/4}}\left[ \sin\left(\frac{2}{3}z^{3/2}+\frac{\pi}{4}\right)+\textmd{Err}_{\textmd{Ai}}(z)\right]. 
\end{equation}
Since the Airy function is entire it follows, by definition, that $\textmd{Err}_{\textmd{Ai}}(z)$ is analytic in $\Re (z) >0$. Moreover it follows from differentiating $\textmd{Err}_{\textmd{Ai}}$, making use of the expansions (\ref{airysine1}) and (\ref{airysine2}), that
\begin{equation}\label{KernelErrorEstimate}
    \textmd{Err}_{\textmd{Ai}}(z)=\bigO\left( \frac{1}{z^{3/2}}\right),\quad \textmd{Err}_{\textmd{Ai}}^{\prime}(z)=\bigO\left(\frac{1}{z}\right), \quad \textmd{Err}_{\textmd{Ai}}^{\prime\prime}(z)=\bigO\left(\frac{1}{z^{1/2}}\right), 
\end{equation} 
where for the second derivative we have used, in addition, the Airy equation $\textmd{Ai}^{\prime\prime}(z)=z\textmd{Ai}(z)$.

Let us denote 
\[ S(z)=\sin\left(\frac{2}{3}(-z)^{3/2}+\frac{\pi}{4}\right),\quad \textmd{and} \quad C(z)=\cos\left(\frac{2}{3}(-z)^{3/2}+\frac{\pi}{4}\right). \]
Noting that
\[
\frac{(-z')^{1/4}}{(-z)^{1/4}}=1+ \frac{(-z')^{1/4}-(-z)^{1/4}}{(-z')^{1/4}}=1+(z-z')\bigO(1/s)=1+\bigO\left(\frac{t_0(x-y)}{s^{3/2}}\right),
\]
we obtain, by means of (\ref{airyerror}) and its derivative, that
\be\label{kernelexp}
\begin{aligned}
    &\pi (z-z') K^{\textmd{Ai}}(z,z')=
    \pi\left[\textmd{Ai}(z)\textmd{Ai}^{\prime}(z^{\prime})-\textmd{Ai}(z^{\prime})\textmd{Ai}^{\prime}(z)\right]=\\
    & -\left[S(z)C(z^{\prime})-S(z^{\prime})C(z)\right]+C(z)\textmd{Err}_{\textmd{Ai}}(-z^{\prime})- C(z^{\prime})\textmd{Err}_{\textmd{Ai}}(-z)\\
    & +\frac{1}{(-z')^{1/4}}\left[S(z')+\textmd{Err}_{\textmd{Ai}}(-z')\right]\textmd{Err}_{\textmd{Ai}}^{\prime}(-z)
    -\frac{1}{(-z)^{1/4}}\left[S(z)+\textmd{Err}_{\textmd{Ai}}(-z)\right]\textmd{Err}_{\textmd{Ai}}^{\prime}(-z')\\
    & +\bigO\left(\frac{t_0(x-y)}{s^{3/2}}\right).
\end{aligned}
\ee

For $f,g$ denoting analytic functions, there are $ z_1^*,z_2^*\in(z,z^{\prime})$ such that
\be\label{UniformKernelEstimate}
\begin{aligned}
    &\frac{f(z)g(z^{\prime})-f(z^{\prime})g(z)}{z-z^{\prime}}=\frac{f(z)\left(g(z)+g^{\prime}(z_1^*)(z-z^{\prime})\right)-\left(f(z)+f^{\prime}(z_2^*)(z-z^{\prime})\right)g(z)}{z-z^{\prime}}\\
    &=f(z)g^{\prime}(z_1^*)-f^{\prime}(z_2^*)g(z).
\end{aligned}
\ee
Applying this to the pairs $f(z)=C(z)$, $g(z)=\textmd{Err}_{\textmd{Ai}}(-z)$ and 
$f(z)=\frac{1}{(-z)^{1/4}}\left[S(z)+\textmd{Err}_{\textmd{Ai}}(-z)\right]$, $g(z)=\textmd{Err}_{\textmd{Ai}}^{\prime}(-z)$, in \eqref{kernelexp},
and using the estimates \eqref{KernelErrorEstimate},
we obtain \eqref{kernelestimate},  uniformly in $x,y \in (0,2)$.
\end{proof}

With the assistance of the above result we may now prove Lemma 1. 

Consider the kernel 
\begin{equation}
\widehat{K^{\textmd{Ai}}}(z,z^{\prime})=\begin{cases} K^{\textmd{Ai}}(z,z^{\prime}), \quad z,z^{\prime} \in (sc,sb) \textmd{ or } z,z^{\prime}\in \left(sa,+\infty\right) \\ 0,\quad \textmd{ otherwise.}
\end{cases}
\end{equation}
The corresponding determinant splits into the desired product of determinants up to a small error. 
We have 
\begin{prop}\label{HATANDPRODEST}There exist constants $C_0,C_1>0$ depending only on $c$ such that
\be
\Biggl\lvert \det(I-\widehat{K^{\textmd{Ai}}})_{L^2(sJ)}- \det(I-K^{\textmd{sine}})_{L^2(0,2\lvert c \rvert^{1/2}t_0)}
\det(I-K^{\textmd{Ai}})_{L^2(-t_1,+\infty)}\Biggr\rvert \le C_0 \frac{e^{C_1 t_0^2}}{s^{3/2}}.
\ee
\end{prop}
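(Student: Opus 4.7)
The first step is to exploit block-diagonality: since $\widehat{K^{\mathrm{Ai}}}$ vanishes on the off-diagonal blocks connecting $(sc,sb)$ to $(sa,+\infty)$, its Fredholm determinant factors as
\[
\det(I-\widehat{K^{\mathrm{Ai}}})_{L^2(sJ)}=\det(I-K^{\mathrm{Ai}})_{L^2(sc,sb)}\cdot\det(I-K^{\mathrm{Ai}})_{L^2(-t_1,+\infty)},
\]
where I have used $sa=-t_1$. Bounding the second factor above by $1$ (it is a Tracy--Widom probability), the proposition reduces to the single-interval estimate
\[
\bigl|\det(I-K^{\mathrm{Ai}})_{L^2(sc,sb)}-\det(I-K^{\mathrm{sine}})_{L^2(0,2|c|^{1/2}t_0)}\bigr|\le C_0\,e^{C_1 t_0^2}/s^{3/2}.
\]

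To compare these, I would rescale both determinants to the common space $L^2(0,2)$. The substitution $z=cs+xt_0/s^{1/2}$ identifies the Airy operator on $(sc,sb)$ with an operator $\tilde A$ on $L^2(0,2)$ with kernel $(t_0/s^{1/2})K^{\mathrm{Ai}}(z(x),z(y))$, and a further linear rescaling identifies the sine operator on $(0,2|c|^{1/2}t_0)$ with the operator $\tilde B$ on $L^2(0,2)$ with kernel $K^{\mathrm{sine}}_{t_0|c|^{1/2}}$. Proposition \ref{Ksineinternal} supplies the pointwise bound $\tilde A-\tilde B=\bigO(t_0^2/s^{3/2})$ uniformly on $(0,2)^2$; inspecting its proof, in particular \eqref{kernelexp} and \eqref{KernelErrorEstimate}, one also obtains comparable uniform bounds on the partial derivatives of the error kernel, picking up at worst subexponential powers of $t_0$.

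The determinantal comparison then follows from the standard Lipschitz estimate
\[
\bigl|\det(I-\tilde A)-\det(I-\tilde B)\bigr|\le \|\tilde A-\tilde B\|_1\,\exp\bigl(1+\|\tilde A\|_1+\|\tilde B\|_1\bigr).
\]
Both $\tilde A$ and $\tilde B$ are positive trace-class, so their trace norms equal their traces. Using $K^{\mathrm{Ai}}(z,z)\sim |z|^{1/2}/\pi$ as $z\to-\infty$ and $K^{\mathrm{sine}}(x,x)=1/\pi$, both traces are of order $t_0$, giving $\|\tilde A\|_1,\|\tilde B\|_1=\bigO(t_0)$ and hence $\exp(1+\|\tilde A\|_1+\|\tilde B\|_1)\le e^{C_1 t_0^2}$ for $t_0\ge 1$. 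The main technical point is the trace-norm estimate $\|\tilde A-\tilde B\|_1\le C t_0^2/s^{3/2}$: the pointwise smallness is not by itself a trace-norm bound, but on the fixed interval $(0,2)$ the trace norm of a smooth integral kernel is controlled by a few of its $C^k$ seminorms (e.g.\ via Fourier series), and the derivative bounds of the preceding paragraph supply exactly these. Assembling the three estimates yields the claimed bound.
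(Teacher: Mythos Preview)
Your approach is correct but takes a genuinely different route from the paper. Both begin with the block-diagonal factorization and reduce to comparing $\det(I-K^{\mathrm{Ai}})_{(sc,sb)}$ with the sine determinant. From there the paper works directly with the Fredholm series: it expands the determinant as an alternating sum of $k\times k$ subdeterminants, inserts the pointwise estimate of Proposition~\ref{Ksineinternal} row by row via multilinearity, and bounds each error determinant by $(Ct_0)^k\sqrt{k!}$ using Hadamard's inequality; summing over $k$ (with Cauchy--Schwarz) yields $e^{Ct_0^2}/s^{3/2}$. You instead invoke the abstract Lipschitz bound $|\det(I-\tilde A)-\det(I-\tilde B)|\le\|\tilde A-\tilde B\|_1\exp(1+\|\tilde A\|_1+\|\tilde B\|_1)$. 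This is conceptually cleaner but shifts the work to the trace-norm estimate $\|\tilde A-\tilde B\|_1=\mathcal O(t_0^k/s^{3/2})$, which genuinely requires derivative control on the error kernel (pointwise smallness gives only Hilbert--Schmidt). Your claim that such bounds follow from the ingredients of Proposition~\ref{Ksineinternal} is correct---differentiating each error term in \eqref{kernelexp} once in $z'$ and multiplying by the Jacobian $t_0/\sqrt{s}$ still leaves $\mathcal O(t_0^k/s^{3/2})$---but this needs to be spelled out, and your Fourier-series remark is too loose (one derivative in each variable gives coefficients decaying like $1/(mn)$, which is not summable; a clean argument is the factorization $D(x,y)=D(x,2)-\int_y^2\partial_t D(x,t)\,dt$, exhibiting $D$ as rank-one plus a product of two Hilbert--Schmidt operators). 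The paper's Hadamard route avoids this issue altogether by never leaving the pointwise level; your route is shorter once the trace-norm lemma is in hand.
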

\begin{proof}
First we note that
\be\label{41}
\begin{aligned}
    &\det(I-\widehat{K^{\textmd{Ai}}})_{L^2(sJ)}=1+\sum_{m=1}^{\infty}\sum_{k=0}^m \frac{(-1)^m}{(m-k)!k!}\\
   &\times \int_{\begin{matrix}z_1,\dots,z_k \in(cs,bs) \\ z_{k+1},\dots,z_m\in(as,+\infty)\end{matrix}} \det(K^{\textmd{Ai}}(z_i,z_j))_{i,j=1}^k
   \det(K^{\textmd{Ai}}(z_i,z_j))_{i,j=k+1}^m dz_1\cdots dz_m \\    
   &= \det(I-K^{\textmd{Ai}})_{L^2(sc,sb)}\det(I-K^{\textmd{Ai}})_{L^2(-t_1,+\infty)}.
\end{aligned}
\ee
Furthermore, using the change of variables,
\be\label{changevar}
z_j=sc+\frac{t_0 x_j}{\sqrt{s}},\quad j=1,\dots,k,
\ee
and Proposition \ref{Ksineinternal}, we write
\be\label{42}
\begin{aligned}
&\det(I-K^{\textmd{Ai}})_{L^2(sc,sb)}=1+\sum_{k=1}^{\infty}\frac{(-1)^k}{k!}\\
   &\times \int_{x_1,\dots,x_k \in(0,2)}\det\left(K^{\textmd{sine}}_{t_0\sqrt{|c|}}(x_i,x_j)+\bigO\left(\frac{ t_0^2}{s^{3/2}}\right)\right)_{i,j=1}^k dx_1\cdots dx_k.
\end{aligned}
\ee

Let $r_j$ denote the $j$'th row of $\left(K^{\textmd{sine}}_{t_0\sqrt{|c|}}(x_i,x_j)\right)_{i,j=1}^k$ and let $e_j$ denote the $j$'th row of the error matrix, so that $e_j=\bigO\left(\frac{t_0^2}{s^{3/2}}\right)$, $j=1,\dots,k$. Then we have 
\begin{equation*}
\begin{aligned}
&\det\left(K^{\textmd{sine}}_{t_0\sqrt{|c|}}(x_i,x_j)+\bigO\left(\frac{t_0^2}{s^{3/2}}\right)\right)_{i,j=1}^k=\det\begin{pmatrix}r_1+e_1\\ r_2+e_2\\ .. \\ r_k+e_k\end{pmatrix}=\\
&=\det \begin{pmatrix}r_1\\ r_2\\ ..\\ r_k\end{pmatrix}+\sum_{j=1}^k \det \begin{pmatrix}r_1\\ .. \\ r_{j-1}\\ e_j \\ r_{j+1}+e_{j+1}\\.. \\ r_k+e_k\end{pmatrix}=\det\left(K^{\textmd{sine}}_{t_0\sqrt{|c|}}(x_i,x_j)\right)_{i,j=1}^k+\frac{t_0^2}{s^{3/2}}\sum_{j=1}^k \det \begin{pmatrix}r_1\\ .. \\ r_{j-1}\\ \bigO(1) \\ r_{j+1}+e_{j+1}\\.. \\ r_k+e_k\end{pmatrix}.
\end{aligned}
\end{equation*}
Note that the determinants in the sum over $j$ on the right hand side may be estimated by Hadamard's inequality.
 Let $v_{\ell}$ denote the rows of a matrix. If all the matrix elements $\| v_{\ell\,m} \| \le C$,
\be\label{HadamardINEQ}
\lvert \det\left(v_1,v_2,\dots,v_k\right)^T \rvert \leq \prod_{\ell=1}^k \| v_{\ell} \| \leq C_1^k k^{k/2} \leq (100\cdot C_1)^k \sqrt{k!}. 
\ee
Using uniformity of the error term and the estimate 
\be\label{Ksineest}
\left| K^{\textmd{sine}}_{t_0\sqrt{|c|}}(x_i,x_j)\right| \le C_1 t_0,
\ee
we obtain
\be \label{RKSEstimate}
\begin{aligned}
\Biggl \lvert &\det\left(K^{\textmd{sine}}_{t_0\sqrt{|c|}}(x_i,x_j)+\bigO\left(\frac{t_0^2}{s^{3/2}}\right)\right)_{i,j=1}^k -\det\left(K^{\textmd{sine}}_{t_0\sqrt{|c|}}(x_i,x_j)\right)_{i,j=1}^k\Biggr\rvert \leq \\
&\leq  \frac{t_0^2}{s^{3/2}}k (C_2t_0)^k \sqrt{k!}\leq \frac{t_0^2}{s^{3/2}}(C_3t_0)^{k}\sqrt{k!}.
\end{aligned}
\ee

This estimate and \eqref{42} imply (note that $\det(I-K^{\textmd{sine}})_{L^2(0,2\lvert c \rvert^{1/2}t_0)}=
\det(I-K^{\textmd{sine}}_{t_0\sqrt{|c|}})_{L^2(0,2)}$)
\be\label{finalestlemma}
\begin{aligned}
&\left| \det(I-K^{\textmd{Ai}})_{L^2(sc,sb)}- \det(I-K^{\textmd{sine}})_{L^2(0,2\lvert c \rvert^{1/2}t_0)}\right|\\
&\le
\frac{t_0^2}{s^{3/2}}\sum_{k=1}^{\infty}\frac{(C_1 t_0)^k}{\sqrt{k!}}\le
\frac{t_0^2}{s^{3/2}}\left(\sum_{k=1}^{\infty}\frac{(C_1t_0)^{2k}k^2}{k!}\right)^{1/2}
\left(\sum_{k=1}^{\infty}\frac{1}{k^2}\right)^{1/2}\le C_0 \frac{e^{C_2 t_0^2}}{s^{3/2}}.
\end{aligned}
\ee
Since by \eqref{AirydetoneGAP},
\[
\left|\det(I-K^{\textmd{Ai}})_{L^2(-t_1,+\infty)}\right|\le C_0,
\]
the estimate \eqref{finalestlemma} implies the statement of the proposition.
\end{proof}

On the other hand, the difference between the determinants corresponding to the kernels 
$K^{\textmd{Ai}}$ and $\widehat{K^{\textmd{Ai}}}$ is small:

\begin{prop}\label{AIRYANDHATEST} There exist constants $C_2,C_3>0$ depending only on $c$ such that
\be
\Biggl \lvert \det(I-K^{\textmd{Ai}})_{L^2(sJ)}-\det(I-\widehat{K^{\textmd{Ai}}})_{L^2(sJ)} \Biggr \rvert \leq  
\frac{C_2}{s^{3/4}}e^{C_3 t_1^2\max\{t_0^2,t_1\}}.
\ee
\end{prop}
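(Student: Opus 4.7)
The plan is to factor $\det(I-K^{\mathrm{Ai}})$ across the block structure $L^{2}(sJ)=L^{2}(I_{1})\oplus L^{2}(I_{2})$, with $I_{1}=(sc,sb)$ and $I_{2}=(sa,+\infty)$, via a Schur-complement identity, so that the difference from $\det(I-\widehat{K^{\mathrm{Ai}}})$ is captured by a determinant of an explicit trace-class perturbation of the identity, small because the two intervals are macroscopically separated. Writing $K_{ij}=\chi_{I_{i}}K^{\mathrm{Ai}}\chi_{I_{j}}$, we have $K^{\mathrm{Ai}}-\widehat{K^{\mathrm{Ai}}}=K_{12}+K_{21}$ and $K_{21}=K_{12}^{*}$ by self-adjointness. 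The first step is a pointwise estimate on the off-block kernel: for $z\in I_{1}$ (so $z\approx sc$) and $z'\in I_{2}$ (so $z'\ge -t_{1}$), the large-negative Airy asymptotics \eqref{airysine1}--\eqref{airysine2} give $|\mathrm{Ai}(z)|=O(s^{-1/4})$ and $|\mathrm{Ai}'(z)|=O(s^{1/4})$, while $|\mathrm{Ai}(z')|,|\mathrm{Ai}'(z')|$ are bounded on $(-t_{1},0)$ and exponentially decaying on $(0,+\infty)$. Combined with the separation $|z-z'|\ge s|c|/2$, the closed form \eqref{airykernel1} yields $|K^{\mathrm{Ai}}(z,z')|\le Cs^{-3/4}\phi(z')$ for some integrable $\phi$ on $I_{2}$, and hence the Hilbert--Schmidt bound $\|K_{12}\|_{2}^{2}\le Ct_{0}t_{1}/s^{2}$.

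Next I apply the block-determinant identity
\[
\det(I-K^{\mathrm{Ai}})=\det(I-K_{11})\det(I-K_{22})\det(I-M),\qquad M=(I-K_{22})^{-1}K_{21}(I-K_{11})^{-1}K_{12},
\]
valid because each $K_{ii}\ge 0$ has spectrum in $[0,1)$. Since $\det(I-\widehat{K^{\mathrm{Ai}}})=\det(I-K_{11})\det(I-K_{22})$, subtracting gives
\[
\det(I-K^{\mathrm{Ai}})-\det(I-\widehat{K^{\mathrm{Ai}}})=\det(I-\widehat{K^{\mathrm{Ai}}})\bigl[\det(I-M)-1\bigr].
\]
Using $|\det(I-M)-1|\le\|M\|_{1}\,e^{\|M\|_{1}}$ and the trace-ideal inequality $\|ABCD\|_{1}\le\|A\|_{\infty}\|B\|_{2}\|C\|_{\infty}\|D\|_{2}$, the trace-norm bound becomes
\[
\|M\|_{1}\le \|(I-K_{22})^{-1}\|_{\infty}\,\|(I-K_{11})^{-1}\|_{\infty}\,\|K_{12}\|_{2}^{2}.
\]

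For the two resolvent norms, the elementary bound $\|(I-K_{ii})^{-1}\|_{\infty}\le 1/\det(I-K_{ii})$ holds, since $0\le\lambda_{\max}(K_{ii})<1$ and $\det(I-K_{ii})=\prod_j(1-\lambda_{j})\le 1-\lambda_{\max}$. Proposition \ref{HATANDPRODEST} combined with the sine-kernel asymptotics \eqref{1gap} yields $\det(I-K_{11})\ge e^{-Ct_{0}^{2}}$, and the Tracy--Widom expansion \eqref{AirydetoneGAP} yields $\det(I-K_{22})\ge e^{-Ct_{1}^{3}}$. Combining everything, $\|M\|_{1}\le C(t_{0}t_{1}/s^{2})e^{C(t_{0}^{2}+t_{1}^{3})}$, and since $t_{0}^{2}+t_{1}^{3}\le 2t_{1}^{2}\max\{t_{0}^{2},t_{1}\}$ for $t_{1}\ge 1$, and $t_{0}t_{1}\le s^{5/4}$ for $s$ large, together with $|\det(I-\widehat{K^{\mathrm{Ai}}})|\le 1$, the stated bound follows. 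The main obstacle is the resolvent estimate for $(I-K_{22})^{-1}$: as $t_{1}\to\infty$, $\lambda_{\max}(K_{22})\to 1$, and quantifying how rapidly requires the Tracy--Widom expansion \eqref{AirydetoneGAP}; the shortcut $\|R\|_{\infty}\le 1/\det$ is elementary but the useful lower bound on the determinant is not, and without it no control on the exponential factor in the final estimate is possible.
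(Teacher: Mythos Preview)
Your approach is correct but genuinely different from the paper's. The paper does not use the Schur-complement factorization or any resolvent estimates. Instead, it writes both kernels in factored form $K^{\mathrm{Ai}}(z,z')=\kappa(z,z')f(z)f(z')$ and $\widehat{K^{\mathrm{Ai}}}(z,z')=\widehat\kappa(z,z')f(z)f(z')$ with explicit pointwise bounds on $\kappa$, $\widehat\kappa$, $f$, expands both Fredholm determinants as series, and bounds the difference $|\det(\kappa(z_i,z_j))-\det(\widehat\kappa(z_i,z_j))|$ term by term using Hadamard's inequality (via a combinatorial lemma from \cite{IB}). The off-diagonal smallness enters through the bound $|\kappa(z,z')|\le C s^{-3/4}$ for $z\in I_1$, $z'\in I_2$, which is your observation \eqref{86kernelestimate} as well. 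The paper's route is entirely self-contained: it never invokes the one-interval asymptotics \eqref{1gap} or \eqref{AirydetoneGAP}, and in particular never needs lower bounds on $\det(I-K_{ii})$. Your route is more operator-theoretic and makes the mechanism (small off-diagonal blocks) transparent via $\|K_{12}\|_2^2\le Ct_0t_1/s^2$, at the cost of importing the one-interval results to control $\|(I-K_{ii})^{-1}\|$.

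Two small points to tighten. First, the lower bound $\det(I-K_{11})\ge e^{-Ct_0^2}$ does not follow from the statement of Proposition~\ref{HATANDPRODEST} alone, since that concerns the product $\det(I-K_{11})\det(I-K_{22})$; you should cite the intermediate estimate \eqref{finalestlemma} from its proof, which gives $|\det(I-K_{11})-\det(I-K^{\mathrm{sine}})|\le Ce^{Ct_0^2}/s^{3/2}$ directly. Second, the assertion ``$t_0t_1\le s^{5/4}$ for $s$ large'' is not automatic under the standing hypotheses $t_0=o(s^{1/2})$, $t_1=o(s)$. The clean way to close the argument is to split into cases: if $\frac{C_2}{s^{3/4}}e^{C_3 t_1^2\max\{t_0^2,t_1\}}\ge 2$ the bound is trivial since both determinants lie in $[0,1]$; otherwise $t_1^2\max\{t_0^2,t_1\}=O(\log s)$, which forces $t_0,t_1=O((\log s)^{1/2})$ and in particular makes $\|M\|_1\to 0$, so your chain of inequalities goes through with room to spare once $C_3$ is chosen large enough relative to the constants coming from \eqref{1gap} and \eqref{AirydetoneGAP}.
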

\begin{proof}
We use another representation of the Airy-kernel,
\[
K^{\textmd{Ai}}(z,z')=\int_0^{\infty} \text{Ai}(z+x)\text{Ai}(z'+x)dx,
\]
and the asymptotics of the Airy function at $+\infty$, see, e.g., \cite{AS}, 
\be\label{Aiinfty}
    \textmd{Ai}(z)=\frac{C_0}{z^{1/4}}\exp\left(-\frac{2}{3}z^{3/2}\right)\left[1+\bigO\left(\frac{1}{z^{3/2}}\right)\right],\qquad z\to +\infty,
\ee
and also the arguments similar to those in Proposition \ref{Ksineinternal}
to conclude that
\be
K^{\textmd{Ai}}(z,z')=\kappa(z,z')f(z)f(z'),\qquad z,z'\in sJ,
\ee
where
\be\label{53est}
|\kappa(z,z')|\le C_0 t_1^{1/2},\qquad |f(z)|\le 
\begin{cases}
C_1,& z\in(-t_1,t_1)\cr
C_1 e^{-C_2 z^{3/2}},& z\in (t_1,\infty)
\end{cases},\qquad z,z'\in (sa,+\infty).
\ee
Moreover, on the interval $(sc,sb)$,
\be
|\kappa(z,z')|\frac{t_0}{s^{1/2}}\le C_0 t_0,\qquad |f(z)|\le C_1,\qquad z,z'\in (sc,sb).
\ee

We similarly have
\be
\widehat{K^{\textmd{Ai}}}(z,z')=\widehat{\kappa}(z,z')f(z)f(z'),\qquad z,z'\in sJ,\qquad |\widehat{\kappa}(z,z')|\le 
\begin{cases}
C_1 s^{1/2} ,& z,z'\in(sc,sb)\cr
C_1 t_1^{1/2},& z,z'\in  (sa,+\infty)
\end{cases}.
\ee

Consider the case when $z\in (sc,sb)$ and $z^{\prime}\in (sa,+\infty)$. Here we have that $z-z^{\prime}>(a-b)s$. 
By representation \eqref{airykernel1}, it follows, from (\ref{airysine1}), \eqref{Aiinfty}, and the asymptotics for the derivatives, that
\begin{equation}\label{86kernelestimate}
    |\kappa(z,z^{\prime})|\le\frac{C_1}{s^{3/4}},\qquad z\in (sc,sb),\quad z^{\prime}\in (sa,+\infty).
\end{equation}

After making the change of variable \eqref{changevar} in the integrals over $(cs,bs)$, 
we can use Proposition 12 in \cite{IB} (which is based on Hadamard's inequality) to estimate the difference
$|\det\left(\kappa(z_j,z_k)\right)_{j,k=1}^{m+1}-\det\left(\widehat{\kappa}(z_j,z_k)\right)_{j,k=1}^{m+1}|$ below and obtain
\be
\begin{aligned}
&\left| \det(I-K^{\textmd{Ai}})_{L^2(sJ)}-\det(I-\widehat{K^{\textmd{Ai}}})_{L^2(sJ)}\right|\\
&\le \sum_{m=0}^{\infty} \frac{1}{(m+1)!}
   \int_{z_1,\dots,z_{m+1} \in sJ}
   \left| \det\left(\kappa(z_i,z_j)\right)_{i,j=1}^{m+1}- \det\left(\widehat{\kappa}(z_i,z_j)\right)_{i,j=1}^{m+1}\right| f(z_1)^2dz_1\cdots f(z_{m+1})^2dz_{m+1}\\
   &=\sum_{m=0}^{\infty}\sum_{k=0}^{m+1} \frac{(-1)^{m+1}}{(m+1-k)!k!}\\
   &\times \int_{\begin{matrix}z_1,\dots,z_k \in(cs,bs) \\ z_{k+1},\dots,z_{m+1}\in(as,+\infty)\end{matrix}} 
   \left| \det\left(\kappa(z_i,z_j)\right)_{i,j=1}^{m+1}- \det\left(\widehat{\kappa}(z_i,z_j)\right)_{i,j=1}^{m+1}\right| f(z_1)^2dz_1\cdots f(z_{m+1})^2dz_{m+1} \\    
  &\le \frac{C_0}{s^{3/4}}\sum_{m=0}^{\infty} \frac{(C_1t_1\max\{t_0,t_1^{1/2}\})^{m}}{\sqrt{m!}}\le \frac{C_2}{s^{3/4}}e^{C_3 t_1^2\max\{t_0^2,t_1\}}.
\end{aligned}
\ee
Indeed, by the estimates \eqref{53est}--\eqref{86kernelestimate} and Proposition 12 in \cite{IB}, if $z_1,\dots,z_k\in (cs,bs)$,
\be
\left|\det\left(\kappa(z_i,z_j)\right)_{i,j=1}^{m+1}-\det\left(\widehat{\kappa}(z_i,z_j)\right)_{i,j=1}^{m+1}\right|\left(\frac{t_0}{s^{1/2}}\right)^k
\le\frac{C_0}{s^{3/4}}(C_1\max\{t_0,t_1^{1/2}\})^m\sqrt{m!}.
\ee

\end{proof}

Combining the statements of Propositions \ref{HATANDPRODEST} and \ref{AIRYANDHATEST} we obtain
\be
\begin{aligned}
&\Biggl\lvert \det(I-K^{\textmd{Ai}})- \det(I-K^{\textmd{sine}})_{L^2(0,2\lvert c \rvert^{1/2}t_0)}\det(I-K^{\textmd{Ai}})_{L^2(-t_1,+\infty)}\Biggr\rvert \leq\\
& \leq \frac{C_1}{s^{3/2}}\exp\left(C_2 t_0^2\right)+\frac{C_3}{s^{3/4}}\exp(C_4t_1^2\max\{t_0^2,t_1\}).
\end{aligned}
\ee
We may thus choose, for example, $t_0=t_0(s)=t_1=t_1(s)=\log(s)^{1/8}$, which proves the lemma.

\section{Differential Identity}\label{secdiffid}
In this section we express, for $p=a,b,c$, the derivative $\frac{d}{dp}\log \det (I-K^{\mathrm{Ai}})$ in terms of a solution of a certain Riemann-Hilbert problem (RHP). 
First let us write the kernel of the operator $K^{\mathrm{Ai}}$ in the form
\begin{equation}
    K^{\mathrm{Ai}}(z,z^{\prime})=\frac{\vec{f}(z)^T\vec{g}(z^{\prime})}{z-z^{\prime}}, \qquad \vec{f}(z)=\begin{pmatrix}\mathrm{Ai}(z) \\ \mathrm{Ai}^{\prime}(z)\end{pmatrix},\quad \vec{g}(z)=\begin{pmatrix}\mathrm{Ai}^{\prime}(z) \\ -\mathrm{Ai}(z)\end{pmatrix}.
\end{equation}
Note that $\sum_{k=1}^2 f_k(z)g_k(z)=0$.
The operators of this form are known as integrable operators. They possess the following crucial properties (Lemmata 2.8 and 2.12 in \cite{DIZ}).

The resolvent $R_s$ of the operator $K^{\mathrm{Ai}}$ is given by
\begin{equation}\label{resolvent}
    (I-K^{\mathrm{Ai}})^{-1}=I+R_s,
\end{equation}
where $R_s$ can be expressed as 
\begin{equation}\label{resolventexpr}
    R_s(z,z^{\prime})=\frac{\vec{F}^T(z)\vec{G}(z^{\prime})}{z-z^{\prime}},\qquad \vec{F}(z)=(I-K^{\mathrm{Ai}})^{-1}\vec{f}(z),\quad 
    \vec{G}(z)=(I-K^{\mathrm{Ai}})^{-1}\vec{g}(z),
\end{equation}
and $\sum_{k=1}^2 F_k(z)G_k(z)=0$. 
Furthermore,
\be\label{mFG}
\vec{F}(z)=m_+(z)\vec{f}(z),\qquad \vec{G}(z)=(m^{-1}_+(z))^T\vec{g}(z),
\ee
where $m(z)$ is the $2\times 2$ matrix, the solution to the following RHP:

\noindent
(a) $m(z) \text{ is analytic in } \mathbb{C} \setminus [c,b]\cup [a,\infty]$.

\noindent (b)
The function $m(z)$ possesses $L^2$ boundary values $m_+$ and $m_-$ as $z$ approaches 
$ (c,b)\cup (a,\infty)$ from above and below, respectively. These boundary values are related by the condition
\be
m_{+}(z)=m_{-}(z)V^{-1}(sz),\qquad z\in (c,b )\cup (a ,\infty),
\ee
where the jump matrix
\begin{equation}V^{-1}(z)=I-2\pi i \vec{f}(z)\vec{g}(z)^T=
\begin{pmatrix}1-2\pi i \mathrm{Ai}(z)\mathrm{Ai}^{'}(z) & 2 \pi i\mathrm{Ai}(z)^2 \\ -2 \pi i \mathrm{Ai}^{'}(z)^2 & 1+ 2\pi i\mathrm{Ai}(z)\mathrm{Ai}^{'}(z) \end{pmatrix}.
\end{equation}

\noindent (c)  $m(z)$ satisfies
\be
m(z) = I+\bigO\left(\frac{1}{z}\right), \qquad z\to \infty.
\ee

Before proceeding with the derivation of the differential identity, we will now reduce this RHP to another one, 
with constant jumps. For that, we need the following model problem.

\subsection{Airy model RH problem}
Let
\be
y_j(z)=\omega^j \text{Ai}(\omega^j z),\qquad \omega =e^{\frac{2\pi i}{3}},\qquad j=0,1,2.
\ee
In each region $I-IV$ as per Figure \ref{Fig1},
\begin{figure}
\begin{center}
    \begin{tikzpicture}
    \draw[thick,middlearrow={>}] (-2,3.46)--(0,0) node[anchor=north west] {$0$} ;
     \node[above] at (4.5, 0)  (c)     {$\Gamma$};
     \node[above] at (1.5, 2)  (c)     {$I$};
     \node[above] at (1.5, -2)  (c)     {$IV$};
     \node[above] at (-2.5, 2)  (c)     {$II$};
     \node[above] at (-2.5, -2)  (c)     {$III$};
    \draw[thick,middlearrow={>}] (-2,-3.46)--(0,0) ;
    \draw[thick,middlearrow={>}] (-4,0)--(0,0) ;
    \draw[thick,middlearrow={>}] (0,0)--(4,0) ;
    \end{tikzpicture}
    \caption{Jump contour for the $\Phi$-RH problem.}\label{Fig1}
\end{center} 
\end{figure}
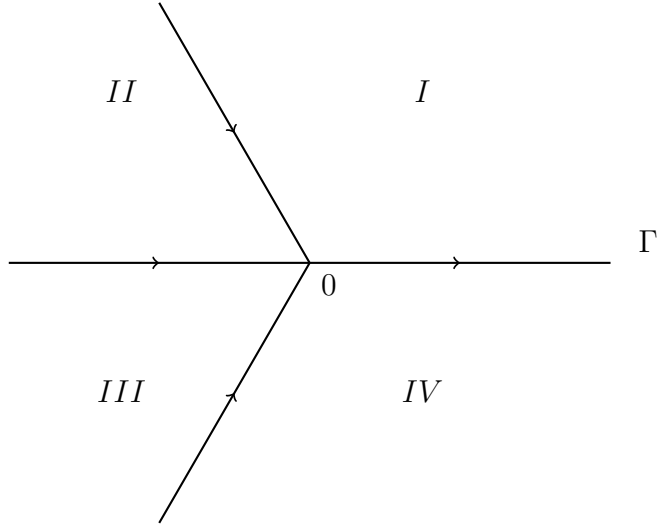
consider the function $\Phi$ defined by
\begin{equation}
\Phi(z)=\sqrt{2 \pi}e^{-\frac{\pi i}{4}} \begin{cases}\begin{pmatrix}y_0 & -y_2 \\ y_0' & -y_2'\end{pmatrix} \text{, for } z \in I, \\\begin{pmatrix}-y_1 & -y_2 \\ -y_1' & -y_2'\end{pmatrix} \text{, for } z \in II, \\ \begin{pmatrix}-y_2 & y_1 \\ -y_2' & y_1'\end{pmatrix} \text{, for } z \in III, \\ \begin{pmatrix}y_0 & y_1 \\ y_0' & y_1'\end{pmatrix} \text{, for } z \in IV. \\\end{cases}
\end{equation}
The pre-factor normalises the determinant to unity at all points. For $\Gamma=\mathbb{R}\cup \mathbb{R}^{+}e^{\pm\frac{2\pi i}{3}}$ 
(see Figure \ref{Fig1}) let
\begin{equation}
v_{\Gamma}(z)=\begin{cases}\begin{pmatrix}1&0\\1&1\end{pmatrix} \text{, } z\in \mathbb{R}^{+}e^{\pm \frac{2\pi i}{3}},\\ \begin{pmatrix}0&1\\-1&0\end{pmatrix} \text{, } z\in (-\infty,0),\\ \begin{pmatrix}1&1\\0&1\end{pmatrix} \text{, } z\in (0,\infty), \end{cases}
\end{equation}
it is known, see e.g. \cite{CIK}, \cite{Dbook}, that $\Phi$ satisfies the following RHP with $L^2$ boundary values:
\begin{equation}\label{PHIRHP}
\begin{aligned}
&\Phi(z)\;\mathrm{ is\; analytic\; in\; } \mathbb{C}\setminus \Gamma ,\\ &\Phi_{+}(z)=\Phi_{-}(z)v_{\Gamma}(z),\qquad z \in \Gamma\setminus \{0\}, \\ &\Phi(z) =z^{-\frac{1}{4}\sigma_3}N_0\left(I+\begin{pmatrix}1&6i\\6i&-1\end{pmatrix}\frac{1}{48z^{3/2}}+\bigO\left(\frac{1}{z^{3}}\right)\right)e^{-\frac{2}{3}z^{\frac{3}{2}}\sigma_3},\quad z \to \infty, \\ &\Phi(z) =O(1),\qquad z \to 0,
\end{aligned}
\end{equation}
where $N_0$ is given by
\begin{equation}\label{N0def}
    N_0=\frac{1}{\sqrt{2}}\begin{pmatrix}1 & 1 \\ -1 & 1 \end{pmatrix}e^{-\frac{i \pi}{4}\sigma_3}.
\end{equation}
The calculation to verify that $\Phi$ indeed satisfies (\ref{PHIRHP}) rests upon the well-known facts
\[\text{Ai}(z)+e^{\frac{2\pi i}{3}}\text{Ai}(ze^{\frac{2\pi i}{3}})+e^{-\frac{2\pi i}{3}}\text{Ai}(ze^{-\frac{2\pi i}{3}})=0,\] and
\[\text{Ai}(z)=\frac{z^{-1/4}e^{-\frac{2}{3}z^{3/2}}}{2\sqrt{\pi}}\left(1-\frac{5}{48z^{3/2}}+\bigO\left(\frac{1}{z^3}\right)\right),\qquad z\to \infty.\]
For further details we refer the reader to p.216 in \cite{Dbook}.

The reason for introducing $\Phi$ is because the jump, $V^{-1}(sz)$, of the $m$-RHP may be factorised in terms of 
$\Phi(sz)$ as follows:
\begin{equation}\label{Vfact}
V^{-1}(sz)=
\begin{cases}
\Phi_{+}(sz)\begin{pmatrix}1&-1\\0&1\\ \end{pmatrix} \Phi_{+}^{-1}(sz),\qquad z \in (0,\infty), \\
\Phi_{+}(sz)\begin{pmatrix}2&-1\\1&0\\ \end{pmatrix} \Phi_{+}^{-1}(sz),\qquad z \in (c,b)\cup(a,0).
\end{cases}
\end{equation}

\subsection{RH problem for $X$}
Making use of the Airy model problem, we transform 
 the $m$-RHP into a form with constant jump matrices. We define the matrix $X(z)$ in each region of $\mathbb{C}$ as per Figure \ref{Fig2}. 
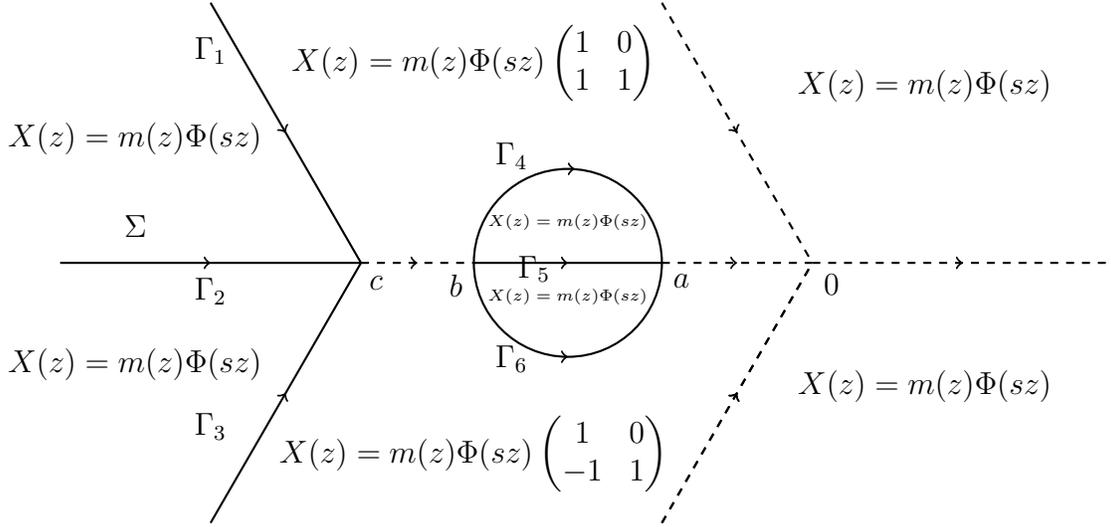
\begin{figure}
    \begin{center}
    \begin{tikzpicture}

   \node[above] at (-9, 0.2)  (c)     {$\Sigma$};
     \node[above] at (1.5, 2)  (c)     {$X(z)=m(z)\Phi(sz)$};
     \node[above] at (1.5, -2)  (c)     {$X(z)=m(z)\Phi(sz)$};
     \node[above] at (-4.5, 2)  (c)     {$X(z)=m(z)\Phi(sz)\begin{pmatrix}1& 0 \\ 1& 1\end{pmatrix}$};
     \node[above] at (-4.5, -3.2)  (c)     {$X(z)=m(z)\Phi(sz)\begin{pmatrix}1& 0 \\ -1& 1\end{pmatrix}$};     

      \node[above] at (-5.8, -0.5)  (c)     {$c$};
      
      \node[above] at (-9, 1.3)  (c)     {$X(z)=m(z)\Phi(sz)$};
      \node[above] at (-9, -1.7)  (c)     {$X(z)=m(z)\Phi(sz)$};

       \node[above] at (-3.25, 0.3)  (c)     {\tiny $X(z)=m(z)\Phi(sz)$};
      \node[above] at (-3.25, -0.7)  (c)     {\tiny $X(z)=m(z)\Phi(sz)$};
      
    \node[above] at (-8, 2.5)  (c)     {$\Gamma_1$};  
    \node[above] at (-8, -0.7)  (c)     {$\Gamma_2$};
    \node[above] at (-8, -2.5)  (c)     {$\Gamma_3$}; 
      
    \node[above] at (-4, 1.1)  (c)     {$\Gamma_4$};  
    \node[above] at (-3.7, -0.4)  (c)     {$\Gamma_5$};
    \node[above] at (-4, -1.6)  (c)     {$\Gamma_6$};

        \draw[thick,middlearrow={>}] (-8,3.46)--(-6,0) ;
        \draw[thick,middlearrow={>}] (-8,-3.46)--(-6,0);
        \draw[thick,middlearrow={>}] (-10,0)--(-6,0);
        \draw[thick,dashed,middlearrow={>}] (-6,0)--(-4.5,0) node[anchor=north east] {$b$}; 
        \draw[thick,middlearrow={>}] (-4.5,0)--(-2,0) node[anchor=north west] {$a$}; 
        
         \draw[thick,middlearrow={<}] (-2,0) arc (0:180:1.25);
        \draw[thick,middlearrow={>}] (-4.5,0) arc (180:360:1.25);

        \draw[thick,dashed,middlearrow={>}] (-2,0)--(0,0) ;

    \draw[thick,dashed,middlearrow={>}] (-2,-3.46)--(0,0) ;
    \draw[thick,dashed,middlearrow={>}] (-2,-3.46)--(0,0) ;
    \draw[thick,dashed,middlearrow={>}] (0,0)--(4,0) ;
    \draw[thick,dashed,middlearrow={>}] (-2,3.46)--(0,0) node[anchor=north west] {$0$} ;

    \end{tikzpicture}
     \caption{Definition of $X$ in various regions.}\label{Fig2}
\end{center} 
\end{figure}

Using \eqref{Vfact} and the $m$ and $\Phi$-RH problems, we find that $X(z)$ satisfies the following problem, where 
$\Sigma$ denotes the union of bold curves on Figure \ref{Fig2}.
\begin{equation}\label{XRHP}
\begin{aligned}
&X(z)\;\mathrm{ is\; analytic\; in\; } \mathbb{C}\setminus \Sigma,\\ 
&X_{+}(z)=X_{-}(z)\begin{pmatrix}0&1\\-1&0\\ \end{pmatrix}, \mathrm{\; for\; } z \in \Gamma_2 \cup \Gamma_5=(-\infty,c)\cup(b,a), \\
&X_{+}(z)=X_{-}(z)\begin{pmatrix}1&0\\1&1\\ \end{pmatrix}, \mathrm{\; for\; } z \in  \Gamma_1\cup \Gamma_3 \cup \Gamma_4 \cup \Gamma_6,\\  &X(z) =m(z)(sz)^{-\frac{1}{4}\sigma_3}N_0\left(I+\begin{pmatrix}1&6i\\6i&-1\end{pmatrix}
\frac{1}{48(sz)^{3/2}}+\bigO\left(\frac{1}{(sz)^{3}}\right)\right)e^{-\frac{2}{3}(sz)^{\frac{3}{2}}\sigma_3},\quad  z \to \infty.
\end{aligned}
\end{equation}

\subsection{The Identity}
We now proceed with the derivation of the differential identity.
Consider the case when $p=b$, the identities at the points $a$, $c$ are obtained similarly.
\begin{equation}\label{derdi}
    \frac{d}{db}\log P^{\mathrm{Ai}}(sJ)=\mathrm{tr} (I-K^{\mathrm{Ai}})^{-1}\frac{dK^{\mathrm{Ai}}}{db}=-((I-K^{\mathrm{Ai}})^{-1}K^{\mathrm{Ai}})(b,b)=-R_s(b,b).
\end{equation}
where, for the final equality, we used (\ref{resolvent}). The negative sign at $R_s(b,b)$ comes from the fact that $b$ is the lower limit of integration. (For $p=a,c$, we have the opposite sign.) Now, by (\ref{resolventexpr}), we may write
\be
    R_s(p,p)  =\lim_{z,z^{\prime}\to p}\left(\frac{F_1(z)G_1(z^{\prime})+F_2(z)G_2(z^{\prime})}{z-z^{\prime}}\right)=-
    \lim_{z\to p}\left(F_1(z)G_1^{\prime}(z)+F_2(z)G_2^{\prime}(z)\right).\label{Rdi}
\ee

For definiteness, assume that the limit is taken from outside the lens and with $\Im z>0$.
By \eqref{mFG}, the definition of $X$ and that of $\Phi$ in sector II,
\be\begin{aligned}
\vec{F}(z)&=m_+(z)\vec{f}(z)=
X_+(z)\begin{pmatrix} 1 & 0 \cr -1 & 1\end{pmatrix}\Phi_+^{-1}(sz)\begin{pmatrix} y_0(sz)\cr y_0'(sz)\end{pmatrix}\\
&=\sqrt{2\pi}e^{-i\pi/4}X_+(z)\begin{pmatrix} 1 & 0 \cr -1 & 1\end{pmatrix}
\begin{pmatrix} -y_2'(sz) & y_2(sz) \cr y_1'(sz) & -y_1(sz)\end{pmatrix}\begin{pmatrix} y_0(sz)\cr y_0'(sz)\end{pmatrix},
\end{aligned}\label{Fprelim}
\ee
where we used the fact that $\det\Phi(z)\equiv 1$. Using it once again, we reduce \eqref{Fprelim} to
\be\label{Fdi}
\vec{F}(z)=\frac{1}{\sqrt{2\pi}e^{-i\pi/4}}X_+(z)\begin{pmatrix} 1\cr 0\end{pmatrix}=\frac{1}{\sqrt{2\pi}e^{-i\pi/4}}
\begin{pmatrix} X_{11+}(z)\cr X_{21+}(z)\end{pmatrix}.
\ee

Similarly, we obtain
\be\label{Gdi}
\begin{aligned}
\vec{G}(z)&=(m_+^{-1}(z))^T\vec{g}(z)=
\sqrt{2\pi}e^{-i\pi/4}
(X_+^{-1}(z))^T\begin{pmatrix} 1 & 1 \cr 0 & 1\end{pmatrix}
\begin{pmatrix} -y_1(sz) & -y_1'(sz) \cr -y_2(sz) & -y_2'(sz)\end{pmatrix}\begin{pmatrix} y_0'(sz)\cr -y_0(sz)\end{pmatrix}\\
&=\frac{1}{\sqrt{2\pi}e^{-i\pi/4}}(X_+^{-1}(z))^T\begin{pmatrix} 0\cr -1\end{pmatrix}=
\frac{1}{\sqrt{2\pi}e^{-i\pi/4}}\begin{pmatrix} X_{21+}(z)\cr -X_{11+}(z)\end{pmatrix}.
\end{aligned}
\ee

Substituting \eqref{Fdi} and \eqref{Gdi} into \eqref{Rdi}, we finally obtain
\begin{equation}
     R_s(b,b)=\frac{1}{2\pi i}\lim_{z\to b}\left(X_{11}X_{21}^{\prime}-X_{21}X_{11}^{\prime} \right)(z)=
     \frac{1}{2\pi i}\lim_{z\to p}\left(X^{-1}(z)X^{\prime}(z)\right)_{21},
\end{equation}
where the limit is taken from outside the lens with $\Im z>0$;
and therefore by \eqref{derdi}
\be
\frac{d}{db}\log P^{\mathrm{Ai}}(sJ)=-R_s(b,b)=-\frac{1}{2\pi i}\lim_{z\to b}\left(X^{-1}(z)X^{\prime}(z)\right)_{21}.
\ee
At the points $a$ and $c$, we obtain the same result but with the opposite sign. 

Thus we have

\begin{lemma}[Differential identity]\label{DIFFID}
The Fredholm determinant (\ref{AIRYDET}) satisfies:
\begin{equation}\label{diffid}
    \frac{d}{dp}\log P^{\textmd{Ai}}(sJ)=\pm \frac{1}{2\pi i}\lim_{z\to p}\left(X^{-1}(z)X^{\prime}(z)\right)_{21},
\end{equation}
where the $+$ sign is taken if $p=a,c$ and the $-$ sign, if $p=b$. The limit is taken from outside the lens and with $\Im z>0$.
\end{lemma}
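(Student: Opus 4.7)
The plan is to use the standard integrable-operator machinery (Its--Izergin--Korepin--Slavnov): differentiate the Fredholm determinant, recognize the result as a diagonal value of the resolvent kernel, and then leverage the explicit form of the resolvent supplied by the reduced RHP for $X$.

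First, I would start from
\[
\frac{d}{dp}\log P^{\mathrm{Ai}}(sJ)=\mathrm{tr}\!\left[(I-K^{\mathrm{Ai}})^{-1}\frac{dK^{\mathrm{Ai}}}{dp}\right].
\]
Because $p\in\{a,b,c\}$ enters only as an endpoint of the region $sJ$, the kernel itself is $p$-independent and the trace collapses to a single boundary term $\pm R_s(p,p)$. The sign is dictated by the fundamental theorem of calculus applied to the integral over $sJ$: it is $-$ when $p$ is an \emph{upper} endpoint of its component ($p=b$) and $+$ when it is a \emph{lower} endpoint ($p=a,c$).

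Next, since $K^{\mathrm{Ai}}$ is integrable, I would invoke \eqref{resolvent}--\eqref{resolventexpr} and use L'Hôpital on the diagonal, exploiting $\sum_k F_kG_k\equiv 0$, to get
\[
R_s(p,p)=-\lim_{z\to p}\bigl(F_1(z)G_1'(z)+F_2(z)G_2'(z)\bigr).
\]
The main computational step is then to express $\vec F(z)=m_+(z)\vec f(z)$ and $\vec G(z)=(m_+^{-1}(z))^T\vec g(z)$ in terms of $X(z)$. Approaching $p$ from outside the lens with $\Im z>0$ places us in sector II of the Airy parametrix, where, by Figure \ref{Fig2}, $m_+=X_+\!\bigl(\begin{smallmatrix}1&0\\-1&1\end{smallmatrix}\bigr)\Phi_+^{-1}$. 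The algebraic miracle is that, substituting the explicit formula for $\Phi$ in sector II and using $\det\Phi\equiv 1$, the products $\Phi_+^{-1}\vec f$ and $\Phi_+^T\vec g$ collapse to the standard basis vectors $(1,0)^T$ and $(0,-1)^T$ respectively, so that $\vec F$ and $\vec G$ become, up to a common constant prefactor, $(X_{11+},X_{21+})^T$ and $(X_{21+},-X_{11+})^T$.

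Finally, substituting these expressions into the resolvent-diagonal formula yields
\[
R_s(p,p)=\frac{1}{2\pi i}\lim_{z\to p}\bigl(X_{11}X_{21}'-X_{21}X_{11}'\bigr)(z)=\frac{1}{2\pi i}\lim_{z\to p}\bigl(X^{-1}X'\bigr)_{21}(z),
\]
where the last equality uses $\det X\equiv 1$. Combining with the sign convention from the first step gives \eqref{diffid}. I expect the main obstacle to be bookkeeping: tracking the orientation-dependent signs at each endpoint, and correctly identifying the sector of $\Phi$ corresponding to the chosen limiting direction (from outside the lens, $\Im z>0$). The cases $p=a$ and $p=c$ require no new ideas; only the boundary sign differs, because both are lower endpoints of their respective components of $sJ$.
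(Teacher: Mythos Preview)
Your proposal is correct and follows essentially the same route as the paper's proof: differentiate to obtain a boundary resolvent value, apply the integrable-operator formula for $R_s(p,p)$, rewrite $\vec F,\vec G$ via $m_+=X_+\bigl(\begin{smallmatrix}1&0\\-1&1\end{smallmatrix}\bigr)\Phi_+^{-1}$ in sector~II, and collapse the Airy factors using $\det\Phi\equiv 1$ to reach $\frac{1}{2\pi i}(X^{-1}X')_{21}$. Your endpoint-sign bookkeeping (upper endpoint $b$ gives $-$, lower endpoints $a,c$ give $+$) is in fact a cleaner explanation than the paper's own verbal justification.
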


\section{Solution to the Riemann-Hilbert problem for $X$}\label{secRHP}
To solve the $X$-RHP for large $s$ we, as usual, apply a series of transformations. The approach, known as the steepest descent method for Riemann-Hilbert problems, was first introduced by Deift and Zhou \cite{DZ} and used and developed in many subsequent works.
The first step is to normalise the exponential behaviour at $\infty$ by multiplying from the right by a suitable function. This process is set up as follows.

\subsection{$g$-function and the RH problem for $S$}
In the introduction, we defined
\[
g(z)=\int_{a}^z\frac{q(\zeta)}{p(\zeta)^{1/2}}d\zeta, \qquad \zeta\in\mathbb{C}\setminus(-\infty,a].
\]
Here $p(z)=(z-a)(z-b)(z-c)$, the branch of the root is chosen positive for positive arguments, and the branch cut is on
$(-\infty,c)\cup(b,a)$, $c<b<a<0$; $q(z)=z^2+q_1 z + q_0$ is the second degree polynomial. 
We require that $g(z)$ satisfies conditions \eqref{gcond1} and \eqref{gcond2},
which as we now show determine the coefficients $q_0$, $q_1$.

We have
\begin{lemma}\label{lemmag}
The function
$g(z)$ is analytic in $\mathbb{C}\setminus(-\infty,a]$ and 
satisfies the jump conditions:
 \begin{equation}\label{gjumps}
 g_{+}(z)+g_{-}(z) =0, \quad z \in (-\infty,c)\cup(b,a), \qquad g_{+}(z)=g_{-}(z) +2g_+(b),  \quad z \in (c,b).
\end{equation}

The polynomial $q(z)=z^2+q_1 z + q_0$ has coefficients given by \eqref{defq1}.

As $z\to\infty$,
\begin{equation}\label{gasymptotics}
  g(z)=\frac{2}{3}z^{3/2}+\frac{\alpha_1}{z^{1/2}}+\frac{\alpha_2}{z^{3/2}}+\bigO(z^{-5/2}),
\end{equation}
where \[\alpha_1=\frac{1}{2}(ab+ac+bc)-\frac{1}{4}(a^2+b^2+c^2)-2q_0,\] and  
\be\label{alpha2}
\alpha_2=-\frac{1}{12}\left(a^3+b^3+c^2-(a+b)(a+c)(b+c)+4(a+b+c)q_0\right).\ee
\end{lemma}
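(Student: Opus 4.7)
My plan is to handle the three claims — analyticity with jumps, identification of $q_0,q_1$, and the asymptotic expansion — in order, threading the conditions \eqref{gcond1}--\eqref{gcond2} in where needed. The integrand $q(\zeta)/p(\zeta)^{1/2}$ is analytic in $\mathbb{C}\setminus((-\infty,c]\cup[b,a])$ with $p^{1/2}_+ = -p^{1/2}_-$ across those cuts. Path independence of the integral in the doubly-slit plane, together with the base point $\zeta = a$, yields an analytic function $g(z)$ on $\mathbb{C}\setminus(-\infty,a]$. On $(b,a)$ we have $(q/p^{1/2})_+ = -(q/p^{1/2})_-$, so $(g_+ + g_-)' \equiv 0$, and the base-point condition $g_\pm(a) = 0$ gives $g_+ + g_- \equiv 0$ there. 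On $(c,b)$ the integrand is analytic (no cut of $p^{1/2}$), hence $(g_+ - g_-)' \equiv 0$; matching at $z = b$ with the $(b,a)$ jump gives $g_-(b) = -g_+(b)$ and thus $g_+ - g_- \equiv 2 g_+(b)$ on $(c,b)$. The third jump, on $(-\infty, c)$, is postponed until $q_0$ is fixed.

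Next, expanding $(p(\zeta)/\zeta^3)^{-1/2} = 1 + A_1/\zeta + A_2/\zeta^2 + A_3/\zeta^3 + O(\zeta^{-4})$ with $A_1 = e_1/2$, $A_2 = -e_2/2 + 3 e_1^2/8$, $A_3 = e_3/2 - 3 e_1 e_2/4 + 5 e_1^3/16$ (where $e_k$ is the $k$-th elementary symmetric polynomial of $a,b,c$) and multiplying by $q(\zeta)=\zeta^2+q_1\zeta+q_0$ gives
\[
\frac{q(\zeta)}{p(\zeta)^{1/2}} = \zeta^{1/2} + \frac{q_1 + A_1}{\zeta^{1/2}} + \frac{q_0 + q_1 A_1 + A_2}{\zeta^{3/2}} + \frac{q_0 A_1 + q_1 A_2 + A_3}{\zeta^{5/2}} + O(\zeta^{-7/2}).
\]
Since term-by-term integration turns a $\zeta^{-1/2}$ coefficient into a $z^{1/2}$ term, the leading content of \eqref{gcond1} forces $q_1 + A_1 = 0$, i.e.\ $q_1 = -(a+b+c)/2$. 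Using $\int_c^b \zeta^k/p^{1/2}_+\, d\zeta = J_k$, condition \eqref{gcond2} reduces to $J_2 + q_1 J_1 + q_0 J_0 = 0$, yielding the first formula for $q_0$. The equivalent form $q_0 = e_2/3 + q_1 J_1/(3 J_0)$ follows from the period identity $3 J_2 - 2 e_1 J_1 + e_2 J_0 = 0$, which I would prove by integrating $(p^{1/2})' = p'/(2 p^{1/2})$ from $c$ to $b$ and using $p(b) = p(c) = 0$. With $q_0$ chosen by \eqref{gcond2}, one computes $g_+(c) = g_+(b) - (J_2 + q_1 J_1 + q_0 J_0) = g_+(b)$ and, symmetrically, $g_-(c) = -g_+(b)$ (using that $p^{1/2}$ has no jump across $(c,b)$), so $g_+(c) + g_-(c) = 0$; combined with $(g_+ + g_-)' \equiv 0$ on $(-\infty,c)$ this yields the last jump condition.

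Finally, term-by-term integration of the Laurent series produces
\[
g(z) = \tfrac{2}{3} z^{3/2} + K + \alpha_1 z^{-1/2} + \alpha_2 z^{-3/2} + O(z^{-5/2}),\qquad z\to\infty,
\]
with $\alpha_1 = -2(q_0 + q_1 A_1 + A_2)$ and $\alpha_2 = -\tfrac{2}{3}(q_0 A_1 + q_1 A_2 + A_3)$; substitution of $q_1 = -e_1/2$ reproduces the stated $\alpha_1$. The constant $K$ must vanish because $g_+ + g_- \equiv 0$ on $(-\infty,c)$, together with $(z^{3/2})_+ + (z^{3/2})_- = 0$ for $z<0$, forces $2K + O(|z|^{-1/2}) = 0$ as $z\to -\infty$. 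The main algebraic burden is the simplification of $\alpha_2$ to the claimed compact form: substituting $q_1 = -e_1/2$ collapses $q_0 A_1 + q_1 A_2 + A_3$ to $\tfrac{1}{2}(q_0 e_1 - e_1 e_2 + e_3) + \tfrac{1}{8}e_1^3$, and the identities $(a+b)(a+c)(b+c) = e_1 e_2 - e_3$ and $a^3 + b^3 + c^3 = e_1^3 - 3 e_1 e_2 + 3 e_3$ then convert this into $-\tfrac{1}{12}\bigl(a^3+b^3+c^3-(a+b)(a+c)(b+c)+4 e_1 q_0\bigr)$, as required.
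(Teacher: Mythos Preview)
Your proof is correct and follows essentially the same approach as the paper: both derive the jumps from the sign change of $p^{1/2}$ across the cuts together with \eqref{gcond2}, fix $q_1$ from the $\zeta^{-1/2}$ coefficient, fix $q_0$ from \eqref{gcond2} and rewrite it via the period identity obtained by integrating $(p^{1/2})'$ over $(c,b)$, and use the jump on $(-\infty,c)$ to kill the constant of integration. Your write-up is in fact more explicit than the paper's --- you carefully separate the $(b,a)$ jump (where $g_\pm(a)=0$ alone suffices) from the $(-\infty,c)$ jump (where \eqref{gcond2} is genuinely needed), and you carry out the $\alpha_1,\alpha_2$ algebra in full via the elementary symmetric polynomials, whereas the paper simply asserts the series expansion verifies them.
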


\begin{proof}
Since  $\sqrt{p(z)}_++\sqrt{p(z)}_-=0$, on $(-\infty,c)\cup(b,a)$, it follows from (\ref{gcond2}) that
\[g_+(z)+g_-(z)=0,\quad \mathrm{ for }\quad z\in (-\infty,c)\cup(b,a).\] On the other hand, for $ z\in(c,b)$, we have
\[g_+(z)-g_-(z)=2\int_{a}^{b} \frac{q(\zeta)}{p(\zeta)^{1/2}_+}=2g_+(b)=2g_+(c),\]
where the last equality follows from (\ref{gcond2}). Thus we have \eqref{gjumps}.

Write the expansion at infinity
\[
\frac{q(z)}{\sqrt{p(z)}}=z^{1/2}\left(1+\frac{1}{z}\left(q_1+\frac{a+b+c}{2}\right)+\bigO\left(\frac{1}{z^2}\right)\right).
\]
Setting 
\[
q_1=-\frac{a+b+c}{2}
\]
and integrating, we obtain
\be
g(z)=\frac{2}{3}z^{3/2}+g_0+\bigO(z^{-1/2}),
\ee
For some constant $g_0$. It follows from the jump condition $g_+(z)+g_-(z)=0$ on the half-line $(-\infty,c)$ that $g_0=0$.
Thus, condition \eqref{gcond1} holds with $q_1$ given in \eqref{defq1}.
The first expression in \eqref{defq1},
\[
q_0=-\frac{J_2+q_1J_1}{J_0}
\]
follows immediately from \eqref{gcond2}. Next, taking the integral over the $B_1$ cycle (see Figure \ref{Figcycles})
on the Riemann surface of $p(z)^{1/2}$,
we have
\begin{equation}\label{newq0}
    0=- \oint_{B_1} \frac{d}{dz}\sqrt{p(z)}dz=\int_c^b \frac{p^{\prime}(z)dz}{\sqrt{p(z)}}=3J_2+4q_1J_1+(ab+ac+bc)J_0.
\end{equation}
This allows one to write $J_2$ in terms of $J_0$, $J_1$, and thus we obtain the second equality in \eqref{defq1}.

A straightforward series expansion of $g(z)$ at infinity verifies the values of $\alpha_1$, $\alpha_2$. 
\end{proof}

Now set
\begin{equation}\label{definitionofS}
S(z)=X(z)e^{s^{3/2}g(z)\sigma_3}.
\end{equation}
It follows from the lemma above and the $X$-RH problem, in (\ref{XRHP}), that $S$ satisfies the following problem. For $\Sigma$ as in Figure \ref{FigS}, we have
\subsection*{RH problem for $S$}
\begin{equation}\label{SRHP}
\begin{aligned}
&S(z)\;\mathrm{ is\; analytic\; in\; } \mathbb{C}\setminus \Sigma,\\ 
&S_{+}(z)=S_{-}(z)\begin{pmatrix} 0&1\\ -1&0 \end{pmatrix},\qquad z \in (-\infty,c)\cup (b,a), \\
&S_{+}(z)=S_{-}(z)e^{2s^{3/2}g_+(b)\sigma_3},\qquad z \in (c,b), \\
&S_{+}(z)=S_{-}(z)\begin{pmatrix} 1&0\\ e^{2s^{3/2}g(z)}&1 \end{pmatrix},\qquad z \in  \Gamma_1 \cup \Gamma_3 \cup \Gamma_4 \cup \Gamma_6, \\
&S(z)=s^{-\frac{1}{4}\sigma_3}
\left(S_0+\bigO\left(\frac{1}{z}\right)\right)z^{-\frac{1}{4}\sigma_3}N_0 \mathrm{, }\quad z \to \infty,\qquad\mbox{where}\quad
S_0=\begin{pmatrix}
1 & 0 \cr -\alpha_1 s^{3/2} & 1
\end{pmatrix}.
\end{aligned}
\end{equation}
The expansion of $S(z)$ at infinity is found using (\ref{gasymptotics}) and the expansions of $\Phi$ and $m$ at infinity. 
Up to the error term, it is the same in all sectors. 

We now construct approximate solutions (parametrices) for this $S$-RH problem: outside parametrix away from the points $a$, $b$, $c$, and local parametrices around these points, see Figure \ref{Us}. 
This allows us to construct an asymptotic solution to the $S$-problem,
and therefore to the $X$-problem.
\begin{figure}
    \begin{center}
    \begin{tikzpicture}

   \node[above] at (-9, 0.2)  (c)     {$\Sigma$};

      \node[above] at (-5.8, -0.5)  (c)     {$c$};

    \node[above] at (-8, 2.5)  (c)     {$\Gamma_1$};  
    \node[above] at (-8, -0.7)  (c)     {$\Gamma_2$};
    \node[above] at (-8, -2.5)  (c)     {$\Gamma_3$}; 
      
    \node[above] at (-4, 1.1)  (c)     {$\Gamma_4$};  
    \node[above] at (-3.2, -0.6)  (c)     {$\Gamma_5$};
    \node[above] at (-4, -1.6)  (c)     {$\Gamma_6$};       
       \node[below] at (0, 0)  (c)     {$0$};   
     
        \draw[thick,middlearrow={>}] (-8,3.46)--(-6,0) ;
        \draw[thick,middlearrow={>}] (-8,-3.46)--(-6,0);
        \draw[thick,middlearrow={>}] (-10,0)--(-6,0);
        \draw[thick,middlearrow={>}] (-6,0)--(-4.5,0) node[anchor=north east] {$b$}; 
        \draw[thick,middlearrow={>}] (-4.5,0)--(-2,0) node[anchor=north west] {$a$}; 
        
         \draw[thick,middlearrow={<}] (-2,0) arc (0:180:1.25);
        \draw[thick,middlearrow={>}] (-4.5,0) arc (180:360:1.25);

    \end{tikzpicture}
     \caption{Jump contour for the $S$-RH problem.}\label{FigS}
\end{center} 
\end{figure}
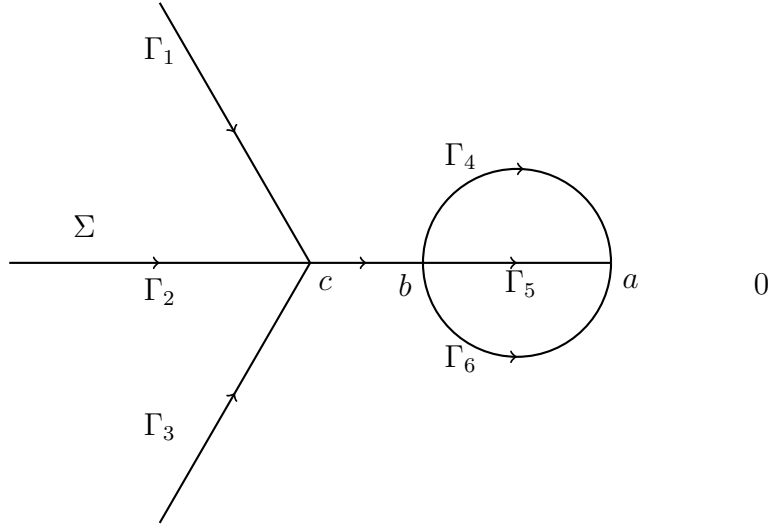

\subsection{Outside parametrix. Jacobi $\theta$-functions}
In this section we construct a parametrix for the $S$-RHP away from the points $p=a,b,c$, which models the behaviour of $S(z)$ at infinity. We proceed as follows.

\subsubsection{$\beta$ model problem}
Set 
\[\beta(z)=\left(\frac{(z-a)(z-c)}{z-b}\right)^{1/4},\]
with branch cuts along $(-\infty,c)\cup(b,a)$ and such that $\beta(z)>0$ when $z\to+\infty$. Consider the function
\begin{equation}\label{defN}
N(z)=N_0^{-1}\beta^{-\sigma_3}N_0=\begin{pmatrix} \frac{\beta(z)+\beta(z)^{-1}}{2} & \frac{\beta(z)-\beta(z)^{-1}}{2i}\\ -\frac{\beta(z)-\beta(z)^{-1}}{2i} & \frac{\beta(z)+\beta(z)^{-1}}{2}\end{pmatrix},
\end{equation}
where $N_0$ is given by (\ref{N0def}). 
The function $N(z)$ is analytic in $\mathbb{C}\setminus \left\{ (-\infty,c]\cup [b,a] \right\}$.
By definition, it is immediate that 
\be\label{Ninfty}
N(z)= N_0^{-1}\left(I+\frac{N_1}{z}+\frac{N_2}{z^2}+ \bigO\left( \frac{1}{z^3}\right)\right)z^{-\frac{1}{4}\sigma_3}N_0,\qquad z\to \infty,
\ee
for some diagonal matrices $N_{j}$, which may be written in terms of $a,b,c$. Moreover, since the boundary values of $\beta$ satisfy $\beta(z)_{+}=i\beta(z)_{-}$
for $z\in (-\infty,c)\cup (b,a)$, we have that
\begin{equation}
N_{+}(z)=N_{-}(z)\begin{pmatrix}0&1\\ -1&0\\ \end{pmatrix},\quad \mathrm{for } \quad z\in (-\infty,c)\cup (b,a). 
\end{equation}

\subsubsection{Abel map}

Recall $J_0$ and $\tau$ as defined in \eqref{Jkdefinition}  and (\ref{definitionofV}) respectively, in the introduction.  Consider the map $u(z)$ defined by
\begin{equation}\label{definitionofu}
u(z)=\int_{a}^z\omega=
\frac{1}{2J_0}\int_{a}^{z}\frac{dw}{p(w)^{1/2}},\qquad \omega(z)=\frac{dz}{2J_0p(z)^{1/2}},\qquad \mathbb{C}\setminus (-\infty, a].
\end{equation}
We note several relevant properties of $u(z)$ which will be required in defining the outside parametrix.
\begin{lemma}\label{ulemma} $u(z)$ is analytic in $\mathbb{C}\setminus (-\infty, a]$ and
has the following properties.
\begin{align}
        & u(a)=0, \quad u_{+}(b)=-\frac{1}{2}\tau, \quad u_{+}(c)=-\frac{1}{2}(1+\tau),\label{ucond0}
         \\
 & u_{+}(z)+u_{-}(z)=-1, \quad z\in (-\infty,c),\label{ucond1} \\
& u_{+}(z)=u_{-}(z)-\tau , \quad z\in (c,b),\label{ucond2}\\
& u_{+}(z)+u_{-}(z)=0,\quad z\in (b,a),\label{ucond3}
\end{align}
and \be\label{uinfty}
u(z)=-1/2+\frac{u_1}{z^{1/2}}+\frac{u_2}{z^{3/2}}+\frac{u_3}{z^{5/2}}+\bigO(z^{-7/2}), \qquad z\to \infty,\ee
where the coefficients $u_j$ may be written in terms of $a,b,c$.
\end{lemma}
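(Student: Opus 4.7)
The plan is to verify each assertion directly from the integral definition. Analyticity of $u$ on the slit plane $\mathbb{C}\setminus(-\infty,a]$ is immediate: this region is simply connected, the integrand $1/p(w)^{1/2}$ (with the branch specified by $p^{1/2}>0$ on $(a,+\infty)$) is holomorphic there since the gap $(c,b)$ carries no branch cut, and the square-root singularities at $a,b,c$ are integrable, so $u(z)=\frac{1}{2J_0}\int_a^z\omega$ is well defined and extends continuously to the branch points. To evaluate $u$ at these points I first track the phase of $p^{1/2}$ as one moves from $(a,+\infty)$ into the upper half plane around the branch points: this yields $p^{1/2}_+(z)=i\sqrt{|p(z)|}$ on $(b,a)$, $p^{1/2}(z)=-\sqrt{|p(z)|}$ on the gap $(c,b)$ (two factors of $i$ from crossing $a$ and $b$), and $p^{1/2}_+(z)=-i\sqrt{|p(z)|}$ on $(-\infty,c)$. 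Then $u(a)=0$ is trivial, integration along the upper edge of $(b,a)$ gives $u_+(b)=-I_0/(2J_0)=-\tau/2$, and extending through the gap $(c,b)$ contributes an additional $-1/2$ by the definition of $J_0$, yielding $u_+(c)=-(1+\tau)/2$.

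The jump conditions all follow from $p^{1/2}_++p^{1/2}_-=0$ on the branch cuts $(-\infty,c)\cup(b,a)$. On $(b,a)$ both paths defining $u_\pm$ traverse only this cut, and the sign flip of $p^{1/2}$ makes their integrals negatives of each other, so $u_++u_-=0$. On $(-\infty,c)$ the two paths additionally run through $(c,b)$ where $p^{1/2}$ is single valued; this common contribution is counted twice and evaluates to $-1$ by the definition of $J_0$, while the contributions over the actual branch cuts cancel pairwise. On the gap $(c,b)$ itself there is no jump of $p^{1/2}$, but the two paths traverse the cut $(b,a)$ along opposite edges, and this asymmetry gives $u_+-u_-=-I_0/J_0=-\tau$.

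For the expansion at infinity I expand $1/p(w)^{1/2}=w^{-3/2}(1+(a+b+c)/(2w)+\cdots)$ and integrate term by term from $z$ out to $\infty$, producing an expansion in half-integer powers of $1/z$ whose coefficients $u_j$ are rational expressions in $a,b,c$ and $J_0$. The main obstacle is identifying the constant term as exactly $-1/2$. For this I argue as follows: since $|\omega|=O(|w|^{-3/2})$ is integrable at infinity, two points on the circle $|z|=R$ satisfy $|u(z_1)-u(z_2)|=O(R^{-1/2})$, so $u$ has a unique limit $u(\infty)$ along every sequence tending to $\infty$ in the slit plane and its boundary edges. Approached along the positive real axis, where both the integrand and $J_0$ are real, this limit is real; approached from either edge of $(-\infty,c)$ as $z\to-\infty$, the limits of $u_\pm$ coincide (by uniqueness) and must sum to $-1$ by the jump condition just established. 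Hence $2u(\infty)=-1$, and so $u(\infty)=-1/2$.
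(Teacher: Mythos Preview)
Your proof is correct and follows essentially the same direct approach as the paper: compute the half-period integrals from the definitions of $I_0$, $J_0$, $\tau$, and read off the jump relations from $p^{1/2}_++p^{1/2}_-=0$ on the cuts. The one minor difference is in identifying the constant $u(\infty)=-1/2$: the paper obtains $\int_{-\infty}^{c}\omega_+=-\tau/2$ via an $A$-cycle (residue-at-infinity) argument and adds it to $u_+(c)$, whereas you deduce it from the uniqueness of the limit at infinity combined with the jump relation $u_++u_-=-1$ on $(-\infty,c)$; both are valid and equally short.
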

\begin{proof}
It is immediate from the definition that on the + side of the cut
\[
\int_{c}^{b} \omega_+ =\frac{1}{2},\quad \mathrm{and}\quad \int_{b}^{a} \omega_+ =\frac{\tau}{2}.
\]
Considering the integration around the $A$ cycles (see Figure \ref{Figcycles}), we also note that
\[
\int_{-\infty}^{c} \omega_+ =-\frac{\tau}{2}.
\]
From here the statements of the lemma easily follow.
\end{proof}

Note that we consider $u(z)$ on the first sheet of the Riemann surface of the function $p(z)^{1/2}$ (where we have
$p(x)^{1/2}>0$, $\beta(x)>0$, for $x>a$ on the real line). The function $u(z)$ maps the full Riemann surface to a torus (where 
$\theta$-functions are defined).

\subsubsection{Theta function parametrix}
A $\theta$-function parametrix for the sine-kernel determinant on several intervals was first constructed in \cite{DIZ}.
We now modify the construction for our case.

Recall the function $\theta_3(z)$ given by (\ref{definitionoftheta3}) and the constant $\Omega$ given by (\ref{definitionofV}). Consider
 \begin{equation}\label{finfinitydef}
P^{\infty}(z;s)=N_0\begin{pmatrix} \frac{\theta_3(0)\theta_3(u(z)+s^{3/2}\Omega+d)}{\theta_3(s^{3/2}\Omega)\theta_3(u(z)+d)}\frac{\beta(z)+\beta(z)^{-1}}{2}&\frac{\theta_3(0)\theta_3(u(z)-s^{3/2}\Omega-d)}{\theta_3(s^{3/2}\Omega)\theta_3(u(z)-d)}\frac{\beta(z)-\beta(z)^{-1}}{2i}\\ -\frac{\theta_3(0)\theta_3(u(z)+s^{3/2}\Omega-d)}{\theta_3(s^{3/2}\Omega)\theta_3(u(z)-d)}\frac{\beta(z)-\beta(z)^{-1}}{2i}&\frac{\theta_3(0)\theta_3(u(z)-s^{3/2}\Omega+d)}{\theta_3(s^{3/2}\Omega)\theta_3(u(z)+d)}\frac{\beta(z)+\beta(z)^{-1}}{2}\end{pmatrix},
\end{equation}
 where $d$ a constant to be determined. We will also write $P^{\infty}(z;s)$ in the short hand notation 
 \begin{equation}\label{thetaij}
    P^{\infty}=N_0\begin{pmatrix}\theta_{11}N_{11} & \theta_{12}N_{12} \\ \theta_{21}N_{21} & \theta_{22}N_{22} \end{pmatrix},
\end{equation}
where $N_{jk}$ are the entries of $N(z)$ as defined in (\ref{defN}), and
\be\label{thetajk}
\begin{aligned}
\theta_{11}(u(z))=\frac{\theta_3(0)\theta_3(u(z)+s^{3/2}\Omega+d)}{\theta_3(s^{3/2}\Omega)\theta_3(u(z)+d)},\qquad
\theta_{12}(u(z))=\frac{\theta_3(0)\theta_3(u(z)-s^{3/2}\Omega-d)}{\theta_3(s^{3/2}\Omega)\theta_3(u(z)-d)},\\
\theta_{21}(u(z))=\frac{\theta_3(0)\theta_3(u(z)+s^{3/2}\Omega-d)}{\theta_3(s^{3/2}\Omega)\theta_3(u(z)-d)},\qquad
\theta_{22}(u(z))=\frac{\theta_3(0)\theta_3(u(z)-s^{3/2}\Omega+d)}{\theta_3(s^{3/2}\Omega)\theta_3(u(z)+d)}.
\end{aligned}
\ee

We prove:
\begin{prop}\label{pinfinityprop} $P^{\infty}$ satisfies the following RHP.
\begin{equation}\label{pinfinityrhp}
\begin{aligned}
&P^{\infty}(z;s)\;\mathrm{ is\; analytic\; in\; } \mathbb{C}\setminus  (-\infty,a],\\
&P^{\infty}_{+}(z;s)=P^{\infty}_{-}(z;s)\begin{pmatrix} 0&1\\ -1&0 \end{pmatrix},\qquad z \in (-\infty,c)\cup(b,a), \\
&P^{\infty}_{+}(z;s)=P^{\infty}_{-}(z;s)e^{2s^{3/2}g_+(b)\sigma_3},\qquad z \in (c,b), \\
&P^{\infty}(z) =N_0
\left( F_0 + \bigO(1/z)\right)N(z), \qquad z \to \infty, 
\end{aligned}
\end{equation}
where
\be
F_0=
\begin{pmatrix}
 \frac{\theta_3(0)\theta_3(1/2+s^{3/2}\Omega+d)}{\theta_3(s^{3/2}\Omega)\theta_3(1/2+d)} & 0\cr
 0 &  \frac{\theta_3(0)\theta_3(1/2+s^{3/2}\Omega-d)}{\theta_3(s^{3/2}\Omega)\theta_3(1/2+d)}
 \end{pmatrix}.
 \ee
\end{prop}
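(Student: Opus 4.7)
The plan is to verify the four parts of the RH problem for $P^{\infty}$ in turn, relying on Lemma \ref{ulemma} for the jump properties of the Abel map $u(z)$, the quasi-periodicity relations \eqref{qprelations} together with the evenness $\theta_3(-v)=\theta_3(v)$ of the Jacobi theta function, and the jump identities of $N(z)$ together with $\beta_+=i\beta_-$ on $(-\infty,c)\cup(b,a)$.

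I would begin with the jump on $(c,b)$, which is the simplest since $N(z)$ is analytic across this interval and the full jump must come from the theta quotients. Applying $u_+(z)-u_-(z)=-\tau$ from \eqref{ucond2} and the second relation in \eqref{qprelations}, each $\theta_{jk}(u(z))$ in \eqref{thetajk} acquires a factor $e^{\pm 2\pi i s^{3/2}\Omega}$, with the sign dictated by the sign of $s^{3/2}\Omega$ in the numerator. Since $2\pi i\Omega=2g_+(b)$ by \eqref{definitionofV}, the first and second columns of the middle matrix in \eqref{thetaij} are scaled by $e^{2s^{3/2}g_+(b)}$ and $e^{-2s^{3/2}g_+(b)}$, which is exactly the right-multiplication by $e^{2s^{3/2}g_+(b)\sigma_3}$.

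Next I would handle the jump on $(b,a)$, with the argument on $(-\infty,c)$ following identically modulo unit periodicity of $\theta_3$. Using $u_-=-u_+$ from \eqref{ucond3} and evenness of $\theta_3$, the theta quotients obey the swapping identities $\theta_{11-}=\theta_{12+}$, $\theta_{12-}=\theta_{11+}$, $\theta_{21-}=\theta_{22+}$, $\theta_{22-}=\theta_{21+}$. Independently, from $\beta_+=i\beta_-$ one verifies $N_{11+}=-N_{12-}$, $N_{12+}=N_{11-}$, $N_{21+}=-N_{22-}$, $N_{22+}=N_{21-}$, which is just the jump $N_+=N_-\bigl(\begin{smallmatrix}0&1\\-1&0\end{smallmatrix}\bigr)$ written entrywise. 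Multiplying out $P^\infty_-\bigl(\begin{smallmatrix}0&1\\-1&0\end{smallmatrix}\bigr)$ and matching it entrywise with $P^\infty_+$ then produces the claimed jump.

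The analyticity in $\mathbb{C}\setminus(-\infty,a]$ is the most delicate step and is where the constant $d$ must be fixed. Since $u(z)$ is single-valued away from $(-\infty,a]$, the theta quotients are meromorphic there, with potential poles only at the zeros of $\theta_3(u(z)\pm d)$. By the degree count on the Jacobian, one such zero necessarily lies in the image under $u$. I would choose $d$ (following the analogous construction in \cite{DIZ}) so that these zeros coincide with the branch points in $\{a,b,c\}$, where the simple zeros of $\theta_3(u(z)\pm d)$ combine with the $(z-p)^{\pm 1/4}$ singularities of the $N_{jk}$ to produce only an integrable singularity of $P^\infty$ at each branch point. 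Verifying that a single lattice class of $d$ simultaneously cancels all four spurious singularities is the main obstacle of the proof.

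Finally, for the behaviour at infinity, I would substitute the expansion $u(z)=-1/2+u_1 z^{-1/2}+O(z^{-3/2})$ from \eqref{uinfty} into \eqref{thetajk} and combine it with \eqref{Ninfty}. Evaluation of each $\theta_{jk}$ at $v=-1/2$, using the evenness of $\theta_3$ and its unit period, yields $\theta_{11}(-1/2)=\theta_{12}(-1/2)$ and $\theta_{22}(-1/2)=\theta_{21}(-1/2)$, so the leading theta-$N$ matrix collapses to the diagonal matrix $F_0$ acting on $N(z)$ from the left. Combined with \eqref{Ninfty} this gives $P^\infty(z)=N_0(F_0+O(1/z))N(z)$ as claimed.
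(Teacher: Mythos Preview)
Your verification of the two jump conditions and the behaviour at infinity is correct and matches the paper's argument. The gap is in the analyticity step, where your description of the role of $d$ is wrong.

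You propose to choose $d$ so that the zeros of $\theta_3(u(z)\pm d)$ fall at the branch points $a,b,c$, hoping the $(z-p)^{\pm 1/4}$ behaviour of $N_{jk}$ will absorb the resulting singularity. That is not how the cancellation works. In the paper's construction, the denominators $\theta_3(u(z)\pm d)$ are attached to specific entries of the matrix: $\theta_3(u(z)+d)$ sits under the diagonal entries (paired with $N_{11}=N_{22}=\tfrac12(\beta+\beta^{-1})$), while $\theta_3(u(z)-d)$ sits under the off-diagonal ones (paired with $N_{12}=-N_{21}=\tfrac{1}{2i}(\beta-\beta^{-1})$). The function $\beta+\beta^{-1}$ has no zeros on the first sheet, so it cannot cancel anything; the only available zeros are those of $\beta-\beta^{-1}$, which lie at two interior points $\beta_1^*\in(c,b)$ and $\beta_2^*\in(a,+\infty)$, \emph{not} at the branch points. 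The constant $d$ is therefore fixed by $d=\int_c^{\beta_1^*}\omega$, which forces the unique zero of $\theta_3(u(z)-d)$ in the domain to sit exactly at $\beta_1^*$, where it is cancelled by the simple zero of $\beta-\beta^{-1}$.

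A second point you miss is that with this choice one has $0<d<\tfrac12$, and since $u$ maps $\mathbb C\setminus(-\infty,a]$ into $(-\tfrac12,0)\times(-\tfrac{\tau}{2},\tfrac{\tau}{2})$, the argument $u(z)+d$ never reaches $\tfrac{1+\tau}{2}\bmod\Lambda$. Hence $\theta_3(u(z)+d)$ has \emph{no} zeros at all in the domain, and the diagonal entries are automatically analytic. There are not ``four spurious singularities'' to cancel simultaneously; there is exactly one, in the off-diagonal entries, and $d$ is determined by it.
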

Note that $P^{\infty}(z)$ has the same jumps and behaviour at infinity (up to left-multiplication by a constant matrix), as $S(z)$. 

\begin{proof}
To check analyticity in $\mathbb{C}\setminus  (-\infty,a]$ one needs only to determine that no poles are introduced from the zeros of the theta functions in the denominators. In fact, as in \cite{DIZ},
we determine the constant $d$ so
that those which do occur, are precisely cancelled by the zeros of $N_{jk}(z)$. We claim that the function $\beta(z)-\beta(z)^{-1}$
has two zeros
\begin{equation}\label{betastars}
    \beta_1^{*} \in (c,b),\quad   \beta_2^{*} \in (a,+\infty).
\end{equation}
Indeed, we have that 
\[
\beta(z)\pm \beta(z)^{-1}=0 \implies \beta(z)^4=1 \iff (z-a)(z-c)=(z-b).\]
Let $h(z)=(z-a)(z-c)-(z-b)$.
We have 
\[
h(c)=b-c>0,\qquad
h(b)=-(a-b)(b-c)<0,\qquad
h(a)=-(a-b)<0.
\]
Since  $h(z) \to +\infty$, if  $z\to \pm\infty$, we deduce that $h$ has a zero, $\beta_1^{*}$ in $(c,b)$ and another $\beta_2^{*}$ in $(a,+\infty)$. Since $\beta(z)\pm \beta(z)^{-1}=0\iff \beta(z)^2=\pm 1$, the sign of the root (given our choice of branches) implies that 
$\beta_1^{*}$, $\beta_2^{*}$ are the zeros of $\beta(z)-\beta(z)^{-1}$, whereas $\beta(z)+\beta(z)^{-1}$ has no zeros (its zeros are on the second sheet of the Riemann surface).

It is well known (see, e.g., \cite{WW}) that $\theta_3(z)$ has only one zero at $z=\frac{1}{2}+\frac{\tau}{2}$ modulo its period lattice
$\Lambda=\mathbb{Z}+\tau\mathbb{Z}$. By Lemma \ref{ulemma}, we easily see that $u(z)$ maps $\mathbb{C}\setminus (-\infty,a]$ onto 
$(-1/2,0)\times (-\tau/2,\tau/2)$. 
Therefore $\theta(u(z))=0 \iff z=c$.
Define then
\begin{equation}\label{definitionofd}
    d=\int_c^{\beta_1^*}\omega.
\end{equation}
It follows that
\be\theta(u(z)-d)=0 \iff z=\beta_1^{*}.\ee
This proves that, for our specific choice of $d$, the zero of $\theta(u(z)-d)$, in $P^{\infty}(z)$, at $\beta_1^{*}$ is cancelled by the zero of 
$\beta(z)-\beta(z)^{-1}$. It follows that $P^{\infty}(z)_{12}$,  $P^{\infty}(z)_{21}$ are analytic functions in $\mathbb{C}\setminus(-\infty,a]$. 
To see the analyticity of the diagonal terms $P^{\infty}(z)_{11}$,  $P^{\infty}(z)_{22}$, we note that since $0<d<1/2$ we have, using again
the mapping $u(z)$, that $u(z)+d \neq 1/2+\tau/2 \mod \Lambda$ for all $z$. 
Thus $P^{\infty}(z;s)$ is analytic in $\mathbb{C}\setminus(-\infty,a]$. 
 
We now turn to the jump conditions. The jumps of $P^{\infty}(z)$ are verified by means of (\ref{ucond1})--(\ref{ucond3}), together with the quasi-periodicity properties of $ \theta_3 (z)$ in (\ref{qprelations}) and the jumps for $N(z)$. Indeed for the $11$ entry, on the set $(-\infty,c)\cup (b,a)$, we have
\begin{align*}
  (N_0^{-1}P^{\infty}(z;s))_{11+}& =  \frac{\theta_3(0)\theta_3(u_{+}(z)+s^{3/2}\Omega+d)}{\theta_3(s^{3/2}\Omega)\theta_3(u_{+}(z)+d)}N_{11+}=-\frac{\theta_3(0)\theta_3(-u_{-}(z)+s^{3/2}\Omega+d)}{\theta_3(s^{3/2}\Omega)\theta_3(-u_{-}(z)+d)} N_{12-}\\
   & =-(N_0^{-1}P^{\infty}(z;s))_{12-}.
\end{align*}
 where we have used $\theta_3(z+1)=\theta_3(z)$, and evenness of $\theta_3(z)$. On the interval $(c,b)$ we have
\begin{align*}
 (N_0^{-1} P^{\infty}(z;s))_{11+}&= \frac{\theta_3(0)\theta_3(u_{+}(z)+s^{3/2}\Omega+d)}{\theta_3(s^{3/2}\Omega)\theta_3(u_{+}(z)+d)}N_{11+}=\frac{\theta_3(0)\theta_3(u_{-}(z)-\tau+s^{3/2}\Omega+d)}{\theta_3(s^{3/2}\Omega)\theta_3(u_{-}(z)-\tau+d)}N_{11-}\\
 & = \frac{\theta_3(0)\theta_3(u_{-}(z)+s^{3/2}V+d)}{\theta_3(s^{3/2}\Omega)\theta_3(u_{-}(z)+d)}\frac{\exp(2\pi i(u_{-}(z)+s^{3/2}\Omega+d)-\pi i \tau )}{\exp(2\pi i(u_{-}(z)+d)-\pi i \tau )}N_{11-}\\
 &=(N_0^{-1}P^{\infty}(z;s))_{11-}\exp\left(2s^{3/2}g_+(b) \right).
\end{align*}
Repeating this computation for each of the remaining 3 entries yields the jumps of $P^{\infty}$. 

To obtain the large $z$ expansion of $P^{\infty}(z)$ we note that $P^{\infty}(z)N(z)^{-1}$ is analytic at infinity, and use the fact that 
by \eqref{uinfty}, $u(z)\to -1/2$ as $z\to\infty$.
\end{proof}

\subsubsection{Expansions of $u(z)$, $\beta(z)$, and some properties of $\theta_{jk}$}

Below we will use the following easy to obtain expansions:
\begin{equation}\label{uexp}
    u(z)=u_+(p)+u_{0,p}\sqrt{z-p}\left(1+u_{1,p}(z-p)+\bigO((z-p)^2)\right),\qquad z\to p\in\{a,b,c\},\quad \Im z>0,
\end{equation}
with 
\begin{align}
& u_{0,a}=\frac{1}{J_0\sqrt{(a-b)(a-c)}} , \qquad u_{1,a}=-\frac{1}{6}\left(\frac{1}{a-b}+\frac{1}{a-c} \right) ,\label{u0a}\\
& u_{0,b}=\frac{1}{iJ_0\sqrt{(a-b)(b-c)}} , \qquad u_{1,b}=\frac{1}{6}\left(\frac{1}{a-b}-\frac{1}{b-c} \right) ,\label{u0b}\\
& u_{0,c}=-\frac{1}{J_0\sqrt{(a-c)(b-c)}} , \qquad u_{1,c}=\frac{1}{6}\left(\frac{1}{a-c}+\frac{1}{b-c} \right) ,
\end{align}
and
\be\label{betaexp}
\begin{aligned}
   & \beta(z)=\beta_{0,p}(z-p)^{1/4}\left(1+\beta_{1,p}(z-p)+\bigO((z-p)^2)\right), \qquad z\to p\in\{a,c\},\quad\Im z>0,\\
   & \beta(z)=\beta_{0,b}(z-b)^{-1/4}(1+\beta_{1,b}(z-b)+\bigO((z-b)^2)), \qquad z\to b,\quad \Im z>0,
\end{aligned}
\ee
with
\begin{equation}\label{b0}
     \beta_{0,a}^2=\sqrt{\frac{a-c}{a-b}},\qquad
     \beta_{0,b}^2=i\sqrt{(a-b)(b-c)}, \qquad
     \beta_{0,c}^2=\sqrt{\frac{a-c}{b-c}}.
\end{equation}
The values of $\beta_{1,p}$ are unimportant in what follows. 

By the quasiperiodicity relations \eqref{qprelations} and the definition \eqref{definitionofV} of $\Omega$,
we obtain the following connections between the values of $\theta_{jk}$ in \eqref{thetajk}
on the $+$-side of $(-\infty,a)$:
\be\label{thetanoder}
\theta_{11}(u_+(b))=\theta_{12}(u_+(b))e^{2g(b)_+ s^{3/2}},\qquad
\theta_{22}(u_+(b))=\theta_{21}(u_+(b))e^{-2g(b)_+ s^{3/2}}.
\ee
Now considering the derivatives of the second relation in \eqref{qprelations}, we moreover obtain
\be\label{thdeder}
\theta_{j1}^{\prime}=-\theta_{j2}^{\prime}e^{2g(b)_+ s^{3/2}},\qquad
\theta_{j1}^{\prime\prime}=\theta_{j2}^{\prime\prime}e^{2g(b)_+ s^{3/2}},\qquad j=1,2,
\ee
and
\begin{equation}\label{thetader}
 \theta_{11}\theta_{22}^{\prime\prime}=\theta_{12}\theta_{21}^{\prime\prime},\qquad     \theta_{22}\theta_{11}^{\prime\prime}=\theta_{21}\theta_{12}^{\prime\prime}, \qquad 
 \theta_{11}^{\prime}\theta_{22}^{\prime}=\theta_{12}^{\prime}\theta_{21}^{\prime},
\end{equation}
where
\[
\theta_{jk}^{\prime}=\frac{d}{du}\theta_{jk}(u(z)),\qquad \theta_{jk}^{\prime\prime}=\frac{d^2}{du^2}\theta_{jk}(u(z)),
\]
and all $\theta_{jk}$ and their derivatives are evaluated at $u_+(b)$.

Similarly, at $u(a)=0$, we obtain
\be\label{thua}
\theta_{j1}(0)=\theta_{j2}(0),\qquad \theta_{j1}^{\prime}(0)=-\theta_{j2}^{\prime}(0),
\qquad \theta_{j1}^{\prime\prime}(0)=\theta_{j2}^{\prime\prime}(0),\qquad j=1,2.
\ee

\subsection{Identities for $\theta$-functions and elliptic integrals}\label{secidentities}
We now obtain a set of identities involving $\theta$-functions which will be used below in the proof of Theorem \ref{Mainthm}. First, recall all four Jacobi $\theta$-functions. These may be expressed in terms of $\theta_3(z)$, defined in (\ref{definitionoftheta3}), as follows.
\begin{equation}\label{theta2134relations}
    \theta_2(z)=\theta_1\left(z+\frac{1}{2}\right)=e^{-\pi i z+\pi i \tau /4}\theta_3\left(z-\frac{\tau}{2}\right), \quad \theta_4(z)=\theta_3\left(z+\frac{1}{2}\right).
\end{equation}

We note that $\theta_1(z)$ is an odd function of $z$, whereas $\theta_j(z)$, $j=2,3,4$ are all even functions. $\theta$-functions have the following periodicity properties:
\begin{equation}\label{theta period}
\begin{aligned}
\theta_1(z+1)&=-\theta_1(z), \qquad \theta_1(z+\tau)=-e^{-2\pi iz-\pi i \tau}\theta_1(z),\\
\theta_2(z+1)&=-\theta_2(z), \qquad \theta_2(z+\tau)=e^{-2\pi iz-\pi i \tau}\theta_2(z),\\
\theta_j(z+1)&=\theta_j(z), \qquad \theta_j(z+\tau)=e^{-2\pi iz-\pi i \tau}\theta_j(z),\qquad j=3,4.
\end{aligned}
\end{equation}

Denote by $\theta_k$, $\theta'_k$ the values of $\th$-functions and their derivatives at zero ($\th$-constants), i.e.,
\[\theta_k=\theta_k(0)=\theta_k(0;\tau),\qquad \theta'_k=\frac{d}{dz}\theta_k(z)|_{z=0},\qquad
 \qquad k=1,2,3,4.\]

\begin{lemma}\label{thetalemma}
Recall $J_0$, $\tau$, $\Omega$, $P^{\infty}(z;s)$ and $d$ given by (\ref{Jkdefinition}), (\ref{definitionofV}),  (\ref{finfinitydef}) and (\ref{definitionofd}), respectively. The following identities hold.
\begin{align}
    & i) \quad \theta_3^4=\frac{J_0^2}{\pi^2}(a-c), \qquad ii) \qquad \theta_4^4=\frac{J_0^2}{\pi^2}(a-b), \qquad
    iii) \quad \theta_2^4=\frac{J_0^2}{\pi^2}(b-c),\label{id123}\\
    & iv) \quad \det P^{\infty}(z;s)=1,\label{iddet1}\\
    & v)\quad \frac{\theta_3^{\prime}}{\theta_3}(u_+(b)+d)-\frac{\theta_1^{\prime}}{\theta_1}(u_+(b)+d)=J_0, \label{id5} \\
    & vi) \quad\left(\frac{\theta_1}{\theta_3}\right)^{\prime\prime\prime}(u_+(b)+d)=-3J_0\left(\frac{\theta_1}{\theta_3}\right)^{\prime\prime}(u_+(b)+d)
+\frac{6J_0(2\beta_{1,b}+u_{1,b})}{u_{0,b}^2}\frac{\theta_1}{\theta_3}(u_+(b)+d),\label{id6}\\
    & vii) \quad \frac{d\Omega}{db}=\frac{q(b)}{(a-b)(b-c)J_0}, \qquad viii) \quad \frac{d\tau}{db}=-\frac{\pi i}{J_0^2(a-b)(b-c)}.\label{idder}\\
     & ix) \quad\frac{\theta_3^{\prime}}{\theta_3}(d)-\frac{\theta_1^{\prime}}{\theta_1}(d)=J_0(a-c),\label{id5A}\\
    & x) \quad\left(\frac{\theta_1}{\theta_3}\right)^{\prime\prime\prime}(d)=-3J_0(a-c)\left(\frac{\theta_1}{\theta_3}\right)^{\prime\prime}(d)
+\frac{6J_0(a-c)(-2\beta_{1,a}+u_{1,a})}{u_{0,a}^2}\frac{\theta_1}{\theta_3}(d),\label{id6A}\\
& xi) \quad \frac{d\Omega}{da}=-\frac{q(a)}{(a-b)(a-c)J_0},\qquad xii)\quad \frac{d\tau}{da}=\frac{\pi i}{J_0^2(a-b)(a-c)}.\label{ddaOmega}
 \end{align}
\end{lemma}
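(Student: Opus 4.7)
The twelve identities split naturally into four thematic groups, and I would organise the proof by treating each group with a distinct technique.

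For the Thomae-type identities (i)--(iii), my plan is to reduce to the standard Jacobi normal form by the Möbius change of variable $z=c+(a-c)x$, which sends the branch points $c,b,a$ to $0,k^{2},1$ with $k^{2}=(b-c)/(a-c)$. A short computation shows that $J_0$ becomes an explicit multiple of the complete elliptic integral $K'(k)$ (with the factor $(a-c)^{-1/2}$ supplied by the Jacobian), and $\tau=I_0/J_0$ becomes the standard modular parameter. The classical $\theta$-constant formulas $\theta_{3}^{2}=2K/\pi$, $\theta_{2}^{2}=2kK/\pi$, $\theta_{4}^{2}=2k'K/\pi$ then translate, after squaring and substituting, into exactly (i)--(iii).

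For (iv) I would argue by Liouville. The jump matrices prescribed in Proposition \ref{pinfinityprop} all have determinant one, so $\det P^{\infty}$ is single-valued across $(-\infty,c)\cup(c,b)\cup(b,a)$; the choice of $d$ in \eqref{definitionofd} together with the analyticity argument in the proof of Proposition \ref{pinfinityprop} (the potential pole at $\beta_{1}^{*}$ is cancelled by the zero of $\beta-\beta^{-1}$) shows $\det P^{\infty}$ has no singularities at $a,b,c$. Hence it extends to an entire function on $\mathbb{CP}^{1}$ and is therefore constant; reading off the constant from the $z\to\infty$ expansion in \eqref{pinfinityrhp} reduces (iv) to the statement $\det F_{0}=1$, which is a single instance of the classical Riemann $\theta$-identity relating $\theta_{3}(u+v)\theta_{3}(u-v)$ to a quadratic combination of $\theta$-constants.

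The point-evaluation identities (v), (vi), (ix), (x) I would treat by viewing $\tfrac{d}{du}\log(\theta_{1}/\theta_{3})(u(z)\pm d)$ as a meromorphic function on the Riemann surface of $p(z)^{1/2}$. Its quasi-periodicity shifts follow from \eqref{qprelations} together with \eqref{ucond1}--\eqref{ucond3}, and the location of its single pole is fixed by \eqref{definitionofd}, so the function is determined up to an additive constant which is pinned down at infinity via \eqref{uinfty}; evaluating at $u_{+}(b)+d$ (respectively $d$) yields (v) and (ix). For the third-derivative identities (vi) and (x), I would differentiate the heat equation $\partial_{\tau}\theta_{3}=(4\pi i)^{-1}\partial_{z}^{2}\theta_{3}$ once more and match coefficients against the local expansions \eqref{uexp}, \eqref{betaexp} of $u$ and $\beta$ at $b$ and $a$. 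This bookkeeping, which must reproduce precisely the factors $u_{1,p}$ and $\beta_{1,p}$, is where I expect the main obstacle to lie.

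Finally, the derivative formulas (vii), (viii), (xi), (xii) follow by differentiation under the integral sign in \eqref{Jkdefinition} and in $\Omega=g_{+}(b)/(\pi i)$, using $\partial_{p}p(\zeta)^{1/2}=-\tfrac{1}{2}p(\zeta)^{1/2}/(\zeta-p)$ for $p\in\{a,b\}$. The resulting singular integrals collapse by integration by parts combined with the vanishing condition \eqref{gcond2} and with the explicit values $p'(b)=-(a-b)(b-c)$, $p'(a)=(a-b)(a-c)$; the appearance of $q(b)$ in (vii) and of $q(a)$ in (xi) is then automatic, because the boundary term from $g_{+}(b)=\int_{a}^{b}q/\sqrt{p}$ produces $q$ evaluated at the moving endpoint. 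The $\tau$-derivatives (viii) and (xii) follow from the quotient rule applied to $\tau=I_{0}/J_{0}$ using the same mechanism for $I_0$.
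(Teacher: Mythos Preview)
Your treatment of (i)--(iii) via the substitution $z=c+(a-c)x$ and the classical formulae $\theta_3^2=2K/\pi$, etc., is a perfectly valid alternative to the paper's Liouville argument (which builds an elliptic function from $\theta_3^2(u(z))/\theta_1^2(u(z))$ and evaluates at $z=b,c$). Note, however, that $J_0$ reduces to a multiple of $K(k)$, not $K'(k)$; you should check this, since matching $\tau=I_0/J_0$ to the standard nome requires getting both periods right.

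There are two genuine gaps. First, in (iv) you reduce to $\det F_0=1$ and claim this is ``a single instance of the classical Riemann $\theta$-identity''. It is not: writing $\omega=s^{3/2}\Omega$ and using $\theta_3(1/2+x)=\theta_4(x)$, the statement becomes $\theta_3^2\,\theta_4(\omega+d)\theta_4(\omega-d)=\theta_3^2(\omega)\theta_4^2(d)$, which is false for generic $d$ (test $d=0$). The identity holds only for the specific $d$ of \eqref{definitionofd}, and any proof must use that geometric input. The paper does this by evaluating $\det P^\infty$ not at infinity but at the second zero $\beta_2^*$ of $\beta-\beta^{-1}$, where Abel's theorem gives $u(\beta_2^*)=-d$, collapsing the expression to $1$ directly. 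Second, for (viii) and (xii) the quotient rule on $\tau=I_0/J_0$ produces $(J_0\,dI_0/dp-I_0\,dJ_0/dp)/J_0^2$, and after inserting the derivatives of the period integrals the numerator is proportional to $I_1J_0-J_1I_0$. You cannot reach the constant $\pi i$ without invoking the Legendre (Riemann bilinear) relation $I_1J_0-J_1I_0=2\pi i$, which you do not mention; the paper uses it explicitly in \eqref{riemannsrelations}. A related issue affects (vii) and (xi): differentiating $g_+(b)=\int_a^b q/\sqrt{p}$ in $b$ does not produce a finite boundary term ``$q(b)$ at the moving endpoint'', since the integrand is singular there. The paper avoids this by writing $\Omega$ as a combination of closed-cycle integrals $I_k$ and differentiating those.

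For (v), (vi), (ix), (x) your Liouville-on-the-log-derivative idea can be made to work for (v) and (ix), but the heat equation is a red herring for (vi) and (x): the identities involve $u$-derivatives at a fixed $\tau$, and the constants $\beta_{1,p}$ enter only through the geometry, not through any $\tau$-variation. The paper's device is cleaner: it introduces $\eta(z)=(\theta_1/\theta_3)^2(u(z)+d)(\beta+\beta^{-1})^2-(\theta_1/\theta_3)^2(u(z)-d)(\beta-\beta^{-1})^2$, shows $\eta\equiv 0$ by the same mechanism as for $\det P^\infty$, and then reads off (v) and (vi) (resp.\ (ix) and (x)) from the $(z-b)^0$ and $(z-b)^1$ coefficients of the Taylor expansion at $b$ (resp.\ at $a$). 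This automatically produces the factors $u_{0,p}$, $u_{1,p}$, $\beta_{1,p}$ from the local expansions \eqref{uexp}, \eqref{betaexp}.
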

\begin{proof}
The proof of this lemma is similar to the arguments in \cite{IB} wherein the authors derive analogous results in the case of the sine-kernel determinant. For the first three identities we proceed as follows. By the expansion of $u(z)$ at $a$,
\[
u(z)=\frac{1}{2J_0}\int_a^z \frac{dx}{p(x)^{1/2}}=\frac{1}{J_0}\frac{(z-a)^{1/2}}{\sqrt{(a-b)(a-c)}}\left(1+o(1)\right),\qquad z\to a,
\]
we obtain
\[
\frac{\th_3^2(u(z))}{\th_1^2(u(z))}=J_0^2 \frac{\th_3^2}{\th_1'^2}\frac{(a-b)(a-c)}{z-a}+\bigO(1),\qquad z\to a.
\]
Since $\th_1(0)=0$ is the only zero of $\th_1(z)$ modulo the lattice, it follows from Liouville's theorem that
\begin{equation}
    \frac{\theta_3^2(u(z))}{\theta_1^2(u(z))}-J_0^2\frac{\theta_3^2}{(\theta_1^{\prime})^2}\frac{(a-b)(a-c)}{z-a}=\mathrm{const},\qquad \forall z\in \mathbb{C}.
\end{equation}
We evaluate the constant by taking $z\to \infty$, making use of (\ref{theta2134relations}), and obtain
\begin{equation}\label{thetafourthpowers}
    \frac{\theta_3^2(u(z))}{\theta_1^2(u(z))}-J_0^2\frac{\theta_3^2}{(\theta_1^{\prime})^2}\frac{(a-b)(a-c)}{z-a}=\frac{\theta_4^2}{\theta_2^2}.
\end{equation}
Substituting here the value $z=b$, and making use of the identities (see, e.g., \cite{WW})
\begin{equation}
    \theta_1^{\prime}=\pi \theta_2\theta_3\theta_4, \qquad  \theta_3^4=\theta_2^4+\theta_4^4,
\end{equation}
we obtain (i). Similarly, substituting $z=c$ into \eqref{thetafourthpowers}, we obtain (ii).  The difference of (i) and (ii) gives (iii).

\bigskip
\noindent
iv) It follows from the RH problem for $P^{\infty}(z)$ by standard arguments that $\det P^{\infty}(z)$ is analytic in the complex plane and bounded at infinity. Therefore, it is a constant and, by the condition at infinity,
\begin{equation}
    \det P^{\infty}(z,s)=\det F_0, \qquad  \forall z\in \mathbb{C}.
\end{equation}
It remains to check that this value is $1$. We prove it as follows. First, by \eqref{finfinitydef},
\be\label{detP}
\begin{aligned}
    \det P^{\infty}(z;s)&=\frac{\theta_3^2\theta_3(u(z)+s^{3/2}\Omega+d)}{\theta_3^2(s^{3/2}\Omega)\theta_3^2(u(z)+d)}\theta_3(u(z)-s^{3/2}\Omega+d)\left( \frac{\beta(z)+\beta^{-1}(z)}{2}\right)^2\\
    &+\frac{\theta_3^2\theta(u(z)-s^{3/2}\Omega-d)}{\theta_3^2(s^{3/2}\Omega)\theta_3^2(u(z)-d)}\theta_3(u(z)+s^{3/2}\Omega-d)\left( \frac{\beta(z)-\beta^{-1}(z)}{2i}\right)^2.
\end{aligned}\ee

Consider the following meromorphic function on the Riemann surface of $p(z)^{1/2}$:
\be
\xi (z)=\beta^2(z)-1 =\frac{(z-a)(z-c)}{p(z)^{1/2}}-1.
\ee
By considerations in the previous section, $\xi(z)$ has 2 zeros, $\beta_1^*$, $\beta_2^*$, see \eqref{betastars}. They are located on the first sheet.
By Abel's theorem (see, e.g., \cite{FARKAS, DIZ})
\be
 -u(\infty)+u_+(\beta_1^*)+u(\beta_2^*)-u_+(b)=0 \mod \Lambda.
\ee
Since by Lemma \ref{ulemma}, $u(\infty)=-1/2$, $u_+(b)=-\tau/2$, $u_+(c)=-1/2-\tau/2$, and furthermore,
\[
u_+(\beta_1^*)=u_+(c)+d,\qquad d=\int_{c}^{\beta_1^*}\omega,
\]
we have
\[
   1/2 +u_+(c)+d+u(\beta_2^*)+\tau/2=u(\beta_2^*)+d =0      \mod \Lambda.
\]
or simply
\begin{equation}
    u(\beta_2^*)=-d \mod \Lambda.
\end{equation}
Note, moreover, that since $\beta(\beta_2^*)-\beta^{-1}(\beta_2^*)=0$  and
since by standard arguments $\det N(z)\equiv 1$, we have
\[
\left(\frac{\beta(\beta_2^*)+\beta^{-1}(\beta_2^*)}{2}\right)^2=1.
\]
We now set $z=\beta_2^*$ in the above expression \eqref{detP}  for the determinant and obtain
\begin{equation}\label{det=1}
    \det P^{\infty}(z;s)=1.
\end{equation}
One may expand the determinant at any point and deduce that all the terms apart from the constant one must be zero, yielding an infinite number of identities linking the derivatives of $\theta$ functions. In what follows, we will only need the constant terms in the expansion
at $z=b$ and $z=a$. A straightforward computation using expansions \eqref{uexp},  \eqref{betaexp} gives (in the notation of \eqref{thetaij})
for $z\to b$,
\begin{equation}\label{detatb}
    \frac{1}{4}\frac{d}{du}\left[\theta_{11}(u)\theta_{22}(u)-\theta_{12}(u)\theta_{21}(u) \right]_{u=u_+(b)}
    \beta_{0,b}^2 u_{0,b}+\theta_{11}(u_+(b))\theta_{22}(u_+(b))=1,
\end{equation}
where $u_{0,b}$, $\beta_{0,b}$ are defined in \eqref{u0b} and \eqref{b0}, respectively. We note that
\be\label{beta-u}
\beta_{0,b}^2 u_{0,b}=\frac{1}{J_0}.
\ee
Furthermore, using the identities \eqref{thetanoder}, \eqref{thdeder}, we can rewrite \eqref{detatb} in the form
\be\label{detatb2}
 \frac{1}{2J_0}\left[\theta_{11}^{\prime}(u)\theta_{22}(u)+\theta_{11}(u)\theta_{22}^{\prime}(u) \right]_{u=u_+(b)}
    +\theta_{11}(u_+(b))\theta_{22}(u_+(b))=1.
\ee
Similarly, expanding \eqref{detP} as $z\to a$, one obtains the identity
\be \label{155atA}
 \frac{1}{2J_0(a-c)}\left[\theta_{11}^{\prime}(0)\theta_{22}(0)+\theta_{11}(0)\theta_{22}^{\prime}(0) \right]
    +\theta_{11}(0)\theta_{22}(0)=1.
\ee

\bigskip
\noindent
v) Consider the function $\eta(z)$ defined by
\begin{equation}
  \eta(z)= \frac{\theta_1^2(u(z)+d)}{\theta_3^2(u(z)+d)}\left( \beta(z)+\beta^{-1}(z)\right)^2 -\frac{\theta_1^2(u(z)-d)}{\theta_3^2(u(z)-d)}\left( \beta(z)-\beta^{-1}(z)\right)^2 .
\end{equation}
Note that this expression is similar to that of $\det P^{\infty}$ above. Indeed we may employ the same methods and deduce that it is identically equal to $0$. On the other hand, by considering the constant coefficient (i.e. the coefficient of $(z-b)^0$) in its expansion for $z\to b$, we obtain the desired identity (v).
Vanishing of the term with $(z-b)^1$ in the expansion yields the identity (vi). Here we 
expanded the functions $\frac{\theta_1}{\theta_3}$, $\beta(z)$,
used \eqref{ucond0}, \eqref{theta period}, 
 the oddness
of $\theta_1$, the evenness of $\theta_3$, and the fact that
\[
\frac{\theta_1}{\theta_3}(u_+(b)-d)=\frac{\theta_1}{\theta_3}(-u_+(b)-d-\tau)=
\frac{\theta_1}{\theta_3}(-u_+(b)-d)=-\frac{\theta_1}{\theta_3}(u_+(b)+d).
\]

Similarly, expanding the expression at the point $z=a$ instead, one obtains (ix) and (x).

\bigskip
\noindent
vii) Recall that
\[
   I_k=\frac{1}{2}\oint_{A_1} \frac{z^k dz}{\sqrt{p(z)}},\qquad  J_k=-\frac{1}{2}\oint_{B_1} \frac{z^k dz}{\sqrt{p(z)}}
\]
where we integrate along the cycles. Note that
\begin{equation}
\Omega=\frac{g_+(b)}{\pi i}=\frac{i}{\pi} \left[I_2+q_1I_1+q_0I_0 \right].    
\end{equation}
By means of direct differentiation, we obtain
\begin{equation}\label{dbJ1}
       \frac{d}{db}I_1=\frac{1}{2}I_0+b\frac{d}{db}I_0,\qquad  \frac{d}{db}J_1=\frac{1}{2}J_0+b\frac{d}{db}J_0,
\end{equation}
and
\begin{equation}\label{dbJ2}
       \frac{d}{db}I_2=\frac{1}{2}I_1+\frac{b}{2}I_0+b^2\frac{d}{db}I_0,\qquad \frac{d}{db}J_2=\frac{1}{2}J_1+\frac{b}{2}J_0+b^2\frac{d}{db}J_0.
\end{equation}
Furthermore, expanding the integrand of
\be
\oint\frac{d}{dz}\left(\frac{(z-a)(z-c)}{z-b}\right)^{1/2}dz=0
\ee
along the cycles, we obtain
\begin{equation}\label{dbJ0}
   \frac{d}{db}I_0=-\frac{I_1-bI_0}{2(a-b)(b-c)},\qquad  \frac{d}{db}J_0=-\frac{J_1-bJ_0}{2(a-b)(b-c)},
\end{equation}
Differentiating $q_0$ in (\ref{defq1}) and using the above derivatives of $J_k$, we find
\begin{equation}\label{ddbq0}
    \frac{d}{db}q_0=-\frac{1}{2}(b+q_1)-q(b)\frac{\frac{d}{db}J_0}{J_0}.
\end{equation}

This identity will be used repeatedly in the proof of the theorem. The final ingredient is obtained by means of Riemann's period relations. 
Indeed these relations yield (by similar arguments as in the proof of Lemma 3.45  in \cite{DIZ} and Lemma 27 in \cite{IB})
\begin{equation}\label{riemannsrelations}
    I_1J_0-J_1I_0=2\pi i.
\end{equation}
Alternatively, to show \eqref{riemannsrelations},
we can first reduce $I_0$, $I_1$, $J_0$, $J_1$ to complete elliptic integrals (cf. next section) $K$, $E$, and then use 
the Legendre relation:
\[
K(k^{\prime})E(k)+K(k)E(k^{\prime})-K(k)K(k^{\prime})=\frac{\pi}{2},\qquad k'=\sqrt{1-k^2}.
\]

The above facts, taken together, allow us to differentiate $\Omega$ directly and obtain (vii).

Similarly, we have
\begin{equation}\label{daJ1}
       \frac{d}{da}I_1=\frac{1}{2}I_0+a\frac{d}{da}I_0,\qquad  \frac{d}{da}J_1=\frac{1}{2}J_0+a\frac{d}{da}J_0,
\end{equation}
\begin{equation}\label{daJ2}
       \frac{d}{da}I_2=\frac{1}{2}I_1+\frac{a}{2}I_0+a^2\frac{d}{da}I_0,\qquad \frac{d}{da}J_2=\frac{1}{2}J_1+\frac{a}{2}J_0+a^2\frac{d}{da}J_0,
\end{equation}
and
\begin{equation}\label{ddaJ0}
\frac{d}{da}I_0=\frac{I_1-aI_0}{2(a-b)(a-c)},\qquad \frac{d}{da}J_0=\frac{J_1-aJ_0}{2(a-b)(a-c)}.
\end{equation}
Furthermore,
\begin{equation}\label{ddaq0}
    \frac{dq_0}{da}=-\frac{1}{2}(a+q_1)-q(a)\frac{\frac{d}{da}J_0}{J_0}.
\end{equation} 

With the help of these equations, (xi) easily follows.

\bigskip
\noindent
viii) By definition of $\tau$ in (\ref{definitionofV}),
\begin{equation}
    \tau =\frac{I_0}{J_0}.
\end{equation}
Using (\ref{dbJ0}) and \eqref{riemannsrelations}, we obtain
\begin{equation}\label{dtaudb}
    \frac{d\tau}{db}=\frac{I_1J_0-J_1I_0}{2J_0^2(b-a)(b-c)}=-\frac{\pi i}{J_0^2(a-b)(b-c)},
\end{equation}
which proves (viii).

Similarly, (xii) follows by \eqref{ddaJ0}.
\end{proof}

\subsection{Local parametrices }
In this section we construct parametrices in small neighborhoods, $U_p$, of the points $p=a,b,c$, 
see Figure \ref{Us} where their boundaries are depicted,
which match the exterior parametrix to the main order as $s\to \infty$ on the boundaries $\partial U_p$. As in \cite{DIZ}, \cite{Arno}, \cite{CIK} these parametrices involve Bessel functions. We choose $U_b$, $U_c$ in such a way that the zero $b<\beta_1^*<c$ of $\beta(z)-\beta(z)^{-1}$
(cf. \eqref{betastars}) is outside $U_p$'s. 

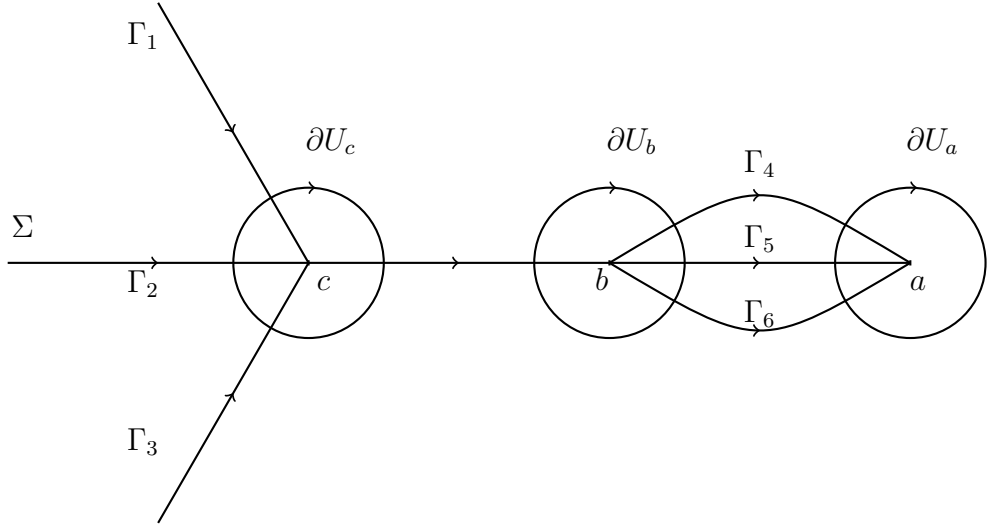
\begin{figure}
\begin{center}
    \begin{tikzpicture}
     \node[above] at (-5.8, -0.5)  (c)     {$c$};
     \node[above] at (-2.1, -0.5)  (c)     {$b$};
     \node[above] at (2.1, -0.5)  (c)     {$a$};
    \node[above] at (-9.8, 0.2)  (c)     {$\Sigma$};
    \node[above] at (-8.2,2.7)  (c)     {$\Gamma_1$};
\node[above] at (-8.2, -0.6)  (c)     {$\Gamma_2$};
\node[above] at (-8.2,-2.7)  (c)     {$\Gamma_3$};

    \node[above] at (0,1)  (c)     {$\Gamma_4$};      
    \node[above] at (0,0)  (c)     {$\Gamma_5$};  
    \node[above] at (0,-1)  (c)     {$\Gamma_6$};     

      \node[above] at (-5.7, 1.3)  (c)     {$\partial U_c$};
     \node[above] at (-1.7, 1.3)  (c)     {$\partial U_b$};
     \node[above] at (2.3, 1.3)  (c)     {$\partial U_a$};
    
    \draw[thick,middlearrow={>}] (-10,0)--(-6,0);
    \draw[thick,middlearrow={>}] (-6,0)--(-2,0);
      
    \draw[thick,middlearrow={>}] (-2,0)--(2,0);  
     \draw[thick] (-6,1pt)--(-6,-1pt);
     \draw[thick] (-2,1pt)--(-2,-1pt);
     \draw[thick] (2,1pt)--(2,-1pt);
     
        \draw[thick,middlearrow={>}] (-8,3.46)--(-6,0) ;
        \draw[thick,middlearrow={>}] (-8,-3.46)--(-6,0);
      
  \draw[thick,middlearrow={<}] (-6,-1) arc (-90:270:1);
     
         \draw[thick,middlearrow={<}] (-2,-1) arc (-90:270:1);

        \draw[thick,middlearrow={<}] (2,-1) arc (-90:270:1);
        
        \draw[thick,middlearrow={>}] (-2,0).. controls (0,1.2) .. (2,0);
        \draw[thick,middlearrow={>}] (-2,0).. controls (0,-1.2) .. (2,0);
 
    \end{tikzpicture}
        \caption{Jump contour with neighbourhoods $U_p$.}\label{Us}
\end{center}
\end{figure}

\subsubsection{Signs of $q(p)$}
Here we prove that for $0>a>b>c$,
\begin{equation}\label{signsqp}
    q(a)<0, \quad q(b)<0,\quad \mathrm{and} \quad q(c)>0.
\end{equation}
This will be used in subsequent analysis.

First note that, since
\begin{equation}
      J_k =(-1)^{k+1}\int_c^b \frac{\lvert z\rvert^k dz}{\sqrt{\lvert p(z) \rvert }},
\end{equation}
we have that
\begin{equation}
   \lvert b \rvert^k\lvert J_0 \rvert \leq \lvert J_k \rvert \leq \lvert c \rvert^k \lvert J_0 \rvert.
\end{equation}
Substituting these inequalities into the second form of $q_0$ in (\ref{defq1}), we find that
\begin{equation}\label{signofqb}
    q(b) \leq -\frac{1}{3}(a-b)(b-c) <0,
\end{equation}
and
\begin{equation}\label{signofqc}
    q(c) \geq \frac{1}{3}(a-c)(b-c) >0.
\end{equation}
To determine the sign of $q(a)$ we require a sharper bound. Let us, first, write
\begin{equation}
    J_1=\int_c^b \frac{a+z-a}{\sqrt{p(z)}}dz=aJ_0+\int_c^b \sqrt{\frac{a-z}{(b-z)(z-c)}} dz
\end{equation}
Then note the following standard reductions, with $k=\sqrt{\frac{b-c}{a-c}}$.
\begin{align*}
    & J_0  =-\int_c^b\frac{dz}{\sqrt{(a-z)(b-z)(z-c)}}=\left[ z=c+(b-c)w^2 \right]=-\frac{2}{\sqrt{a-c}}K(k),\\
    & K(k)=\int_0^1 \frac{dw}{\sqrt{1-k^2w^2}\sqrt{1-w^2}},
\end{align*}
and, similarly,
\begin{equation*}
    \int_c^b \sqrt{\frac{a-z}{(b-z)(z-c)}}dz =2\sqrt{a-c}E(k),\qquad
    E(k)=\int_0^1 \frac{\sqrt{1-k^2 w^2}}{\sqrt{1-w^2}}dw,
\end{equation*}
where $K(k)$ and $E(k)$ are the complete elliptic integrals of the first and second kind, respectively. 

Note that $0<k=\sqrt{\frac{b-c}{a-c}}<1$. With these expressions for $J_0$ and $J_1$ we find that $q(a)$ takes the following form.
\begin{equation}
    q(a)=\frac{1}{3}(a-c)\left(a-b-q_1\frac{E(k)}{K(k)}\right), \quad k=\sqrt{\frac{b-c}{a-c}}.
\end{equation}
We want to show that $q(a)<0$, i.e. that
\begin{equation}\label{qaconditionfornegative}
     \frac{E(k)}{K(k)}>\frac{a-b}{q_1}.
\end{equation}
As the authors in \cite{BCL} we will make use of the estimate
\begin{equation}\label{EKbound}
    \frac{E(k)}{K(k)}>\sqrt{1-k^2}.
\end{equation}
This reduces the problem to seeing that
\begin{equation}
    \sqrt{1-k^2}=\sqrt{\frac{a-b}{a-c}}>\frac{a-b}{q_1} \quad\mathrm{i.e., that}\quad (a+b+c)^2>4(a-b)(a-c) .
\end{equation}
But this is clear since, for $0>a>b>c$, we have
\begin{equation}
    (a+b+c)^2-4(a-b)(a-c)>(b+c)^2-4bc=(b-c)^2>0.
\end{equation}

The estimate (\ref{EKbound}) was proven in \cite{NeumanBounds}, in a different form, and also follows from log-convexity of the Gauss hypergeometric function \cite{C1}. We now provide a proof for the reader's convenience.

First  we have that
\begin{equation*}
   \frac{\pi^2}{4}= \left(\int_0^1\frac{dz}{\sqrt{1-z^2}} \right)^2 <\left(\int_0^1\frac{\sqrt{1-k^2z^2}}{\sqrt{1-z^2}}dz\right)\left(\int_0^1\frac{dz}{\sqrt{1-k^2z^2}\sqrt{1-z^2}}\right) =E(k)K(k),
\end{equation*}
and so
\begin{equation}
    \frac{E(k)}{K(k)}>\frac{\pi^2}{4K(k)^2}.
\end{equation}
It, thus, remains to prove the estimate
\begin{equation}\label{requiredKestimate}
    K(k)\leq \frac{\pi}{2}\frac{1}{\sqrt[4]{1-k^2}}.
\end{equation}
For $0<\alpha<\beta$, let us denote by $M(\alpha,\beta)$ the arithmetic-geometric mean. This is defined as the common limit of the following sequences.
\begin{align}
    &\alpha_0=\alpha,\quad \beta_0=\beta,\quad \alpha_{n+1}=\sqrt{\alpha_n \beta_n},\quad \alpha_n\leq \alpha_{n+1}; \qquad \beta_{n+1}=\frac{\alpha_n+\beta_n}{2},\quad \beta_{n+1}\leq \beta_n.
\end{align}
We then have
\begin{theorem}[Gauss, 1799]
\begin{equation}
    K(k) M(\sqrt{1-k^2},1)=\frac{\pi}{2}.\label{Gaussthm}
\end{equation}
\end{theorem}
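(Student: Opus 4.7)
The plan is to deduce Gauss's theorem from the invariance of a standard elliptic-type integral under the arithmetic-geometric-mean iteration (the Landen/Gauss transformation), and then pass to the limit.

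First, I would rewrite $K(k)$ in the symmetric form
\begin{equation*}
K(k)=\int_0^{\pi/2}\frac{d\theta}{\sqrt{1-k^2\sin^2\theta}}=I(1,\sqrt{1-k^2}),\qquad I(\al,\bt)=\int_0^{\pi/2}\frac{d\theta}{\sqrt{\al^2\cos^2\theta+\bt^2\sin^2\theta}},
\end{equation*}
valid for $0<\bt\le\al$. Note the trivial evaluation $I(\al,\al)=\pi/(2\al)$.

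The central step is to establish the Gauss invariance
\begin{equation*}
I(\al,\bt)=I\!\left(\tfrac{\al+\bt}{2},\sqrt{\al\bt}\right).
\end{equation*}
I would prove this by the substitution
\begin{equation*}
\sin\theta=\frac{2\al\sin\varphi}{(\al+\bt)+(\al-\bt)\sin^2\varphi},
\end{equation*}
which is a smooth, strictly increasing bijection $[0,\pi/2]\to[0,\pi/2]$. A direct (if somewhat laborious) computation, using $\al^2\cos^2\theta+\bt^2\sin^2\theta=\al^2-(\al^2-\bt^2)\sin^2\theta$, shows that
\begin{equation*}
\frac{d\theta}{\sqrt{\al^2\cos^2\theta+\bt^2\sin^2\theta}}=\frac{d\varphi}{\sqrt{\bigl(\frac{\al+\bt}{2}\bigr)^2\cos^2\varphi+\al\bt\sin^2\varphi}},
\end{equation*}
yielding the invariance. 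This substitution and its verification form the main technical obstacle of the argument; everything else is routine.

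With invariance in hand, set $\al_0=\al$, $\bt_0=\bt$ and iterate the AGM step $\al_{n+1}=(\al_n+\bt_n)/2$, $\bt_{n+1}=\sqrt{\al_n\bt_n}$. Then $I(\al,\bt)=I(\al_n,\bt_n)$ for all $n$. Since $\al_n,\bt_n\to M(\al,\bt)$ (quadratically), and since $I(\al,\bt)$ is continuous in its arguments on $\{0<\bt\le\al\}$ (uniform in $\theta$ on compact sets), passing to the limit gives
\begin{equation*}
I(\al,\bt)=I\bigl(M(\al,\bt),M(\al,\bt)\bigr)=\frac{\pi}{2M(\al,\bt)}.
\end{equation*}
Finally, specializing to $\al=1$, $\bt=\sqrt{1-k^2}$ yields $K(k)\,M(\sqrt{1-k^2},1)=\pi/2$, which is \eqref{Gaussthm}. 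The exchange of arguments is harmless since $M$ is symmetric in its arguments by construction.
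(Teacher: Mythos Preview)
Your proposal is correct and follows essentially the same approach as the paper: define the symmetric integral $I(\alpha,\beta)$, establish the Gauss invariance $I(\alpha,\beta)=I\bigl(\tfrac{\alpha+\beta}{2},\sqrt{\alpha\beta}\bigr)$ via the same substitution $\sin\theta=\frac{2\alpha\sin\varphi}{(\alpha+\beta)+(\alpha-\beta)\sin^2\varphi}$, iterate the AGM, and pass to the limit. The only differences are cosmetic (your labeling of which sequence is arithmetic vs.\ geometric is swapped, and you spell out the continuity justification for the limit more carefully than the paper does).
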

\begin{proof}
Define the integral $I(\alpha,\beta)$ by
\begin{equation}
    I(\alpha,\beta)=\int_0^{\frac{\pi}{2}}\frac{d\theta}{\sqrt{\alpha^2\cos^2\theta+\beta^2\sin^2\theta}}, \quad I(1,\sqrt{1-k^2})=I(\sqrt{1-k^2},1)
   = K(k).
\end{equation}
Note that
\begin{equation}
    I(\alpha,\beta)=\left[ \sin\theta=\frac{2\alpha \sin \phi }{(\alpha+\beta)+(\alpha-\beta)\sin^2\phi}\right]=I\left(\frac{\alpha+\beta}{2},\sqrt{\alpha \beta}\right).
\end{equation}
It follows, recursively, that 
\begin{equation}
    I(\alpha,\beta)=I\left(\frac{\alpha+\beta}{2},\sqrt{\alpha \beta}\right)=I(\beta_2,\alpha_2)=\cdots
    =I(M(\alpha,\beta),M(\alpha,\beta))=\frac{\pi}{2M(\alpha,\beta)},
\end{equation}
which gives the result upon taking $\alpha=\sqrt{1-k^2}$, $\beta=1$.
\end{proof}
The required estimate, (\ref{requiredKestimate}), now follows from \eqref{Gaussthm} by noting that $\alpha_1=\sqrt{\alpha \beta}\leq M(\alpha,\beta)$ with
$\alpha=\sqrt{1-k^2}$, $\beta=1$.


\subsubsection{Bessel model RH problem}
First recall the Bessel model RH problem of \cite{CIK}, which is a slight modification of the corresponding problem in \cite{Arno}, which in turn, is related to the problem in \cite{DIZ}.

Let $I_0$, $K_0$, $H_{0}^{(1)}$, $H_{0}^{(2)}$ denote the standard Bessel functions, see e.g. \cite{AS}, with branch cut along $(-\infty,0)$. 
Let, furthermore, $I_0^{\prime}(x)=\frac{d}{dx}I_0(x)$, etc, and define

\begin{equation}\label{Psi1}
\Psi_1(\zeta)=e^{-\frac{i\pi}{4}\sigma_3}\pi^{\frac{1}{2}\sigma_3}\begin{pmatrix}I_0(\zeta^{1/2})&\frac{i}{\pi}K_0(\zeta^{1/2})\\\ i \pi\zeta^{1/2}I_0^{'}(\zeta^{1/2})&-\zeta^{1/2}K_0^{'}(\zeta^{1/2}) \end{pmatrix},
\end{equation}

 \begin{equation}
\Psi_2(\zeta)=\frac{1}{2}e^{-\frac{i\pi}{4}\sigma_3}\pi^{\frac{1}{2}\sigma_3}\begin{pmatrix}H_0^1(-i\zeta^{1/2})&H_0^2(-i\zeta^{1/2})\\\  \pi\zeta^{1/2}H_0^{1'}(-i\zeta^{1/2})&\pi \zeta^{1/2}H_0^{2'}(-i\zeta^{1/2}) \end{pmatrix}
\end{equation}
and
 \begin{equation}
\Psi_3(\zeta)=\frac{1}{2}e^{-\frac{i\pi}{4}\sigma_3}\pi^{\frac{1}{2}\sigma_3}\begin{pmatrix}H_0^2(i\zeta^{1/2})&-H_0^1(i\zeta^{1/2})\\ -\pi \zeta^{1/2}H_0^{2'}(i\zeta^{1/2})&\pi \zeta^{1/2}H_0^{1'}(i\zeta^{1/2}) \end{pmatrix}
\end{equation}

Define regions
$\Omega_1=\{\zeta\in \mathbb{C}: -\frac{2\pi}{3}<\arg \zeta < \frac{2\pi}{3} \} $,
$\Omega_2=\{\zeta\in \mathbb{C}: \frac{2\pi}{3}<\arg \zeta < \pi \} $
and
$\Omega_3=\{\zeta\in \mathbb{C}: -\pi<\arg \zeta < -\frac{2\pi}{3} \} $ (see Figure \ref{FigBessel})
and consider the function $\Psi(\zeta)$ defined in each region $\Omega_k$ by $\Psi_k$. We have
\begin{figure}
\begin{center}
    \begin{tikzpicture}
    \draw[thick,middlearrow={>}] (-2,3.46)--(0,0) node[anchor=north west] {$0$} ;
     \node[above] at (-3.5, 0.3)  (c)     {$\Gamma$};
    \draw[thick,middlearrow={>}] (-2,-3.46)--(0,0) ;
    \draw[thick,middlearrow={>}] (-4,0)--(0,0) ;
         \node[above] at (-2.3, -1.3)  (c)     {$\Omega_3$};
         \node[above] at (2, 0)  (c)     {$\Omega_1$};
         \node[above] at (-2.3, 1.3)  (c)     {$\Omega_2$};
    \end{tikzpicture}
    \caption{Jump contour for the $\Psi$-RH problem.}\label{FigBessel}
\end{center} 
\end{figure}
\begin{lemma} \cite{Arno,CIK}
For $\Gamma=(-\infty,0)\cup \mathbb{R_+}\exp\left(\frac{2\pi i}{3} \right)\cup \mathbb{R_+}\exp\left(-\frac{2\pi i}{3} \right)$, $\Psi(\zeta)$ 
satisfies the following RHP.
\begin{equation}
\begin{aligned}
&\Psi(\zeta)\;\mathrm{ is\; analytic\; in } \;\mathbb{C}\setminus \Gamma,\\ 
&\Psi_{+}(\zeta)=\Psi_{-}(\zeta)\begin{pmatrix}1&0\\ 1&1 \end{pmatrix} \mathrm{, } \qquad \zeta \in \mathbb{R_+}\exp\left(\frac{2\pi i}{3} \right)\cup \mathbb{R_+}\exp\left(-\frac{2\pi i}{3} \right),\\ 
&\Psi_{+}(\zeta)=\Psi_{-}(\zeta)\begin{pmatrix} 0&1\\ -1&0 \end{pmatrix} \mathrm{, }\qquad  \zeta \in (-\infty,0),\\ 
&\Psi(\zeta)=\zeta^{-\frac{1}{4}\sigma_3}N_0\left(I+\frac{1}{8\sqrt{\zeta}}\begin{pmatrix}-1&-2i\\ -2i&1 \end{pmatrix}+\bigO\left(\frac{1}{\zeta}\right)\right)e^{\zeta^{1/2}\sigma_3} \mathrm{, }\qquad \zeta \to \infty.\\
\end{aligned}
\end{equation}
Furthermore, the expansion at infinity holds uniformly as $|\zeta|\to\infty$.

\subsubsection{Construction of the parametrices}
\end{lemma}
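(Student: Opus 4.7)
The plan is to verify the three requirements of the Riemann--Hilbert problem -- analyticity off $\Gamma$, the prescribed jumps across $\Gamma$, and the large-$\zeta$ asymptotic expansion -- directly from classical identities for Bessel and Hankel functions, following \cite{Arno, CIK}. The three piecewise definitions $\Psi_1,\Psi_2,\Psi_3$ are designed precisely so that the connection formulas relating $I_0, K_0$ to $H_0^{(1)}, H_0^{(2)}$ become the required jump matrices.

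First I would check analyticity. The principal branch of $\zeta^{1/2}$ is analytic in $\mathbb{C}\setminus(-\infty,0]$. In $\Omega_1$, the function $I_0(w)$ is entire in $w$ and $K_0(w)$ is analytic in $\mathbb{C}\setminus(-\infty,0]$, and one checks that for $\zeta\in\Omega_1$ the argument $w=\zeta^{1/2}$ lies in the right half-plane, so $\Psi_1$ is analytic on $\Omega_1$. In $\Omega_2$ (respectively $\Omega_3$), the argument $-i\zeta^{1/2}$ (resp.\ $i\zeta^{1/2}$) stays away from $(-\infty,0]$, so $H_0^{(1,2)}$ of that argument is analytic. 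Thus $\Psi$ is analytic in $\mathbb{C}\setminus\Gamma$.

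Next I would verify the jumps. On the rays $\arg\zeta=\pm\tfrac{2\pi}{3}$, I would use the connection formulas
\[
I_0(w)=\tfrac{1}{2}\bigl[H_0^{(1)}(iw)+H_0^{(2)}(iw)\bigr],\qquad K_0(w)=\tfrac{\pi i}{2}H_0^{(1)}(iw),
\]
(valid in appropriate sectors), together with their derivative analogues obtained from $\tfrac{d}{dw}H_0^{(j)}(w)=-H_1^{(j)}(w)$ and the Wronskian $H_0^{(1)}(w)H_1^{(2)}(w)-H_0^{(2)}(w)H_1^{(1)}(w)=-\tfrac{4i}{\pi w}$. A direct column-by-column comparison of $\Psi_1$ and $\Psi_2$ (resp.\ $\Psi_3$) at the boundary $\arg\zeta=\tfrac{2\pi}{3}$ (resp.\ $-\tfrac{2\pi}{3}$) shows that $\Psi_1=\Psi_2\bigl(\begin{smallmatrix}1&0\\1&1\end{smallmatrix}\bigr)$ there. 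On $(-\infty,0)$ the jump follows from the monodromy $H_0^{(1)}(e^{i\pi}w)=-H_0^{(2)}(w)$ and $H_0^{(2)}(e^{-i\pi}w)=-H_0^{(1)}(w)$, which, after bookkeeping of the factors $\pi^{\frac12\sigma_3}e^{-i\pi\sigma_3/4}$ and $\zeta^{1/2}$, produces exactly $\bigl(\begin{smallmatrix}0&1\\-1&0\end{smallmatrix}\bigr)$.

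For the asymptotics I would plug in the uniform large-argument expansions
\[
I_0(w)\sim\frac{e^{w}}{\sqrt{2\pi w}}\Bigl(1-\tfrac{1}{8w}+O(w^{-2})\Bigr),\qquad K_0(w)\sim\sqrt{\tfrac{\pi}{2w}}\,e^{-w}\Bigl(1+\tfrac{1}{8w}+O(w^{-2})\Bigr),
\]
and the analogous ones for $H_0^{(1,2)}$, together with the derivative versions obtained by differentiation (equivalently, by the identity $I_0'=I_1$, $K_0'=-K_1$, etc.). With $w=\zeta^{1/2}$, the leading factors $(2\pi w)^{-1/2}e^{\pm w}$ combine with the prefactor $e^{-i\pi\sigma_3/4}\pi^{\sigma_3/2}$ and the matrix entries to reconstitute $\zeta^{-\sigma_3/4}N_0 e^{\zeta^{1/2}\sigma_3}$, while the $\tfrac{1}{8w}$ corrections and their derivative counterparts assemble into the matrix $\tfrac{1}{8\sqrt\zeta}\bigl(\begin{smallmatrix}-1&-2i\\-2i&1\end{smallmatrix}\bigr)$. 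Because each of the three expansions is uniform on its sector (and the subleading terms match across the rays thanks to the jumps already verified), the expansion holds uniformly as $|\zeta|\to\infty$.

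The main obstacle is purely the bookkeeping: keeping track of the $e^{-i\pi\sigma_3/4}\pi^{\sigma_3/2}$ prefactor, the correct sector of validity of each Bessel connection formula, and the sign conventions for $(-i\zeta^{1/2})^{1/2}$-type quantities when verifying the expansion. Once this accounting is done carefully, all three conditions fall out from the classical formulas.
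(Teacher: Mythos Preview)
Your proof plan is correct and follows the standard verification strategy for this well-known Bessel parametrix. The paper itself does not supply a proof of this lemma: it simply states the result and cites \cite{Arno, CIK}, so there is no in-paper argument to compare against. Your outline---checking sectorwise analyticity, deriving the jumps from the Bessel/Hankel connection formulas, and extracting the large-$\zeta$ expansion from the classical asymptotics of $I_0$, $K_0$, $H_0^{(1,2)}$---is precisely how the result is established in those references.
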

Writing 
\[
g(z)=g_+(p)+\int_{p}^z\frac{q(\zeta)d\zeta}{p(\zeta)^{1/2}},
\]
let
\begin{equation}\label{fdef}
    f_p(z)=\left(\int_{p}^z\frac{q(\zeta)d\zeta}{p(\zeta)^{1/2}}\right)^2, \qquad f_p(z)^{1/2}=-
    \int_{p}^z\frac{q(\zeta)d\zeta}{p(\zeta)^{1/2}}=g_+(p)-g(z),\qquad
    p=a,b,c.
\end{equation}
It is easy to verify that there exist an open disk $U_p$ centered at $p$ (for each $p=a,b,c$) of a sufficiently 
small radius $\epsilon$ such that
$f_p(z)$ is a conformal mapping of $U_p$ onto a neighborhood of zero. Expanding close to $z=p$, we have, in particular,
\begin{equation}\label{glocalexp}
f_p(z)^{1/2}=
 \begin{cases}-\frac{2q(a)}{\sqrt{(a-b)(a-c)}}(z-a)^{\frac{1}{2}}(1+\bigO(z-a)),\qquad z\to a, \\  -\frac{2  q(b)}{\sqrt{(a-b)(b-c)}} (b-z)^{\frac{1}{2}} (1+\bigO(z-b)),\qquad  z\to b,  \\ \frac{2q(c)}{\sqrt{(a-c)(b-c)}} (z-c)^{\frac{1}{2}} (1+\bigO(z-c)),\qquad  z\to c .
\end{cases}
\end{equation}
Note that the signs of the quantities $q(p)$ in \eqref{signsqp} ensure the location of the cuts of $f_p(z)^{1/2}$ in $U_p$
corresponds to the contour of the $\Psi$-RHP under the mapping 
\[
\zeta(z)=s^3 f_p(z).
\]
We now choose the exact form of the jump contour $\Sigma_S$, for the $S$-RHP
problem \eqref{SRHP}, in $U_p$ so that its image under the mapping $\zeta(z)$ is direct lines. This relates the construction to the Bessel problem.

We look for local parametrices, with $z\in U_p$, in the form
\begin{equation}\label{defP}
\begin{aligned}
P_p(z;s)&=E_p(z;s)\Psi\left(s^3f_p(z)\right)\exp(s^{3/2}g(z)\sigma_3),\qquad p=a,c\\
P_b(z;s)&=E_b(z;s)\sigma_3\Psi\left(s^3f_b(z)\right)\sigma_3 \exp(s^{3/2}g(z)\sigma_3),\qquad p=b.
\end{aligned}
\end{equation}
where $E_p(z)$ are, analytic in $U_p$, matrix-valued functions chosen so that $P_p(z;s)$ matches $S(z)$ on $\partial U_p$
to the main order. We will now see that the following functions satisfy these conditions. Set
\begin{equation}\label{Ep}
E_p(z;s)=P^{\infty}(z;s)e^{-s^{3/2}g_{\pm}(p)\sigma_3}N_0^{-1}f_p(z)^{\frac{1}{4}\sigma_3}s^{\frac{3}{4}\sigma_3}, \qquad p=a,c,
\end{equation}
and
\begin{equation}\label{Eb}
E_b(z;s)=P^{\infty}(z;s)e^{-s^{3/2}g_{\pm}(b)\sigma_3}\sigma_3 N_0^{-1} \sigma_3 f_b(z)^{\frac{1}{4}\sigma_3}s^{\frac{3}{4}\sigma_3},\qquad p=b,
\end{equation}
where in $\pm$ we choose $+$, if $\Im (z-p) >0$ and $-$, if $\Im (z-p) <0$.

We have
\begin{lemma}\label{Eanalytic}
The functions $E_p(z)=E_p(z;s)$ defined by \eqref{Ep}, \eqref{Eb} are analytic in the respective neighbourhoods $U_p$.
\end{lemma}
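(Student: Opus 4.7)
The plan is to show analyticity of $E_p(z)$ in $U_p$ by verifying (i) that $E_p$ has no jumps across any arc inside $U_p$, and (ii) that $E_p$ is bounded at $z=p$. Riemann's removable singularity theorem then gives analyticity of $E_p$ in $U_p$.

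For (i), only three factors in \eqref{Ep}--\eqref{Eb} can jump inside $U_p$: the exterior parametrix $P^{\infty}(z)$, the piecewise constant exponential $e^{-s^{3/2}g_{\pm}(p)\sigma_3}$ (which jumps across the real axis where the sign of $\Im(z-p)$ flips), and $f_p(z)^{\sigma_3/4}$. I would place the branch cut of $f_p^{1/4}$ along the segment of $\mathbb{R}\cap U_p$ on which $f_p<0$. Using \eqref{glocalexp} together with the signs of $q(p)$ established in \eqref{signsqp} and of $p(z)^{1/2}$ on each real subinterval, this segment is $(b,a)\cap U_a$ for $p=a$, $(-\infty,c)\cap U_c$ for $p=c$, and $(b,a)\cap U_b$ for $p=b$; on the complementary real segments inside $U_p$ the function $f_p$ is positive, so $f_p^{1/4}$ is continuous there. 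Across the branch cut one has $f_p^{1/4}|_+ = \epsilon_p\, i\, f_p^{1/4}|_-$ with $\epsilon_a=\epsilon_c=+1$ and $\epsilon_b=-1$, the sign reversal at $b$ coming from the factor $(b-z)^{1/2}$ in the local expansion \eqref{glocalexp}; hence $f_p^{\sigma_3/4}|_+ = \epsilon_p\, i\sigma_3\, f_p^{\sigma_3/4}|_-$. Cancellation of the $P^{\infty}$-jump $J=\begin{pmatrix}0 & 1 \\ -1 & 0\end{pmatrix}$ (occurring on $(b,a)\cap U_a$, on $(-\infty,c)\cap U_c$, and on both $(c,b)\cap U_b$ and $(b,a)\cap U_b$) against the $f_p^{\sigma_3/4}$-jump then reduces to the matrix identities
\begin{equation*}
N_0 J N_0^{-1} = -i\sigma_3,\qquad \sigma_3 N_0^{-1}\sigma_3 N_0 = i\sigma_1,\qquad J\sigma_3 = -\sigma_3 J,
\end{equation*}
verified directly from $N_0 = \tfrac{1}{\sqrt 2}\begin{pmatrix}1 & 1 \\ -1 & 1\end{pmatrix}e^{-i\pi\sigma_3/4}$. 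On $(c,b)\cap U_c$ and $(c,b)\cap U_b$ the $P^{\infty}$-jump $e^{2s^{3/2}g_+(b)\sigma_3}$ is cancelled exactly by the jump in $e^{-s^{3/2}g_{\pm}(p)\sigma_3}$, using $g_+(c)=g_+(b)=-g_-(c)$ and $g_-(b)=-g_+(b)$.

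For (ii), the expansions \eqref{uexp}--\eqref{betaexp} give $\beta(z)^{\mp 1}\sim (z-p)^{\pm 1/4}$, with the choice of sign depending on whether $\beta$ vanishes ($p=a,c$) or blows up ($p=b$) at $p$. The entries $N_{jk}$ of $N(z)$ correspondingly diverge like $(z-p)^{-1/4}$, but to leading order the singular coefficient matrix has rank one. Expanding $\theta_{jk}(u(z))$ around $u_+(p)$ and using the relations \eqref{thua} at $p=a$ (and their quasi-periodic analogues at $p=c,b$ via \eqref{qprelations} and \eqref{thetanoder}), one finds that the singular rank-one contribution to $P^{\infty}(z) N_0^{-1}$, or to $P^{\infty}(z)\sigma_3 N_0^{-1}\sigma_3$ when $p=b$, lies in a single column, which is precisely annihilated by the factor $(z-p)^{1/4}$ in the corresponding diagonal entry of $f_p^{\sigma_3/4}$; the other column is bounded from the outset and is multiplied only by a finite factor. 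Thus $E_p(z)$ is bounded as $z\to p$, and together with (i) this gives analyticity in $U_p$.

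The main obstacle is the bookkeeping: the sign of the $f_p^{1/4}$ jump depends on the branch choice in \eqref{glocalexp} (which differs at $b$ due to the $(b-z)^{1/2}$ factor there), and the piecewise exponential factor contributes an additional jump across $\mathbb{R}\cap U_p$ that must match the $P^{\infty}$ jump. Once all sign conventions are aligned consistently with the Riemann-Hilbert setup of Section \ref{secRHP}, both cancellations reduce to the elementary matrix identities above and the power-counting of $(z-p)^{1/4}$ factors.
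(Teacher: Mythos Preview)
Your approach is essentially the same as the paper's: show that $E_p$ has no jumps on $\mathbb{R}\cap U_p$ and that the singularity at $p$ is removable. There is one slip in your bookkeeping: you list $(c,b)\cap U_b$ among the intervals where $P^{\infty}$ has the jump $J$, but on $(c,b)$ the $P^{\infty}$-jump is the diagonal one $e^{2s^{3/2}g_+(b)\sigma_3}$, which you then (correctly) handle in the next sentence; only $(b,a)\cap U_b$ carries the jump $J$. You should also make explicit that on $(-\infty,c)\cap U_c$ and on $(b,a)\cap U_b$ all \emph{three} jumps (the $P^{\infty}$-jump $J$, the jump of the piecewise constant $e^{-s^{3/2}g_{\pm}(p)\sigma_3}$, and the jump of $f_p^{\sigma_3/4}$) occur simultaneously; the exponential jump is absorbed because $Je^{\alpha\sigma_3}=e^{-\alpha\sigma_3}J$ (this is your identity $J\sigma_3=-\sigma_3 J$) and $g_-(p)=-g_+(p)$ at $p=b,c$.

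For step (ii) the paper's argument is considerably shorter than your rank-one analysis: once (i) shows the singularity is isolated, it suffices to observe that each entry of $P^{\infty}(z)$ and of $f_p^{\sigma_3/4}(z)$ has at worst a $(z-p)^{-1/4}$ singularity, so each entry of $E_p(z)$ blows up at worst like $(z-p)^{-1/2}$, and an isolated singularity with $o((z-p)^{-1})$ growth is removable. Your detailed rank-one analysis is correct (the identities \eqref{thua} and \eqref{thetanoder} indeed force the leading $(z-p)^{-1/4}$ part of $P^{\infty}N_0^{-1}$, respectively $P^{\infty}\sigma_3 N_0^{-1}\sigma_3$, into the column that is killed by $f_p^{1/4}$), but it is not needed here.
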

\begin{proof} 
 Note that  \eqref{glocalexp} gives
\[
f_{a}^{\frac{1}{4}}(z)_{+}=i f_{a}^{\frac{1}{4}}(z)_{-},\quad z\in (a-\epsilon,a);\qquad
f_{c}^{\frac{1}{4}}(z)_{+}=i f_{c}^{\frac{1}{4}}(z)_{-},\quad z\in (c-\epsilon,c),
\]
and
\[
f_{b}^{\frac{1}{4}}(z)_{+}=-i f_{c}^{\frac{1}{4}}(z)_{-},\quad z\in (b,b+\epsilon).
\]

Consider $U_c$. First, we verify that $E_c(z)$ has no jumps in $U_c$.
On $(c-\epsilon,c)$, we have using the jump conditions for $P^{\infty}$ in \eqref{pinfinityrhp}, and the fact that by \eqref{gjumps}, $g_+(c)=-g_-(c)$,
\[
E_c(z)_{+}=P_{-}^{\infty}(z;s)e^{-s^{3/2}g_-(c)}\begin{pmatrix} 0&1\\ -1&0 \end{pmatrix}N_0^{-1}\begin{pmatrix}i&0\\0&-i \end{pmatrix}f_{c}^{\frac{1}{4}\sigma_3}(z)_{-}s^{\frac{3}{4}\sigma_3}=E_c(z)_{-}.
\]
On $(c,c+\epsilon)$, since $g_+(b)=g_+(c)$,
\[
E_c(z)_{+}=P_{-}^{\infty}(z;s)e^{s^{3/2}g_+(c)}N_0^{-1}f_{c}^{\frac{1}{4}\sigma_3}(z)_{-}s^{\frac{3}{4}\sigma_3}=E_c(z)_{-}.
\]
So that the singularity of $E_c(z)$ at $c$ is not a branch point.
Since $f_c^{\frac{1}{4}\sigma_3}(z)$ and the matrix elements $P^{\infty}_{jk}(z)$ contain at worst a $(z-c)^{-1/4}$ singularity each, it follows that the singularity of $E_c(z)$ at $c$ is removable. Thus $E_c(z)$ is analytic in $U_c$.

The functions $E_b(z)$ in $U_b$, and $E_a(z)$ in $U_a$ are considered similarly.
\end{proof}

Now by the large $\zeta$ expansion of $\Psi(\zeta)$ and \eqref{fdef},
 we find that on the boundaries $\partial U_p$ for large $s$, uniformly in $z$,
\begin{equation}\label{ACDelta2Error}
P_p(z;s)=P^{\infty}(z;s)e^{-s^{3/2}g_{\pm}(p)\sigma_3}\left(I+\frac{1}{8s^{3/2}f_p(z)^{1/2}}\begin{pmatrix}-1&-2i\\-2i&1\end{pmatrix}+\bigO\left(\frac{1}{s^3}\right)\right)e^{s^{3/2}g_{\pm}(p)\sigma_3},
\end{equation}
for $p=a,c$, 
and 
\be\label{BDelta2Error}
P_b(z;s)= P^{\infty}(z;s)e^{-s^{3/2}g_{\pm}(b)\sigma_3}\left(I+\frac{1}{8s^{3/2}f_b(z)^{1/2}}\begin{pmatrix}-1&2i\\2i&1\end{pmatrix}+\bigO\left(\frac{1}{s^3}\right)\right) e^{s^{3/2}g_{\pm}(b)\sigma_3},
\ee
where as before, in $\pm$, we choose $+$, if $\Im (z-p) >0$ and $-$, if $\Im (z-p) <0$.

Since $E_p(z)$ are analytic,  left-hand multiplication by them does not affect the jump conditions for $P_p(z)$, and it is easy to check
that $P_p(z)$ satisfies the same jump conditions as $S$ on $\Sigma_S\cup U_p$.

Thus, we have shown

\begin{lemma}\label{Pprhp}
With $\Sigma_S$ and $v_S$ denoting the contour and jumps, respectively, of the $S$-RHP \eqref{SRHP}; the function $P_p(z;s)$ given by \eqref{defP}
in $U_p$, $p=a,b,c$,
has the following properties.
\begin{equation}
\begin{aligned}
&P_p(z;s)\; \mathrm{ is\; analytic\; in }\; \left(\mathbb{C}\setminus \Sigma_S\right)\cap U_p,\\ 
&P_p(z;s)_{+}=P_p(z;s)_{-}v_S(z;s),\qquad z \in \Sigma_S \cap U_p.
\end{aligned}
\end{equation}
Moreover, $P_p(z;s)(P^{\infty}(z;s))^{-1}$ has the following large-$s$ expansion:
\begin{equation*}
P_p(z;s)(P^{\infty}(z;s))^{-1} =I+\Delta_1(z;s)+\bigO\left(\frac{1}{s^{3}}\right), \quad \Delta_1(z;s)=\bigO\left(\frac{1}{s^{3/2}}\right), \quad s\to \infty,
\end{equation*}
uniformly on $\partial U_p$. The function $\Delta_1(z;s)$ in $z$ is meromorphic in $U_p$.
For $p=a,c$, we have
\begin{equation*}
\Delta_1(z;s)=\frac{1}{8s^{3/2}f_p(z)^{1/2}}P^{\infty}(z;s)e^{-s^{3/2}g_{\pm}(p)\sigma_3}\begin{pmatrix}-1&-2i\\-2i&1\end{pmatrix} e^{s^{3/2}g_{\pm}(p)\sigma_3}\left( P^{\infty}(z;s)\right)^{-1},
\end{equation*}
and, for $p=b$,
\[
\Delta_1(z;s)=\frac{1}{8s^{3/2}f_b(z)^{1/2}} P^{\infty}(z;s)e^{-s^{3/2}g_{\pm}(b)\sigma_3}\begin{pmatrix}-1&2i\\2i&1\end{pmatrix} e^{s^{3/2}g_{\pm}(b)\sigma_3}\left( P^{\infty}(z;s)\right)^{-1},
\]
where in $\pm$, we choose $+$, if $\Im (z-p) >0$ and $-$, if $\Im (z-p) <0$.
\end{lemma}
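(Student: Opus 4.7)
The plan is to verify three things in order: analyticity of $P_p$ on $U_p\setminus\Sigma_S$, the $v_S$-jump on $\Sigma_S\cap U_p$, and the claimed asymptotic on $\partial U_p$. Since the prefactor $E_p$ and the extra $\sigma_3$-conjugation in the definition \eqref{defP} were engineered precisely so that the model Bessel jumps of $\Psi$ (after conjugation by $e^{s^{3/2}g\sigma_3}$) reproduce the $S$-jumps, this is a structured consistency check rather than a new computation, and the display formulas \eqref{ACDelta2Error} and \eqref{BDelta2Error} already contain the matching expansion.

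For analyticity, I would note that $\Psi(s^3f_p(z))$ is analytic off the preimage of the Bessel contour $\Gamma$, and by the explicit choice of $\Sigma_S$ inside $U_p$ (chosen so that its image under the conformal map $\zeta=s^3 f_p(z)$ coincides with $\Gamma$) this preimage equals $\Sigma_S\cap U_p$. Combined with Lemma \ref{Eanalytic} and the analyticity of $e^{s^{3/2}g(z)\sigma_3}$ off the real line, this gives analyticity of $P_p$ on $U_p\setminus\Sigma_S$.

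For the jump matching, I would treat each arc of $\Sigma_S\cap U_p$ separately. On the lens rays $\Gamma_j\cap U_p$ the Bessel jump $\bigl(\begin{smallmatrix}1&0\\1&1\end{smallmatrix}\bigr)$ of $\Psi$ combined with conjugation by $e^{s^{3/2}g(z)\sigma_3}$ produces exactly the $S$-jump $\bigl(\begin{smallmatrix}1&0\\e^{2s^{3/2}g(z)}&1\end{smallmatrix}\bigr)$ from \eqref{SRHP}. On the real-line segments inside $U_p$, the Bessel jump $\bigl(\begin{smallmatrix}0&1\\-1&0\end{smallmatrix}\bigr)$ combined with $g_++g_-=0$ on $(-\infty,c)\cup(b,a)$, respectively $g_+-g_-=2g_+(b)$ on $(c,b)$ (both from \eqref{gjumps}), reproduces the remaining two $S$-jumps. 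The role of the extra $\sigma_3$ inside $P_b$ is to swap the upper- and lower-triangular Bessel jumps so that the cut of $f_b^{1/2}$, which by the sign information \eqref{signsqp} lies to the left of $b$, matches the correct side of the $S$-contour at the right endpoint of $(c,b)$.

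For the matching on $\partial U_p$, I would substitute the large-$\zeta$ expansion
\[
\Psi(\zeta)=\zeta^{-\frac{1}{4}\sigma_3}N_0\Bigl(I+\frac{1}{8\sqrt{\zeta}}\begin{pmatrix}-1&-2i\\-2i&1\end{pmatrix}+\bigO(\zeta^{-1})\Bigr)e^{\zeta^{1/2}\sigma_3}
\]
into \eqref{defP} with $\zeta=s^3f_p(z)$, using $s^{3/2}f_p(z)^{1/2}=s^{3/2}(g_\pm(p)-g(z))$ from \eqref{fdef}. The factor $\zeta^{-\frac{1}{4}\sigma_3}N_0$ is cancelled by $E_p$ (this is how $E_p$ was defined in \eqref{Ep}, \eqref{Eb}), while $e^{\zeta^{1/2}\sigma_3}$ is cancelled by the outer $e^{s^{3/2}g(z)\sigma_3}$, leaving precisely \eqref{ACDelta2Error} for $p=a,c$ and \eqref{BDelta2Error} for $p=b$, where the $\sigma_3$-conjugation inside $P_b$ only flips the sign of the off-diagonal entries $\mp2i$. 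Reading off the $1/s^{3/2}$ term gives the stated formula for $\Delta_1$, and the remainder inherits the $\bigO(s^{-3})$ bound, uniformly on the compact circle $\partial U_p$. The only step requiring genuine care is the second one, where the bookkeeping of $\pm$ boundary values, signs of $q(p)$ and orientation of the cuts of $f_p^{1/2}$ must be kept consistent; but these are already pinned down by \eqref{signsqp} and \eqref{gjumps}, so the obstacle is essentially clerical.
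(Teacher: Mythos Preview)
Your proposal is correct and follows exactly the paper's approach: analyticity via Lemma~\ref{Eanalytic} and the choice of $\Sigma_S$ as the $\zeta$-preimage of the Bessel contour; jump matching by combining the Bessel jumps of $\Psi$ with the jumps of $e^{s^{3/2}g\sigma_3}$ coming from \eqref{gjumps}; and the boundary matching by inserting the large-$\zeta$ expansion of $\Psi$ into \eqref{defP}, which reproduces \eqref{ACDelta2Error}--\eqref{BDelta2Error} and hence the stated $\Delta_1$.

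One small slip in your heuristic for the $\sigma_3$-conjugation at $b$: the cut of $f_b^{1/2}$ lies to the \emph{right} of $b$, not to the left. Indeed, from \eqref{glocalexp} and $q(b)<0$ one has $f_b(z)>0$ for $z\in(c,b)$ and $f_b(z)<0$ for $z\in(b,a)$, so the Bessel negative axis corresponds to $(b,a)$. Also, $\sigma_3$-conjugation does not swap upper- and lower-triangular matrices; it negates off-diagonal entries (so $\sigma_3\bigl(\begin{smallmatrix}1&0\\1&1\end{smallmatrix}\bigr)\sigma_3=\bigl(\begin{smallmatrix}1&0\\-1&1\end{smallmatrix}\bigr)$, the inverse). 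The actual reason the extra $\sigma_3$ is needed is that the local geometry at $b$ is the mirror image of that at $a,c$ (lenses open to the right rather than to the left), which reverses the orientation with which one crosses the Bessel rays; the $\sigma_3$-conjugation compensates for this reversal and simultaneously produces the sign flip $-2i\to 2i$ in the matching matrix. This does not affect the validity of your argument, since as you note the verification is ultimately a direct check against \eqref{SRHP} using \eqref{signsqp} and \eqref{gjumps}.
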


\subsection{Small norm RH Problem for $R$}
Set
\begin{equation}\label{RRHPDEF}
R(z)=\left(s^{-\sigma_3/4}S_0 N_0 F_0^{-1}N_0^{-1}\right)^{-1} S(z)\times
\begin{cases}
P_p(z)^{-1},& \mathrm{for}\quad  z\in U_p,\\
P^{\infty}(z)^{-1},& \mathrm{for}\quad  z\in \mathbb{C}\setminus U_p,
\end{cases}
\end{equation}
and let $\Sigma_R$ denote the contour in Figure \ref{FigR}.
\begin{figure}
\begin{center}
    \begin{tikzpicture}
     \node[above] at (-5.7, -0.2)  (c)     {$c$};
     \node[above] at (-1.7, -0.2)  (c)     {$b$};
     \node[above] at (2.3, -0.2)  (c)     {$a$};
      
      \node[above] at (-5.7, 1.3)  (c)     {$\partial U_c$};
     \node[above] at (-1.7, 1.3)  (c)     {$\partial U_b$};
     \node[above] at (2.3, 1.3)  (c)     {$\partial U_a$};

     \draw[thick] (-6,1pt)--(-6,-1pt);
     \draw[thick] (-2,1pt)--(-2,-1pt);
     \draw[thick] (2,1pt)--(2,-1pt);
     
        \draw[thick,middlearrow={>}] (-8,3.46)--(-6.5,0.866) ;
        \draw[thick,middlearrow={>}] (-8,-3.46)--(-6.5,-0.866);
      
  \draw[thick,middlearrow={<}] (-5,0) arc (0:360:1);
     
         \draw[thick,middlearrow={<}] (-1,0) arc (0:360:1);

        \draw[thick,middlearrow={<}] (3,0) arc (0:360:1);
        
        \draw[thick,middlearrow={>}] (-2+0.707,0.707).. controls (0,1.2) .. (2-0.707,0.707);
        \draw[thick,middlearrow={>}] (-2+0.707,-0.707).. controls (0,-1.2) .. (2-0.707,-0.707);
    \end{tikzpicture}
    \caption{Jump contour for the $R$-RH problem.}\label{FigR}
\end{center}
\end{figure}
Then, $R$ satisfies the following Riemann-Hilbert problem.
\begin{equation}
\begin{aligned}
&R \;\mathrm{ is\; analytic\; in\; } \mathbb{C}\setminus \Sigma_R \\
&R_+(z)=R_-(z)P_p(z)(P^{\infty}(z))^{-1},\qquad z\in \partial U_p,\quad p\in \{a,b,c\} \\
&R_+(z)=R_-(z)P^{\infty}(z)v_S(z)(P^{\infty}(z))^{-1},\qquad z\in \Sigma_R \setminus \left(\partial U_a \cup \partial U_b \cup \partial U_c\right), \\
&R(z)\to I,\qquad z\to \infty,
\end{aligned}
\end{equation}
where
\begin{equation}
v_S(z)=\begin{pmatrix}1 & 0\cr e^{2s^{3/2}g(z)} & 1\end{pmatrix}.
\end{equation}
Indeed, the jump conditions easily follow from 
 the RH problems for $S(z)$, $P^{\infty}(z)$ and $P_p(z)$. The condition at infinity follows from \eqref{SRHP}, \eqref{pinfinityrhp},
 \eqref{Ninfty}.

We will now show that all the jumps for the $R$-RHP problem are close to the identity for large $s$ both in terms of $L^{\infty}$ and $L^2$ norms, which guarantees the solvability of the problem for large $s$ \cite{Dbook}. 
 
Let us first consider the jumps of $R$ on $\Sigma_R \setminus \left(\partial U_a \cup \partial U_b \cup \partial U_c\right)$. We claim that one may deform the contour $\Gamma_1 \cup \Gamma_3 \cup \Gamma_4 \cup \Gamma_6$, in the RH problem for $S(z)$ (\ref{SRHP}), in order to have $\Re g(z) < 0$ on this part of $\Sigma_R$. In particular, we show that this implies
\begin{equation}\label{expsmall}
P^{\infty}(z)v_S(z)(P^{\infty}(z))^{-1} =I+\bigO\left(e^{-c's^{3/2}\max\{1,|z|^{3/2}\}}\right), \quad c'>0.
\end{equation}

First, since 
\be\label{cond0}
\Re g(z) =\frac{2}{3}\lvert z \rvert^{3/2}\cos \left( \frac{3}{2}\arg z \right)+\bigO\left(\frac{1}{\sqrt{z}}\right), \quad z\to \infty \textmd{ with } \frac{\pi}{2}<\lvert \arg (z-a) \rvert <\pi,
\ee
we have that, for sufficiently large $\lvert z \rvert$, $\Re (g) <0$ in the sector, and 
hence \eqref{expsmall} holds on $\Sigma_R$ for suffcieintly large $|z|$.

On the other hand it follows from (\ref{signsqp}) that 
\[
q(z)=(z-x_1)(z-x_2),\qquad x_1\in (c,b),\quad x_2\in (a,+\infty),
\]
and therefore 
\begin{equation}
   \Im \frac{q(z)}{\sqrt{p(z)}_+}=  \frac{\partial}{\partial x}\Im g_+(x)>0, \quad x\in (-\infty,c)\cup(b,a).
\end{equation}
Since $\Re g(x)=0$, for $x\in(-\infty,c)\cup(b,a)$, using
the Cauchy-Riemann equations we have, for $0< y < \delta$, ($\delta$ sufficiently small) that
\be\label{cond1}
 \Re g(x+iy)=\int_0^{y}\frac{\partial \Re g(x+i\psi)}{\partial \psi}d\psi<0,\qquad x \in (-\infty,c)\cup(b,a).
\ee
Similarly, decreasing $\delta$ if need be, we show that
\be\label{cond2}
\Re g(x+iy)<0,\qquad x \in (-\infty,c)\cup(b,a),\quad -\delta<y<0.
\ee
Conditions \eqref{cond1} and \eqref{cond2} together with the boundedness (from infinity and zero) of $\theta$-functions on the contour 
imply that the jump matrix satisfies
\[
P^{\infty}(z)v_S(z)(P^{\infty}(z))^{-1}=I+\bigO\left(e^{-s^{3/2}\epsilon}\right)
\]
for some $\epsilon>0$ uniformly on the lenses of the $\Sigma_R$ contour around $(b,a)$, and also for sufficiently small $|z-c|$ on the lenses
around $(-\infty,c)$. It remains to obtain an $L^{\infty}$ estimate on the rest of latter lenses: between a fixed small $|z-c|$ and
a fixed large $|z-c|$ (above which we can use the estimate \eqref{cond0}). Consider the part of the lenses with $\Im z>0$ (for the $\Im z<0$ part
we argue similarly).
Clearly, it is sufficient to show that
\be\label{Imcond}
\Im \frac{q(x+iy)}{\sqrt{p(x+iy)}}>0,\qquad x \in (-\infty,c),\quad y>0.
\ee
Let $\phi_1>\phi_2>\phi_3>\phi_4>\phi_5$ denote the acute angles between the real line and the direct lines connecting
$x+iy$, $x \in (-\infty,c)$, $y>0$, and the points $c<x_1<b<a<x_2$, respectively. Then
\[
\frac{q(x+iy)}{\sqrt{p(x+iy)}}=\left|\frac{q(x+iy)}{\sqrt{p(x+iy)}}\right|\exp\left\{ i\left(\frac{\pi}{2}+\frac{\phi_1+\phi_3+\phi_4-2\phi_2-2\phi_5}{2}
\right)\right\}.
\]
It is easily seen that
\[
-\pi<\phi_1+\phi_3+\phi_4-2\phi_2-2\phi_5<\pi,
\]
and thus we obtain \eqref{Imcond}.

To summarize, we established the estimate 
\be 
P^{\infty}(z)v_S(z)(P^{\infty}(z))^{-1}=I+\bigO\left(e^{-s^{3/2}\epsilon' \max\{1,|z|^{3/2}\}}\right),\qquad \epsilon'>0,
\ee
on the lenses of the contour $\Sigma_R$.

It remains to consider the jumps of the $R$ problem on the boundaries $\partial U_p$. By means of Lemma \ref{Pprhp} we have that
\begin{equation}
v_R(z)=P_p(z)(P^{\infty}(z))^{-1} =
I+\Delta_1(z;s)+\bigO\left(\frac{1}{s^{3}}\right)=
I+\bigO\left(\frac{1}{s^{3/2}}\right), \qquad s\to \infty,
\end{equation}
uniformly on $\partial U_p$, $p=a,b,c$. 

Thus, by standard theory (see, e.g., \cite{Dbook}), the $R$-problem is a small-norm problem for large $s$ solvable by means of Neumann series. In what follows, we will need the first 2 terms. We then obtain 
\be \label{RUNIF}
R(z)=I+R^{(1)}(z)+\bigO\left(\frac{1}{s^{3}}\right), \qquad R^{(1)}(z)=\bigO\left(\frac{1}{s^{3/2}}\right),
\ee
where the estimate for $R^{(1)}(z)$ and the error term are uniform in $z$.
As usual, we substitute these series
into the $R$-RHP, and obtain that $R^{(1)}(z)$ is analytic in $\mathbb{C}\setminus\cup_p \partial U_p$,
\be
\begin{aligned}
&R_{+}^{(1)}(z)=R_{-}^{(1)}(z)+\Delta_1(z),\qquad z\in \cup_p \partial U_p,\\
&R_{+}^{(1)}(z)\to 0,\qquad z\to\infty.
\end{aligned}
\ee
This problem is solved by the Plemelj-Sokhotski formula, so that
\begin{equation}
 R^{(1)}(z)=\frac{1}{2\pi i}\int_{\partial U_a \cup \partial U_b \cup \partial U_c}\frac{\Delta_1(\zeta)}{\zeta-z}d\zeta.
\end{equation}
Note that the integration along the circles $\partial U_p$ is in the negative direction.
Below, we will find $ R^{(1)}(z)$ by computing the residues.
Retracing our transformations $R\to S\to X$ yields the asymptotic solution to the RH problem for $X(z)$.

\subsection{Extension to the limiting regime of Lemma \ref{seplemma}}\label{secRHext}
We now show that the solution to the Riemann-Hilbert problem for fixed $c<b<a<0$ extends to the regime where 
$a \to 0^{-}$ and $b-c \to 0^{+}$, as in Lemma \ref{seplemma}. We need to verify that the jumps of the $R$-RH problem remain small
in $L^{\infty}$ and $L^2$ norms. Then the $R$-RH problem is solvable by Neuman series
by standard arguments for small norm RH problems on contracting contours.

Let us first consider the jumps on $\partial U_a \cup \partial U_b \cup \partial U_c$. We will now show that in the regime
of Lemma \ref{seplemma}, by choosing the diameters of $U_a$, $U_b$, $U_c$ appropriately, we have uniformly in $z$
\be\label{estdU}
P_p(z)(P^{\infty}(z))^{-1}=I+
\begin{cases}
\bigO\left(\frac{1}{s^{3/2}\lvert a\rvert }\right),& z\in\partial U_a\\
\bigO\left(\frac{1}{s^{3/2}(b-c)}\right)& z\in\partial U_b\cup \partial U_c.
\end{cases}
\ee

First, uniformly for $a-b>\ve>0$ as $b\to c$ we have
\begin{equation}\label{J0BC}
    J_0=-\frac{\pi}{\sqrt{a-c}}\left(1+\frac{1}{2(a-c)}(b-c)+\bigO((b-c)^2)\right)
\end{equation}
and
\begin{equation}\label{J1BC}
    J_1=-\frac{c \pi}{\sqrt{a-c}}\left(1+\left(\frac{1}{c}+\frac{1}{2(a-c)}\right)(b-c)+\bigO((b-c)^2)\right),
\end{equation}
and, therefore,
\begin{equation}\label{q0expnatBC}
    q_0=\frac{ac}{2}+\frac{a}{4}(b-c)+\bigO((b-c)^2).
\end{equation}
With this expression for $q_0$ we may expand $q(p)$, $p=a,b,c$. Indeed, we find that uniformly for $a-b>\ve>0$ as $b\to c$,
\begin{equation}\label{qaBC}
    q(a)=\frac{a(a-c)}{2}+\bigO(b-c),
\end{equation}
\begin{equation}\label{qbBC}
    q(b)=-\frac{(a-2c)}{4}(b-c)+\bigO((b-c)^2),
\end{equation}
and
\begin{equation}\label{qcBC}
    q(c)=\frac{a-2c}{4}(b-c)+\bigO((b-c)^2).
\end{equation}
Note that, for $p=b,c$, \[\sqrt{z-p}=\bigO\left(\sqrt{b-c}\right),\quad b\to c,\quad z\in U_p.\]  This is because we have $\textmd{diam}(U_p)<\min \{\lvert \beta_1^*-b\rvert, \lvert \beta_1^*-c \rvert \} $, where $\beta_j^*$, $j=1,2,$ are the zeros of $\beta(z)-\beta(z)^{-1}$ (they satisfy 
\eqref{betastars}). We easily derive the expansions
\begin{equation}\label{explicitbetastars}
    \beta_1^*=c+\frac{b-c}{1+a-c}+\bigO((b-c)^2) , \qquad \beta_2^*=1+a-\frac{1}{4}\frac{b-c}{1+a-c}+\bigO((b-c)^2), \qquad b \to c.
\end{equation} 
 We may and do set $\textmd{diam}(U_p)=\epsilon^{\prime}\min \{\lvert \beta_1^*-b\rvert, \lvert \beta_1^*-c \rvert \}$, with $0<\epsilon^{\prime}<\frac{1}{2}$. Note that the diameter of $U_p$ decreases in this regime. On the other hand, we have that $\sqrt{z-a}=O(1)$, $a\to 0^{-}$, $z\in \overline{U_a}$. The diameter of $U_a$ does not need to decrease, set $\textmd{diam}(U_a)=\epsilon^{\prime}$. By \eqref{fdef} and \eqref{glocalexp} and the above estimates for $q(p)$, we have
\begin{equation}\label{173BDelta2}
\begin{aligned}
g_+(p)-g(z)&=f_p(z)^{1/2}=\bigO(b-c),\qquad   z\in \overline{U_p},\qquad p=b,c,\\
g_+(a)-g(z)&=f_a(z)^{1/2}=\bigO(|a|),\qquad   z\in \overline{U_a}.
\end{aligned}
\end{equation}
Recall also that $\Re g_+(p)=0$, $p=a,b,c$. In view of Lemma \ref{Pprhp},
to prove \eqref{estdU} it remains to show that $P^{\infty}(z)_{jk}$, $z\in \partial U_p$, are uniformly bounded in the prescribed regime.

By the first expansion in \eqref{explicitbetastars},
the functions $\beta(z)$ and $\beta(z)^{-1}$ are uniformly bounded on $\partial U_b$,  $\partial U_c$. Clearly, they are also bounded on 
$\partial U_a$.

To investigate the behaviour of $\theta$-functions, first analyze $\tau$, and for that write
\[\begin{aligned}
I_0&=
\int_{b+(b-c)^{1/2}}^a\frac{dx}{(x-c)(x-a)^{1/2}}\left(1+\bigO(\sqrt{b-c})\right)\\
&+
\frac{1}{(b-a)^{1/2}}\int_b^{b+(b-c)^{1/2}}\frac{dx}{((x-b)(x-c))^{1/2}}\left(1+\bigO(\sqrt{b-c})\right).
\end{aligned}
\]
Then it is easy to compute the integrals and obtain
\be\label{I0asymp}
I_0=\frac{1}{i\sqrt{a-c}}\left[-\log(b-c)+\log(16(a-c))\right]\left(1+\bigO(\sqrt{b-c})\right),
\ee
and, by using also \eqref{J0BC},
\be\label{tauasymp}
\tau=\frac{I_0}{J_0}=\frac{i}{\pi}\left[-\log(b-c)+\log(16(a-c))\right]\left(1+\bigO(\sqrt{b-c})\right).
\ee
Thus $i\tau \to -\infty$ logarithmically as $b\to c$. By the series representation for $\theta_3(\xi)$, this implies that the $\theta$-functions would converge to unity if their argument was any and fixed or was real.
However, since it is in general complex and depends on $a$, $b$, $c$, we need to show 
that $\theta(u(z)+(-1)^js^{3/2}\Omega+(-1)^k d)$ in the numerators in $P^{\infty}$
are bounded, and also that $\theta(u(z)\pm d)$ in the denominators are bounded away from zero. 

From the definition of $d$ in (\ref{definitionofd}), we have using \eqref{J0BC} and \eqref{explicitbetastars}  that, as $b\to c$,
\be\label{d}
\begin{aligned}
     d  &= \frac{1}{2J_0} \int_c^{\beta_1^*}\frac{dz}{((z-a)(z-b)(z-c))^{1/2}}=\left[z=c+(b-c)w \right]\\
    &=\frac{1}{2\pi}\left(1+\bigO(b-c)\right)\int_0^{\frac{\beta_1^*-c}{b-c}}\frac{dw}{(w(1-w))^{1/2}}=
    \frac{1}{2\pi}\left(1+\bigO(b-c)\right)\int_0^{\frac{1}{1+a-c}}\frac{dw}{(w(1-w))^{1/2}}<\frac{1}{2}.
\end{aligned} 
\ee
Therefore,
\be\label{dbcestimate}
0<d<\frac{1}{2}.
\ee

Using (\ref{explicitbetastars}), denote
\[ m:=\frac{1}{b-c}\min\{\lvert \beta_1^*-c \rvert, \lvert b-\beta_1^* \rvert \}=\min\{\frac{1}{1+a-c}+\bigO(b-c),\frac{a-c}{1+a-c}+\bigO(b-c)\}.  \]
Thus,
\[
\partial U_b=\{z=b+\epsilon' m (b-c)e^{i\phi}, 0\le\phi<2\pi\},\qquad \partial U_c=\{z=c+\epsilon' m (b-c)e^{i\phi}, 0\le\phi<2\pi\}.
\]
Arguing as in the case of $d$ above, we obtain
\be\label{uloc}
\begin{aligned}
u(z)&=\frac{\tau}{2}+\frac{1}{2\pi}\int_0^{\epsilon' m e^{i\phi}}\frac{dw}{(w(1-w))^{1/2}}\left( 1+ \bigO(b-c)\right),\qquad z\in \partial U_b,\\
u(z)&=\frac{1+\tau}{2}+\frac{1}{2\pi}\int_0^{\epsilon' m e^{i\phi}}\frac{dw}{(w(1-w))^{1/2}}\left( 1+ \bigO(b-c)\right),\qquad z\in \partial U_c,
\end{aligned}
\ee
and $|u(z)|\le\epsilon$ for $z\in \partial U_a$ for some $\epsilon>0$. Choosing $\epsilon'>0$ sufficiently small, we see that these equations and \eqref{dbcestimate} imply in particular that $u(z)\pm d$ is uniformly bounded away from the zero $\frac{1+\tau}{2}\mod \Lambda$ of $\theta_3(\xi)$ on $\partial U_a\cup\partial U_b\cup \partial U_c$. We then obtain for some constants
\be
\begin{aligned}
&0<\epsilon<\left|\theta(u(z)\pm d)\right|<C<\infty\qquad
\left|\theta(u(z)\pm d+r(a,b,c)))\right|<C'<\infty,\qquad \Im r(a,b,c)=0,\\
& z\in \partial U_a\cup\partial U_b\cup \partial U_c.
\end{aligned}
\ee
Thus $P^{\infty}$ is bounded on $\partial U_a\cup\partial U_b\cup \partial U_c$ and therefore \eqref{estdU} holds.

The analysis of the jumps of $R$  on $\Sigma_R\setminus \partial U_a \cup \partial U_b \cup \partial U_c$ is similar. 
(Note, in particular, that $0<\Im u(z) <|\tau|$ on the contour.)
We obtain
\begin{equation}\label{estSR}
  P^{\infty}(z;s)v_S(z;s)\left(P^{\infty}(z;s)\right)^{-1}=
  I+\bigO\left(e^{-c's^{3/2}\min\{\lvert a\rvert, b-c\}\max\{1,|z|^{3/2}\}}\right),\qquad c'>0,
\end{equation}
on $\Sigma_R\setminus \partial U_a \cup \partial U_b \cup \partial U_c$.

To conlude, we see from \eqref{estdU} and \eqref{estSR} that in the regime of Lemma \ref{seplemma}, where
$a-b>\ve>0$,
\begin{equation}\label{regime}
    a=-\frac{(\log s)^{1/8}}{s}, \qquad b=c+\frac{2(\log s)^{1/8}}{s^{3/2}},\qquad s\to +\infty,
\end{equation}
we have
\begin{equation}\label{estfinal}
\begin{aligned}
P_p(z)(P^{\infty}(z))^{-1}&=I+
\bigO\left(\frac{1}{2(\log s)^{1/8}}\right),\qquad z\in\partial U_a\cup\partial U_b\cup \partial U_c,\\
  P^{\infty}(z;s)v_S(z;s)\left(P^{\infty}(z;s)\right)^{-1}&=
  I+\bigO\left(e^{-2c'(\log s)^{1/8}\max\{1,|z|^{3/2}\}}\right),\qquad z\in\Sigma_R\setminus \partial U_a \cup \partial U_b \cup \partial U_c.
\end{aligned}
\end{equation}
Therefore the $R$-RH problem is solvable in this regime, and we can write
\be\label{Rext}
R(z)= I + R^{(1)}(z)+\bigO\left(\frac{1}{s^3\min\{|a|^2,(b-c)^2\}}\right),\qquad 
R^{(1)}(z)=\bigO\left(\frac{1}{s^{3/2}\min\{|a|,b-c\}}\right),
\ee
uniformly in $z$ and uniformly in the range from fixed $c<b<a<0$ to the regime \eqref{regime}.

\section{Proof of Theorem \ref{Mainthm}}\label{proofofthm}
 We start with the differential identity \eqref{diffid} for $p=b$, and write $P^{\mathrm{Ai}}(sJ)=P_s^{\mathrm{Ai}}(a,b)$, to indicate the values of $a$ and $b$,
\begin{equation}
 \frac{d}{db}\log P^{\mathrm{Ai}}(sJ)=
    \frac{d}{db}\log P_s^{\mathrm{Ai}}(a,b)
    =-\lim_{z\to b}\frac{1}{2\pi i}\left( X^{-1}(z)\frac{d}{dz}X(z)\right)_{21},    
\end{equation}
where the limit is taken from outside the lens and with $\Im z>0$. We first concentrate on the case of $c<b<a<0$ fixed,
and then provide an extension to the asymptotic regime of Lemma \ref{seplemma}.

By the definitions of $S(z)$ and $R(z)$, we have that
in the region $U_b\cap \{z: \Im z>0\}$, and outside the lens,
\begin{equation}
X^{-1}X^{\prime}=  e^{s^{3/2}g(z)\sigma_3}P_b^{-1}R^{-1}R^{\prime}P_b e^{-s^{3/2}g(z)\sigma_3}+e^{s^{3/2}g(z)\sigma_3}P_b^{-1}  P_b^{\prime}e^{-s^{3/2}g(z)\sigma_3}-s^{3/2}g^{\prime}(z)\sigma_3.
\end{equation}
By definition \eqref{defP} of $P_b(z)$, in particular in the preimage $\zeta^{-1}(\Omega_1)$ (which is outside the lens), 
\[ P_b(z)=E_b(z)\sigma_3 \Psi(s^3(g(z)-g(b))^2)\sigma_3 e^{s^{3/2}g(z)\sigma_3},\]      
and therefore 
\[P_b^{-1}P_b^{\prime}=e^{-s^{3/2}g(z)\sigma_3}\sigma_3\Psi^{-1}\sigma_3E_b^{-1}E_b^{\prime}\sigma_3 \Psi\sigma_3 e^{s^{3/2}g(z)\sigma_3}+e^{-s^{3/2}g(z)\sigma_3 }\sigma_3\Psi^{-1}\Psi^{\prime}\sigma_3 e^{s^{3/2}g(z)\sigma_3} +s^{3/2}g^{\prime}(z)\sigma_3.  \]
Moreover, note that for $z$ in a neighbourhood of $b$,
\begin{equation}\label{RR}
    R^{-1}(z)R^{\prime}(z)=\frac{dR^{(1)}}{dz}(z)+\bigO\left(\frac{1}{s^{3}}\right), \qquad s\to \infty,
\end{equation}
where
\begin{equation}
\frac{dR^{(1)}}{dz}(z)
   =\frac{1}{2\pi i}\int_{\partial U_a \cup \partial U_b \cup \partial U_c}\frac{\Delta_1(\zeta;s)}{(\zeta-z)^2}d\zeta.
\end{equation}
Note that the (uniform) estimate for the error term in  \eqref{RR}, follows from the 
differentiability of the asymptotic expansion of $R$, which, in turn, follows from the
uniform estimate for $R$, in \eqref{RUNIF}, and Cauchy's integral formula. 

Starting with the (differentiable) expansions, see e.g. \cite{AS}, for the Bessel functions
\[I_0(z)=1+\frac{z^2}{4}+\bigO(z^4),\qquad z\to 0,\]
and
\[K_0(z)=-\left( \log\frac{z}{2}+\gamma\right)I_0(z)+\frac{z^2}{4}+\bigO(z^4),\qquad z\to 0,\]
where $\gamma$ is Euler's constant, it is straightforward to verify that the function \eqref{Psi1}
\begin{equation}\label{Jzetaexpn}
\Psi(\zeta)=e^{-\frac{i\pi}{4}\sigma_3}\pi^{\frac{1}{2}\sigma_3} \begin{pmatrix}1+\frac{\zeta}{4}+\bigO(\zeta^2) & \frac{1}{2\pi i} \log \zeta +\bigO(1) \\ \frac{i \pi}{2}\zeta+\bigO(\zeta^{2}) & 1+\bigO(\zeta\log \zeta)  \end{pmatrix},\qquad \zeta\to 0.
\end{equation}
Using this expression, the definition
\[ E_b(z;s)=P^{\infty}(z;s)e^{-s^{3/2}g(b)\sigma_3}\sigma_3 N_0^{-1} \sigma_3 f_b(z)^{\frac{1}{4}\sigma_3}s^{\frac{3}{4}\sigma_3}, \]
and the fact that $P^{\infty}(z;s)$ is bounded, as $s\to\infty$, we estimate the error term in $\lim_{z\to b}X^{-1}(z)X^{\prime}(z)$, arising from that in \eqref{RR}, as follows
\be\label{RRError}
\lim_{z\to b}\left(
e^{s^{3/2}g(z)\sigma_3}P_b^{-1}
\bigO\left(\frac{1}{s^{3}}\right)P_b e^{-s^{3/2}g(z)\sigma_3}\right)_{21}
=\bigO\left(\frac{b-c}{s^{3/2}}\right).
\ee

Thus, in the region $W_b:=U_b\cap \{z: \Im z>0\}\cap \zeta^{-1}(\Omega_1)$,
\begin{equation}\label{218old}
- \frac{1}{2\pi i}\lim_{z\to b}\left( X^{-1}(z)\frac{d}{dz}X(z)\right)_{21}= \lim_{z\to b}(\tau_1(z)+\tau_2(z)+ \tau_3(z))_{21}+\bigO\left(\frac{b-c}{s^{3/2}}\right).
\end{equation}
where
\begin{align}
     \mathcal{\tau}_1(z)&=-\frac{1}{2\pi i}\sigma_3\Psi^{-1}(s^3 f_b(z))\frac{d\Psi}{dz}(s^3 f_b(z))\sigma_3, \label{tau121} \\
     \mathcal{\tau}_2(z)&=-\frac{1}{2\pi i}\sigma_3\Psi^{-1}(s^3 f_b(z))\sigma_3E_b^{-1}(z)\frac{dE_b}{dz}(z)\sigma_3 \Psi(s^3 f_b(z))\sigma_3,\label{tau221} \\
     \mathcal{\tau}_3(z)&=-\frac{1}{2\pi i}e^{s^{3/2}g(z)\sigma_3}P_b^{-1}(z)\frac{dR^{(1)}}{dz}(z)P_b(z) e^{-s^{3/2}g(z)\sigma_3}.\label{tau321}
\end{align}
 With these we have the formula
\begin{equation}\label{preliminaryformula}
       \frac{d}{db}\log P_s^{\mathrm{Ai}}(a,b)=  \lim_{z\to b} \left(\mathcal{\tau}_1(z)+\mathcal{\tau}_2(z)+\mathcal{\tau}_3(z)\right)_{21}+ \bigO\left(\frac{1}{s^{3/2}}\right),
\end{equation}
for fixed $c<b<a<0$,
where the limit is taken along a path in $W_b$. 

To extend \eqref{preliminaryformula} to the regime $b\to c$, $a\to 0$
of Lemma \ref{seplemma}, we rely on the results of the previous section.
Considering $R^{-1}R'$ by using \eqref{Rext}, we obtain 
\begin{equation}\label{RRext}
    R^{-1}(z)R^{\prime}(z)=\frac{dR^{(1)}}{dz}(z)+
    \bigO\left(\frac{1}{s^3\min\{|a|^2,(b-c)^2\}}\right), \qquad s\to \infty,
\end{equation}
and hence (cf. \eqref{218old})
\begin{equation}\label{prelimitingformula}
       \frac{d}{db}\log P_s^{\mathrm{Ai}}(a,b)=  \lim_{z\to b} \left(\mathcal{\tau}_1(z)+\mathcal{\tau}_2(z)+\mathcal{\tau}_3(z)\right)_{21}+ \bigO\left(\frac{1}{s^{3/2}\min\{|a|^2,(b-c)\}}\right),
\end{equation}
uniformly in the range from fixed $c<b<a<0$ to the regime \eqref{regime},
where the limit is taken along a path in $W_b$.

In the next 3 sections, we prove the following lemmata (with the notation from the introduction).
\begin{lemma}\label{mainterm}
\begin{equation}
   \lim_{z\to b} \mathcal{\tau}_1(z)_{21}=- s^3 \frac{d}{db}\alpha_2.
\end{equation}
\end{lemma}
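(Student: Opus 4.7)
The strategy is to unpack $\tau_1(z)$ directly. Since the prefactor $e^{-i\pi/4\sigma_3}\pi^{\sigma_3/2}$ in the definition of $\Psi$ cancels in $\Psi^{-1}\Psi'$, and since conjugation by $\sigma_3$ flips the sign of the $(2,1)$ entry, I would first rewrite
\[
(\tau_1(z))_{21}=\frac{s^3 f_b'(z)}{2\pi i}\bigl(\Psi^{-1}(\zeta)\Psi'(\zeta)\bigr)_{21}\Big|_{\zeta=s^3f_b(z)},
\]
where $\Psi'=d\Psi/d\zeta$. Since $\det\Psi\equiv 1$, the $(2,1)$-entry of $\Psi^{-1}\Psi'$ reduces to $-\Psi_{21}\Psi'_{11}+\Psi_{11}\Psi'_{21}$. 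Substituting the definition \eqref{Psi1} of $\Psi$ in terms of $I_0(\zeta^{1/2})$ and using the modified Bessel equation $xI_0''(x)+I_0'(x)=xI_0(x)$ to eliminate $I_0''$, I would obtain the clean closed form
\[
(\Psi^{-1}\Psi')_{21}(\zeta)=\frac{i\pi}{2}\bigl[I_0(\zeta^{1/2})^2-I_0'(\zeta^{1/2})^2\bigr].
\]
Since $I_0(0)=1$ and $I_0'(0)=0$, the limit of this expression as $\zeta\to 0$ (and hence as $z\to b$) equals $i\pi/2$.

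Next, I would read off $f_b'(b)$ from the local expansion \eqref{glocalexp}: $f_b(z)=\frac{4q(b)^2}{(a-b)(b-c)}(b-z)(1+\bigO(z-b))$, so
\[
\lim_{z\to b}f_b'(z)=-\frac{4q(b)^2}{(a-b)(b-c)}.
\]
Combining these two limits gives, after cancellation of $2\pi i$ factors,
\[
\lim_{z\to b}(\tau_1(z))_{21}=-\frac{s^3\,q(b)^2}{(a-b)(b-c)}.
\]

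The remaining (and main) task is to identify this with $-s^3\,d\alpha_2/db$; this is the step that will require the most care. I would differentiate the explicit expression \eqref{definitionofalpha2} for $\alpha_2$ in $b$, using $dq_1/db=-\tfrac12$ together with the formula \eqref{ddbq0} for $dq_0/db$, which brings in $dJ_0/db$ via \eqref{dbJ0}. The purely polynomial contributions combine to $-q(b)/3$ after invoking the identity $b^2-b(a+c)+2q_0=2q(b)$ that follows from the definition of $q(z)$ and the value of $q_1$. The $J_0$-dependent contribution is $\frac{q_1 q(b)(J_1/J_0-b)}{3(a-b)(b-c)}$. Using the alternative form $q_1J_1/J_0=3q_0-(ab+ac+bc)$ from \eqref{defq1} to eliminate $J_1/J_0$, one verifies by direct algebra that
\[
-(a-b)(b-c)+q_1\bigl(J_1/J_0-b\bigr)=3q(b),
\]
which yields $d\alpha_2/db=q(b)^2/[(a-b)(b-c)]$ and completes the proof. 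The algebra in this final verification is the delicate part; the Bessel-side computation is mechanical once the Bessel ODE is invoked to collapse $I_0^2+I_0I_0''/\ldots$ to $I_0^2-(I_0')^2$.
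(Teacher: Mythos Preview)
Your proof is correct and follows essentially the same approach as the paper. Both compute $\lim_{z\to b}(\tau_1)_{21}=-s^3 q(b)^2/[(a-b)(b-c)]$ and then verify $d\alpha_2/db=q(b)^2/[(a-b)(b-c)]$ by differentiating \eqref{alpha2} with the aid of \eqref{ddbq0} and \eqref{dbJ0}; the only cosmetic difference is that the paper reads off the $\zeta\to 0$ limit directly from the series expansion \eqref{Jzetaexpn} of $\Psi$, whereas you obtain the closed form $(\Psi^{-1}\Psi')_{21}=\tfrac{i\pi}{2}[I_0^2-(I_0')^2]$ via the Bessel ODE before evaluating at $\zeta=0$.
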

\begin{lemma}\label{oscillatoryterm}
 \begin{equation}\lim_{z\to b}\mathcal{\tau}_2(z)_{21}= \frac{d}{db}\log \theta_3(s^{3/2}\Omega;\tau)-\frac{d\tau}{db}\frac{\partial}{\partial\tau}\log \theta_3(s^{3/2}\Omega;\tau).
 \end{equation}
\end{lemma}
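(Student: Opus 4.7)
The plan is to show $\lim_{z\to b}\tau_2(z)_{21}=\tfrac{1}{2}B_{21}(b)$ where $B(z):=E_b^{-1}(z)E_b'(z)$, and then evaluate $B_{21}(b)$ explicitly. For the first step, Lemma~\ref{Eanalytic} guarantees that $E_b$ is analytic at $z=b$, so $B(z)$ extends analytically through $b$. Setting $\phi:=\sigma_3\Psi\sigma_3$ and using $\det\Psi=1$, a direct expansion gives
\[
(\phi^{-1}B\phi)_{21}=(B_{11}-B_{22})\Psi_{11}\Psi_{21}+\Psi_{11}^2\, B_{21}-\Psi_{21}^2\, B_{12}.
\]
The small-$\zeta$ expansion \eqref{Jzetaexpn} yields $\Psi_{11}(\zeta)\to e^{-i\pi/4}\sqrt{\pi}$ (so $\Psi_{11}^2\to -i\pi$), $\Psi_{21}(\zeta)=O(\zeta)$ and $\Psi_{21}^2=O(\zeta^2)$ as $\zeta=s^3 f_b(z)\to 0$; substituting, only the middle term survives in the limit, producing the prefactor $-\tfrac{1}{2\pi i}(-i\pi)=\tfrac{1}{2}$.

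For the second step, write $E_b(z)=P^\infty(z)\,M(z)$ with $M(z):=e^{-s^{3/2}g_+(b)\sigma_3}\sigma_3 N_0^{-1}\sigma_3 f_b(z)^{\sigma_3/4}s^{3\sigma_3/4}$. The diagonal conjugation by $f_b^{\sigma_3/4}$ scales the $(2,1)$ entry of the off-diagonal piece of $M^{-1}(P^\infty)^{-1}(P^\infty)'M$ by $f_b^{1/2}$, giving
\[
B_{21}(z) = s^{3/2}\,f_b(z)^{1/2}\,H_{21}(z),
\]
where $H$ is an explicit conjugation of $(P^\infty)^{-1}(P^\infty)'$ by $\sigma_3 N_0^{-1}\sigma_3$ and by $e^{-s^{3/2}g_+(b)\sigma_3}$. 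Factoring $P^\infty=N_0\Xi$ with $\Xi_{jk}(z)=\theta_{jk}(u(z))N_{jk}(z)$, one has $(P^\infty)^{-1}(P^\infty)'=\Xi^{-1}\Xi'$, and the entries of $\Xi'$ split into a ``$u'$-piece'' $\theta_{jk}'(u)u'(z)N_{jk}$ and an ``$N'$-piece'' $\theta_{jk}(u)N_{jk}'(z)$. The crucial calculation, using the local expansion \eqref{glocalexp} of $f_b^{1/2}$ near $b$ together with Lemma~\ref{thetalemma}(vii), is
\[
\lim_{z\to b}f_b(z)^{1/2}\,u'(z) \;=\; \frac{q(b)}{J_0(a-b)(b-c)} \;=\; \frac{d\Omega}{db},
\]
and this is what ultimately produces the factor $d\Omega/db$ driving the theta derivative.

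The third step identifies the resulting expression. A~priori $H_{21}(z)$ has a $(z-b)^{-3/2}$ singularity (from the $(z-b)^{-1/4}$ singularities of $P^\infty$ and the $(z-b)^{-5/4}$ singularity after differentiation), but the leading $(z-b)^{-3/2}$ coefficient must vanish identically --- this cancellation is exactly the one underlying the analyticity of $E_b$ in Lemma~\ref{Eanalytic}. Extracting the subleading $(z-b)^{-1/2}$ coefficient of $H_{21}$ and multiplying by $f_b^{1/2}\sim C_b(z-b)^{1/2}$, one collapses the resulting expression via the quasi-periodicity relations \eqref{thetanoder}, the derivative identities \eqref{thdeder}, and the identity (v) of Lemma~\ref{thetalemma} to obtain
\[
\tfrac{1}{2}B_{21}(b)=s^{3/2}\frac{d\Omega}{db}\cdot\frac{\theta_3'(s^{3/2}\Omega;\tau)}{\theta_3(s^{3/2}\Omega;\tau)},
\]
which by the chain rule $\frac{d}{db}\log\theta_3(s^{3/2}\Omega(b);\tau(b))-\frac{d\tau}{db}\frac{\partial_\tau\theta_3(s^{3/2}\Omega;\tau)}{\theta_3(s^{3/2}\Omega;\tau)}= s^{3/2}\frac{d\Omega}{db}\cdot\frac{\theta_3'(s^{3/2}\Omega;\tau)}{\theta_3(s^{3/2}\Omega;\tau)}$ is exactly the right-hand side of the lemma. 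The main obstacle is this algebraic bookkeeping: one must carefully track the subleading coefficient of $H_{21}$ and verify that the $u'$- and $N'$-contributions, after the enforced cancellation of the leading singularity, combine through the theta identities into the single clean factor $\theta_3'(s^{3/2}\Omega;\tau)/\theta_3(s^{3/2}\Omega;\tau)$, with no spurious terms proportional to $\theta_{1}'/\theta_1$ or derivatives of $\beta$ surviving.
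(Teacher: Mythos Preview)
Your first step is correct and in fact more elegant than the paper's approach: by recognizing at the outset that $E_b$ is analytic at $b$ (Lemma~\ref{Eanalytic}) with $\det E_b\equiv 1$, you conclude directly that $B=E_b^{-1}E_b'$ is analytic at $b$, and the small-$\zeta$ expansion of $\Psi$ then gives $\lim_{z\to b}\tau_2(z)_{21}=\tfrac{1}{2}B_{21}(b)$ immediately. The paper instead splits $E_b^{-1}E_b'$ into the diagonal piece $\tfrac{\sigma_3}{4}\tfrac{f_b'}{f_b}$ and a remainder, computes each contribution to $(\tau_2)_{21}$ separately (producing the term $s^3 q(b)^2/(2(a-b)(b-c))$ from \eqref{EBsomething} and the $(B-C)$ piece in \eqref{EBEXPN}), and then checks that they cancel via the identity \eqref{detatb2}. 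Your route bypasses this cancellation entirely. Your observation $\lim_{z\to b}f_b^{1/2}u'=d\Omega/db$ is also correct and matches the paper's use of (vii).

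However, your step 3 has a genuine gap. After reaching the expression for $\tfrac12 B_{21}(b)$ in terms of $\theta_{jk}$ and their derivatives at $u_+(b)$ --- this is the paper's $T_1(\omega)$ in \eqref{T1omegadef} --- you claim it collapses to $2\theta_3'(\omega)/\theta_3(\omega)$ ``via the quasi-periodicity relations \eqref{thetanoder}, the derivative identities \eqref{thdeder}, and identity (v)''. Those identities are indeed used, but they only take you to the second line of \eqref{T1omegadef}, a combination of $\theta_3(\omega\pm\hat d)$ and their first and second derivatives with a $\theta_1'/\theta_1(\hat d)$ coefficient. The final collapse to $2\theta_3'(\omega)/\theta_3(\omega)$ is the separate identity \eqref{prop23CIB}, which is \emph{not} a consequence of the identities you list: it is a nontrivial elliptic-function identity requiring a Liouville-type argument (the paper defers this to Proposition~23(c) of \cite{IB}). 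Your assertion that ``no spurious terms proportional to $\theta_1'/\theta_1$ survive'' is precisely the content of \eqref{prop23CIB} and needs independent justification.
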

\begin{lemma}\label{constantterm}With $\omega=s^{3/2}\Omega$,
\begin{equation}
    \int_0^1\lim_{z\to b}\mathcal{\tau}_3(z)_{21}d\omega=-\frac{1}{2}\frac{d}{db}\log \lvert J_0 \rvert-\frac{1}{8}\frac{d}{db}\log \lvert q(a)q(b)q(c)\rvert +\frac{d\tau}{db}\int_0^1\frac{\partial}{\partial \tau} \log\theta_3(\omega;\tau)d\omega.
\end{equation}
\end{lemma}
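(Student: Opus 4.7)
The plan is to compute $\lim_{z\to b}\tau_3(z)_{21}$ by residue calculus and then perform the $\omega$-integration over $[0,1]$ so that oscillatory contributions average away. Since $\Delta_1(\zeta;s)$ is meromorphic in each $U_p$ with a simple pole at $p$ (Lemma \ref{Pprhp}) and the circles $\partial U_p$ are clockwise, I would write
\begin{equation*}
\frac{dR^{(1)}}{dz}(z)=-\sum_{p\in\{a,b,c\}}\mathrm{Res}_{\zeta=p}\frac{\Delta_1(\zeta;s)}{(\zeta-z)^2}\; - \;\chi_{U_b}(z)\,\frac{d\Delta_1}{dz}(z),
\end{equation*}
with the last term recording the pole at $\zeta=z$ when $z$ lies in $U_b$. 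The residue at each $p$ follows from the local expansion \eqref{glocalexp} of $f_p^{1/2}$ together with the observation that the matrix $P^\infty(\zeta)\,e^{-s^{3/2}g_\pm(p)\sigma_3}\,M_p\,e^{s^{3/2}g_\pm(p)\sigma_3}(P^\infty(\zeta))^{-1}$ is single-valued and finite at $p$, so the pole comes solely from the scalar prefactor $1/f_p^{1/2}$.

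Substituting into \eqref{tau321} and conjugating by $P_b(z)e^{-s^{3/2}g(z)\sigma_3} = E_b(z)\sigma_3\Psi(s^3 f_b(z))\sigma_3$, with $\Psi$ expanded near the origin via \eqref{Jzetaexpn}, the $(2,1)$-entry in the limit $z\to b$ from $W_b$ reduces to a finite algebraic expression in the theta-ratios $\theta_{jk}$ of \eqref{thetajk} evaluated at $u_+(p)\pm d$, together with oscillatory phases $e^{\pm 2\pi i\omega}$. These phases arise from the factors $e^{\pm 2s^{3/2}g_\pm(p)\sigma_3}$ in the residues and from the quasi-periodicity shift $u \mapsto u-\tau$ across $(c,b)$, which by \eqref{qprelations} and the definition \eqref{definitionofV} of $\Omega$ exchanges entries of $P^\infty$ up to factors $e^{\mp 2\pi i\omega}$.

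The accumulated theta-products are simplified by Lemma \ref{thetalemma}: parts (v), (ix) trade the combination $\theta_1'/\theta_1 - \theta_3'/\theta_3$ for multiples of $J_0$; (vi), (x) remove third-order derivatives; (i)--(iii) convert theta-constants to elliptic-integral data. Integrating the resulting expression against $\omega\in[0,1]$, pure harmonics $e^{\pm 2\pi i n\omega}$ with $n\neq 0$ vanish by orthogonality, and the non-oscillatory survivors assemble, via \eqref{dbJ0}--\eqref{ddbq0} together with (vii), (viii), into $-\tfrac{1}{2}(d/db)\log|J_0| - \tfrac{1}{8}(d/db)\log|q(a)q(b)q(c)|$. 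The remaining $\omega$-dependence, carried by ratios of $\theta_3$ at shifted arguments, is matched to $\partial_\tau\log\theta_3(\omega;\tau)$ using the heat equation $\partial_\tau\theta_3 = \tfrac{1}{4\pi i}\partial_\omega^2\theta_3$ and identity (viii), producing precisely the coefficient $d\tau/db$.

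The main obstacle will be the bookkeeping in this simplification stage: the three residues at $a,b,c$ together with the self-pole at $z$ produce many $\theta$-products, and one must correctly sort them into non-oscillatory, purely harmonic, and $\partial_\tau\log\theta_3$ contributions, applying the appropriate identity from Lemma \ref{thetalemma} at each step to recover the exact coefficients $\tfrac{1}{2}$ and $\tfrac{1}{8}$ in the statement. Recognising the heat-equation structure disguised inside products of $\theta$-ratios, in particular, is the most delicate step.
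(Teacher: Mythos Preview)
Your overall architecture --- compute the residue contributions from $a,b,c$ and then average over $\omega$ --- matches the paper's. But the execution you sketch has real gaps, and one structural choice will cause trouble.

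\textbf{The self-pole decomposition.} Writing $dR^{(1)}/dz$ as a sum of residues at $p=a,b,c$ plus a separate ``self-pole'' term $-\chi_{U_b}(z)\,d\Delta_1/dz$ is formally correct, but both pieces diverge like $(z-b)^{-2}$ as $z\to b$ and only their combination, after conjugation by $P_b e^{-s^{3/2}g\sigma_3}$ and projection to the $(2,1)$ entry, is finite. Tracking that cancellation is painful. The paper avoids this entirely: since the conjugation matrices and the $(2,1)$ projection produce a finite limit, one may replace $(\xi-z)^{-2}$ by $(\xi-b)^{-2}$ under the contour integral \emph{before} taking residues, so the residue at $b$ is computed directly from the Laurent expansion of $\Delta_1(\xi)/(\xi-b)^2$ (a pole of order three). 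This is the route you should take.

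\textbf{The averaging is not Fourier orthogonality.} You write that ``pure harmonics $e^{\pm 2\pi i n\omega}$ with $n\neq 0$ vanish by orthogonality''. The $\omega$-dependence here does not present itself as a finite trigonometric sum. The residues at $a$ and $c$ produce the ratios $\theta_2^2(\omega)/\theta_3^2(\omega)$ and $\theta_4^2(\omega)/\theta_3^2(\omega)$, and part of the residue at $b$ produces $(\theta_3'(\omega)/\theta_3(\omega))^2$. None of these averages to its constant Fourier mode by inspection; you need the closed-form integrals
\[
\int_0^1\frac{\theta_2^2(\omega)}{\theta_3^2(\omega)}\,d\omega,\qquad
\int_0^1\frac{\theta_4^2(\omega)}{\theta_3^2(\omega)}\,d\omega,\qquad
\int_0^1\Bigl(\frac{\theta_3'(\omega)}{\theta_3(\omega)}\Bigr)^2 d\omega=\frac{\pi^2}{3}+\frac{\theta_1'''}{3\theta_1'},
\]
all taken from \cite{IB}. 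These, together with the heat equation and parts (i)--(iii), (viii) of Lemma \ref{thetalemma}, are what produce the coefficients $\tfrac{1}{2}$ and $\tfrac{1}{8}$.

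\textbf{The residue at $b$ needs the $A_j,B_j$ trick.} Because the pole at $b$ is third order, a direct expansion of $\Delta_1$ there is intractable. The paper (following \cite{IB}) writes the relevant matrix product in terms of auxiliary functions $A_j(z),B_j(z)$ chosen so that $A_j(z)(z-b)^{1/4}$ and $B_j(z)(z-b)^{-1/4}$ are analytic in $U_b$; this forces all cross terms $A_jB_k$ to vanish in the residue and splits the computation cleanly. You have not mentioned this, and without it the residue at $b$ will not organize itself. Moreover, for the $B$-part the paper averages over $\omega$ \emph{before} extracting the residue, which is what makes the calculation close; doing residue first and averaging second is much harder.

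\textbf{The $\partial_\tau\log\theta_3$ term.} This term does not appear as ``the remaining $\omega$-dependence'' after the non-oscillatory parts are extracted. It is inserted by hand: one recognises, via the heat equation and (viii) of Lemma~\ref{thetalemma}, that $\frac{d\tau}{db}\int_0^1\partial_\tau\log\theta_3\,d\omega$ equals a specific multiple of $\int_0^1(\theta_3'/\theta_3)^2\,d\omega$, and subtracts it from both sides so that what remains simplifies to the two logarithmic derivatives. Your description has the logic reversed.

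In short: the plan is right, but the proof lives in the three ingredients above --- the $A_j,B_j$ decomposition, the specific $\theta$-ratio integrals, and the average-before-residue manoeuvre --- none of which your proposal supplies.
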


Using these results, we may now write the differential identity \eqref{prelimitingformula} in the following explicit form.

\begin{prop}\label{prop2b} (Asymptotic form of the differential identity for $b$)
Let $\ve>0$ be fixed.
Then uniformly for $c<b_0<b<b_1<a<0$, where
$b_0,b_1\in[c+\frac{2t_0}{s^{3/2}},a-\ve]$, $a\le -\frac{t_1}{s}$, as $s\to\infty$,
with $t_0=t_1=(\log s)^{1/8}$,
\be\label{prop2bint}
  \frac{d}{db}\log P_s^{\mathrm{Ai}}(a,b)=  \frac{d}{db} D(a,b) +
  \Theta_b+\widetilde\Theta_b+
  \bigO\left(\frac{1}{s^{3/2}|a|^2}\right)+
   \bigO\left(\frac{1}{s^{3/2}(b-c)}\right),
\ee
where
\be\label{Dform}
D(a,b)= 
-\alpha_2 s^3 +\log \theta_3(s^{3/2}\Omega;\tau)-\frac{1}{2}\log |J_0| -\frac{1}{8}\log | q(a)q(b)q(c) |,
\ee
and
\begin{align}
 \Theta_b&=\lim_{z\to b}\tau_3(z)_{21}-\int_0^1\lim_{z\to b}\tau_3(z)_{21}d\omega,\\
 \widetilde\Theta_b&=\frac{d\tau}{db}\int_0^1\frac{\partial}{\partial \tau} \log\theta_3(\omega;\tau)d\omega-
\frac{d\tau}{db}\frac{\partial}{\partial\tau}\log \theta_3(s^{3/2}\Omega;\tau).
\end{align}
Moreover,
\be\label{Omegaint}
\int_{b_0}^{b_1}\Theta_b db,\quad \int_{b_0}^{b_1}\widetilde\Theta_b db\quad =
\bigO\left(\frac{1}{s^{3/2}|a|}\right)+
   \bigO\left(\frac{1}{s^{3/2}(b_0-c)}\right).
\ee
\end{prop}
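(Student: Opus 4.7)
The proof divides into the pointwise identity \eqref{prop2bint}, which is essentially accounting of Lemmata \ref{mainterm}--\ref{constantterm}, and the integral bound \eqref{Omegaint}, which requires a quantitative oscillatory-integral estimate. For the identity I would substitute Lemmata \ref{mainterm}, \ref{oscillatoryterm}, \ref{constantterm} directly into \eqref{prelimitingformula}. Splitting $\lim_{z\to b}\tau_3(z)_{21} = \int_0^1 \lim_{z\to b}\tau_3(z)_{21}\,d\omega + \Theta_b$ isolates the oscillatory remainder $\Theta_b$ from the mean, which by Lemma \ref{constantterm} produces the $-\tfrac12\log|J_0|$ and $-\tfrac18\log|q(a)q(b)q(c)|$ terms of $\frac{d}{db}D(a,b)$ together with a $\tau'(b)\int_0^1\partial_\tau\log\theta_3(\omega;\tau)\,d\omega$ contribution. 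Using the chain rule
\[
\frac{d}{db}\log\theta_3(s^{3/2}\Omega;\tau) = s^{3/2}\Omega'(b)\,\frac{\theta_3'}{\theta_3}(s^{3/2}\Omega;\tau) + \tau'(b)\,\frac{\partial}{\partial\tau}\log\theta_3(s^{3/2}\Omega;\tau),
\]
the $\tau'(b)\partial_\tau\log\theta_3(s^{3/2}\Omega;\tau)$ piece appearing inside Lemma \ref{oscillatoryterm} combines with the $\tau'(b)\int_0^1\partial_\tau\log\theta_3(\omega;\tau)d\omega$ from Lemma \ref{constantterm} to reproduce $\widetilde\Theta_b$, while the $s^{3/2}\Omega'\,\theta_3'/\theta_3$ piece and the $-s^3\frac{d\alpha_2}{db}$ from Lemma \ref{mainterm} assemble into $\frac{d}{db}D(a,b)$. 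The two error terms are inherited verbatim from \eqref{prelimitingformula}.

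For \eqref{Omegaint} I would view both $\Theta_b$ and $\widetilde\Theta_b$ as oscillatory integrals with rapid phase $\omega(b) = s^{3/2}\Omega(b)$. By construction each is the mean-zero, $1$-periodic fluctuation in $\omega$ of a function that is real-analytic in $\omega$; this analyticity descends from the $\theta_3$-based construction of $P^\infty$, of the local parametrices $P_a,P_b,P_c$, and of $R^{(1)}$. Writing the Fourier series
\[
\Theta_b = \sum_{n\neq 0} c_n(b)\,e^{2\pi i n s^{3/2}\Omega(b)},
\]
and similarly for $\widetilde\Theta_b$, I would integrate by parts once in $b$, using identity (vii) of Lemma \ref{thetalemma}, $\Omega'(b) = q(b)/((a-b)(b-c)J_0)$. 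The expansions \eqref{qbBC}, \eqref{J0BC} imply that $|\Omega'(b)|$ stays bounded below uniformly in the range of the proposition, so one integration by parts supplies the required factor $s^{-3/2}$; the sum over $n$ converges by fast Fourier decay coming from the real-analyticity in $\omega$.

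The main obstacle is tracking the $b$-dependence of the amplitudes $c_n(b)$ in the degenerating regime $a \to 0^-$, $b \to c^+$. The amplitudes themselves stay bounded, but differentiating them in $b$ forces factors of $1/|a|$ (from $\partial U_a$ in $dR^{(1)}/dz$ and from $q(a)$ in $D(a,b)$) and of $1/(b-c)$ (from $\partial U_b\cup\partial U_c$ in $dR^{(1)}/dz$, from $\partial_\tau$ via \eqref{tauasymp}, and from $q(b),q(c)$) to appear in $\frac{d}{db}(c_n(b)/\Omega'(b))$. Carefully tracking these contributions, the boundary term at $b_0$ together with the $\partial U_b\cup\partial U_c$ part of the leftover integral yield the $O(\frac{1}{s^{3/2}(b_0-c)})$ error, while the boundary at $b_1$ together with the $\partial U_a$ part yield the $O(\frac{1}{s^{3/2}|a|})$ error. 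This proves \eqref{Omegaint} for both $\Theta_b$ and $\widetilde\Theta_b$ simultaneously.
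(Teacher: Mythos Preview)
Your proposal is correct and follows essentially the same approach as the paper: assemble Lemmata \ref{mainterm}--\ref{constantterm} into \eqref{prelimitingformula} for the pointwise identity, then Fourier-expand $\Theta_b,\widetilde\Theta_b$ in $\omega=s^{3/2}\Omega$ and integrate by parts once in $b$ using that $\Omega'(b)$ stays bounded away from zero (via \eqref{idder}, \eqref{qbBC}, \eqref{J0BC}) to extract the $s^{-3/2}$.

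The only notable difference is in how the sum over Fourier modes is controlled. You invoke real-analyticity in $\omega$ for exponential decay of $c_n$; the paper instead integrates by parts once in $\omega$ (formula \eqref{Fdom}) to get an explicit $1/n$ from each of $f_n$ and $\partial_b f_n$, yielding $1/n^2$-summability. Correspondingly, the paper tracks the degeneration via the cruder bounds $\partial_\omega f=\mathcal O(1/(b-c))+\mathcal O(1/|a|)$ and $\partial_\omega\partial_b f=\mathcal O(1/(b-c)^2)+\mathcal O(1/|a|)$ (derived from the explicit residue formulae \eqref{firstACresidues}, \eqref{penultimateAterm}, \eqref{AABB}), so that after integrating $\int_{b_0}^{b_1}(b-c)^{-2}\,db$ the $\mathcal O(1/(s^{3/2}(b_0-c)))$ emerges. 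Your route, with $c_n$ bounded and $\partial_b c_n=\mathcal O(1/(b-c))$, actually gives a slightly sharper $\mathcal O(s^{-3/2}\log(1/(b_0-c)))$ for the integral term, which is of course dominated by the stated bound; both arguments land in the same place.
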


\begin{proof} By \eqref{prelimitingformula} and 3 lemmata above, we only need to prove
\eqref{Omegaint}. First we show that
\be\label{Omegaint1}
\int_{b_0}^{b_1}\Theta_b db=
\bigO\left(\frac{1}{s^{3/2}|a|}\right)+
   \bigO\left(\frac{1}{s^{3/2}(b_0-c)}\right),
\ee
i.e. replacing $\lim_{z\to b}\tau_3(z)_{21}$ by its average w.r.t. $\omega=s^{3/2}\Omega$ under the sign of the integral introduces only a small error. 
Let 
\[
f(\omega;a,b,c)=\lim_{z\to b}\tau_3(z)_{21},\qquad \omega=s^{3/2}\Omega.
\]
First, let $c<b<a<0$ be fixed. Then $f$ is an analytic function of $\omega$. Let $f_n$ denote its Fourier coefficients
\begin{equation}
    f(\omega;a,b,c)=\sum_{n\in \mathbb{Z}}f_n(a,b,c)e^{2\pi i n \omega}.
\end{equation}
For $n\neq 0$, it follows from integration by parts that
\begin{align}\label{fourierbyparts}
    \Bigl \lvert \int_{b_0}^{b_1} f_n(a,b,c) & e^{2\pi i n s^{3/2}\Omega}db \Bigr \rvert=\\
    &=\frac{1}{2\pi  \lvert n \rvert s^{3/2}}\Biggl \lvert \left[ \frac{f_n(a,b,c)e^{2\pi i n s^{3/2}\Omega}}{\frac{d\Omega}{db}} \right]_{b_0}^{b_1}-\int_{b_0}^{b_1}e^{2\pi i n s^{3/2}\Omega}\frac{\partial}{\partial b}\left(\frac{f_n(a,b,c)}{\frac{d\Omega}{db}} \right)db\Biggr \rvert.
\end{align}
From \eqref{idder} of Lemma \ref{thetalemma}, we see that $\frac{d\Omega}{db}(a,b,c)$ is a strictly positive differentiable function
of the parameters $a$, $b$, and $c$, bounded away from zero if $c<b<a<0$ are bounded away from each other. 
Furthermore, the functions $f_n$,  $\partial f_n/\partial b$ decrease with $n$ exponentially.
Therefore 
\begin{equation}\label{fourieraverageerror}
  \int_{b_0}^{b_1} f(\omega;a,b,c)db=\sum_{n\in \mathbb{Z}}
 \int_{b_0}^{b_1}  f_n(a,b,c)e^{2\pi i n \omega}db=
  \int_{b_0}^{b_1} f_0(a,b,c)db+\bigO\left(\frac{1}{s^{3/2}} \right) ,\quad s\to \infty.
\end{equation}

Now let $c<b<a<0$ be in the range of the Proposition.  We will show that
\begin{equation}\label{fe}
  \int_{b_0}^{b_1} f(\omega;a,b,c)db=
  \int_{b_0}^{b_1} f_0(a,b,c)db+
  \bigO\left(\frac{1}{s^{3/2}|a|}\right)+
   \bigO\left(\frac{1}{s^{3/2}(b_0-c)}\right),\qquad s\to \infty,
\end{equation}
which implies \eqref{Omegaint1}.

By \eqref{idder}, \eqref{J0BC}, \eqref{qbBC},
\begin{equation}
    \frac{d\Omega}{db}=\frac{a-2c}{\pi\sqrt{a-c}}+\bigO(b-c),\qquad  \frac{d^2\Omega}{db^2}=\bigO(1),
\end{equation}
as $b\to c$ uniformly for $a-b>\ep>0$. We note, in particular, that $\frac{d\Omega}{db}$
remains bounded away from zero in the regime of the Proposition. To apply the above Fourier series arguments and deduce \eqref{fe},
it suffices to show that
\be
f_n(a,b,c)=\bigO\left(\frac{1}{n(b-c)}\right)+\bigO\left(\frac{1}{n|a|}\right),\qquad 
\frac{\partial}{\partial b}f_n(a,b,c)=\bigO\left(\frac{1}{n(b-c)^2}\right)+\bigO\left(\frac{1}{n|a|}\right).
\ee
in the regime of the Proposition. Since
\begin{equation}\label{Fdom} 
\left|f_n(a,b,c)\right|=\left|\int_0^1f(\omega;a,b,c)e^{-2\pi i n \omega}d\omega\right| 
=\left|\frac{1}{2\pi n}\int_0^1\frac{\partial}{\partial\omega}f(\omega;a,b,c)e^{-2\pi i n \omega}d\omega\right|,
\qquad n\neq 0,
\end{equation}
and similarly for $\frac{\partial}{\partial b}f_n(a,b,c)$, we only need to show that
\begin{equation}\label{bsuff}
\frac{\partial}{\partial \omega}f(\omega;a,b,c)=\mathcal O\left(\frac{1}{b-c}\right)+\bigO\left(\frac{1}{|a|}\right)
, \qquad \frac{\partial}{\partial \omega}\frac{\partial}{\partial b}f(\omega;a,b,c)=\mathcal O\left(\frac{1}{(b-c)^2}\right)+\bigO\left(\frac{1}{|a|}\right).
\end{equation}

One can use directly the definition of $\lim_{z\to b}\tau_3(z)_{21}$, but it is easier to consider its simplified form in Section \ref{sec-const}
below.
By \eqref{tauasymp}, we have uniformly in $\omega$, the estimates for the $\theta$-functions 
\be\label{thetalim}
\theta_j(\omega)=1+\bigO(b-c),\qquad j=1,2,3,4,
\ee
and their derivatives $\theta'_j(\omega)=\bigO(b-c)$.
The analysis of the previous section shows that the matrix elements \eqref{thetajk}, $\theta_{jk}(u(z))$ are bounded on $\partial U_b$,
and so are their derivatives w.r.t. $\omega$.
Using \eqref{uloc}, \eqref{tauasymp}, we also conclude that
\[
 \frac{\partial u}{\partial b},\qquad
  \frac{\partial \tau}{\partial b},\qquad
   \frac{\partial \beta}{\partial b},\qquad
 \frac{\partial \beta^{-1}}{\partial b}
\]
are all $\bigO(1/(b-c))$ uniformly on $\partial U_b$. Finally, note that we can use the heat equation to differentiate $\theta$-functions w.r.t. $\tau$.
The estimates \eqref{bsuff} follow then from \eqref{tau3TT}, \eqref{firstACresidues}, and \eqref{penultimateAterm} with \eqref{AABB} below.
Thus, we obtained \eqref{fe}.




In view of the above lemmata \ref{mainterm}, \ref{oscillatoryterm}, \ref{constantterm}, and the formula \eqref{prelimitingformula},
it remains to show the second estimate in \eqref{Omegaint}, i.e. that 
\[ 
\widetilde\Theta_b=
\frac{d\tau}{db}\int_0^1\frac{\partial}{\partial\tau} \log\theta_3(\omega;\tau)d\omega-\frac{d\tau}{db}\frac{\partial}{\partial\tau}\log \theta_3(s^{3/2}\Omega;\tau)\]
becomes $\bigO\left(\frac{1}{s^{3/2}\min\{|a|,b-c\}}\right)$ after integration. But this follows from the above considerations similarly to (and simpler than) \eqref{fe}.
\end{proof}

Using the analogous results for the differential identity at $a$, we follow the above methods and arrive in Section \ref{secDEATA}, similarly, at

\begin{prop}\label{DEataasympt} (Asymptotic form of the differential identity for $a$)
Let $\ve>0$ be fixed.
Then uniformly for $c<b<a_0<a<a_1<0$, where
$a_0,a_1\in [b+\ve,-\frac{t_1}{s}]$, $b\ge c+\frac{2t_0}{s^{3/2}}$ as $s\to\infty$,
with $t_0=t_1=(\log s)^{1/8}$,
\be\label{prop2aint}
  \frac{d}{da}\log P_s^{\mathrm{Ai}}(a,b)=  \frac{d}{da} D(a,b) +
  \Theta_a+\widetilde\Theta_a+
  \bigO\left(\frac{1}{s^{3/2}|a|}\right)+
   \bigO\left(\frac{1}{s^{3/2}(b-c)^2}\right),
\ee
where $D(a,b)$ is given by \eqref{Dform}, and
\begin{align}
\Theta_a&=\lim_{z\to a}\tau_3(z)_{21}-\int_0^1\lim_{z\to a}\tau_3(z)_{21}d\omega,\\
 \widetilde\Theta_a&=\frac{d\tau}{da}\int_0^1\frac{\partial}{\partial \tau} \log\theta_3(\omega;\tau)d\omega-
\frac{d\tau}{da}\frac{\partial}{\partial\tau}\log \theta_3(s^{3/2}\Omega;\tau).
\end{align}
Moreover,
\be
\int_{a_0}^{a_1}\Theta_a da,\quad \int_{a_0}^{a_1}\widetilde\Theta_a da\quad =
\bigO\left(\frac{1}{s^{3/2}|a|}\right)+
   \bigO\left(\frac{1}{s^{3/2}(b-c)}\right).
\ee
\end{prop}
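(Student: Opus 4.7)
The plan is to mirror the argument used for Proposition \ref{prop2b}, replacing every ingredient associated to the endpoint $b$ with its analogue at $a$. Starting from Lemma \ref{DIFFID} with $p=a$ (where the sign is now $+$) and the factorization $X(z) = s^{-\sigma_3/4}S_0 N_0 F_0^{-1}N_0^{-1}\cdot R(z)\cdot P_a(z)\cdot e^{-s^{3/2}g(z)\sigma_3}$ valid in $U_a\cap\{\Im z>0\}\cap \zeta^{-1}(\Omega_1)$, a direct computation analogous to \eqref{218old}--\eqref{tau321} yields
\begin{equation*}
\frac{d}{da}\log P_s^{\mathrm{Ai}}(a,b) = -\lim_{z\to a}\bigl(\widetilde\tau_1(z)+\widetilde\tau_2(z)+\widetilde\tau_3(z)\bigr)_{21}+\bigO\!\left(\frac{1}{s^{3/2}\min\{|a|^2,(b-c)^2\}}\right),
\end{equation*}
where $\widetilde\tau_j$ are defined by \eqref{tau121}--\eqref{tau321} with $f_b$, $E_b$, $P_b$ replaced by $f_a$, $E_a$, $P_a$, and the outer conjugations by $\sigma_3$ omitted (reflecting the absence of $\sigma_3$ in the definition \eqref{defP} of $P_a$); here we have used \eqref{RRext} and the uniform estimate \eqref{Rext} extending to the regime \eqref{regime}.

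Next, one establishes three analogues of Lemmata \ref{mainterm}, \ref{oscillatoryterm}, \ref{constantterm}. Using the local expansion \eqref{glocalexp} of $f_a(z)^{1/2}$ at $z=a$ and the series \eqref{Jzetaexpn} for $\Psi$, the first yields $\lim_{z\to a}\widetilde\tau_1(z)_{21} = -s^3\,\frac{d\alpha_2}{da}$, matching the $s^3$-term in $\frac{d}{da}D(a,b)$. For the second, the computations use the $\theta$-function values at $u(a)=0$ in place of $u_+(b)$; the relations \eqref{thua} together with identities (ix)--(xii) of Lemma \ref{thetalemma} (which replace (v)--(viii)) produce
\begin{equation*}
\lim_{z\to a}\widetilde\tau_2(z)_{21} = \frac{d}{da}\log\theta_3(s^{3/2}\Omega;\tau) - \frac{d\tau}{da}\frac{\partial}{\partial\tau}\log\theta_3(s^{3/2}\Omega;\tau).
\end{equation*}
For the third, residues are computed in the same manner as in Lemma \ref{constantterm}, with the contribution of $\partial U_a$ leading (via identity (ix)) to $-\tfrac{1}{2}\frac{d\log|J_0|}{da}-\tfrac{1}{8}\frac{d\log|q(a)q(b)q(c)|}{da}$ after averaging, plus the $\tau$-derivative term. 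Summing the three analogues reproduces the explicit part $\frac{d}{da}D(a,b)+\widetilde\Theta_a$, and separating out the oscillatory remainder gives the $\Theta_a$ contribution.

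Finally, to obtain \eqref{prop2aint} with the stated error we upgrade the pre-integration bound: the Cauchy representation of $R^{(1)}$ together with Lemma \ref{Pprhp} and the estimates \eqref{estfinal} yields $\partial_\omega(\lim_{z\to a}\widetilde\tau_3)=\bigO(1/(b-c))+\bigO(1/|a|)$ and $\partial_\omega\partial_a(\lim_{z\to a}\widetilde\tau_3)=\bigO(1/(b-c)^2)+\bigO(1/|a|)$ uniformly in the regime \eqref{regime}, which produces the two error terms in \eqref{prop2aint}. The Fourier averaging argument proceeds as in \eqref{fourierbyparts}--\eqref{fe}, but now the relevant frequency derivative is $\frac{d\Omega}{da}=-q(a)/((a-b)(a-c)J_0)$, which by \eqref{qaBC} satisfies $\frac{d\Omega}{da}\asymp |a|$ as $a\to 0^-$. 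Integration by parts therefore gains only $\frac{1}{s^{3/2}|a|}$ rather than $\frac{1}{s^{3/2}}$, which after integration over $a\in[a_0,a_1]$ produces the announced bound $\bigO(1/(s^{3/2}|a|))+\bigO(1/(s^{3/2}(b-c)))$ for $\int \Theta_a\,da$ and $\int\widetilde\Theta_a\,da$.

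The main obstacle is precisely this last point: because $\frac{d\Omega}{da}$ degenerates linearly as $a\to 0^-$ (whereas $\frac{d\Omega}{db}$ remained bounded away from zero), the stationary phase gain is weaker, and one must verify that the Fourier coefficients $f_n(a,b,c)$ of $\lim_{z\to a}\widetilde\tau_3(z)_{21}$ (and their $a$-derivatives) decay rapidly enough in $n$ so that the resulting series in \eqref{fourieraverageerror} still sums to the required bound in the extended regime where $|a|$ may be as small as $(\log s)^{1/8}/s$.
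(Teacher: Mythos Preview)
Your approach is essentially the same as the paper's: it establishes Lemmata \ref{maintermA}, \ref{oscillatorytermA}, \ref{constanttermA} (the $a$-analogues of Lemmata \ref{mainterm}--\ref{constantterm}) using \eqref{thua} and identities (ix)--(xii) of Lemma \ref{thetalemma} in place of \eqref{thetanoder}--\eqref{thetader} and (v)--(viii), and then repeats the Fourier averaging argument; you also correctly isolate the one genuinely new feature, namely that $\frac{d\Omega}{da}\asymp |a|\to 0$ degrades the integration-by-parts gain to $1/(s^{3/2}|a|)$, which is precisely what produces the stated error in the final estimate. One caveat on bookkeeping: with your convention (keeping the $-\tfrac{1}{2\pi i}$ prefactor from \eqref{tau121}--\eqref{tau321} but dropping the $\sigma_3$-conjugation) your $\widetilde\tau_j$ equals minus the paper's, so the intermediate values you quote for $\lim_{z\to a}\widetilde\tau_1$ and $\lim_{z\to a}\widetilde\tau_2$ are each off by a sign, and the phrase ``matching the $s^3$-term in $\frac{d}{da}D$'' is then inconsistent with the outer minus in your displayed formula --- the final assembled identity is correct, but the signs in the narrative need to be aligned.
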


Because of the form of the error terms, we will need to move the edges $a$ and $b$ first
simultaneously. To do this we will use the following Proposition which easily follows from the 2 ones above and some similar additional arguments. For fixed $\alpha$, $\beta$, we denote $a=\alpha-x$, $b=\beta+x$ and remark that
\[
\frac{d}{dx}=\frac{\partial}{\partial b}-\frac{\partial}{\partial a}.
\]
\begin{prop}\label{DEatabasympt} (Asymptotic form of the differential identity for $a=\alpha-x$, $b=\beta+x$)
Let $\ve>0$ be fixed.
Then uniformly for $0\le x_0<x<x_1\le\frac{\alpha-\beta}{2}-\ve$, where
$a=\alpha-x$, $b=\beta+x$, and $\beta= c+\frac{2t_0}{s^{3/2}}$, $\alpha= -\frac{t_1}{s}$
as $s\to\infty$
with $t_0=t_1=(\log s)^{1/8}$,
\be\label{prop2x}
  \frac{d}{dx}\log P_s^{\mathrm{Ai}}(a,b)=  \frac{d}{dx} D(a,b) 
  -\Theta_a-\widetilde\Theta_a+\Theta_b+\widetilde\Theta_b+
  \bigO\left(\frac{1}{s^{3/2}|a|^2}\right)+
   \bigO\left(\frac{1}{s^{3/2}(b-c)^2}\right),
\ee
where $D(a,b)$ is given by \eqref{Dform}, and $\Theta_a$, $\widetilde\Theta_a$, $\Theta_b$,
$\widetilde\Theta_b$ as in Propositions \ref{DEataasympt}, \ref{prop2b} above.
Moreover,
\be
\int_{x_0}^{x_1}\Theta_a dx,\quad \int_{x_0}^{x_1}\widetilde\Theta_a dx,\quad 
\int_{x_0}^{x_1}\Theta_b dx,\quad \int_{x_0}^{x_1}\widetilde\Theta_b dx\quad 
=
\bigO\left(\frac{1}{s^{3/2}|a|}\right)+
   \bigO\left(\frac{1}{s^{3/2}(b-c)}\right).
\ee
\end{prop}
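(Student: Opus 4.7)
The plan is to derive Proposition \ref{DEatabasympt} directly from Propositions \ref{prop2b} and \ref{DEataasympt} via the chain rule. Since $a = \alpha - x$ and $b = \beta + x$ with $\alpha, \beta$ fixed, we have $\frac{d}{dx} = \frac{\partial}{\partial b} - \frac{\partial}{\partial a}$. Applying this operator to $\log P_s^{\mathrm{Ai}}(a,b)$ and substituting the asymptotic expressions from \eqref{prop2bint} and \eqref{prop2aint} yields
\[
\frac{d}{dx}\log P_s^{\mathrm{Ai}}(a,b) \;=\; \frac{d}{dx}D(a,b) + (\Theta_b + \widetilde\Theta_b) - (\Theta_a + \widetilde\Theta_a) + \text{error},
\]
where the pointwise error combines those in Propositions \ref{prop2b} and \ref{DEataasympt}. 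Taking the worse of each --- $\bigO(1/(s^{3/2}|a|^2))$ dominates $\bigO(1/(s^{3/2}|a|))$ (since $|a|$ is bounded above), and $\bigO(1/(s^{3/2}(b-c)^2))$ dominates $\bigO(1/(s^{3/2}(b-c)))$ --- produces precisely the pointwise error stated in \eqref{prop2x}. The range $0\le x_0 < x < x_1 \le (\alpha-\beta)/2-\ve$ guarantees that $a - b \ge 2\ve$, so the hypotheses of both propositions are met throughout the interval of integration.

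For the integrated error bounds, the plan is to adapt the averaging procedure used in the proof of Proposition \ref{prop2b}. The key ingredient is that, as $x$ varies with $a = \alpha - x$ and $b = \beta + x$, the function $\Omega(\alpha-x, \beta+x, c)$ has derivative
\[
\frac{d\Omega}{dx} \;=\; \frac{\partial\Omega}{\partial b} - \frac{\partial\Omega}{\partial a} \;=\; \frac{q(b)}{(a-b)(b-c)J_0} + \frac{q(a)}{(a-b)(a-c)J_0}
\]
by parts (vii) and (xi) of Lemma \ref{thetalemma}. Using the expansions \eqref{qaBC}--\eqref{qcBC} for $q(p)$ and \eqref{J0BC} for $J_0$, one checks in the regime $a\to 0^-$, $b\to c^+$ that the first term on the right tends to a positive constant while the second tends to zero; hence $d\Omega/dx$ is bounded above and below by positive constants throughout the admissible range of $x$. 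Fourier expanding each of $\Theta_a$, $\widetilde\Theta_a$, $\Theta_b$, $\widetilde\Theta_b$ in the variable $\omega = s^{3/2}\Omega$ and integrating by parts in $x$ then gains a factor of $1/s^{3/2}$ on every nonzero Fourier mode, producing the stated error bounds $\bigO(1/(s^{3/2}|a|)) + \bigO(1/(s^{3/2}(b-c)))$ after summation.

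The main technical point to verify is that the Fourier coefficients of the integrands, and of their $x$-derivatives, satisfy estimates of the form $\bigO(1/(n(b-c))) + \bigO(1/(n|a|))$ uniformly in $x$ throughout the admissible range. This is obtained by revisiting the bounds established in Section \ref{secRHext} on $P^\infty$, on $u(z)$, $\beta(z)$, $\tau$, and their derivatives with respect to $a$ and $b$ on $\partial U_a \cup \partial U_b \cup \partial U_c$, exactly as in the proofs of Propositions \ref{prop2b} and \ref{DEataasympt}. Since $\partial/\partial x = \partial/\partial b - \partial/\partial a$ and each partial generates at worst a factor of $1/(b-c)$ or $1/|a|$ in the limiting regime, the bookkeeping transfers without modification. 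I expect the main obstacle to be precisely this bookkeeping: one must track carefully which negative powers of $|a|$ and $(b-c)$ arise from differentiation versus from the integration-by-parts step, so as to recover the exponents stated in \eqref{prop2x} rather than worse ones. No new analytic input is required beyond what is already present in the proofs of Propositions \ref{prop2b}, \ref{DEataasympt}, and in Section \ref{secRHext}.
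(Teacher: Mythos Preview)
Your proposal is correct and matches the paper's approach. The paper itself gives no detailed proof of Proposition \ref{DEatabasympt}, stating only that it ``easily follows from the 2 ones above and some similar additional arguments''; you have correctly identified those arguments---combining \eqref{prop2bint} and \eqref{prop2aint} via $\frac{d}{dx}=\frac{\partial}{\partial b}-\frac{\partial}{\partial a}$ for the pointwise statement, and rerunning the Fourier-averaging argument with $\frac{d\Omega}{dx}=\frac{\partial\Omega}{\partial b}-\frac{\partial\Omega}{\partial a}$ (which you correctly verify is bounded away from zero) in place of $\frac{d\Omega}{db}$ for the integrated bounds.
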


With these propositions, we may prove Theorem \ref{Mainthm}. We will integrate to reach the desired values of $a=a_0$, $b=b_0$ of the Theorem.
Set $\alpha= -\frac{t_1}{s}$, $\beta=c+\frac{2t_0}{s^{3/2}}$, $t_0=t_1=(\log s)^{1/8}$.
First, by Proposition \ref{DEatabasympt}, we integrate over $x$ from $x=0$ to $x=\alpha-a_0$,
which fixes the desired value of $a$:
\be\begin{aligned}
&\log P_s^{\mathrm{Ai}}(a_0,\beta+\alpha-a_0)-\log P_s^{\mathrm{Ai}}(\alpha,\beta)=
\int_{0}^{\alpha-a_0} \frac{d}{dx}\log P_s^{\mathrm{Ai}}(a,b)dx=\\
& D(a_0,\beta+\alpha-a_0)-D(\alpha,\beta)
+\bigO\left(\frac{1}{s^{3/2}|\alpha|}\right)+
   \bigO\left(\frac{1}{s^{3/2}(\beta-c)}\right).
   \end{aligned}
\ee
Next, using Proposition \ref{prop2b}, we integrate over $b$ from the present value $b=\beta+\alpha-a_0$ to $b=b_0$:
\be\begin{aligned}
&\log P_s^{\mathrm{Ai}}(a_0,b_0)-\log P_s^{\mathrm{Ai}}(a_0,\beta+\alpha-a_0)=
\int_{\beta+\alpha-a_0}^{b_0} \frac{d}{db}\log P_s^{\mathrm{Ai}}(a,b)db=\\
& D(a_0,b_0)-D(a_0,\beta+\alpha-a_0)
+\bigO\left(\frac{1}{s^{3/2}}\right).
   \end{aligned}
\ee

The sum of these expressions gives
\begin{equation}\label{Psum}
\log P_s^{\mathrm{Ai}}(a_0,b_0)=D(a_0,b_0)-D(\alpha,\beta)+\log P_s^{\mathrm{Ai}}(\alpha,\beta)+\bigO\left(\frac{1}{(\log s)^{1/8}}\right).
\end{equation}

Note that $\log P_s^{\mathrm{Ai}}(\alpha,\beta)$ in \eqref{Psum} may be replaced with the expression in Lemma \ref{seplemma}. Thus it remains only to expand the expression for $D(\alpha,\beta)$ when $s\to\infty$. To this end we prove the following 
\begin{prop}\label{limitregimeprop}
 For $\alpha=-\frac{t_1}{s}$ and $\beta =c+\frac{2t_0}{s^{3/2}}$, $t_0=t_1=(\log s)^{1/8}$,
\begin{equation}\label{Dalbe} 
    D(\alpha,\beta)=-\frac{\lvert c \rvert t_0^2}{2}-\frac{1}{4}\log(\sqrt{\lvert c \rvert} t_0) -\frac{t_1^3}{12}-\frac{1}{8}\log t_1 +\frac{1}{2}\log s+\frac{1}{8}\log 2 -\frac{1}{2}\log \pi+o(1), \quad s\to \infty.
\end{equation}
\end{prop}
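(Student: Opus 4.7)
Plan. I would treat each of the four summands in
\[
D(\alpha,\beta)=-\alpha_2 s^3 + \log\theta_3(s^{3/2}\Omega;\tau) - \tfrac12\log|J_0| - \tfrac18\log|q(\alpha)q(\beta)q(c)|
\]
separately, setting $\delta:=\beta-c=2t_0/s^{3/2}$ and noting $\alpha-c=|c|+o(1)$.

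The two logarithmic pieces are routine. From \eqref{J0BC} one reads $|J_0|=\pi/\sqrt{|c|}(1+o(1))$; from \eqref{qaBC}--\eqref{qcBC} together with $\alpha=-t_1/s$ one gets $|q(\alpha)|=\frac{t_1|c|}{2s}(1+o(1))$ and $|q(\beta)|=|q(c)|=\frac{|c|t_0}{s^{3/2}}(1+o(1))$. A direct computation then yields
\[
-\tfrac12\log|J_0|-\tfrac18\log|q(\alpha)q(\beta)q(c)|=-\tfrac18\log|c|-\tfrac18\log t_1-\tfrac14\log t_0+\tfrac18\log 2+\tfrac12\log s-\tfrac12\log\pi+o(1),
\]
which is exactly the non-polynomial part of \eqref{Dalbe}. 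For the theta term, \eqref{tauasymp} gives $-i\tau=\frac{3}{2\pi}\log s+O(\log\log s)$, so $|e^{\pi i\tau}|=s^{-3/2+o(1)}$; because $\Omega\in\mathbb{R}$, the Fourier series \eqref{definitionoftheta3} yields $\theta_3(s^{3/2}\Omega;\tau)=1+O(s^{-3/2+o(1)})$, hence $\log\theta_3=o(1)$.

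The heart of the matter is verifying $-\alpha_2 s^3=-\tfrac{t_1^3}{12}-\tfrac{|c|t_0^2}{2}+o(1)$. Substituting the algebraic identity
\[
a^3+b^3+c^3-(a+b)(a+c)(b+c)=a^3-2ca^2-4ac^2-(a^2+4ac)\delta+(2c-a)\delta^2+\delta^3
\]
into \eqref{alpha2}, individual summands such as $-4ac^2s^3=4t_1c^2 s^2$ blow up as $s\to\infty$ and can be cancelled only by the contribution $4(a+b+c)q_0$, which forces one to know $q_0$ through order $\delta^2$. Writing $q_0=ac/2+a\delta/4+q_{02}\delta^2+O(\delta^3)$, I would compute $q_{02}(0,c)$ from the differential identity \eqref{ddbq0}: combining $q(b)=\tfrac{2c-a}{4}\delta+O(\delta^2)$ (from \eqref{qbBC}) with the estimate $(dJ_0/db)/J_0=\tfrac{1}{4(a-c)}+O(\delta)$, which follows from \eqref{dbJ0} and $J_1-bJ_0=\tfrac{\pi\delta}{2\sqrt{a-c}}+O(\delta^2)$, one obtains $\tfrac{dq_0}{db}\big|_{a=0}=-\tfrac{\delta}{8}+O(\delta^2)$, and hence $q_{02}(0,c)=-\tfrac{1}{16}$. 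After the cancellations, $-\alpha_2=\tfrac{1}{12}\bigl[a^3+(2c+(4a+8c)q_{02})\delta^2+\delta^3\bigr]+o(s^{-3})$; at $a=0$ with $q_{02}=-\tfrac{1}{16}$ the coefficient of $\delta^2$ is $3c/2$, so
\[
-\alpha_2 s^3=\tfrac{1}{12}\bigl(a^3s^3+(3c/2)\cdot 4t_0^2\bigr)+o(1)=\tfrac{1}{12}(-t_1^3+6ct_0^2)+o(1)=-\tfrac{t_1^3}{12}-\tfrac{|c|t_0^2}{2}+o(1),
\]
using $c=-|c|$ and $a^3s^3=-t_1^3$. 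Summing with the two previous contributions yields \eqref{Dalbe}.

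The main obstacle is the $\alpha_2$-computation: the $\delta^2$ coefficient of $q_0$ at $a=0$ lies one order beyond the expansion \eqref{q0expnatBC} explicitly recorded in the paper, and identifying it is crucial. Without the value $q_{02}(0,c)=-\tfrac{1}{16}$ one would obtain the incorrect constant $-\tfrac{2|c|t_0^2}{3}$ in place of $-\tfrac{|c|t_0^2}{2}$, so tracking this higher-order input (and the cancellations of the individually divergent terms such as $ac^2s^3$) is essential.
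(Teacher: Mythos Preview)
Your proof is correct and follows the same term-by-term expansion of $D(\alpha,\beta)$ that the paper uses. The treatment of $\log|J_0|$, $\log|q(\alpha)q(\beta)q(c)|$, and $\log\theta_3$ is essentially identical to the paper's (the paper invokes \eqref{thetalim} for the $\theta_3$ term, you argue directly from the series, but the content is the same).

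The one place where your argument is genuinely more explicit than the paper's is the $\alpha_2$ step. The paper simply asserts
\[
\alpha_2=-\frac{a^3}{12}+\frac{(a-2c)^2}{32(a-c)}(b-c)^2+\bigO((b-c)^3),
\]
citing \eqref{definitionofalpha2} and \eqref{q0expnatBC}. As you correctly observe, \eqref{q0expnatBC} records $q_0$ only through order $\delta$, whereas the $\delta^2$ coefficient of $\alpha_2$ requires the $\delta^2$ coefficient of $q_0$; the paper has implicitly used one more order of the expansion of $q_0$ than it displays. Your route to $q_{02}(0,c)=-\tfrac{1}{16}$ via the differential relation \eqref{ddbq0} (combined with \eqref{dbJ0} and the first-order expansions of $J_0,J_1$) is a neat shortcut: it avoids pushing the raw expansions \eqref{J0BC}--\eqref{J1BC} and the analogous one for $J_2$ to second order, which is presumably what the paper did off-page to obtain its stated $\delta^2$ coefficient. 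Your value is consistent with the paper's general-$a$ formula, since $\frac{(a-2c)^2}{32(a-c)}\big|_{a=0}=-\frac{c}{8}=\frac{|c|}{8}$ matches your $\frac{1}{12}\cdot\frac{3c}{2}\cdot 4=\frac{c}{2}$ for $-\alpha_2 s^3/\!t_0^2$.
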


\begin{proof}The formula is obtained by expanding all terms in $D(a,b)$ with $a=-\frac{t_1}{s}$ and $b=c+\frac{2t_0}{s^{3/2}}$,
$s\to\infty$. First, 
by definition of $\alpha_2$ in \eqref{definitionofalpha2} and the exansion of $q_0$ (\ref{q0expnatBC}),
\be\label{121}
\alpha_2 =-\frac{a^3}{12}+\frac{(a-2c)^2}{32(a-c)}(b-c)^2+\bigO((b-c)^3)=
   \frac{t_1^3}{12s^3}+\frac{\lvert c \rvert t_0^2}{2s^3}\left(1+o(1)\right).
\ee
Next, by \eqref{thetalim}, $\theta_3(s^{3/2}\Omega;\tau)=1+\bigO(b-c)$.
The expansion (\ref{J0BC}) implies that
\begin{equation}\label{118}
    -\frac{1}{2}\log \lvert J_0 \rvert =-\frac{1}{2}\log \pi +\frac{1}{4}\log \lvert c \rvert +\bigO(b-c).
\end{equation}
Finally, the expansions (\ref{qaBC}), (\ref{qbBC}), and (\ref{qcBC}) imply that
\begin{equation}\label{125}
\begin{aligned}
    -\frac{1}{8}\log \lvert q(a)q(b)q(c)\rvert &=-\frac{1}{8}\log \left| \frac{a(a-c)}{2}\left(\frac{a-2c}{4}(b-c)\right)^2(1+\bigO(b-c))\right| \\
    &= 
    -\frac{1}{4}\log(\sqrt{\lvert c \rvert} t_0)-\frac{1}{8}\log t_1+\frac{1}{2}\log s +\frac{1}{8}\log 2-\frac{1}{4}\log \lvert c \rvert+o(1).
    \end{aligned}
\end{equation}
Combining (\ref{121}), (\ref{118}), and (\ref{125}), we obtain \eqref{Dalbe}.
\end{proof}

On substituting the expressions \eqref{seplemmaexpansion} of Lemma \ref{seplemma} and \eqref{Dalbe} of
Proposition \ref{limitregimeprop}, into \eqref{Psum}, Theorem \ref{Mainthm} follows
with $a=a_0$ and $b=b_0$.

In the following sections we prove the aforementioned lemmata \ref{mainterm}, \ref{oscillatoryterm}, and \ref{constantterm}, 
and their analogues for the differential identity in $a$.

\subsection{The main term. Proof of Lemma \ref{mainterm}.}
From \eqref{Jzetaexpn} we easily obtain, 
recalling that $\zeta=\zeta(z)=s^3 f_b(z)=s^3 (g(z)-g(b))^2$ and using \eqref{glocalexp},
\begin{equation}\label{mainprelim}
\begin{aligned}
\lim_{z\to b}{\tau}_1(z)_{21}&=
-\frac{1}{2\pi i}\lim_{z\to b}\left(\sigma_3\Psi^{-1}(s^3f_b(z))\frac{d\Psi}{dz}(s^3f_b(z))\sigma_3\right)_{21}\\
&=-\frac{1}{2\pi i}\lim_{\zeta\to 0}\left(\sigma_3\Psi^{-1}(\zeta)\frac{d\Psi}{d\zeta}(\zeta)\sigma_3\right)_{21} s^{3} f'_b(b)
=-s^3\frac{q(b)^2}{(a-b)(b-c)}.
\end{aligned}
\end{equation}

On the other hand, using \eqref{defq1}, \eqref{ddbq0}, and \eqref{dbJ0}, 
we differentiate  $\alpha_2$ given by \eqref{alpha2} in Lemma \ref{lemmag}, and obtain
\begin{equation}\label{ddbalpha2}
    \frac{d}{db}\alpha_2=\frac{q(b)^2}{(a-b)(b-c)}.
\end{equation}
Comparing this with \eqref{mainprelim}, we obtain Lemma \ref{mainterm}.

\begin{remark}
Identity (\ref{ddbalpha2}) is suitable to prove that $\alpha_2>0$. Note that this property is important since it determines exponential \emph{decay} of the determinant itself. It is clear that the right hand side of \eqref{ddbalpha2} is positive for all values of $a$, $b$, and $c$. Since (cf Section \ref{secRHext})
$\alpha_2 \to -\frac{a^3}{12}>0$ as  $b \to c $, we have that
\begin{equation}
    \alpha_2(b)=\alpha_2(b=c)+\int_c^b\frac{q(b)^2}{(a-b)(b-c)}db>0.
\end{equation}
\end{remark}

\subsection{The oscillatory term. Proof of Lemma \ref{oscillatoryterm}.}

We now study the term
\begin{equation}
    \mathcal{\tau}_2(z)=-\frac{1}{2\pi i}\sigma_3\Psi^{-1}(\zeta)\sigma_3E_b^{-1}(z)E_b^{\prime}(z)\sigma_3 \Psi(\zeta)\sigma_3.
\end{equation}
From the definition
\[ E_b(z)=P^{\infty}(z)e^{-s^{3/2}g_+(b)\sigma_3}\sigma_3 N_0^{-1} \sigma_3 f_b(z)^{\frac{1}{4}\sigma_3}s^{\frac{3}{4}\sigma_3},\]
we have that
\begin{align}
&E_b^{-1}(z)E_b^{\prime}(z)=
 \frac{\sigma_3}{4}\frac{f_b^{\prime}(z)}{f_b(z)}\\
&+ s^{-\frac{3}{4}\sigma_3} f_b^{-\frac{1}{4}\sigma_3}(z)  \sigma_3 N_0 \sigma_3 e^{s^{3/2}g_+(b)\sigma_3}\left(P^{\infty}(z)\right)^{-1} \left( P^{\infty}(z) \right)^{\prime} e^{-s^{3/2}g_+(b)\sigma_3}\sigma_3 N_0^{-1} \sigma_3 f_b^{\frac{1}{4}\sigma_3}(z)  s^{\frac{3}{4}\sigma_3}.
\notag
\end{align}
It follows from the expansions of $\Psi(\zeta)$ in (\ref{Jzetaexpn}) and of  $f_b(z)$ in \eqref{glocalexp}, that the contribution to $\tau_2$ from the first term
in the above expression is
\begin{equation}\label{EBsomething}
  -\frac{1}{2\pi i}  \frac{1}{4}\frac{f_b^{\prime}(z)}{f_b(z)}\left(\sigma_3\Psi^{-1}(\zeta)\sigma_3 \Psi(\zeta)\sigma_3\right)_{21}\to s^3\frac{q(b)^2}{2(a-b)(b-c)},\quad z\to b.
\end{equation}

Next,
with the notation for $P^{\infty}$ in \eqref{thetaij}, we obtain,
using the fact that
$\det P^{\infty}(z)=1$, $\det (P^{\infty})^{\prime}(z)=0$ (see Lemma \ref{thetalemma} (iv)),
\begin{equation}\label{ABC}
    e^{s^{3/2}g_+(b)\sigma_3}\left( P^{\infty}(z)\right)^{-1}\left(P^{\infty}(z)\right)^{\prime} e^{-s^{3/2}g_+(b)\sigma_3}=\begin{pmatrix} A & B \\ C & -A \end{pmatrix},
\end{equation}
with
\begin{equation}\label{A0}
    A=\theta_{22}\frac{d\theta_{11}}{dz}N_{11}^2+\theta_{11}\theta_{22}N_{11}\frac{d N_{11}}{dz}+\theta_{12}\frac{d\theta_{21}}{dz} N_{12}^2+\theta_{12}\theta_{21}N_{12}\frac{d N_{12}}{dz},
\end{equation}
\begin{equation}\label{B0}
    B=\left[
    \left(\theta_{22}\frac{d\theta_{12}}{dz}-\theta_{12}\frac{d\theta_{22}}{dz}\right)N_{11}N_{12}+
    \theta_{12}\theta_{22}\left(N_{11}\frac{dN_{12}}{dz}-N_{12}\frac{d N_{11}}{dz}\right)\right]e^{2g_+(b)s^{3/2}},
\end{equation}
and
\begin{equation}\label{C0}
    C=\left[
    \left(\theta_{21}\frac{d\theta_{11}}{dz}-\theta_{11}\frac{d\theta_{21}}{dz}\right)N_{11}N_{12}-
    \theta_{21}\theta_{11}\left(N_{11}\frac{dN_{12}}{dz}-N_{12}\frac{d N_{11}}{dz}\right)\right]e^{-2g_+(b)s^{3/2}},
\end{equation}
where we omit the arguments for brevity.

In this notation, we determine using the expansion of $\Psi(\zeta)$ in (\ref{Jzetaexpn}) that
\begin{align}\label{EBEXPN}
   \lim_{z\to b}{\tau}_2(z)_{21}&=\lim_{z\to b}\left(\frac{1}{4}\left(2A+i(B+C)\right)s^{3/2}f_b^{1/2}(z) -\frac{i}{4}(B-C)s^3f_b(z)\right)+s^3\frac{q(b)^2}{2(a-b)(b-c)},
\end{align}
where we substituted (\ref{EBsomething}). We first evaluate the term with $B-C$ in this expression.
Writing
\be
\frac{d}{dz}\theta_{jk}=\theta'_{jk}\frac{du}{dz},\qquad \theta'_{jk}(u)=\frac{d}{du}\theta_{jk}(u(z)),
\ee
we have by \eqref{uexp}
\[
\frac{du}{dz}=\frac{u_{0,b}}{2}(z-b)^{-1/2}\left(1+\bigO(z-b)\right),\qquad z\to b.
\]
Replacing $\frac{d}{dz}\theta_{jk}$ by $\theta'_{jk}\frac{du}{dz}$ in the above expressions for $B$ and $C$, we obtain
by the expansion of $\beta(z)$ in \eqref{betaexp} and the identities for the values of $\theta_{jk}$ at $u_+(b)$ in \eqref{thetanoder},
\be\begin{aligned}
B-C&=-\frac{1}{2i}\frac{1}{z-b}\left( \theta_{11}\theta_{22}+\frac{u_{0,b}\beta_{0,b}^2}{2}(\theta_{11}\theta'_{22}+\theta_{22}\theta'_{11})\right)\left|_{u_+(b)}\right.\left(1+\bigO(z-b)\right)\\
&=-\frac{1}{2i}\frac{1}{z-b}\left(1+\bigO(z-b)\right),
\end{aligned}
\ee
where we used the identity \eqref{detatb} in the last equation. Then, by the expansion of $f_b(z)$ in \eqref{glocalexp}, 
\begin{equation}
    -\frac{i}{4}(B-C)s^3f_b(z) \to -s^3 \frac{q(b)^2}{2(a-b)(b-c)}, \qquad z\to b,
\end{equation}
which cancels the last term in (\ref{EBEXPN}). 

We now evaluate $2A+i(B+C)$. We obtain as before, but now using also \eqref{thetader}, that
\be
B+C=-\frac{u_{0,b}}{2i}\frac{1}{(z-b)^{1/2}}\left(
\theta_{11}\theta'_{22}-\theta_{22}\theta'_{11}+\frac{u_{0,b}\beta_{0,b}^2}{2}(\theta_{11}\theta''_{22}-\theta_{22}\theta''_{11})\right)\left|_{u_+(b)}\right.
+\bigO(1),
\ee
and similarly,
\be
A=-\frac{u_{0,b}}{4}\frac{1}{(z-b)^{1/2}}\left(
\theta_{11}\theta'_{22}-\theta_{22}\theta'_{11}+\frac{u_{0,b}\beta_{0,b}^2}{2}(\theta_{11}\theta''_{22}-\theta_{22}\theta''_{11})\right)\left|_{u_+(b)}\right.
+\bigO(1).
\ee
Recalling also \eqref{beta-u}, we obtain
\be
2A+i(B+C)=-\frac{u_{0,b}}{(z-b)^{1/2}}\left(
\theta_{11}\theta'_{22}-\theta_{22}\theta'_{11}+\frac{1}{2J_0}(\theta_{11}\theta''_{22}-\theta_{22}\theta''_{11})\right)\left|_{u_+(b)}
+\bigO(1).\right.
\ee
Substituting this expression into \eqref{EBEXPN} and using again \eqref{glocalexp} and the value of $u_{0,b}$ in \eqref{u0b}, we have
\be\label{t23id}
 \lim_{z\to b}{\tau}_2(z)_{21}=
 s^{3/2}\frac{q(b)}{(a-b)(b-c)}\frac{1}{4J_0^2}\left[\theta_{11}\theta_{22}
 \left(2J_0\left(\frac{\theta'_{11}}{\theta_{11}}-\frac{\theta'_{22}}{\theta_{22}}\right)
 +\frac{\theta''_{11}}{\theta_{11}}-\frac{\theta''_{22}}{\theta_{22}}\right)\right]_{u_+(b)}.
 \ee

Using parts (v) and (vii) of Lemma \ref{thetalemma}, and also expanding $\theta_{jk}$ by their definition \eqref{thetajk},
we obtain
\be\label{222}
 \lim_{z\to b}{\tau}_2(z)_{21}=
 \frac{s^{3/2}}{2}\frac{d\Omega}{db}T_1(s^{3/2}\Omega),
 \ee
 where 
 \be\label{T1omegadef}
 \begin{aligned}
 T_1(\omega)&=\frac{1}{2J_0}\left[\theta_{11}\theta_{22}
 \left(2J_0\left(\frac{\theta'_{11}}{\theta_{11}}-\frac{\theta'_{22}}{\theta_{22}}\right)
 +\frac{\theta''_{11}}{\theta_{11}}-\frac{\theta''_{22}}{\theta_{22}}\right)\right]_{u_+(b)}\\
 &=
 -\frac{\theta_3^2\theta_3(\omega+\widehat{d})\theta_3(\omega-\widehat{d})}{J_0\theta_3^2(\widehat{d})\theta_3^2(\omega)}\left[ \frac{\theta_1^{\prime}}{\theta_1}(\widehat{d})\left(\frac{\theta_3^{\prime}}{\theta_3}(\omega+\widehat{d})+\frac{\theta_3^{\prime}}{\theta_3}(\omega-\widehat{d})\right)-\frac{1}{2}\left(\frac{\theta_3^{\prime\prime}}{\theta_3}(\omega+\widehat{d})- \frac{\theta_3^{\prime\prime}}{\theta_3}(\omega-\widehat{d})\right) \right],
   \end{aligned}
\ee
with $\widehat{d}=u_+(b)+d$.

The function $T_1(\omega)$ has the same structure as (219) in \cite{IB}. 
As in the proof of part (c) of Proposition 23 in \cite{IB}, we verify that (note: $J_0<0$)
\begin{equation}\label{prop23CIB}
    T_1(\omega)=2\frac{\theta_3^{\prime}}{\theta_3}(\omega).
\end{equation}
Substituting this into \eqref{222} and writing out the total derivative, we obtain Lemma \ref{oscillatoryterm}.

\subsection{The constant term. Proof of Lemma \ref{constantterm}.}\label{sec-const}
We are left to consider the term
\begin{equation}
    \mathcal{\tau}_3(z)=-\frac{1}{2\pi i}e^{s^{3/2}g(z)\sigma_3}P_b^{-1}(z)\frac{dR^{(1)}}{dz}(z)P_b(z) e^{-s^{3/2}g(z)\sigma_3}.
\end{equation}
Note that
\begin{equation}\label{R1primeB}
\begin{aligned}
 \frac{d}{dz}  R^{(1)}(z)&=\frac{1}{2\pi i}\int_{\partial U_a \cup \partial U_b \cup \partial U_c}\frac{\Delta_1(\xi)}{(\xi-z)^2}d\xi\\
 &=\frac{1}{2\pi i}\int_{\partial U_a \cup \partial U_b \cup \partial U_c}\frac{\Delta_1(\xi)}{(\xi-b)^2}d\xi\left(1+\bigO(z-b)\right),\qquad
 z\to b.
 \end{aligned}
\end{equation}
Furthermore, with notation as in (\ref{thetaij}) and using the identities \eqref{thetanoder} for $\theta_{jk}$ at $u_+(b)$, 
and the expansions for $\Psi(\xi)$ in \eqref{Jzetaexpn}, $f_b(z)$ in \eqref{glocalexp}, $\beta(z)$ in \eqref{betaexp},
we have for the first column
of $P_b(z)e^{-s^{3/2}g(z)\sigma_3}$,
\begin{equation}\label{PBEXPn}
   \lim_{z\to b} \begin{pmatrix} (P_b(z)e^{-s^{3/2}g(z)\sigma_3})_{11}\\
   (P_b(z)e^{-s^{3/2}g(z)\sigma_3})_{21}\end{pmatrix}
   =\frac{\beta_{0,b}}{\sqrt{2}}\left( s^3 \pi ^2 f_b^{\prime}(b)\right)^{1/4}N_0e^{-s^{3/2}g_+(b)\sigma_3}\begin{pmatrix}
   \theta_{11}(u_+(b)) \\ i\theta_{22}(u_+(b))\end{pmatrix},
\end{equation}
Set
\begin{equation}
    F(\xi)=\begin{pmatrix} \theta_{11}(u(\xi)) N_{11}(\xi) & \theta_{12}(u(\xi)) N_{12}(\xi) \\ \theta_{21}(u(\xi)) N_{21}(\xi) & \theta_{22}(u(\xi)) N_{22}(\xi) \end{pmatrix}.
\end{equation}
The above observations and the fact that $\det P^{\infty}=\det F\equiv 1$ imply
\be
 \lim_{z\to b} \tau_3(z)_{21}=
 \frac{1}{2\pi i}\int_{\partial U_a \cup \partial U_b \cup \partial U_c}\frac{L(\xi)}{(\xi-b)^2}d\xi,
 \ee
 where
 \be\label{defL}
 L(\xi)=\frac{\beta^2_{0,b}}{-4\pi i}\left( s^3 \pi ^2 f_b^{\prime}(b)\right)^{1/2}
\begin{pmatrix}-i\theta_{22}(u_+(b)) & \theta_{11}(u_+(b)) \end{pmatrix}e^{s^{3/2}g_+(b)\sigma_3}\widehat{\Delta_1(\xi)}e^{-s^{3/2}g_+(b)\sigma_3}\begin{pmatrix} \theta_{11}(u_+(b)) \\ i\theta_{22}(u_+(b))\end{pmatrix},
\ee
with, by recalling Lemma \ref{Pprhp},
\begin{align}\label{231}
    \widehat{\Delta_1(\xi)} &=\frac{1}{8s^{3/2}f_a(\xi)^{1/2}}F(\xi) \begin{pmatrix}-1 & -2i \\ -2i & 1 \end{pmatrix} F(\xi)^{-1}, \quad \mathrm{near }\quad \xi=a,\\
    &=\frac{1}{8s^{3/2}f_b(\xi)^{1/2}}F(\xi) e^{-s^{3/2}g_\pm(b)\sigma_3}\begin{pmatrix}-1 & 2i \\ 2i & 1 \end{pmatrix}e^{s^{3/2}g_\pm(b)\sigma_3} F(\xi)^{-1}, \quad \mathrm{near } \quad\xi=b,\label{Deltab}
    \\
    &=\frac{1}{8s^{3/2}f_c(\xi)^{1/2}}F(\xi) e^{-s^{3/2}g_\pm(c)\sigma_3}\begin{pmatrix}-1 & -2i \\ -2i & 1 \end{pmatrix}e^{s^{3/2}g_\pm(c)\sigma_3} F(\xi)^{-1}, \quad \mathrm{near }\quad \xi=c,
    \label{233}
\end{align}
where in $\pm$, $+$ is taken if $\Im\xi>0$, and $-$ if $\Im\xi<0$. (In fact, $g_\pm(b)=g_\pm(c)$.)
Recall that $\Delta_1(\xi)$ (and hence $\widehat{\Delta_1(\xi)}$) is a meromorphic function near each point $a$, $b$, $c$. In fact,
it has a first order pole at these points. 
Note that since there is an additional pole at $b$ introduced by the denominator in (\ref{R1primeB}), we must expand $\widehat{\Delta_1(\xi)}$ up to the third term at $b$, but only up to the first term at $a$ and $c$ to compute the residues. Therefore, it is convenient to define
\be\label{T2T3def}
 T_2(\omega)= \frac{1}{2\pi i}\int_{\partial U_a \cup \partial U_c}\frac{L(\xi)}{(\xi-b)^2}d\xi,\qquad 
T_3(\omega)=  \frac{1}{2\pi i}\int_{\partial U_b }\frac{L(\xi)}{(\xi-b)^2}d\xi,
 \ee
where we denote $\omega=s^{3/2}\Omega$. Then we have
\be\label{tau3TT}
\lim_{z\to b} \tau_3(z)_{21}=T_2(\omega)+T_3(\omega).
\ee

\subsubsection{Evaluation of $T_2(\omega)$}
Since $u(a)=0$, we have expanding $F(\xi)$ (using \eqref{betaexp})
\[
F(\xi)=\begin{pmatrix}\theta_{11}(0) & -i \theta_{12}(0)\\ -i \theta_{21}(0) & \theta_{22}(0) \end{pmatrix}
\frac{1}{2\beta_{0,a}}(\xi-a)^{-1/4}\left(1+o(1)\right),\qquad \xi\to a.
\]
Subsituting this into \eqref{231} and expanding $f_a(\xi)$ by \eqref{glocalexp}, we obtain
\begin{equation}
    \widehat{\Delta_1(\xi)}=\frac{1}{\xi-a}\frac{\beta_{0,a}^{-2}}{16s^{3/2}f_a^{\prime}(a)^{1/2}}\begin{pmatrix}\theta_{11}(0)\theta_{22}(0) & -i \theta_{11}(0)^2 \\ -i \theta_{22}(0)^2 & -\theta_{11}(a)\theta_{22}(0) \end{pmatrix}+\bigO(1), \qquad 
    \xi\to a,
\end{equation}
Similarly, using the identities 
\be
\theta_{j1}(u_+(c))=\theta_{j2}(u_+(c))e^{2s^{3/2}g_+(b)},\qquad j=1,2,
\ee
which are derived as \eqref{thetanoder}, 
we obtain
\begin{equation}\begin{aligned}
    \widehat{\Delta_1(\xi)}&=\frac{1}{\xi-c}\frac{\beta_{0,c}^{-2}}{16s^{3/2}f_c^{\prime}(c)^{1/2}}e^{-s^{3/2}g_+(b)\sigma_3} \begin{pmatrix}\theta_{11}(u_+(c))\theta_{22}(u_+(c)) & -i \theta_{11}(u_+(c))^2 \\ -i \theta_{22}(u_+(c))^2 & -\theta_{11}(u_+(c))\theta_{22}(u_+(c)) \end{pmatrix}e^{s^{3/2}
    g_+(b)\sigma_3}\\
    &+\bigO(1), \qquad 
    \xi\to c.
    \end{aligned}
\end{equation}
Therefore by \eqref{defL}, writing out $\theta_{jk}$ by \eqref{thetajk}, using relations in (\ref{theta2134relations}) and the following summation formulae,
\begin{align}\label{theta23}
\theta_2(x+y)\theta_3(x-y)+\theta_2(x-y)\theta_3(x+y)&=\frac{2}{\theta_2\theta_3}\theta_2(x)\theta_2(y)\theta_3(x)\theta_3(y),\\
\label{theta34}
\theta_1(x+y)\theta_2(x-y)-\theta_1(x-y)\theta_2(x+y)&=\frac{2}{\theta_3\theta_4}\theta_1(y)\theta_2(y)\theta_3(x)\theta_4(x),
\end{align}
(where we recall the convention that the $\theta$-functions without argument stand for their values at zero),
we obtain
\begin{equation}
L(\xi)=\frac{1}{\xi-a}\frac{1 }{16}\frac{\beta_{0,b}^2}{\beta_{0,a}^2}\frac{f_b^{\prime}(b)^{1/2}}{f_a^{\prime}(a)^{1/2}}\frac{\theta_3^2}{\theta_2^2}\frac{\theta_2^2(s^{3/2}\Omega)}{\theta_3^2(s^{3/2}\Omega)},\qquad \xi\to a,
\end{equation}
and
\begin{equation}
L(\xi)=\frac{1}{\xi-c}\frac{1 }{16}\frac{\beta_{0,b}^2}{\beta_{0,c}^2}\frac{f_b^{\prime}(b)^{1/2}}{f_c^{\prime}(c)^{1/2}}\frac{\theta_3^2}{\theta_4^2}\frac{\theta_4^2(s^{3/2}\Omega)}{\theta_3^2(s^{3/2}\Omega)},\qquad \xi\to c.
\end{equation}
Now note that an analysis of branches in \eqref{glocalexp} gives that $f_a^{\prime}(a)$, $f_c^{\prime}(c)>0$, while $f_b^{\prime}(b)=-i\left|f_b^{\prime}(b)\right|$. Using their values and those of $\beta_{0,p}$ in \eqref{b0}, we write the residues and determine $T_2$ (note the negative direction of integration in \eqref{T2T3def}):
\begin{equation}\label{firstACresidues}
   T_2(\omega)= -\frac{1}{16}\left( \frac{1}{a-b}\left| \frac{q(b)}{q(a)}\right|\frac{\theta_3^2}{\theta_2^2}\frac{\theta_2^2(\omega)}{\theta_3^2(\omega)} + \frac{1}{b-c}\left| \frac{q(b)}{q(c)}\right| \frac{\theta_3^2}{\theta_4^2}\frac{\theta_4^2(\omega)}{\theta_3^2(\omega)}\right).
\end{equation}

We now average $T_2(\omega)$ over $\omega$. We use the integrals (238) obtained in \cite{IB}. Namely,
\begin{equation}\label{238}
    \int_0^1 \frac{\theta_2^2(\omega)}{\theta_3^2(\omega)}d\omega =\frac{\theta_4^2}{(\theta_1^{\prime})^2}\frac{\theta_3^{\prime \prime}}{\theta_3}+\frac{\theta_2^2}{\theta_3^2} ,\qquad \int_0^1 \frac{\theta_4^2(\omega)}{\theta_3^2(\omega)}d\omega =-\frac{\theta_2^2}{(\theta_1^{\prime})^2}\frac{\theta_3^{\prime \prime}}{\theta_3}+\frac{\theta_4^2}{\theta_3^2}.
\end{equation}
By the identity $\theta_1^{\prime}=\pi\theta_2\theta_3\theta_4$ \cite{WW}, we can write the first integral as follows
\be\label{intinterim}
\frac{\theta_3^2}{\theta_2^2}
\int_0^1 \frac{\theta_2^2(\omega)}{\theta_3^2(\omega)}d\omega =\frac{1}{\pi^2\theta_2^4}\frac{\theta_3^{\prime \prime}}{\theta_3}+1.
\ee
To simplify this  further, we use the heat equation for $\theta$-functions,  
\begin{equation}
    \frac{\theta_3^{\prime\prime}}{\theta_3}=4\pi i \frac{\partial}{\partial \tau}\log \theta_3=\pi i \frac{1}{\frac{\partial\tau}{\partial b}}\frac{\partial}{\partial b}\log \theta_3^4=-2J_0(a-b)(b-c)\frac{d}{db}J_0=J_0(J_1-bJ_0),
\end{equation}
where we made use of parts $(i)$ and $(viii)$ of Lemma \ref{thetalemma} to express $\theta_3^4$ and $\frac{d\tau}{db}$ in terms
of elliptic integrals, and used \eqref{dbJ0} to evaluate $\frac{dJ_0}{db}$.
Substituting this expression as well as the one for $\theta_2^4$ from Lemma \ref{thetalemma} into the r.h.s. of \eqref{intinterim}, we obtain
\be
\frac{\theta_3^2}{\theta_2^2}
\int_0^1 \frac{\theta_2^2(\omega)}{\theta_3^2(\omega)}d\omega =
\frac{J_1-cJ_0}{J_0(b-c)}.
\ee
A similar analysis of the second integral in \eqref{238} gives
\be
\frac{\theta_3^2}{\theta_4^2}
\int_0^1 \frac{\theta_4^2(\omega)}{\theta_3^2(\omega)}d\omega =
-\frac{J_1-aJ_0}{J_0(a-b)}.
\ee
Therefore, integrating \eqref{firstACresidues} we obtain
\begin{equation}\label{secondACresidues}
 \int_0^1 T_2(\omega)d\omega=   -\frac{1}{16(a-b)(b-c)}\left(\left|\frac{q(b)}{q(a)}\right| \left( \frac{J_1}{J_0}-c\right) -\left|\frac{q(b)}{q(c)}\right| \left( \frac{J_1}{J_0}-a\right) \right).
\end{equation}
Next, by means of direct differentiation of $q(a)$ and $q(c)$, recalling the derivative $\frac{dq_0}{db}$ in (\ref{ddbq0}), we have that
\begin{equation}
    \frac{\frac{J_1}{J_0}-c}{2(a-b)(b-c)}=\frac{\frac{d}{db}q(a)}{q(b)}+\frac{a+b-c}{4q(b)}+\frac{1}{2(a-b)},
\end{equation}
and
\begin{equation}
    \frac{\frac{J_1}{J_0}-a}{2(a-b)(b-c)}=\frac{\frac{d}{db}q(c)}{q(b)}+\frac{b+c-a}{4q(b)}-\frac{1}{2(b-c)}.
\end{equation}
Substituting these expressions into (\ref{secondACresidues}), noting the signs of $q(p)$ obtained earlier in (\ref{signsqp}), 
and using the simple identities
\be
q(b)=q(a)-\frac{1}{2}(a-b)(a+b-c),\qquad q(b)=q(c)+\frac{1}{2}(b-c)(b+c-a),
\ee
we find
\begin{equation}\label{ACRresidues}
 \int_0^1 T_2(\omega) d\omega =  -\frac{1}{8}\frac{d}{db}\log \frac{\lvert q(a)q(c)\rvert}{\sqrt{(a-b)(b-c)}}.
\end{equation}

\subsubsection{Evaluation of $T_3(\omega)$}
In view of the definition \eqref{T2T3def}, we will now simplify the expression for $L(\xi)$ in \eqref{defL}, with $\widehat{\Delta_1(\xi)}$
given by \eqref{Deltab}. We use the following argument from \cite{IB} to decompose matrix products. In a neighborhood of $b$, define functions $A_j(z)$ and $B_j(z)$,  $j=1,2$, by
\be\label{AABB}
\begin{aligned}
    A_1(z)&=\frac{1}{2}\left(\theta_{11}(u)(\beta(z)+\beta^{-1}(z))e^{\mp s^{3/2}g_+(b)} +\theta_{12}(u)(\beta(z)-\beta^{-1}(z))e^{\pm s^{3/2}g_+(b)} \right),\\
    A_2(z)&=\frac{1}{2}\left(\theta_{22}(u)(\beta(z)+\beta^{-1}(z))e^{\pm s^{3/2}g_+(b)} +\theta_{21}(u)(\beta(z)-\beta^{-1}(z))e^{\mp s^{3/2}g_+(b)} \right),\\
    B_1(z)&=\frac{1}{2}\left(\theta_{11}(u)(\beta(z)+\beta^{-1}(z))e^{\mp s^{3/2}g_+(b)} -\theta_{12}(u)(\beta(z)-\beta^{-1}(z))e^{\pm s^{3/2}g_+(b)} \right),\\
    B_2(z)&=\frac{1}{2}\left(\theta_{22}(u)(\beta(z)+\beta^{-1}(z))e^{\pm s^{3/2}g_+(b)} -\theta_{21}(u)(\beta(z)-\beta^{-1}(z))e^{\mp s^{3/2}g_+(b)} \right),
    \end{aligned}\ee
where the upper sign in $\pm$, $\mp$ is taken for $\Im(z-b)>0$, and the lower, for $\Im(z-b)<0$.
It follows from the jump relations (cf. proof of  Proposition \ref{pinfinityprop}), that 
\begin{equation}\label{analytAB}
    A_j(z)(z-b)^{1/4},
     \qquad   B_j(z)(z-b)^{-1/4},\qquad  j=1,2,
\end{equation}
are analytic functions in $U_b$. 
In this notation, we have
\[
\widehat{\Delta_1(z)}=\frac{1}{8s^{3/2}f_b(z)^{1/2}}F(z) e^{-s^{3/2}g_\pm(b)\sigma_3}\begin{pmatrix}-1 & 2i \\ 2i & 1 \end{pmatrix}e^{s^{3/2}g_\pm(b)\sigma_3} F(z)^{-1}, 
\]
with 
\be
  F e^{-s^{3/2}g_{\pm}(b)\sigma_3}=\frac{1}{2}\begin{pmatrix}A_1+B_1 & -i(A_1-B_1) \\ i(A_2-B_2) & A_2+B_2 \end{pmatrix}.
\ee
This is a convenient way of expressing $\widehat{\Delta_1(z)}$ near $b$ for the following reason.
Since $\widehat{\Delta_1(z)}$ is meromorphic near $b$, its expansion at $z=b$ contains only integer powers. On account of the $1/f_b(z)^{1/2}$ term, containing only half-powers, it follows from the analyticity of \eqref{analytAB}
that all 'cross-terms', containing products $A_j B_k$, must be zero. Thus we have the following decomposition:
\be\begin{aligned}
    \widehat{\Delta_1(z)} &=\frac{1}{4\cdot 8s^{3/2}f_b(z)^{1/2}}\begin{pmatrix}A_1 & -iA_1 \\ iA_2 & A_2 \end{pmatrix} \begin{pmatrix}-1 & 2i \\ 2i & 1 \end{pmatrix}\begin{pmatrix}A_2 & iA_1 \\ -iA_2 & A_1 \end{pmatrix}+\\
    & +\frac{1}{4\cdot 8s^{3/2}f_b(z)^{1/2}}\begin{pmatrix}B_1 & iB_1 \\ -iB_2 & B_2 \end{pmatrix} \begin{pmatrix}-1 & 2i \\ 2i & 1 \end{pmatrix}\begin{pmatrix}B_2 & -iB_1 \\ iB_2 & B_1 \end{pmatrix}.
    \end{aligned}\ee
It is now a straightforward calculation to see that we may write (\ref{defL}) near $b$ in the form
\begin{equation}\label{AANDBFORM}
\begin{aligned}
L(z)&=
-\frac{\beta^2_{0,b}  f_b^{\prime}(b)^{1/2}}{64 f_b(z)^{1/2}}
   \left( [A_1(z) \theta_{22}(u_+(b))e^{s^{3/2}g_+(b)}-A_2(z)\theta_{11}(u_+(b))e^{-s^{3/2}g_+(b)}]^2\right.\\
   &\left.+3[B_1(z)\theta_{22}(u_+(b))e^{s^{3/2}g_+(b)}+B_2(z)\theta_{11}(u_+(b))e^{-s^{3/2}g_+(b)}]^2\right),
   \end{aligned}
\end{equation}

By means of (\ref{uexp}), (\ref{betaexp}), and the identities for $\theta_{jk}$ in \eqref{thetanoder}, \eqref{thdeder}, we expand $\beta^{-1}(z)A_j(z)$, $z\to b$, for $j=1,2$, and determine that the analytic (by \eqref{analytAB}) in $U_b$ function
\be\begin{aligned}
    &\beta^{-1}(z)[A_1(z) \theta_{22}(u_+(b))e^{s^{3/2}g_+(b)}-A_2(z)\theta_{11}(u_+(b))e^{-s^{3/2}g_+(b)}]\\
    &=\left\{\beta_{0,b}^{-2}u_{0,b}(\theta_{11}^{\prime}\theta_{22}-\theta_{11}\theta_{22}^{\prime})+ 
    +\frac{u_{0,b}^2}{2}(\theta_{11}^{\prime\prime}\theta_{22}-\theta_{11}\theta_{22}^{\prime\prime})\right\}(z-b)+\bigO((z-b)^2)\\
    &=\frac{u_{0,b}^2}{2}\left\{2J_0(\theta_{11}^{\prime}\theta_{22}-\theta_{11}\theta_{22}^{\prime})+
    \theta_{11}^{\prime\prime}\theta_{22}-\theta_{11}\theta_{22}^{\prime\prime}\right\}(z-b)+\bigO((z-b)^2)
    ,\qquad z\to b,
\end{aligned}\ee
where all $\theta_{jk}$ and their derivatives with omitted argument are evaluated at $u_+(b)$, and we used the identity \eqref{beta-u}
in the last equation.
Notice that the $z-b$ term is, up to a prefactor, is what we found earlier in (\ref{t23id}). 
By the equations \eqref{T1omegadef} and \eqref{prop23CIB},
\begin{equation}\label{T3useful}
\begin{aligned}
 &\beta^{-1}(z)[A_1(z) \theta_{22}(u_+(b))e^{s^{3/2}g_+(b)}-A_2(z)\theta_{11}(u_+(b))e^{-s^{3/2}g_+(b)}]\\
 &=2u_{0,b}^2 J_0 \frac{\theta_3^{\prime}}{\theta_3}(\omega) (z-b)+\bigO((z-b)^2),\qquad z\to b,\qquad \omega=s^{3/2}\Omega.
 \end{aligned}
\end{equation}
 Substituting the above expression into the first term in (\ref{AANDBFORM}) and that in turn into \eqref{T2T3def}, we find that
\be
\begin{aligned}
T_3(\omega)&=  \frac{1}{2\pi i}\int_{\partial U_b }\frac{L(z)}{(z-b)^2}dz=
-\frac{\beta^2_{0,b}  f_b^{\prime}(b)^{1/2}}{64}\left(\int_{\partial U_b }  \frac{dz}{2\pi i}
\frac{\beta^{2}(z)}{f_b(z)^{1/2}}\left\{\left(2u_{0,b}^2 J_0 \frac{\theta_3^{\prime}}{\theta_3}(\omega)\right)^2+\bigO(z-b)\right\}
\right.
\\
 &\left.+\int_{\partial U_b }  \frac{dz}{2\pi i}
 \frac{3}{(z-b)^2f_b(z)^{1/2}}[B_1(z)\theta_{22}(u_+(b))e^{s^{3/2}g_+(b)}+B_2(z)\theta_{11}(u_+(b))e^{-s^{3/2}g_+(b)}]^2\right).
\end{aligned}
 \ee
 Since the meromorphic in $U_b$ function
 \[
 \frac{\beta^2(z)}{f_b(z)^{1/2}}=\frac{1}{z-b}\frac{\beta^2_{0,b} }{f_b^{\prime}(b)^{1/2}}\left(1+\bigO(z-b)\right),\qquad z\to b,
 \]
we immediately compute the first integral by taking the residue (recall the negative direction of the integration), and recalling the expressions \eqref{b0}, \eqref{u0b} for $\beta_{0,b}$, $u_{0,b}$, we obtain
\be\label{penultimateAterm}
\begin{aligned}
T_3(\omega)&=- \frac{1}{16 J_0^2(a-b)(b-c)}\left(\frac{\theta_3^{\prime}}{\theta_3}(\omega)\right)^2\\
\\
 &
 -\frac{\beta^2_{0,b}  f_b^{\prime}(b)^{1/2}}{64}\int_{\partial U_b }  \frac{dz}{2\pi i}
 \frac{3}{(z-b)^2f_b(z)^{1/2}}[B_1(z)\theta_{22}(u_+(b))e^{s^{3/2}g_+(b)}+B_2(z)\theta_{11}(u_+(b))e^{-s^{3/2}g_+(b)}]^2.
\end{aligned}
 \ee
Recall that we are interested in the average of $T_3(\omega)$ over $\omega$. As was noticed in a similar situation in \cite{IB},
it is easier to do the averaging of the second term in \eqref{penultimateAterm} {\it before} computing its residue. Furthermore,
to average the first term, we can use the integral (A.19) in Lemma 26 from \cite{IB}:
\be\label{onethetaint}
 \int_0^1 \left( \frac{\theta_3^{\prime}}{\theta_3}(\omega) \right)^2d\omega =\frac{\pi^2}{3}+\frac{\theta_1^{\prime\prime\prime}}{3\theta_1^{\prime}}.
 \ee
 By the heat equation, the identity $\theta_1^{\prime}=\pi \theta_2 \theta_3\theta_4$, and the identities of  Lemma \ref{thetalemma}, we have that
\be
    \frac{\theta_1^{\prime \prime \prime}}{\theta_1^{\prime}}=4\pi i \frac{\partial }{\partial \tau}\log \theta_1^{\prime}=\frac{\pi i}{\frac{d\tau}{d b}}\frac{d}{d b}\log( \theta_2^4 \theta_3^4 \theta_4^4)
    =-J_0^2 (a-b)(b-c)\frac{d}{d b} \log \left[\left(J_0^6(a-b)(b-c)\right) \right].
\ee
 
 Thus, we obtain
\be\label{264}
\frac{1}{J_0^2(a-b)(b-c)} \int_0^1 \left( \frac{\theta_3^{\prime}}{\theta_3}(\omega) \right)^2d\omega=
\frac{\pi^2/3}{J_0^2(a-b)(b-c)}-\frac{1}{3}\frac{d}{d b} \log \left[\left(J_0^6(a-b)(b-c)\right)\right].
\ee

On the other hand, since
\be
 \frac{\partial }{\partial \tau}\log \theta_3(\omega)=\frac{1}{4\pi i} \frac{\theta_3^{\prime \prime}(\omega)}{\theta_3(\omega)}
 \ee
and
\[
0= \int_0^1 \left( \frac{\theta_3^{\prime}}{\theta_3}(\omega) \right)^{\prime}d\omega=
\int_0^1\left[\frac{\theta_3^{\prime \prime}(\omega)}{\theta_3(\omega)}-\left( \frac{\theta_3^{\prime}}{\theta_3}(\omega) \right)^2\right]
d\omega,
\]
we obtain, using the expression for $d\tau/db$ from Lemma \ref{thetalemma},
\be\label{addition}
\frac{d\tau}{d b} \int_0^1 \frac{\partial }{\partial \tau}\log \theta_3(\omega)d\omega=
-\frac{\pi i}{J_0^2(a-b)(b-c)}\int_0^1 \frac{\partial }{\partial \tau}\log \theta_3(\omega)d\omega=
-\frac{1}{4J_0^2(a-b)(b-c)}\int_0^1   \left( \frac{\theta_3^{\prime}}{\theta_3}(\omega) \right)^2d\omega.
\ee

Thus, integrating \eqref{penultimateAterm} and using \eqref{addition} and \eqref{264}, we obtain
 \be\label{T3prelim}
 \begin{aligned}
&\int_0^1 T_3(\omega)d\omega- \frac{d\tau}{d b} \int_0^1 \frac{\partial }{\partial \tau}\log \theta_3(\omega)d\omega=
 \frac{\pi^2/16}{J_0^2(a-b)(b-c)}-\frac{1}{16}\frac{d}{d b} \log \left[\left(J_0^6(a-b)(b-c)\right)\right]\\
  &
 -\frac{\beta^2_{0,b}  f_b^{\prime}(b)^{1/2}}{64}\int_{\partial U_b }  \frac{dz}{2\pi i}
 \frac{3}{(z-b)^2f_b(z)^{1/2}}\int_0^1 d\omega [B_1(z)\theta_{22}(u_+(b))e^{s^{3/2}g_+(b)}+B_2(z)\theta_{11}(u_+(b))e^{-s^{3/2}g_+(b)}]^2.
\end{aligned}
 \ee

We now turn to computation of the average of the term with $B$'s in this expression.
First note that the analytic in $U_b$ function
\begin{equation}\label{fb}
    \frac{1}{\beta^2(z)f_b(z)^{1/2}}=\frac{1}{\beta_{0,b}^2f_b^{\prime}(b)^{1/2}}\left(1-\left(\frac{f_{1,b}}{2}+2\beta_{1,b}\right)(z-b)+\bigO((z-b)^2) \right), \qquad z\to b,
\end{equation}
where we used (\ref{betaexp}) and wrote $f_b(z)=f_b^\prime(b)(z-b)(1+f_{1,b}(z-b)+\bigO((z-b)^2))$ recalling \eqref{glocalexp}. 
In the last expansion
\be\label{f1b}
f_{1,b}=\frac{2}{3}\left(\frac{q_1+2b}{q(b)}-\frac{1}{2}\left(\frac{1}{b-c}-\frac{1}{a-b}\right)\right).
\ee

Expanding $B_j(z)$ similarly to $A_j(z)$ above, and using the identity \eqref{detatb2}, we obtain for the analytic  (by \eqref{analytAB}) in $U_b$ function
\begin{equation}
    \beta(z)[B_1(z)\theta_{22}(u_+(b))e^{s^{3/2}g_+(b)}+B_2(z)\theta_{11}(u_+(b))e^{-s^{3/2}g_+(b)}]-2=\bigO(z-b), \qquad z\to b.
\end{equation}
Taking the square of this expression, we may write
\begin{equation}\label{Btermexpn}
\begin{aligned}
 &\beta(z)^2[B_1(z)\theta_{22}(u_+(b))e^{s^{3/2}g_+(b)}+B_2(z)\theta_{11}(u_+(b))e^{-s^{3/2}g_+(b)}]^2\\
  &=-4+4\beta(z)
  [B_1(z)\theta_{22}(u_+(b))e^{s^{3/2}g_+(b)}+B_2(z)\theta_{11}(u_+(b))e^{-s^{3/2}g_+(b)}]+\bigO((z-b)^2), \qquad z\to b.  
  \end{aligned}
\end{equation}
The advantage of this representation is that we see (taking into account \eqref{fb}) that only the first 2 terms on the r.h.s. may give a nonzero contribution to
the residue in \eqref{T3prelim}, and the second term is not squared.

We now evaluate the average
\begin{equation}
    \int_0^1 [B_1(z)\theta_{22}(u_+(b))e^{s^{3/2}g_+(b)}+B_2(z)\theta_{11}(u_+(b))e^{-s^{3/2}g_+(b)}]
     d\omega=\int_0^1 [\widehat{q(\omega)}+\widehat{q(-\omega)}]d\omega, \qquad \omega=s^{3/2}\Omega,
\end{equation}
where, by expanding the terms and using quasiperiodicity of $\theta$-functions,
\begin{equation}
\widehat{q(\omega)}=
        \frac{\theta_3^2}{2\theta_3^2(\omega)}\frac{\theta_3(-\omega+\widehat{d})}{\theta_3(\widehat{d})}\left( \frac{\theta_3(\widehat{u(z)}+\omega+\widehat{d})}{\theta_3(\widehat{u(z)}+\widehat{d})}(\beta(z)+\beta^{-1}(z))-\frac{\theta_3(\widehat{u(z)}-\omega-\widehat{d})}{\theta_3(\widehat{u(z)}-\widehat{d})}(\beta(z)-\beta^{-1}(z))\right),
\end{equation}
with
\begin{equation}
    \widehat{u(z)}=u(z)-u_+(b), \quad \widehat{d}=d+u_+(b),\qquad u_+(b)=-\frac{\tau}{2}.
\end{equation}
Since $\widehat{q(-\omega)}=\widehat{q(1-\omega)}$, we have that $\int_0^1 [\widehat{q(\omega)}+\widehat{q(-\omega)}]d\omega=
2\int_0^1 \widehat{q(\omega)}d\omega$.
We notice that our expression for $\widehat{q(\omega)}$ has the form of $\widetilde{q(\omega)}$, given by (264) in Section 9.2 of \cite{IB}, with $u(z) \rightarrow \widehat{u(z)}$ and $d \rightarrow \widehat{d}$. Therefore, our analysis now is very similar to that of  \cite{IB}.

Applying Lemma 26 of \cite{IB} to evaluate $\int_0^1\widehat{q(\omega)}d\omega$, we obtain:
\begin{multline}\label{formulaT38}
\beta(z) \int_0^1 [B_1(z)\theta_{22}(u_+(b))e^{s^{3/2}g_+(b)}+B_2(z)\theta_{11}(u_+(b))e^{-s^{3/2}g_+(b)}]
     d\omega=
\frac{\pi \theta_3^2 \widehat g(\widehat d)}{\left(\theta_1'\right)^2\sin(\pi \widehat u)}\\
\times\left\{ \left(\beta(z)^2+1\right) 
\widehat g(\widehat d+\widehat u)[\widehat f(\widehat d)-\widehat f(\widehat d+\widehat u)]
+\left(\beta(z)^2-1\right)\widehat g(\widehat d-\widehat u)[\widehat f(\widehat d)-\widehat f(\widehat d-\widehat u)]
\right\},
\end{multline}
where
\begin{equation}\label{formulaT39}
\widehat g(x)=\frac{\theta_1(x)}{\theta_3(x)},\qquad
\widehat f(x)=\frac{\theta_1'(x)}{\theta_1(x)}.
\end{equation}
Furthermore, expanding $\widehat{u(z)}$ and using (270), (271) in \cite{IB} gives, as $z\to b$,
\begin{multline}\label{T39c}
\beta(z) \int_0^1 [B_1(z)\theta_{22}(u_+(b))e^{s^{3/2}g_+(b)}+B_2(z)\theta_{11}(u_+(b))e^{-s^{3/2}g_+(b)}]
     d\omega\\
     =\widehat g(\widehat d)\left(1+\frac{\pi^2}{6}u_{0,b}^2(z-b)+ \mathcal O((z-b)^2)\right)\\ 
     \times\left[H_0-u_{0,b}\beta_{0,b}^2(1+(z-b)(u_{1,b}+2\beta_{1,b}))H_1+(z-b)(u_{0,b}^2H_2+u_{0,b}^3\beta_{0,b}^2H_3)+\mathcal O((z-b)^{3/2})\right],
\end{multline}
 where
\begin{equation}\begin{aligned}
H_0&=2\widehat g(\widehat d)\left(\frac{1}{\widehat g(\widehat d)^2}-\frac{\theta_3''}{\theta_3}\left(\frac{\theta_3}{\theta_1'}\right)^2\right),&
H_1&=2\widehat g'(\widehat d)\frac{\theta_3''}{\theta_3}\left(\frac{\theta_3}{\theta_1'}\right)^2,\\
H_2&=\widehat g''(\widehat d)\left(\frac{1}{3\widehat g(\widehat d)^2}-\frac{\theta_3''}{\theta_3}\left(\frac{\theta_3}{\theta_1'}\right)^2\right),&
H_3&=\frac{\widehat g'''(\widehat d)}{6}\left(\frac{1}{\widehat g(\widehat d)^2}-2\frac{\theta_3''}{\theta_3}\left(\frac{\theta_3}{\theta_1'}\right)^2\right)-\frac{1}{6}\frac{\widehat g''(\widehat d)\widehat g'(\widehat d)}{\widehat g(\widehat d)^3}.
\end{aligned}\end{equation}
Recalling \eqref{id5} of Lemma \ref{thetalemma} and \eqref{beta-u}, we note that
\be
\widehat g'(\widehat d)=-J_0 \widehat g(\widehat d)=-\frac{1}{u_{0,b}\beta^2_{0,b}}
\widehat g(\widehat d).
\ee

By further applying 
\eqref{id5} and \eqref{id6} of Lemma \ref{thetalemma}, we simplify the combinations of the $H_j$ as follows:
\begin{equation} 
H_0-u_{0,b}\beta_{0,b}^{2}H_1=\frac{2}{\widehat g(\widehat d)},\qquad u_{0,b}^2H_2+u_{0,b}^3\beta_{0,b}^2H_3=\frac{2\beta_{1,b}+u_{1,b}}{\widehat g(\widehat d)}\left(1-2\widehat g(\widehat d)^2\frac{\theta_3''}{\theta_3}\left(\frac{\theta_3}{\theta_1'}\right)^2\right),
\end{equation}
which allows us to write \eqref{T39c} in the form
\begin{equation}\label{Bavgexpn}
\begin{aligned}
\beta(z) \int_0^1 [B_1(z)\theta_{22}(u_+(b))e^{s^{3/2}g_+(b)}+B_2(z)\theta_{11}(u_+(b))e^{-s^{3/2}g_+(b)}]
     d\omega
=\\
2+\left(\frac{\pi^2}{3}u_{0,b}^2+u_{1,b}+2\beta_{1,b} \right)(z-b)+\bigO((z-b)^2), \qquad z\to b,
\end{aligned}
\end{equation}

Substituting this into \eqref{T3prelim} and calculating the residue by
(\ref{Btermexpn}) and (\ref{fb}) we obtain
\be\label{T3almost}
\begin{aligned}
-\frac{\beta^2_{0,b}  f_b^{\prime}(b)^{1/2}}{64}\int_{\partial U_b }  \frac{dz}{2\pi i}
 \frac{3}{(z-b)^2f_b(z)^{1/2}}\int_0^1 d\omega [B_1(z)\theta_{22}(u_+(b))e^{s^{3/2}g_+(b)}+B_2(z)\theta_{11}(u_+(b))e^{-s^{3/2}g_+(b)}]^2\\
         =\frac{1}{16}\left(\pi^2u_{0,b}^2+3u_{1,b}-3f_{1,b}/2\right)=
         -\frac{1}{16}\left(\frac{\pi^2}{(a-b)(b-c)J_0^2}+\frac{q_1+2b}{q(b)} \right).
\end{aligned}\ee
For the last equation here we used \eqref{u0b}, \eqref{f1b}.
On the other hand, 
differentiating  $q(b)$ w.r.t. $b$ and using (\ref{ddbq0}),  we obtain
\begin{equation}
  \frac{q_1+2b}{2q(b)}= \frac{d}{db}\log \lvert q(b)\rvert+\frac{d}{db}\log 
  \vert J_0\rvert.
\end{equation}
With this we may write the r.h.s. of \eqref{T3almost} as
\begin{equation}\label{Btermfinalcontribution}
   - \frac{\pi^2}{16(a-b)(b-c)J_0^2}-\frac{1}{8}\frac{d}{db}\log \lvert q(b)J_0 \rvert.
\end{equation}
Substituting this expression into (\ref{T3prelim}) we obtain
\begin{equation}\label{BRESIDUE}
  \int_0^1 T_3(\omega)d\omega-\frac{d\tau}{db}\int_0^1\frac{\partial}{\partial \tau}\log \theta_3(\omega)d\omega
  = -\frac{1}{2}\frac{d}{db}\log\lvert J_0\rvert-\frac{1}{8}\frac{d}{db}\log\lvert q(b) \rvert -\frac{1}{16}\frac{d}{db}\log((a-b)(b-c)).
\end{equation}
Together with (\ref{ACRresidues}) this equation gives
\begin{align*}
  \lim_{z\to b}\int_0^1\mathcal{\tau}_3(z)_{21}&d\omega-\frac{d\tau}{db}\int_0^1 \frac{\partial }{\partial \tau}\log \theta_3(\omega)d\omega
  = \int_0^1 T_2(\omega)d\omega+ \int_0^1 T_3(\omega)d\omega-\frac{d\tau}{db}\int_0^1 \frac{\partial }{\partial \tau}\log \theta_3(\omega)d\omega
   \\
  &=-\frac{1}{2}\frac{d}{db}\log \lvert J_0 \rvert-\frac{1}{8}\frac{d}{db}\log \lvert q(a)q(b)q(c)\rvert,
\end{align*}
which proves Lemma \ref{constantterm}.

\subsection{Asymptotic form of the differential identity at $a$. Proof of Proposition \ref{DEataasympt}.}\label{secDEATA}
In this section we prove that the differential identity is symmetric in $a$ and $b$ as shown in Proposition \ref{DEataasympt}. We start from the differential identity at $a$, namely
\begin{equation}
    \frac{d}{da}\log P^{\textmd{Ai}}(sJ)=\frac{1}{2\pi i}\lim_{z\to a}\left( X^{-1}(z)X^{\prime}(z)\right)_{21},
\end{equation}
where the limit is taken from outside the lens and with $\Im z>0$.
By the definitions of $S(z)$ and $R(z;s)$, we have that
\begin{equation}
X^{-1}X^{\prime}=  e^{s^{3/2}g(z)\sigma_3}P_a^{-1}R^{-1}R^{\prime}P_a e^{-s^{3/2}g(z)\sigma_3}+e^{s^{3/2}g(z)\sigma_3}P_a^{-1}  P_a^{\prime}e^{-s^{3/2}g(z)\sigma_3}-s^{3/2}g^{\prime}(z)\sigma_3.
\end{equation}
Then, by definition of $P_a(z;s)$, noting that $g(a)=0$, we have
\[ P_a=E_a \Psi(s^3g(z)^2) e^{s^{3/2}g(z)\sigma_3},\]       
\[P_a^{-1}P_a^{\prime}=e^{-s^{3/2}g(z)\sigma_3}\Psi^{-1}E_a^{-1}E_a^{\prime} \Psi e^{s^{3/2}g(z)\sigma_3}+e^{-s^{3/2}g(z)\sigma_3 }\Psi^{-1}\Psi^{\prime} e^{s^{3/2}g(z)\sigma_3} +s^{3/2}g^{\prime}(z)\sigma_3.  \]
Moreover, we have
\begin{equation}\label{RRexta}
    R^{-1}(z)R^{\prime}(z)=\frac{dR^{(1)}}{dz}(z)+
    \bigO\left(\frac{1}{s^3\min\{|a|^2,(b-c)^2\}}\right), \qquad s\to \infty,
\end{equation}
where
\begin{equation}\label{R1Aprime}
    R^{(1)\prime}(z;s)=\frac{1}{2\pi i}\int_{\partial U_a \cup \partial U_b \cup \partial U_c}\frac{\Delta_1(\zeta;s)}{(\zeta-z)^2}d\zeta.
\end{equation}
It follows, similarly to the argument leading to \eqref{prelimitingformula}, that in the regime of Proposition \ref{DEataasympt}
\begin{equation}\label{prelim2}
       \frac{d}{da}\log P^{\mathrm{Ai}}(sJ)=  \lim_{z\to a} \left(\mathcal{\wt\tau}_1(z)+\mathcal{\wt\tau}_2(z)+\mathcal{\wt\tau}_3(z)\right)_{21}+ \bigO\left(\frac{1}{s^{3/2}\lvert a \rvert}\right)+\bigO\left(\frac{1}{s^{3/2}(b-c)^2}\right)
\end{equation}
with
\begin{align}
    & \mathcal{\wt\tau}_1(z)=\frac{1}{2\pi i}\Psi^{-1}(s^3 f_a(z))\Psi^{\prime}(s^3 f_a(z)), \label{tauA121} \\
    & \mathcal{\wt\tau}_2(z)=\frac{1}{2\pi i}\Psi^{-1}(s^3 f_a(z))E_a^{-1}(z)E_a^{\prime}(z) \Psi(s^3 f_a(z)),\label{tauA221} \\
    & \mathcal{\wt\tau}_3(z)=\frac{1}{2\pi i}e^{s^{3/2}g(z)\sigma_3}P_a^{-1}(z)\frac{R^{(1)}}{dz}(z)P_a(z) e^{-s^{3/2}g(z)\sigma_3}.\label{tauA321}
\end{align}

We now compute the limit in \eqref{prelim2}. As before, we summarise the contributions to the differential identity, from each term, in the following lemmata.
\begin{lemma}\label{maintermA}
\begin{equation}
   \lim_{z\to a} \mathcal{\wt\tau}_1(z)_{21}=- s^3 \frac{d}{da}\alpha_2.
\end{equation}
\end{lemma}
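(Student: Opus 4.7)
The plan is to mirror the proof of Lemma~\ref{mainterm} for the $b$-case, with three modifications coming from the differences between $P_a$ and $P_b$: the overall $+$ sign (rather than $-$) in the differential identity at $a$, the absence of the $\sigma_3$-sandwich in $\widetilde\tau_1$ (since $P_a(z)=E_a(z)\Psi(s^3 f_a(z))e^{s^{3/2}g(z)\sigma_3}$ contains no extra $\sigma_3$'s), and the replacement of $f_b$ by $f_a$.

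First, I would apply the chain rule to write $\widetilde\tau_1(z)=\frac{1}{2\pi i}\Psi^{-1}(\zeta)\Psi'(\zeta)\cdot s^3 f_a'(z)$ with $\zeta=s^3 f_a(z)\to 0$ as $z\to a$. Since $\det\Psi\equiv 1$ and the conjugation factor $e^{-i\pi\sigma_3/4}\pi^{\sigma_3/2}$ in \eqref{Psi1} cancels in $\Psi^{-1}\Psi'$, the expansion \eqref{Jzetaexpn} gives, by direct $2\times 2$ computation using the adjugate formula,
\[
\lim_{\zeta\to 0}\bigl(\Psi^{-1}(\zeta)\Psi'(\zeta)\bigr)_{21}=\frac{i\pi}{2}.
\]
Next, \eqref{glocalexp} yields $f_a'(a)=\frac{4q(a)^2}{(a-b)(a-c)}$, and so
\[
\lim_{z\to a}\widetilde\tau_1(z)_{21}=\frac{1}{2\pi i}\cdot\frac{i\pi}{2}\cdot\frac{4s^3 q(a)^2}{(a-b)(a-c)}=\frac{s^3 q(a)^2}{(a-b)(a-c)}.
\]

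What remains is the identity $\frac{d\alpha_2}{da}=-\frac{q(a)^2}{(a-b)(a-c)}$, which is the $a$-analog of \eqref{ddbalpha2}. This I would prove by direct differentiation of \eqref{alpha2}, using \eqref{defq1}, \eqref{ddaq0}, and \eqref{ddaJ0}. After collecting terms, the purely polynomial contribution reduces, via the identity $a^2-a(b+c)+2q_0=2q(a)$ (which is just the definition of $q_0$ rearranged), to $-q(a)/3$, leaving
\[
\frac{d\alpha_2}{da}=\frac{q(a)}{3}\left((a+b+c)\frac{1}{J_0}\frac{dJ_0}{da}-1\right).
\]
Substituting $\frac{dJ_0}{da}=\frac{J_1-aJ_0}{2(a-b)(a-c)}$ from \eqref{ddaJ0}, together with the relation $J_1/J_0=2(ab+ac+bc-3q_0)/(a+b+c)$ that follows from combining \eqref{defq1} with \eqref{newq0}, yields exactly $-3q(a)/[(a-b)(a-c)]$. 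The only real obstacle is this bookkeeping cancellation, which is entirely parallel to the proof of \eqref{ddbalpha2}. Combined with the opposite sign of the differential identity at $a$ versus at $b$, this produces the claimed value $-s^3\,\frac{d\alpha_2}{da}$.
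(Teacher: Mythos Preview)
Your proposal is correct and follows essentially the same route as the paper: compute $\lim_{\zeta\to 0}(\Psi^{-1}\Psi')_{21}=\tfrac{i\pi}{2}$ from \eqref{Jzetaexpn}, read off $f_a'(a)=4q(a)^2/[(a-b)(a-c)]$ from \eqref{glocalexp}, and then verify \eqref{ddaalpha2} by differentiating \eqref{alpha2} using \eqref{ddaq0} and \eqref{ddaJ0}. Your intermediate bookkeeping (the reduction of the polynomial part to $-q(a)/3$ and the subsequent elimination of $J_1/J_0$ via \eqref{defq1}) is a bit more explicit than the paper, but is exactly the same computation.
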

\begin{proof}
The proof of Lemma \ref{maintermA} runs almost exactly the same as that of Lemma \ref{mainterm}:\\
From \eqref{Jzetaexpn} we easily obtain, 
recalling that $\zeta=\zeta(z)=s^3 f_a(z)=s^3 g(z)^2$ and using \eqref{glocalexp},
\begin{equation}\label{mainprelimA}
\begin{aligned}
\lim_{z\to a}{\mathcal{\wt\tau}}_1(z)_{21}&=
\frac{1}{2\pi i}\lim_{z\to a}\left(\Psi^{-1}(s^3f_a(z))\frac{d\Psi}{dz}(s^3f_a(z))\right)_{21}\\
&=\frac{1}{2\pi i}\lim_{\zeta\to 0}\left(\Psi^{-1}(\zeta)\frac{d\Psi}{d\zeta}(\zeta)\right)_{21} s^{3} f'_a(a)
=s^3\frac{q(a)^2}{(a-b)(a-c)},
\end{aligned}
\end{equation}
where we have noted that, in this case,
\[f_a^{\prime}(a)=\frac{4q(a)^2}{(a-b)(a-c)}. \]

At this stage we must prove, similarly to \eqref{ddbalpha2}, the identity
\begin{equation}\label{ddaalpha2}
    \frac{d}{da}\alpha_2=-\frac{q(a)^2}{(a-b)(a-c)}.
\end{equation}
For this we, as before, differentiate \eqref{alpha2}. Then, 
using \eqref{ddaJ0} and \eqref{ddaq0}, we obtain
 obtain (\ref{ddaalpha2}). Comparing (\ref{ddaalpha2}) with \eqref{mainprelimA}, we obtain Lemma \ref{maintermA}.
\end{proof}

\begin{lemma}\label{oscillatorytermA}
 \begin{equation}\lim_{z\to a}\mathcal{\wt\tau}_2(z)_{21}= \frac{d}{da}\log \theta_3(s^{3/2}\Omega;\tau)-\frac{d\tau}{da}\frac{\partial}{\partial\tau}\log \theta_3(s^{3/2}\Omega;\tau).
 \end{equation}
\end{lemma}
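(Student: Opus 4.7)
The plan is to mirror, step by step, the proof of Lemma \ref{oscillatoryterm}, tracking carefully the modifications caused by (a) the absence of $\sigma_3$-conjugation in $E_a$ (contrast \eqref{Ep} vs.\ \eqref{Eb}); (b) the fact that $g_+(a)=0$, so no exponential factors with $g_+(a)$ survive; and (c) that $u(a)=0$ instead of $u_+(b)=-\tau/2$, so every occurrence of $\hat d=u_+(b)+d$ is replaced by $d$ and we must use the ``$a$-versions'' of the theta-function identities.

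First, with $E_a(z)=P^\infty(z)N_0^{-1}f_a(z)^{\sigma_3/4}s^{3\sigma_3/4}$, write
\[
E_a^{-1}E_a'=\tfrac14\tfrac{f_a'(z)}{f_a(z)}\sigma_3+s^{-3\sigma_3/4}f_a(z)^{-\sigma_3/4}N_0(P^\infty)^{-1}(P^\infty)'N_0^{-1}f_a(z)^{\sigma_3/4}s^{3\sigma_3/4}.
\]
The contribution of the first summand to $\tilde\tau_2(z)_{21}$ goes, by \eqref{Jzetaexpn} and \eqref{glocalexp}, to $-s^3 q(a)^2/[2(a-b)(a-c)]$ as $z\to a$ (the sign opposite to the $b$-case, reflecting the absence of the outer $\sigma_3$'s). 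For the second summand, set
$(P^\infty)^{-1}(P^\infty)'=\begin{pmatrix}\widetilde A&\widetilde B\\\widetilde C&-\widetilde A\end{pmatrix}$, with $\widetilde A,\widetilde B,\widetilde C$ obtained from \eqref{A0}--\eqref{C0} by dropping the $e^{\pm 2g_+(b)s^{3/2}}$ factors (since $g_+(a)=0$) and evaluating $\theta_{jk}$ and $N_{jk}$ via \eqref{uexp}--\eqref{b0} at $p=a$. Using \eqref{Jzetaexpn}, the contribution becomes the analogue of \eqref{EBEXPN}, namely
\[
\lim_{z\to a}\bigl(\tfrac14(2\widetilde A+i(\widetilde B+\widetilde C))s^{3/2}f_a(z)^{1/2}-\tfrac{i}{4}(\widetilde B-\widetilde C)s^3 f_a(z)\bigr)+s^3\tfrac{q(a)^2}{2(a-b)(a-c)}.
\]

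Next, using \eqref{thua} to evaluate $\theta_{jk}(0)$ and their derivatives, the $(\widetilde B-\widetilde C)$-term is handled exactly as in the $b$-case: the identity \eqref{155atA} (the $a$-analogue of \eqref{detatb2}, coming from $\det P^\infty(z)\equiv 1$ expanded at $a$) forces the $(z-a)^{-1}$ coefficient of $(\widetilde B-\widetilde C)$ to equal $-\tfrac{1}{2i}\cdot\tfrac{1}{(a-c)}$, which combined with $f_a(z)=f_a'(a)(z-a)+O((z-a)^2)$ and $f_a'(a)=4q(a)^2/[(a-b)(a-c)]$ produces exactly $-s^3 q(a)^2/[2(a-b)(a-c)]$, cancelling the stray term inherited from the first summand above. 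What remains is the $(2\widetilde A+i(\widetilde B+\widetilde C))$ term, which, after using \eqref{thua} and \eqref{thetader}-type relations at $u=0$ and the value $u_{0,a}\beta_{0,a}^2=1/[J_0(a-c)]$ (the analogue of \eqref{beta-u}, from \eqref{u0a}, \eqref{b0}), reduces to
\[
\lim_{z\to a}\tilde\tau_2(z)_{21}=s^{3/2}\,\tfrac{q(a)}{(a-b)(a-c)J_0}\cdot\tfrac{1}{4J_0^2(a-c)}\bigl[\theta_{11}\theta_{22}\bigl(2J_0(a-c)(\tfrac{\theta'_{11}}{\theta_{11}}-\tfrac{\theta'_{22}}{\theta_{22}})+\tfrac{\theta''_{11}}{\theta_{11}}-\tfrac{\theta''_{22}}{\theta_{22}}\bigr)\bigr]_{u=0}.
\]

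Now the key observation is that the bracketed expression is precisely $2J_0(a-c)\cdot T_1(\omega)$ \emph{with the identities} \eqref{id5A} and \eqref{id6A} of Lemma \ref{thetalemma} playing the roles that (v) and (vi) played in the $b$-case: the proof of $T_1(\omega)=2\theta_3'(\omega)/\theta_3(\omega)$ via \eqref{prop23CIB} carries over verbatim, with every $J_0$ replaced by $J_0(a-c)$ in the course of the manipulation. Recognising (xi) in the form $d\Omega/da=-q(a)/[(a-b)(a-c)J_0]$ from \eqref{ddaOmega}, the prefactors collapse to
\[
\lim_{z\to a}\tilde\tau_2(z)_{21}=-\tfrac{s^{3/2}}{2}\tfrac{d\Omega}{da}\cdot 2\tfrac{\theta_3'(\omega)}{\theta_3(\omega)}\cdot(-1)=s^{3/2}\tfrac{d\Omega}{da}\tfrac{\theta_3'(s^{3/2}\Omega)}{\theta_3(s^{3/2}\Omega)},
\]
the sign working out because of the extra minus from (xi) together with the sign flip between the $\sigma_3\Psi^{-1}\cdots\Psi\sigma_3$ structure of $\tau_2$ at $b$ and the bare $\Psi^{-1}\cdots\Psi$ structure of $\tilde\tau_2$ at $a$. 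Finally, writing out $\frac{d}{da}\log\theta_3(s^{3/2}\Omega;\tau)=s^{3/2}\frac{d\Omega}{da}\frac{\theta_3'}{\theta_3}+\frac{d\tau}{da}\frac{\partial}{\partial\tau}\log\theta_3(s^{3/2}\Omega;\tau)$ yields the desired identity.

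The main obstacle will be bookkeeping: correctly tracking the three simultaneous sign/factor differences between the $b$ and $a$ cases (absence of $\sigma_3$-conjugation in $E_a$, vanishing of $g_+(a)$, and the $(a-c)$ scaling in \eqref{id5A}, \eqref{id6A} vs.\ \eqref{id5}, \eqref{id6}) so that the manipulations ultimately reproduce \emph{the same} universal identity $T_1(\omega)=2\theta_3'/\theta_3(\omega)$ from \cite{IB}. Once this is done, matching with the derivative of $\log\theta_3(s^{3/2}\Omega;\tau)$ via (xi) and (xii) is immediate.
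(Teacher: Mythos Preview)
Your strategy is exactly the paper's: reduce to the analogue of \eqref{EBEXPN}, kill the $(B-C)$ term via the $\det P^\infty\equiv1$ identity at $a$ (equation \eqref{155atA}), and recognise the surviving $(2A\pm i(B+C))$ term as $s^{3/2}\frac{d\Omega}{da}\cdot 2\theta_3'/\theta_3$ via \eqref{id5A}--\eqref{id6A} and part (xi). However, several of the concrete intermediate claims are wrong, and since you yourself flagged bookkeeping as ``the main obstacle'', these need fixing.

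First, the displayed analogue of \eqref{EBEXPN} has the wrong signs: the combination $2A-i(B+C)$ appears (not $2A+i(B+C)$), the $(B-C)$ term enters with $+\frac{i}{4}$, and the stray term is $-s^3\frac{q(a)^2}{2(a-b)(a-c)}$ (your text says the first summand contributes $-s^3\ldots$ but your formula then writes $+s^3\ldots$). These flips come from the fact that $E_a$ contains $N_0^{-1}$ rather than $\sigma_3 N_0^{-1}\sigma_3$, which changes the way $B$ and $C$ combine after conjugation. Second, and more structurally, the identity you quote as $u_{0,a}\beta_{0,a}^{2}=1/[J_0(a-c)]$ is false: from \eqref{u0a} and \eqref{b0} one gets $u_{0,a}\beta_{0,a}^{2}=1/[J_0(a-b)]$. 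The correct analogue of \eqref{beta-u} at $a$ is $u_{0,a}\beta_{0,a}^{-2}=1/[J_0(a-c)]$, with the \emph{inverse} square, because $\beta(z)\sim\beta_{0,a}(z-a)^{1/4}$ has a zero at $a$ whereas it has a pole at $b$; this is what makes the $(a-c)$ factors in \eqref{id5A}--\eqref{id6A} appear in place of the bare $J_0$ in \eqref{id5}--\eqref{id6}. Correspondingly, the $(B-C)$ coefficient is $+\tfrac{1}{2i}\frac{1}{z-a}$, with no extra $\tfrac{1}{a-c}$. Once these are corrected, the computation goes through exactly as you outline, yielding \eqref{T1prelimA}--\eqref{T1viathetaata} and hence the lemma.
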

\begin{proof}
To prove Lemma \ref{oscillatorytermA} we proceed as in the proof of Lemma \ref{oscillatoryterm}. We first have that
\begin{equation}\label{EBsomethingA}
  \frac{1}{2\pi i}  \frac{1}{4}\frac{f_a^{\prime}(z)}{f_a(z)}\left(\Psi^{-1}(\zeta)\sigma_3 \Psi(\zeta)\right)_{21}\to -s^3\frac{q(a)^2}{2(a-b)(a-c)},\qquad z\to a.
\end{equation}
Then, similarly to \eqref{EBEXPN} we obtain 
\be\label{EBEXPNa}
   \lim_{z\to a}{\mathcal{\wt\tau}}_2(z)_{21}=\lim_{z\to a}\left(\frac{1}{4}\left(2A-i(B+C)\right)s^{3/2}f_a^{1/2}(z) +\frac{i}{4}(B-C)s^3f_a(z)\right)-s^3\frac{q(a)^2}{2(a-b)(a-c)}.
\ee
where $A$, $B$, $C$ are given by \eqref{A0}--\eqref{C0} but without the $e^{\pm 2g_+(b)s^{3/2}}$
factors and with $\theta_{jk}$ evaluated at $u(a)=0$ rather than at $u_+(b)$.
Using the identities \eqref{thua} for $\theta_{jk}$ and its derivatives, we see that
\be
B-C=\frac{1}{2i}\frac{1}{z-a}\left( \theta_{11}\theta_{22}+\frac{u_{0,a}\beta_{0,a}^{-2}}{2}(\theta_{11}\theta'_{22}+\theta_{22}\theta'_{11})\right)\left|_{u(a)=0}\right.\left(1+\bigO(z-a)\right).
\ee
Using the fact that
\be
u_{0,a}\beta_{0,a}^{-2}=\frac{1}{J_0(a-c)}
\ee
and the identity \eqref{155atA}, we obtain
\be
B-C=\frac{1}{2i}\frac{1}{z-a}\left(1+\bigO(z-a)\right),
\ee
so that by the expansion of $f_a(z)$ in \eqref{glocalexp}, 
\begin{equation}
    \frac{i}{4}(B-C)s^3f_a(z) \to s^3 \frac{q(a)^2}{2(a-b)(a-c)}, \qquad z\to a,
\end{equation}
which cancels the last term in (\ref{EBEXPNa}), as before. 

Furthermore, we obtain similarly 
\be
2A-i(B+C)=-\frac{u_{0,a}}{(z-a)^{1/2}}\left(
\theta_{11}\theta'_{22}-\theta_{22}\theta'_{11}+\frac{1}{2J_0(a-c)}(\theta_{11}\theta''_{22}-\theta_{22}\theta''_{11})\right)\left|_{0}
+\bigO(1).\right.
\ee

By identities \eqref{id5A} and \eqref{id6A} of Lemma \ref{thetalemma}, we then obtain
\be \label{T1prelimA}
\lim_{z\to a}\mathcal{\wt\tau}_2(z)_{21}=s^{3/2}\frac{d\Omega}{da}\wt T_1(s^{3/2}\Omega),
\ee
where
\be\label{T1omegaata}
 \begin{aligned}
 \wt T_1(\omega)&
 =\frac{1}{2J_0(a-c)}\left[\theta_{11}\theta_{22}
 \left(2J_0(a-c)\left(\frac{\theta'_{11}}{\theta_{11}}-\frac{\theta'_{22}}{\theta_{22}}\right)
 +\frac{\theta''_{11}}{\theta_{11}}-\frac{\theta''_{22}}{\theta_{22}}\right)\right]_{0}\\
 &=
 -\frac{\theta_3^2\theta_3(\omega+d)\theta_3(\omega-d)}{J_0(a-c)\theta_3^2(d)\theta_3^2(\omega)}\left[ \frac{\theta_1^{\prime}}{\theta_1}(d)\left(\frac{\theta_3^{\prime}}{\theta_3}(\omega+d)+\frac{\theta_3^{\prime}}{\theta_3}(\omega-d)\right)-\frac{1}{2}\left(\frac{\theta_3^{\prime\prime}}{\theta_3}(\omega+d)- \frac{\theta_3^{\prime\prime}}{\theta_3}(\omega-d)\right) \right].
   \end{aligned}
\ee
The same argument as for \eqref{T1omegadef} above gives
\begin{equation}\label{T1viathetaata}
   \wt T_1(\omega)=2\frac{\theta_3^{\prime}}{\theta_3}(\omega).
\end{equation}
Substituting this into \eqref{T1prelimA} and writing out the total derivative, we obtain the Lemma.
\end{proof}

\begin{lemma}\label{constanttermA}With $\omega=s^{3/2}\Omega$,
\begin{equation}
   \int_0^1 \lim_{z\to a}\mathcal{\wt\tau}_3(z)_{21}d\omega=-\frac{1}{2}\frac{d}{da}\log \lvert J_0 \rvert-\frac{1}{8}\frac{d}{da}\log \lvert q(a)q(b)q(c)\rvert +\frac{d\tau}{da}\int_0^1\frac{\partial}{\partial \tau} \log\theta_3(\omega;\tau)d\omega .
\end{equation}
\end{lemma}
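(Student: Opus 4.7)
The proof plan mirrors that of Lemma \ref{constantterm}, but with the roles of the endpoints interchanged. Writing
\[
\lim_{z\to a}\widetilde{\tau}_3(z)_{21}=\frac{1}{2\pi i}\int_{\partial U_a\cup\partial U_b\cup\partial U_c}\frac{\widetilde L(\xi)}{(\xi-a)^2}d\xi,
\]
I would obtain $\widetilde L(\xi)$ by the same manipulations as in \eqref{PBEXPn}--\eqref{defL}, but using the limit as $z\to a$ of the first column of $P_a(z)e^{-s^{3/2}g(z)\sigma_3}$. The key inputs are the identities \eqref{thua} at $u(a)=0$ (in place of \eqref{thetanoder} at $u_+(b)$), together with the expansions \eqref{uexp}--\eqref{betaexp} of $u(z)$ and $\beta(z)$ at $a$. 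I would split the integral as $\widetilde T_2+\widetilde T_3$, where $\widetilde T_2$ collects the contributions from $\partial U_b\cup\partial U_c$ (simple poles of $\widehat{\Delta_1}$) and $\widetilde T_3$ the contribution from $\partial U_a$ (which, combined with $(\xi-a)^{-2}$, is a double pole).

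For $\widetilde T_2$, near $\xi=b$ and $\xi=c$ the matrix $\widehat{\Delta_1(\xi)}$ has the same form as in \eqref{Deltab}--\eqref{233}, so the residue computation proceeds verbatim except that the prefactor now involves $\theta_{jk}(0)$ and $\beta_{0,a}$. After using the summation formulas \eqref{theta23}--\eqref{theta34} and the values of $\beta_{0,p}$ from \eqref{b0}, one obtains an expression of the form
\[
\widetilde T_2(\omega)=-\frac{1}{16}\left(\frac{1}{a-b}\Bigl|\frac{q(a)}{q(b)}\Bigr|\frac{\theta_3^2}{\theta_2^2}\frac{\theta_2^2(\omega)}{\theta_3^2(\omega)}+\frac{1}{a-c}\Bigl|\frac{q(a)}{q(c)}\Bigr|\frac{\theta_3^2}{\theta_4^2}\frac{\theta_4^2(\omega)}{\theta_3^2(\omega)}\right),
\]
which upon averaging by \eqref{238} and using the identities of Lemma \ref{thetalemma} (now applying \eqref{ddaJ0}--\eqref{ddaq0} to convert the $J_0,J_1$ combinations to derivatives with respect to $a$) yields
\[
\int_0^1 \widetilde T_2(\omega)d\omega=-\frac{1}{8}\frac{d}{da}\log\frac{|q(b)q(c)|}{\sqrt{(a-b)(a-c)}}.
\]

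For $\widetilde T_3$, I would introduce functions $\widetilde A_j(z),\widetilde B_j(z)$ analogous to \eqref{AABB} (but without the $e^{\pm s^{3/2}g_+(b)}$ factors and with $\theta_{jk}$ evaluated through the expansions at $u(a)=0$), so that $\widetilde A_j(z)(z-a)^{1/4}$ and $\widetilde B_j(z)(z-a)^{-1/4}$ are analytic in $U_a$. The crucial step is the analog of \eqref{T3useful}: using identities \eqref{id5A}--\eqref{id6A} of Lemma \ref{thetalemma} together with \eqref{T1viathetaata}, the $\widetilde A$-combination produces again $2u_{0,a}^2 J_0(a-c)(\theta_3'/\theta_3)(\omega)(z-a)+O((z-a)^2)$. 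Squaring, integrating over $\omega$ via \eqref{onethetaint}, and invoking the heat equation together with the identities for $\theta_2^4,\theta_3^4,\theta_4^4$ from Lemma \ref{thetalemma} gives (in place of \eqref{264}) a term proportional to $1/[J_0^2(a-b)(a-c)]$ and the total $b$-derivative of $\log[J_0^6(a-b)(a-c)]$ (now $a$-derivative). The $\widetilde B$-contribution is handled as in \eqref{T3prelim}--\eqref{Btermfinalcontribution}: expanding via the analog of \eqref{Btermexpn}, averaging via the Lemma~26 formula of \cite{IB} adapted to the point $a$ (with $\widehat d$ replaced by $d$), and simplifying using the identities \eqref{id5A}--\eqref{id6A}. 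The $\pi^2$-terms will cancel between the two contributions, leaving a clean derivative $-\tfrac{1}{8}\tfrac{d}{da}\log|q(a)J_0|-\tfrac{1}{16}\tfrac{d}{da}\log[(a-b)(a-c)]$.

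Combining $\int_0^1\widetilde T_2\,d\omega$ and $\int_0^1\widetilde T_3\,d\omega$, the half-logarithms of $(a-b)$ and $(a-c)$ cancel and one arrives at the stated formula. The main obstacle will be the bookkeeping in the $\widetilde T_3$ computation: verifying that the analogs of the key identities \eqref{detatb2} and the simplifications $H_0-u_{0,a}\beta_{0,a}^{-2}H_1=2/\widehat g(d)$, etc., still hold at $d$ (rather than $\widehat d=u_+(b)+d$), so that the cancellation producing the expansion analogous to \eqref{Bavgexpn} persists. This should follow from identities \eqref{id5A}--\eqref{id6A} combined with \eqref{155atA}, but the signs and the factor $(a-c)$ appearing on the right-hand sides of \eqref{id5A}--\eqref{id6A} (absent in \eqref{id5}--\eqref{id6}) need to be tracked carefully to guarantee that the final result is symmetric with $d/db$ replaced by $d/da$, as claimed.
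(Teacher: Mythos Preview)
Your plan is essentially the paper's own argument: the same split into $T_{2,a}+T_{3,a}$, the same residue computations at $b,c$ leading to \eqref{BCRresiduesATA}, and the same $A$/$B$ decomposition at the double pole, with \eqref{id5A}--\eqref{id6A} and \eqref{155atA} replacing \eqref{id5}--\eqref{id6} and \eqref{detatb2}. One point to watch: if you define $\widetilde A_j,\widetilde B_j$ literally ``analogous to \eqref{AABB}'' (same $\pm$ pattern, exponentials removed), then because $\beta(z)\sim(z-a)^{1/4}$ at $a$ (opposite to $b$) the analyticity is reversed---it is $\widetilde B_j(z)(z-a)^{1/4}$ and $\widetilde A_j(z)(z-a)^{-1/4}$ that are analytic, and correspondingly $L_a$ takes the form $[B_1\theta_{22}-B_2\theta_{11}]^2+3[A_1\theta_{22}+A_2\theta_{11}]^2$, so it is the $B$-combination that produces the $(\theta_3'/\theta_3)(\omega)$ factor and the $A$-combination that requires the averaging via Lemma~26 of \cite{IB}; the paper makes this swap explicit.
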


\begin{proof}
We have already computed most of the necessary ingredients in the proof of Lemma \ref{constantterm}. For $\omega=s^{3/2}\Omega$ let us denote the residue contribution to $\lim_{z\to a}\mathcal{\tau}_3(z)_{21}$ from the points $b$ and $c$ by $T_{2,a}(\omega)$, and from the point $a$ by $T_{3,a}(\omega)$. Thus, similarly to \eqref{tau3TT} we have
\be\label{T+T}
 \lim_{z\to a} \mathcal{\wt\tau}_3(z)_{21}=T_{2,a}+T_{3,a}
 \ee
 where
 \be\label{T2T3defa}
 T_{2,a}= \frac{1}{2\pi i}\int_{\partial U_b \cup \partial U_c}\frac{L_a(\xi)}{(\xi-a)^2}d\xi,\qquad 
T_{3,a}=  \frac{1}{2\pi i}\int_{\partial U_a }\frac{L_a(\xi)}{(\xi-a)^2}d\xi,
 \ee
 with  (note that $u(a)=0$ is the argument of $\theta_{jk}$)
 \be\label{defLa}
 L_a(\xi)=\frac{\beta^{-2}_{0,a}}{4\pi i}\left( s^3 \pi ^2 f_a^{\prime}(a)\right)^{1/2}
\begin{pmatrix}i\theta_{22}(0) & \theta_{11}(0) \end{pmatrix}\widehat{\Delta_1(\xi)}\begin{pmatrix} \theta_{11}(0) \\ -i\theta_{22}(0)\end{pmatrix},
\ee
and $\widehat{\Delta_1(\xi)}$ is given by \eqref{231}--\eqref{233}.

For $T_{2,a}$ we may proceed as in the evaluation of $T_2$ in the proof of Lemma \ref{constantterm}.
It is straightforward to derive, analogously to \eqref{secondACresidues}, that
\be\label{T2a}
\int_0^1 T_{2,a}(\omega)d\omega=\frac{1}{16(a-b)(a-c)}\left[ \frac{q(a)}{q(c)}\left(\frac{J_1}{J_0}-b\right)+\frac{q(a)}{q(b)}\left(\frac{J_1}{J_0}-c\right) \right] .    \ee

Using \eqref{ddaJ0} and \eqref{ddaq0}, we differentiate $q(b)$ and $q(c)$ directly and find that
\[-\frac{\frac{J_1}{J_0}-b}{2(a-b)(a-c)}=\frac{\frac{d}{da}q(c)}{q(a)}+\frac{a+c-b}{4q(a)}-\frac{1}{2(a-c)}, \]
and
\[-\frac{\frac{J_1}{J_0}-c}{2(a-b)(a-c)}=\frac{\frac{d}{da}q(b)}{q(a)}+\frac{a+b-c}{4q(a)}-\frac{1}{2(a-b)}. \]
Substituting these into \eqref{T2a} and noting the simple relations
\[q(a)=q(b)+\frac{1}{2}(a-b)(a+b-c), \qquad q(a)=q(c)+\frac{1}{2}(a-c)(a+c-b), \]
we arrive at the expression
\begin{equation}\label{BCRresiduesATA}
 \int_0^1 T_{2,a}(\omega) d\omega =  -\frac{1}{8}\frac{d}{da}\log \frac{\lvert q(b)q(c)\rvert}{\sqrt{(a-b)(a-c)}}.
\end{equation}

Concerning $T_{3,a}$ we, again, follow the proof of Lemma \ref{constantterm}. We first redefine
\be\label{AABB2}
\begin{aligned}
    A_1(z)&=\frac{1}{2}\left(\theta_{11}(u)(\beta(z)+\beta^{-1}(z)) +\theta_{12}(u)(\beta(z)-\beta^{-1}(z))\right),\\
    A_2(z)&=\frac{1}{2}\left(\theta_{22}(u)(\beta(z)+\beta^{-1}(z)) +\theta_{21}(u)(\beta(z)-\beta^{-1}(z)) \right),\\
    B_1(z)&=\frac{1}{2}\left(\theta_{11}(u)(\beta(z)+\beta^{-1}(z)) -\theta_{12}(u)(\beta(z)-\beta^{-1}(z)) \right),\\
    B_2(z)&=\frac{1}{2}\left(\theta_{22}(u)(\beta(z)+\beta^{-1}(z)) -\theta_{21}(u)(\beta(z)-\beta^{-1}(z)) \right),
    \end{aligned}\ee
and then derive similarly to \eqref{AANDBFORM} that for $z$ close to $a$
\begin{equation}\label{AANDBFORM2}
L_a(z)=
-\frac{\beta^{-2}_{0,a}  f_a^{\prime}(a)^{1/2}}{64 f_a(z)^{1/2}}
   \left( [B_1(z) \theta_{22}(0)-B_2(z)\theta_{11}(0)]^2+3[A_1(z)\theta_{22}(0)+A_2(z)\theta_{11}(0)]^2\right),
\end{equation}
Note that $u(a)=0$, $g_+(a)=0$, and the roles of $A$ and $B$ are interchanged.

We then obtain similarly to \eqref{T3useful} that, in terms of $\wt T_1(\omega)$ given by \eqref{T1omegaata}, \eqref{T1viathetaata},
\be\begin{aligned}
    &\beta(z)[B_1(z) \theta_{22}(0)-B_2(z)\theta_{11}(0)]\\
    &=\left\{\beta_{0,a}^{2}u_{0,a}(\theta_{11}^{\prime}\theta_{22}-\theta_{11}\theta_{22}^{\prime})+ 
    \frac{u_{0,a}^2}{2}(\theta_{11}^{\prime\prime}\theta_{22}-\theta_{11}\theta_{22}^{\prime\prime})\right\}(z-a)+\bigO((z-a)^2)\\
    &=\beta_{0,a}^2 u_{0,a}\wt T_1(\omega)(z-a)+\bigO((z-a)^2)\\
    &=2u_{0,a}^2 J_0 (a-c)\frac{\theta_3^{\prime}}{\theta_3}(\omega) (z-a)+\bigO((z-a)^2),\qquad z\to a,\qquad \omega=s^{3/2}\Omega.
\end{aligned}\ee
It follows that
\be
\begin{aligned}
T_{3,a}(\omega)&=  \frac{1}{2\pi i}\int_{\partial U_a }\frac{L_a(z)}{(z-a)^2}dz=
-\frac{\beta^{-2}_{0,a}  f_a^{\prime}(a)^{1/2}}{64}\left(\int_{\partial U_a }  \frac{dz}{2\pi i}
\frac{\beta^{-2}(z)}{f_a(z)^{1/2}}\left\{\left(2u_{0,a}^2 J_0 (a-c)\frac{\theta_3^{\prime}}{\theta_3}(\omega)\right)^2+\bigO(z-a)\right\}
\right.
\\
 &\left.+\int_{\partial U_a }  \frac{dz}{2\pi i}
 \frac{3}{(z-a)^2f_a(z)^{1/2}}[A_1(z)\theta_{22}(0)+A_2(z)\theta_{11}(0)]^2\right).
\end{aligned}
 \ee
Since the meromorphic in $U_a$ function
 \[
 \frac{\beta^{-2}(z)}{f_a(z)^{1/2}}=\frac{1}{z-a}\frac{\beta^{-2}_{0,a} }{f_a^{\prime}(a)^{1/2}}\left(1+\bigO(z-a)\right),\qquad z\to a,
 \]
we immediately compute the first integral by taking the residue (recall the negative direction of the integration), and recalling the expressions \eqref{b0}, \eqref{u0a} for $\beta_{0,a}$, $u_{0,a}$, we obtain
\be\label{penultimateAtermATA}
\begin{aligned}
T_{3,a}(\omega)&= \frac{1}{16 J_0^2(a-b)(a-c)}\left(\frac{\theta_3^{\prime}}{\theta_3}(\omega)\right)^2\\
\\
 &
 -\frac{\beta^{-2}_{0,a}  f_a^{\prime}(a)^{1/2}}{64}\int_{\partial U_a }  \frac{dz}{2\pi i}
 \frac{3}{(z-a)^2f_a(z)^{1/2}}[A_1(z)\theta_{22}(0)+A_2(z)\theta_{11}(0)]^2.
\end{aligned}
 \ee
To average the first term, we again use \eqref{onethetaint}.
By the identities of  Lemma \ref{thetalemma}, we have that
\be
    \frac{\theta_1^{\prime \prime \prime}}{\theta_1^{\prime}}=4\pi i \frac{\partial }{\partial \tau}\log \theta_1^{\prime}=\frac{\pi i}{\frac{d \tau}{d a}}\frac{d}{d a}\log( \theta_2^4 \theta_3^4 \theta_4^4)
    =J_0^2 (a-b)(a-c)\frac{d}{d a} \log \left[\left(J_0^6(a-b)(a-c)\right) \right].
\ee
Thus, we obtain
\be\label{264ATA}
\frac{1}{J_0^2(a-b)(a-c)} \int_0^1 \left( \frac{\theta_3^{\prime}}{\theta_3}(\omega) \right)^2d\omega=
\frac{\pi^2/3}{J_0^2(a-b)(a-c)}+\frac{1}{3}\frac{d}{d a} \log \left[\left(J_0^6(a-b)(a-c)\right)\right].
\ee

Similarly to the derivation of \eqref{addition}, we find
\be\label{additionATA}
\frac{d\tau}{d a} \int_0^1 \frac{\partial }{\partial \tau}\log \theta_3(\omega)d\omega=
\frac{\pi i}{J_0^2(a-b)(a-c)}\int_0^1 \frac{\partial }{\partial \tau}\log \theta_3(\omega)d\omega=
\frac{1}{4J_0^2(a-b)(a-c)}\int_0^1   \left( \frac{\theta_3^{\prime}}{\theta_3}(\omega) \right)^2d\omega.
\ee

Thus, integrating \eqref{penultimateAtermATA} and using \eqref{additionATA} and \eqref{264ATA}, we obtain
 \be\label{T3prelimATA}
 \begin{aligned}
&\int_0^1 T_{3,a}(\omega)d\omega- \frac{d\tau}{d a} \int_0^1 \frac{\partial }{\partial \tau}\log \theta_3(\omega)d\omega=-
 \frac{\pi^2/16}{J_0^2(a-b)(a-c)}-\frac{1}{16}\frac{d}{d a} \log \left[\left(J_0^6(a-b)(a-c)\right)\right]\\
  &
 -\frac{\beta^{-2}_{0,a}  f_a^{\prime}(a)^{1/2}}{64}\int_{\partial U_a }  \frac{dz}{2\pi i}
 \frac{3}{(z-a)^2f_a(z)^{1/2}}\int_0^1 d\omega [A_1(z)\theta_{22}(0)+A_2(z)\theta_{11}(0)]^2.
\end{aligned}
 \ee

We now turn to computation of the average of the term with $A$'s in this expression.
First note that the analytic in $U_a$ function
\begin{equation}\label{fa}
    \frac{1}{\beta^{-2}(z)f_a(z)^{1/2}}=\frac{1}{\beta_{0,a}^{-2}f_a^{\prime}(a)^{1/2}}\left(1-\left(\frac{f_{1,a}}{2}-2\beta_{1,a}\right)(z-a)+\bigO((z-a)^2) \right), \qquad z\to a,
\end{equation}
where we used (\ref{betaexp}) and wrote $f_a(z)=f_a^\prime(a)(z-a)(1+f_{1,a}(z-a)+\bigO((z-a)^2))$ recalling \eqref{glocalexp}. 
In the last expansion
\be\label{f1a}
f_{1,a}=\frac{2}{3}\left(\frac{q_1+2a}{q(a)}-\frac{1}{2}\left(\frac{1}{a-b}+\frac{1}{a-c}\right)\right).
\ee

Expanding $B_j(z)$ and using the identity \eqref{155atA}, we obtain for the analytic in $U_a$ function
\begin{equation}
    \beta(z)^{-1}[A_1(z)\theta_{22}(0)+A_2(z)\theta_{11}(0)]-2=\bigO(z-a), \qquad z\to a.
\end{equation}
Taking the square of this expression, we may write
\begin{equation}\label{BtermexpnATA}
\begin{aligned}
 &\beta(z)^{-2}[A_1(z)\theta_{22}(0)+A_2(z)\theta_{11}(0)]^2\\
  &=-4+4\beta(z)^{-1}
  [A_1(z)\theta_{22}(0)+A_2(z)\theta_{11}(0)]+\bigO((z-a)^2), \qquad z\to a.  
  \end{aligned}
\end{equation}
We now evaluate the average
\begin{equation}
    \int_0^1 [A_1(z)\theta_{22}(0)+A_2(z)\theta_{11}(0)]
     d\omega=\int_0^1 [\widehat{q(\omega)}_a+\widehat{q(-\omega)}_a]d\omega, \qquad \omega=s^{3/2}\Omega,
\end{equation}
where, by writing out the terms and using periodicity properties of $\theta$-functions,
\begin{equation}
\widehat{q(\omega)}_a=
        \frac{\theta_3^2}{2\theta_3^2(\omega)}\frac{\theta_3(-\omega+d)}{\theta_3(d)}\left( \frac{\theta_3(u(z)+\omega+d)}{\theta_3(u(z)+d)}(\beta(z)+\beta^{-1}(z))+\frac{\theta_3(u(z)-\omega-d)}{\theta_3(u(z)-d)}(\beta(z)-\beta^{-1}(z))\right),
\end{equation}
Since $\widehat{q(-\omega)}_a=\widehat{q(1-\omega)}_a$, we have that $\int_0^1 [\widehat{q(\omega)}_a+\widehat{q(-\omega)}_a]d\omega=
2\int_0^1 \widehat{q(\omega)}_a d\omega$.
We notice that the expression for $\widehat{q(\omega)}_a$ has the form of $\widetilde{q(\omega)}$, given in Section 9.2 of \cite{IB}.  In particular, we may proceed in the same way (ans similarly to the derivation of \eqref{Bavgexpn} above). Expanding $u(z)$ and $\beta^{-2}(z)$ as $z\to a$ by \eqref{uexp}, \eqref{betaexp}, and now using the identities 
\eqref{id5A} and \eqref{id6A} of Lemma \ref{thetalemma},  we obtain
\begin{equation}\label{BavgexpnATA}
\begin{aligned}
&\beta(z)^{-1} \int_0^1 [A_1(z)\theta_{22}(0)+A_2(z)\theta_{11}(0)]
     d\omega
=\\
&=2+\left(\frac{\pi^2}{3}u_{0,a}^2+u_{1,a}-2\beta_{1,a} \right)(z-a)+\bigO((z-a)^2), \qquad z\to a,
\end{aligned}
\end{equation}

Substituting \eqref{BtermexpnATA} into \eqref{T3prelimATA} and calculating the residue by
(\ref{BavgexpnATA}) and (\ref{fa}), we obtain
\be\label{T3almostATA}
\begin{aligned}
-\frac{\beta^2_{0,a}  f_a^{\prime}(a)^{1/2}}{64}&\int_{\partial U_a }  \frac{dz}{2\pi i}
 \frac{3}{(z-a)^2f_a(z)^{1/2}}\int_0^1 d\omega [A_1(z)\theta_{22}(0)+A_2(z)\theta_{11}(0)]^2\\
    &=\frac{1}{16}\left(\pi^2u_{0,a}^2+3u_{1,a}-3f_{1,a}/2\right)=
         \frac{1}{16}\left(\frac{\pi^2}{J_0^2(a-b)(a-c)}-\frac{q_1+2a}{q(a)} \right).
\end{aligned}\ee
For the last equation here we used \eqref{u0a} and \eqref{f1a}. Next, 
we differentiate $q(a)$ w.r.t. $a$ and use \eqref{ddaq0} which gives the equation 
\be\frac{q_1+2a}{2q(a)}=\frac{d}{da}\log \lvert q(a) \rvert +\frac{d}{da}\log \lvert J_0 \rvert.  \ee
Substituting this into \eqref{T3almostATA} and that, in turn, into \eqref{T3prelimATA} we conlude that
 \be\label{T3A}
 \begin{aligned}
&\int_0^1 T_{3,a}(\omega)d\omega- \frac{d\tau}{d a} \int_0^1 \frac{\partial }{\partial \tau}\log \theta_3(\omega)d\omega=\\
  &
-\frac{1}{2}\frac{d}{da}\log \lvert J_0 \rvert -\frac{1}{8}\frac{d}{da}\log \lvert q(a) \rvert -\frac{1}{16}\frac{d}{da}\log((a-b)(a-c)).
\end{aligned}
 \ee
Substituting \eqref{BCRresiduesATA} and \eqref{T3A} into the average of \eqref{T+T},
we finally obtain
\be\begin{aligned}
&\int_0^1 \lim_{z\to a}\mathcal{\wt\tau}_3(z)_{21}d\omega-\frac{d\tau}{da}\int_0^1 \frac{\partial}{\partial \tau}\log \theta_3(\omega)d\omega&=\int_0^1 T_{2,a}(\omega)d\omega+\int_0^1 T_{3,a}(\omega)d\omega-\frac{d\tau}{da}\int_0^1\frac{\partial}{\partial \tau }\log \theta_3(\omega)d\omega\\
&=-\frac{1}{2}\frac{d}{da}\log \lvert J_0 \rvert -\frac{1}{8}\frac{d}{da}\log \lvert q(a)q(b)q(c)\rvert,
\end{aligned}\ee
which proves Lemma \ref{constanttermA}.
\end{proof}

\section*{Acknowlegements}
The work of I.K. was partly supported by the Leverhulme Trust research project grant
RPG-2018-260. The authors are grateful to Benjamin Fahs and Alexander Its for useful discussions.


\begin{thebibliography}{99}

\bibitem{AS}
M. Abramowitz, I. Stegun, \emph{Handbook of Mathematical Functions}, Dover Publications, Inc., New York (1968)

\bibitem{BBD} J. Baik, R. Buckingham, J. DiFranco. Asymptotics of Tracy-Widom distributions 
and the total integral of a Painlev\'e II function.  {\it Comm. Math. Phys.} {\bf  280}  (2008) 463--497. 

\bibitem{BBRD}
J. Baik, P. Deift, and K. Johansson, On the distribution of the length of the longest increasing subsequence of random permutations, {\it J. Amer. Math. Soc.} {\bf 12} (1999) 1119--1178.

\bibitem{BCL}
E. Blackstone, C. Charlier, and J. Lenells,  Oscillatory Asymptotics for the Airy Kernel Determinant on Two Intervals, {\it Int. Math. Res. Notices}  (2020) rnaa205.

\bibitem{BCLairybulk}
E. Blackstone, C. Charlier, and J. Lenells,
Gap probabilities in the bulk of the Airy process,
{\it arXiv:2012.01299}.

\bibitem{BCLbessel}
E. Blackstone, C. Charlier, and J. Lenells,
The Bessel kernel determinant on large intervals and Birkhoff's ergodic theorem,
{\it arXiv:2101.09216}.


\bibitem{C1}
B. C. Carlson, A Hypergeometric Mean Value, {\it Proceedings of AMS} {\bf 16} (1965) 759--766. 

\bibitem{CIK}
T. Claeys, A. Its, and I. Krasovsky,  Higher-Order Analogues of the Tracy-Widom Distribution and the Painleve II Hierarchy, {\it Comm. Pure and Applied Math.} {\bf 63} (2009) 362--412.


\bibitem{CM}
J. des Cloizeaux, M. L. Mehta, Asymptotic behaviour of spacing distributions for the eigenvalues of random matrices, {\it J. Math. Phys} {\bf 14} (1973) 1648--1650.

\bibitem{Dbook}
P. Deift, \emph{Orthogonal polynomials and random matrices: A Riemann-Hilbert approach}, Courant lecture notes in mathematics (1999)

\bibitem{DIZ}
P. Deift, A. Its and X. Zhou, A Riemann-Hilbert Approach to Asymptotic problems Arising in the Theory of Random Matrix Models, and also in the Theory of Integrable Statistical Mechanics, {\it Ann. Math.} {\bf 146} (1997) 149--234.

\bibitem{DIKAiry}
P. Deift, A. Its, and I. Krasovsky, Asymptotics of the Airy-kernel determinant,  {\it Commun. Math. Phys.} {\bf 278} (2008) 643--678.

\bibitem{DIKreview}
P. Deift, A. Its, and I. Krasovsky,
Toeplitz matrices and Toeplitz determinants under the impetus of the Ising model.
Some history and some recent results. {\it Commun. Pure
Appl. Math.} {\bf 66} (2013), 1360--1438.


\bibitem{DIKZ}
P. Deift, A. Its and I. Krasovsky and X. Zhou, The Widom-Dyson constant for the gap probability in random matrix theory, {\it J. Comput. Appl. Math} {\bf 202} (2007) 26--47.

\bibitem{DZ}
P. Deift,  X. Zhou. A Steepest Descent Method for Oscillatory Riemann--Hilbert Problems. Asymptotics for the MKdV Equation, {\it Ann. Math.} {\bf 137} (1993) 295--368.

\bibitem{Dyson62}
F. Dyson, Statistical theory of the energy levels of complex systems. II, 
{\it J. Math. Phys.} {\bf 3} (1962) 157--165.

\bibitem{Dyson}
F. Dyson, Fredholm determinants and inverse scattering problems, {\it Comm. Math. Phys.} {\bf 47} (1976) 171--183.

\bibitem{Ehrhardt}
T. Ehrhardt, Dyson’s constants in the asymptotics of the determinants of Wiener-Hopf-Hankel operators with the sine kernel, {\it Commun. Math. Phys.} {\bf 272} (2007) 683--698.

\bibitem{IB}
B. Fahs and I. Krasovsky, Sine-kernel determinant on two large intervals, \emph{arXiv:2003.08136}

\bibitem{FARKAS}
H. M. Farkas and H. E. Rauch, Period Relations of Schottky Type on Riemann Surfaces, {\it Ann. Math.} {\bf 92} (1970) 434--461.


\bibitem{Ksine}
I. Krasovsky, Gap probability in the spectrum of random matrices and asymptotics of polynomials orthogonal on an arc of the unit circle, {\it Int. Math. Res. Notices } {\bf 2004} (2004) 1249--1272.

\bibitem{Arno} A. B. J. Kuijlaars, K. T-R McLaughlin, W. Van Assche, and
M. Vanlessen, The Riemann-Hilbert approach to strong asymptotics for
orthogonal polynomials on $[-1,1]$, {\it Adv. Math.} {\bf 188} (2004) 337--398.


\bibitem{Mehta}
  M. L. Mehta, \emph{Random Matrices}, Academic Press (2004).
  
 \bibitem{NeumanBounds}
E. Neuman, Bounds for symmetric elliptic integrals, {\it J. Approx. Theory} {\bf 122} (2003) 249--259.

  

\bibitem{TW}
C.A. Tracy, H. Widom, Level-spacing distributions and the Airy kernel, {\it Commun. Math. Phys.}
 {\bf 159} (1994) 151--174.





\bibitem{WW}
E. T. Whittaker,  G. N. Watson, \emph{A Course of Modern analysis}, Cambridge University Press, 4th Edition (1996).




\bibitem{W2}
H. Widom, The asymptotics of a continuous analogue of orthogonal polynomials, {\it J. Approx. Theory} {\bf 77} (1994) 51--64.



  
\end{thebibliography}
\end{document}